\newcommand{\brk}[1]{\left( #1 \right)}
\newcommand{\abrk}[1]{\left| #1 \right|}
\newcommand{\cbrk}[1]{\left\{ #1 \right\}}
\newcommand{\sbrk}[1]{\left[ #1 \right]}
\def\Rc{\mathbb{R}}
\newcommand{\F}{\mathcal{F}}
\newtheorem{thm}{Theorem}
\newtheorem{prop}{Proposition}
\newtheorem{lem}{Lemma} 
\newtheorem{cor}{Corollary}
\newtheorem{definition}{Definition}
\newtheorem{remark}{Remark}
\begin{document}

\title{Certain inequalities involving
prolate spheroidal wave functions and associated quantities}
\author{Andrei Osipov\footnote{This author's research was supported in part
 by the AFOSR grant \#FA9550-09-1-0241}
\footnote{Yale University, 51 Prospect st, New Haven, CT 06511.
Email: andrei.osipov@yale.edu.
}}
\maketitle

\begin{abstract}
Prolate spheroidal wave functions (PSWFs) 
play an important role in various areas,
from physics (e.g. wave phenomena, fluid dynamics) to 
engineering (e.g. signal processing, filter design). 
Even though the significance
of PSWFs was realized at least half a century ago, 
and they frequently occur in applications, their analytical
properties have not been investigated as much as those of many
other special functions. In particular,
despite some recent progress,
the gap
between asymptotic expansions and numerical experience,
on the one hand, and rigorously proven explicit bounds and
estimates, on the other hand, is still rather wide.

This paper attempts to improve the current situation.
We analyze the differential operator associated with PSWFs,
to derive fairly tight estimates on its eigenvalues. 
By combining these inequalities with a number
of standard techniques,
we also obtain several other properties of the PSFWs.
The results are illustrated via numerical experiments.
\end{abstract}

\noindent
{\bf Keywords:} {bandlimited functions, prolate spheroidal
wave functions, Pr\"ufer transformation}

\noindent
{\bf Math subject classification:} {
33E10, 34L15, 35S30, 42C10}


\section{Introduction}
\label{sec_intro}
The principal purpose of this paper is to provide
proofs for several inequalities involving bandlimited
functions (see Section~\ref{sec_summary} below). While 
some of these inequalities are known from
``numerical experience'' (see, for example,
\cite{ProlateSlepian1},
\cite{ProlateLandau1},
\cite{ProlateLandau2},
\cite{SlepianAsymptotic}),
their proofs appear to be absent in the literature.

A function $f: \Rc \to \Rc$ is bandlimited of band limit $c>0$, if there
exists a function $\sigma \in L^2\left[-1,1\right]$ such that
\begin{align}
f(x) = \int_{-1}^1 \sigma(t) e^{icxt} \; dt.
\label{eq_intro_f}
\end{align}
In other words, the Fourier transform of a bandlimited function
is compactly supported.
While \eqref{eq_intro_f} defines $f$ for all real $x$, 
one is often interested in bandlimited functions, whose 
argument is confined to an interval, e.g. $-1 \leq x \leq 1$.
Such functions are encountered in physics (wave phenomena,
fluid dynamics), engineering (signal processing), etc.
(see e.g. \cite{SlepianComments}, \cite{Flammer}, \cite{Papoulis}).

About 50 years ago it was observed that the eigenfunctions of
the integral operator $F_c: L^2\left[-1,1\right] \to L^2\left[-1,1\right]$,
defined via the formula
\begin{align}
F_c\sbrk{\varphi} \brk{x} = \int_{-1}^1 \varphi(t) e^{icxt} \; dt,
\label{eq_intro_fc}
\end{align}
provide a natural tool for dealing with bandlimited functions, defined
on the interval $\left[-1,1\right]$. Moreover, it
was observed 
(see, for example,
\cite{ProlateSlepian1}, \cite{ProlateLandau1}, \cite{ProlateSlepian2})
that the eigenfunctions of $F_c$
are precisely the prolate spheroidal wave functions (PSWFs),
well known from the mathematical physics (see, for example,
\cite{PhysicsMorse}, \cite{Flammer}).
The PSWFs are the eigenfunctions
of the differential operator $L_c$, defined via the formula
\begin{align}
L_c\left[ \varphi \right] \left(x\right)= 
-\frac{d}{dx} \left( (1-x^2) \cdot \frac{d\varphi}{dx}(x) \right) +
c^2 x^2.
\label{eq_intro_lc}
\end{align}
In other words, the integral operator $F_c$ 
commutes with
the differential
operator $L_c$ (see
\cite{ProlateSlepian1}, \cite{Grunbaum}).
This property, being remarkable by itself,
also plays an important role in both the analysis of PSWFs
and the associated numerical algorithms (see, for example,
\cite{Glaser}, \cite{RokhlinXiaoProlate}).

It is perhaps surprising, however, that the analytical
properties of PSWFs have not been investigated as
thoroughly as those of several other classes of special functions.
In particular,
when one reads through the classical works about the 
PSWFs (see, for example,
\cite{ProlateSlepian1},
\cite{ProlateLandau1},
\cite{ProlateLandau2},
\cite{ProlateSlepian2},
\cite{ProlateSlepian3}),
one is amazed by the number of properties stated without rigorous
proofs. Some other properties are only supported by analysis
of an asymptotic nature; see, for example, 
\cite{RokhlinXiaoAsymptotic},
\cite{LandauWidom},
\cite{SlepianAsymptotic},
\cite{Fuchs}.
To some extent,
this problem has been
addressed in a number of recently published papers,
for example, \cite{Yoel}, \cite{RokhlinXiaoProlate}, 
\cite{RokhlinXiaoApprox}.
Still, the gap between numerical experience and asymptotic expansions,
on the one hand, and rigorously proven explicit bounds and estimates,
on the other hand, is rather wide; this paper
offers a partial remedy for this deficiency.

This paper is mostly devoted to the analysis
of the differential operator $L_c$, defined via \eqref{eq_intro_lc}.
In particular, several explicit bounds for the eigenvalues of $L_c$
are derived. These bounds turn out to be fairly tight, and the
resulting inequalities lead to rigorous proofs
of several other properties of PSWFs. The analysis 
is illustrated through several numerical experiments.

The analysis of the eigenvalues of the integral operator $F_c$,
defined via \eqref{eq_intro_fc}, requires tools
different from those used in this paper; it will be published at a later date.
The implications of the analysis of both $L_c$ and $F_c$
to numerical algorithms involving
PSWFs are being currently investigated.

This paper is organized as follows. In Section~\ref{sec_prel},
we summarize a number of well known mathematical facts
to be used in the rest of this paper. In Section~\ref{sec_summary},
we provide a summary of the principal results
of this paper. In Section~\ref{sec_analytical}, we introduce
the necessary analytical apparatus and carry out the analysis.
In Section~\ref{sec_numerical}, we illustrate the analysis
via several numerical examples.

\section{Mathematical and Numerical Preliminaries}
\label{sec_prel}
In this section, we introduce notation and summarize
several facts to be used in the rest of the paper.

\subsection{Prolate Spheroidal Wave Functions}
\label{sec_pswf}
In this subsection, we summarize several facts about
the PSWFs. Unless stated otherwise, all these facts can be 
found in \cite{RokhlinXiaoProlate}, 
\cite{RokhlinXiaoApprox},
\cite{LandauWidom},
\cite{ProlateSlepian1},
\cite{ProlateLandau1}.

Given a real number $c > 0$, we define the operator
$F_c: L^2\sbrk{-1, 1} \to L^2\sbrk{-1, 1}$ via the formula
\begin{align}
F_c\sbrk{\varphi} \brk{x} = \int_{-1}^1 \varphi(t) e^{icxt} \; dt.
\label{eq_pswf_fc}
\end{align}
Obviously, $F_c$ is compact. We denote its eigenvalues by
$\lambda_0, \lambda_1, \dots, \lambda_n, \dots$ and assume that
they are ordered such that $\abrk{\lambda_n} \geq \abrk{\lambda_{n+1}}$
for all natural $n \geq 0$. We denote by $\psi_n$ the eigenfunction
corresponding to $\lambda_n$. In other words, the following
identity holds for all integer $n \geq 0$ and all real $-1 \leq x \leq 1$:
\begin{align}
\label{eq_prolate_integral}
\lambda_n \psi_n\brk{x} = \int_{-1}^1 \psi_n(t) e^{icxt} \; dt.
\end{align}
We adopt the convention\footnote{
This convention agrees with that of \cite{RokhlinXiaoProlate},
\cite{RokhlinXiaoApprox} and differs from that of \cite{ProlateSlepian1}.
}
that $\| \psi_n \|_{L^2\sbrk{-1,1}} = 1$.
The following theorem describes the eigenvalues and eigenfunctions
of $F_c$
(see
\cite{RokhlinXiaoProlate},
\cite{RokhlinXiaoApprox},
\cite{ProlateSlepian1}).
\begin{thm}
Suppose that $c>0$ is a real number, and that the operator $F_c$
is defined via \eqref{eq_pswf_fc} above. Then,
the eigenfunctions $\psi_0, \psi_1, \dots$ of $F_c$ are purely real,
are orthonormal and are complete in $L^2\sbrk{-1, 1}$.
The even-numbered functions are even, the odd-numbered ones are odd.
Each function $\psi_n$ has exactly $n$ simple roots in $\brk{-1, 1}$.
All eigenvalues $\lambda_n$ of $F_c$ are non-zero and simple;
the even-numbered ones are purely real and the odd-numbered ones
are purely imaginary; in particular, $\lambda_n = i^n \abrk{\lambda_n}$.
\label{thm_pswf_main}
\end{thm}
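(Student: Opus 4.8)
The plan is to derive every assertion from the commutation relation $F_c L_c = L_c F_c$ stated in the introduction, combined with the spectral theory of the self-adjoint Sturm--Liouville operator $L_c$ of \eqref{eq_intro_lc}. First I would record that $L_c$, taken on the natural domain in $L^2\sbrk{-1,1}$ with the appropriate boundary behaviour at the singular endpoints $\pm 1$ (where $1-x^2$ vanishes), is self-adjoint with compact resolvent; its spectrum is therefore a discrete increasing sequence $\chi_0 < \chi_1 < \cdots$ of \emph{simple} eigenvalues, with an orthonormal basis of real eigenfunctions $\psi_0, \psi_1, \dots$ of $L^2\sbrk{-1,1}$. Realness is automatic since $L_c$ has real coefficients: the real and imaginary parts of an eigenfunction are themselves eigenfunctions, and simplicity forces them to be proportional. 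Making self-adjointness and simplicity rigorous at the singular endpoints is the first technical point, handled by the standard limit-circle analysis for this operator.

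Next I would transfer this structure to $F_c$. Since each eigenspace of $L_c$ is one-dimensional and invariant under $F_c$ (by the commutation relation), every $\psi_n$ is automatically an eigenfunction of $F_c$, say $F_c\psi_n = \lambda_n \psi_n$; hence the $\psi_n$ form a complete orthonormal system of real eigenfunctions of $F_c$, giving the first sentence of the theorem. The statement about $n$ simple roots in $\brk{-1,1}$ is exactly the Sturm oscillation theorem for $L_c$. For parity I would note that $L_c$ commutes with the reflection $P\colon \varphi(x)\mapsto \varphi(-x)$; simplicity then gives each $\psi_n$ a definite parity, and since the eigenfunction with $n$ interior zeros alternates parity with $n$, the even-numbered $\psi_n$ are even and the odd-numbered ones odd.

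For the eigenvalues I would substitute a real $\psi_n$ of definite parity into the defining relation and write $e^{icxt} = \cos(cxt) + i\sin(cxt)$: the odd part integrates to zero against an even $\psi_n$ and the even part against an odd $\psi_n$, so $\lambda_n$ is real for even $n$ and purely imaginary for odd $n$. Non-vanishing follows from injectivity of $F_c$: if $F_c\sbrk{\varphi}\equiv 0$ on $\sbrk{-1,1}$, then, since the right-hand side of \eqref{eq_pswf_fc} extends to an entire function of $x$, it vanishes identically, forcing the Fourier transform of $\varphi\cdot\mathbf 1_{\sbrk{-1,1}}$ to be zero and hence $\varphi = 0$. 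To pin down the exact phase $\lambda_n = i^n\abrk{\lambda_n}$ rather than merely $\lambda_n = \pm\, i^n\abrk{\lambda_n}$, I would use continuity in $c$: as $c\to 0^+$ the $\psi_n$ tend to normalized Legendre polynomials, and the first term of $e^{icxt}$ not orthogonal to $P_n$ yields $\lambda_n \sim i^n c^n\cdot(\text{positive constant})$; since, for every $c>0$, $\lambda_n$ is nonzero and confined to its axis, the phase $i^n$ cannot change as $c$ grows. This argument uses only the simplicity of $L_c$, so it is not circular.

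The main obstacle is the remaining claim that the eigenvalues of $F_c$ are simple, i.e.\ that the $\lambda_n$ are pairwise distinct. Because $\lambda_n=i^n\abrk{\lambda_n}$ separates the residue classes modulo $4$ and all $\lambda_n$ are nonzero, a coincidence $\lambda_n=\lambda_m$ can occur only when $n\equiv m\pmod 4$ and $\abrk{\lambda_n}=\abrk{\lambda_m}$, so distinctness reduces to the strict monotonicity $\abrk{\lambda_0} > \abrk{\lambda_1} > \cdots$. I would obtain this from the associated positive self-adjoint operator $Q_c = (c/2\pi)\,F_c^{\ast} F_c$, whose kernel is the sinc kernel $\sin\!\brk{c(x-t)}/\!\brk{\pi(x-t)}$ and whose eigenvalues are $(c/2\pi)\abrk{\lambda_n}^2$; their strict decrease is the classical fact of Slepian and Pollak, and it is precisely the ingredient that upgrades the simplicity of the spectrum of $L_c$ to that of $F_c$. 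This monotonicity, rather than any of the symmetry or oscillation arguments, is the genuinely analytical part of the proof.
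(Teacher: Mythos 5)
First, a point about the comparison itself: the paper offers no proof of Theorem~\ref{thm_pswf_main}. It is stated in the preliminaries (Section~\ref{sec_pswf}) as a known result, with citations to \cite{RokhlinXiaoProlate}, \cite{RokhlinXiaoApprox}, \cite{ProlateSlepian1}, so your proposal can only be measured against the classical arguments in that literature --- which it in fact follows: commutation of $F_c$ with $L_c$, Sturm--Liouville theory for $L_c$ (self-adjointness and simplicity of the $\chi_n$ via the limit-circle analysis at $\pm 1$, completeness of the eigenbasis, the oscillation theorem for the root count), reflection symmetry for parity, the splitting $e^{icxt}=\cos(cxt)+i\sin(cxt)$ for the real/imaginary alternation, analytic continuation for injectivity of $F_c$ (hence $\lambda_n\neq 0$), and the $c\to 0^+$ limit against Legendre polynomials to pin the phase $i^n$. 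These steps are sound. The only hidden technicalities are that the commutation argument must check that $F_c$ maps eigenfunctions into the domain of $L_c$ and that the boundary terms in the integration by parts vanish (they do, because $1-x^2$ vanishes at $\pm 1$), and that the phase argument needs continuity of $c\mapsto\lambda_n(c)$ under the labelling inherited from $\chi_n(c)$; both are standard.

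The genuine gap is the simplicity of the $\lambda_n$, and it sits exactly where you stopped proving and started citing. Your reduction is correct: since $\lambda_n=i^n\abrk{\lambda_n}\neq 0$, a coincidence $\lambda_n=\lambda_m$ forces $n\equiv m \pmod 4$ and $\abrk{\lambda_n}=\abrk{\lambda_m}$, so it suffices to rule out equal moduli, which would follow from the strict monotonicity $\abrk{\lambda_0}>\abrk{\lambda_1}>\cdots$, equivalently strict monotonicity of the eigenvalues $\mu_n$ of $Q_c$ in \eqref{eq_pswf_qc_fc}. But you then dispose of that monotonicity as ``the classical fact of Slepian and Pollak.'' That fact is not an independent, more elementary lemma: in \cite{ProlateSlepian1} the strict ordering $\mu_0>\mu_1>\cdots$ \emph{is} (modulo the phase bookkeeping you already carried out) the distinctness assertion of the very theorem you are proving, stated for $Q_c$ instead of $F_c$. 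Nothing in your argument --- not the commutation, not the positivity of $Q_c=(c/2\pi)F_c^{\ast}F_c$, not the mod-4 reduction --- excludes $\abrk{\lambda_n}=\abrk{\lambda_m}$ for two distinct indices of the same parity class, and the exclusion is not easy: one workable route is perturbation in $c$ (for small $c$ the $\mu_n$ are of different orders, comparable to $c^{2n+1}$, and one must then show the curves $\mu_n(c)$ never cross), which is genuine analytical work, not a remark. So the proposal is essentially complete except at the single point carrying all of the theorem's difficulty, and there it is circular in the sense of citing the result itself. If citation were admissible at that point, the honest course is the one the paper takes: cite the entire theorem.
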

We define the self-adjoint operator
$Q_c: L^2\sbrk{-1, 1} \to L^2\sbrk{-1, 1}$ via the formula
\begin{align}
Q_c\sbrk{\varphi} \brk{x} =
\frac{1}{\pi} \int_{-1}^1 
\frac{ \sin \brk{c\brk{x-t}} }{x-t} \; \varphi(t) \; dt.
\label{eq_pswf_qc}
\end{align}
Clearly, if we denote by $\F:L^2(\Rc) \to L^2(\Rc)$ 
the unitary Fourier transform,
then
\begin{align}
Q_c\sbrk{\varphi} \brk{x} = 
\chi_{\sbrk{-1,1}}(x) \cdot
\F^{-1} \sbrk{ 
  \chi_{\sbrk{-c,c}}(\xi) \cdot \F\sbrk{\varphi}(\xi)
}(x),
\end{align}
where $\chi_{\left[-a,a\right]} : \Rc \to \Rc$ is the characteristic
function of the interval $\left[-a,a\right]$, defined via the formula
\begin{align}
\chi_{\left[-a,a\right]}(x) = 
\begin{cases}
1 & -a \leq x \leq a, \\
0 & \text{otherwise},
\end{cases}
\label{eq_char_function}
\end{align}
for all real $x$.
In other words, $Q_c$ represents low-passing followed by time-limiting.
$Q_c$ relates to $F_c$, defined via \eqref{eq_pswf_fc}, by 
\begin{align}
Q_c = \frac{ c }{ 2 \pi } \cdot F_c^{\ast} \cdot F_c,
\label{eq_pswf_qc_fc}
\end{align}
and the eigenvalues $\mu_n$ of $Q_n$ satisfy the identity
\begin{align}
\mu_n = \frac{c}{2\pi} \cdot \abrk{\lambda_n}^2,
\label{eq_prolate_mu}
\end{align}
for all integer $n \geq 0$.
Moreover, $Q_c$ has the same eigenfunctions $\psi_n$ as $F_c$.
In other words,
\begin{align}
\mu_n \psi_n(x) = \frac{1}{\pi} 
      \int_{-1}^1 \frac{ \sin\brk{c(x-t)} }{ x - t } \; \psi_n(t) \; dt,
\label{eq_prolate_integral2}
\end{align}
for all integer $n \geq 0$ and all $-1 \leq x \leq 1$.
Also,  $Q_c$ is closely related to the operator
$P_c: L^2(\Rc) \to L^2(\Rc)$,
defined via the formula
\begin{align}
P_c\sbrk{\varphi} \brk{x} =
\frac{1}{\pi} \int_{-\infty}^{\infty}
\frac{ \sin \brk{c\brk{x-t}} }{x-t} \; \varphi(t) \; dt,
\label{eq_pswf_pc}
\end{align}
which is a widely known orthogonal projection onto the space
of functions of band limit $c > 0$ on the real
line $\Rc$.

The following theorem about the eigenvalues $\mu_n$ of the operator $Q_c$,
defined via
\eqref{eq_pswf_qc},
can be traced back to \cite{LandauWidom}:
\begin{thm}
Suppose that $c>0$ and $0<\alpha<1$ are positive real numbers,
and that the operator $Q_c: L^2\left[-1,1\right] \to L^2\left[-1,1\right]$
is defined via \eqref{eq_pswf_qc} above.
Suppose also that the integer $N(c,\alpha)$ is the number of 
the eigenvalues $\mu_n$ of $Q_c$ that are greater than $\alpha$. In
other words,
\begin{align}
N(c,\alpha) = \max\left\{ k = 1,2,\dots \; : \; \mu_{k-1} > 0\right\}.
\end{align}
Then,
\begin{align}
N(c,\alpha) =
\frac{2}{\pi} c + \brk{ \frac{1}{\pi^2} \log \frac{1-\alpha}{\alpha} }
    \log c + O\brk{ \log c }.
\label{eq_mu_spectrum}
\end{align}
\label{thm_mu_spectrum}
\end{thm}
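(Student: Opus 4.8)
The plan is to reduce the counting function $N(c,\alpha)$ to the large-$c$ asymptotics of spectral traces $\tr\,\varphi(Q_c)=\sum_n\varphi(\mu_n)$ for suitable test functions $\varphi$, and then to specialize $\varphi$ to a regularization of the indicator $\mathbf 1_{(\alpha,1]}$. The guiding picture is that $Q_c$, being a frequency cutoff followed by a space cutoff, has eigenvalues $\mu_n\in(0,1)$ that cluster near $1$ and near $0$, with only a thin ``plunge region'' in between; the leading term $\tfrac2\pi c$ counts the eigenvalues near $1$, while the $\log c$ term measures the width of that region together with its $\alpha$-dependent crossing level.

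First I would pin down the two ingredients of the trace asymptotics. The leading term is exact: since the diagonal value of the kernel in \eqref{eq_pswf_qc} is $c/\pi$, one has $\tr Q_c=\sum_n\mu_n=\tfrac2\pi c$. The logarithmic term comes from a boundary computation. Writing $K(x,t)=\sin\brk{c(x-t)}/(\pi(x-t))$ and using that the full-line kernel of $P_c$ is an orthogonal projection (so $\int_{\Rc}K(x,s)^2\,ds=K(x,x)=c/\pi$), I would establish
\[
\tr\brk{Q_c-Q_c^2}=\frac{1}{\pi^2}\int_{-1}^1\int_{\abrk{t}>1}\brk{\frac{\sin\brk{c(x-t)}}{x-t}}^2\,dt\,dx,
\]
reduce the right-hand side by the substitution $s=\abrk{x-t}$ and Fubini to $\tfrac{2}{\pi^2}\int_0^2 s^{-1}\sin^2(cs)\,ds+O(1)$, and evaluate it via $\int_0^{X}u^{-1}(1-\cos u)\,du=\log X+O(1)$ to obtain
\[
\sum_n\mu_n\brk{1-\mu_n}=\tr\brk{Q_c-Q_c^2}=\frac{1}{\pi^2}\log c+O(1).
\]
The analogous (longer) computation of $\tr(Q_c^k)$ for each fixed $k$ isolates the same boundary mechanism and should yield, for every polynomial $p$, the Szeg\H{o}-type formula
\[
\tr\,p(Q_c)=\frac{2}{\pi}c\cdot p(1)+\frac{\log c}{\pi^2}\int_0^1\frac{p(t)-(1-t)p(0)-t\,p(1)}{t(1-t)}\,dt+O(1),
\]
whose $k=1,2$ cases are exactly the two identities above.

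It then remains to feed the indicator into this formula. Formally taking $p=\mathbf 1_{(\alpha,1]}$ gives $p(1)=1$, $p(0)=0$, and
\[
\int_0^1\frac{\mathbf 1_{(\alpha,1]}(t)-t}{t(1-t)}\,dt=\int_\alpha^1\frac{dt}{t}-\int_0^\alpha\frac{dt}{1-t}=\log\frac{1-\alpha}{\alpha},
\]
which reproduces \eqref{eq_mu_spectrum}. The hard part will be making this substitution rigorous, since $\mathbf 1_{(\alpha,1]}$ is discontinuous and the polynomial formula degrades near the jump. I would sandwich $\mathbf 1_{(\alpha,1]}$ between smooth functions $\varphi_{\pm}$ that agree with it outside a small neighborhood of $\alpha$, apply the trace asymptotics to $\varphi_{\pm}$, and bound the resulting gap by the number of eigenvalues $\mu_n$ lying in that neighborhood. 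Controlling this count --- equivalently, establishing enough regularity of the plunge-region eigenvalue density to keep the sandwiching error $O(\log c)$ --- is the crux of the argument, and it is the step I expect to absorb most of the work. This is precisely where the explicit eigenvalue estimates for the commuting operator $L_c$ of \eqref{eq_intro_lc}, together with a WKB/Bohr--Sommerfeld analysis of its spectrum, furnish an alternative route to the same control and connect the theorem to the rest of the paper.
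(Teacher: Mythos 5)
First, a point of reference: the paper never proves Theorem~\ref{thm_mu_spectrum} at all --- it is quoted in the preliminaries and attributed to \cite{LandauWidom} --- so your proposal can only be measured against the classical proof, which is indeed the trace-asymptotics route you outline. The parts you actually carry out are correct: $\tr Q_c = 2c/\pi$ (Mercer's theorem applies, since the kernel is continuous and $Q_c \geq 0$), the identity
$\tr\brk{Q_c - Q_c^2} = \pi^{-2}\int_{-1}^1 \int_{\abrk{t}>1} \brk{\sin\brk{c(x-t)}/(x-t)}^2 \, dt \, dx$
via the projection property of $P_c$, its evaluation as $\pi^{-2}\log c + O(1)$, and the computation of the indicator integral $\log\frac{1-\alpha}{\alpha}$. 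But the two steps that carry essentially all the weight are missing. The Szeg\H{o}-type formula for $\tr\, p(Q_c)$ for general polynomials is only asserted (``should yield''); for degree $\geq 3$ this is not a routine repetition of the $k=2$ computation, and establishing the coefficient of $\log c$ for \emph{all} polynomials is the actual content of Landau--Widom's paper. Likewise the passage from polynomials to $\mathbf{1}_{(\alpha,1]}$ --- showing that the number of eigenvalues $\mu_n$ in a shrinking neighborhood of $\alpha$ contributes $o(\log c)$ (the plunge-region estimate) --- is precisely the crux, and you defer it. Your closing suggestion, that the present paper's bounds on $\chi_n$ together with a WKB analysis could furnish that control, does not work within this paper's toolkit: nothing here links $\chi_n$ quantitatively to $\mu_n$, and the author explicitly postpones the spectral analysis of $F_c$ (hence of $Q_c$) to a separate publication.

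Second, an observation that partially redeems the proposal: as stated, the error term in \eqref{eq_mu_spectrum} is $O(\log c)$, which already swallows the $\alpha$-dependent term, so the literal statement is equivalent to $N(c,\alpha) = 2c/\pi + O_{\alpha}(\log c)$ --- and \emph{this} follows from the two traces you did compute, by a Chebyshev-type argument that needs no sandwiching and no general trace formula. Since $0 \leq \mu_n \leq 1$, every $n$ with $\mu_n > \alpha$ satisfies $1 = \mu_n + (1-\mu_n) \leq \mu_n + \mu_n(1-\mu_n)/\alpha$, whence
$N(c,\alpha) \leq \tr Q_c + \alpha^{-1} \tr\brk{Q_c - Q_c^2}$;
and every $n$ with $\mu_n \leq \alpha$ satisfies $\mu_n \leq \mu_n(1-\mu_n)/(1-\alpha)$, whence
$N(c,\alpha) \geq \sum_{\mu_n > \alpha} \mu_n \geq \tr Q_c - (1-\alpha)^{-1} \tr\brk{Q_c - Q_c^2}$.
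Both bounds equal $2c/\pi + O(\log c)$. So had you aimed at the statement as written, your first two computations plus these three lines would finish the proof; the machinery you left unproven is exactly what is needed for the genuine Landau--Widom theorem, in which the error is $o(\log c)$ and the coefficient $\pi^{-2}\log\frac{1-\alpha}{\alpha}$ is actually meaningful.
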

According to \eqref{eq_mu_spectrum}, there are about $2c/\pi$
eigenvalues whose absolute value is close to one, order of $\log c$
eigenvalues that decay exponentially, and the rest of them are
very close to zero. 

The eigenfunctions $\psi_n$ of $Q_c$ turn out to be the PSWFs, well
known from classical mathematical physics 
(see, for example,
\cite{PhysicsMorse}, \cite{Flammer}).
The following theorem, proved in a more general form in
\cite{ProlateSlepian2},
formalizes this statement.
\begin{thm}
For any $c > 0$, there exists a strictly increasing unbounded sequence
of positive numbers $\chi_0 <  \chi_1 <  \dots$ such that, for
each integer $n \geq 0$, the differential equation
\begin{align}
\brk{1 - x^2} \psi''(x) - 2 x \cdot \psi'(x) 
+ \brk{\chi_n - c^2 x^2} \psi(x) = 0
\label{eq_prolate_ode}
\end{align}
has a solution that is continuous on $\sbrk{-1, 1}$.
Moreover, all such solutions are constant multiples of 
the eigenfunction $\psi_n$ of $F_c$,
defined via \eqref{eq_pswf_fc} above.
\label{thm_prolate_ode}
\end{thm}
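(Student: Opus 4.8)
The plan is to exploit the fact that the integral operator $F_c$ commutes with the second-order differential operator $L_c$ of \eqref{eq_intro_lc}, together with the simplicity of the eigenvalues of $F_c$ furnished by Theorem~\ref{thm_pswf_main}. First I would rewrite \eqref{eq_prolate_ode} in Sturm--Liouville form: since $\frac{d}{dx}\brk{(1-x^2)\psi'} = (1-x^2)\psi'' - 2x\psi'$, equation \eqref{eq_prolate_ode} is precisely $L_c\sbrk{\psi} = \chi \psi$, an eigenvalue problem for $L_c$ on $\sbrk{-1,1}$ with regular singular endpoints at $x = \pm 1$. A local (Frobenius) analysis at each endpoint shows that the indicial equation has a double root at $0$, so one solution branch is bounded (indeed analytic) and the other carries a logarithmic singularity; hence demanding continuity on the \emph{closed} interval is a genuine constraint, satisfied only on a discrete set of values of $\chi$.

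Next I would verify the commutation $L_c F_c = F_c L_c$. The cleanest route is through the kernel $e^{icxt}$: a direct differentiation gives the symmetry
\begin{align}
L_c^{(x)}\sbrk{e^{icxt}} = \brk{2ictx + c^2 x^2 + c^2 t^2 - c^2 x^2 t^2}\, e^{icxt} = L_c^{(t)}\sbrk{e^{icxt}},
\end{align}
where the superscript indicates the variable of differentiation. Applying $L_c$ in $x$ under the integral sign defining $F_c$ and then replacing $L_c^{(x)}$ by $L_c^{(t)}$, one integrates by parts twice in $t$ to move the operator onto $\varphi$; the boundary terms vanish because the coefficient $(1-t^2)$ is zero at $t = \pm 1$. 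This yields $L_c\sbrk{F_c\sbrk{\varphi}} = F_c\sbrk{L_c\sbrk{\varphi}}$ for every $\varphi$ in the (dense) domain of $L_c$.

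With the commutation in hand, existence of the $\chi_n$ is immediate. For each $n$, applying $F_c$ to $L_c\psi_n$ gives $F_c\brk{L_c\psi_n} = L_c\brk{F_c\psi_n} = \lambda_n\, L_c\psi_n$, so $L_c\psi_n$ lies in the $\lambda_n$-eigenspace of $F_c$. By Theorem~\ref{thm_pswf_main} the eigenvalues $\lambda_n$ are simple, hence this eigenspace is one-dimensional, forcing $L_c\psi_n = \chi_n\psi_n$ for some real scalar $\chi_n$. Integrating by parts and using $\| \psi_n \|_{L^2\sbrk{-1,1}} = 1$ gives $\chi_n = \int_{-1}^1 \sbrk{(1-x^2)\brk{\psi_n'(x)}^2 + c^2 x^2 \psi_n(x)^2}\,dx > 0$, so the $\chi_n$ are positive. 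Thus $\psi_n$ solves \eqref{eq_prolate_ode}, and since $\psi_n$ is real-analytic it is certainly continuous on $\sbrk{-1,1}$.

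It remains to order the $\chi_n$ and to establish uniqueness up to a scalar. For the latter, \eqref{eq_prolate_ode} is second order, so its solution space is two-dimensional; the Frobenius analysis above shows that boundedness at $x=1$ already cuts this down to one dimension, and the global continuity requirement is met precisely when $\chi$ equals one of the $\chi_n$, at which point the surviving bounded solution must be a constant multiple of $\psi_n$. For the ordering, I would invoke that $\psi_n$ has exactly $n$ simple roots in $\brk{-1,1}$ (Theorem~\ref{thm_pswf_main}) and appeal to Sturm oscillation theory for $L_c$: the continuous eigenfunction with exactly $n$ interior zeros is the $(n+1)$-st in increasing order of eigenvalue, which gives $\chi_0 < \chi_1 < \cdots$ together with their distinctness, while unboundedness of the sequence follows from the discreteness of the singular Sturm--Liouville spectrum. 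The hard part will be the singular spectral theory itself---making rigorous that continuity at the two regular singular points $x = \pm 1$ defines a self-adjoint realization of $L_c$ with a discrete, simple spectrum and complete eigenfunctions, and that Sturm's oscillation theorem remains valid at these singular endpoints. The commutation identity, by comparison, is only a lengthy but routine computation.
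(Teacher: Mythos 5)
The paper never proves Theorem~\ref{thm_prolate_ode}: it is quoted as a preliminary, with the proof attributed to \cite{ProlateSlepian2}, so there is no internal argument to compare yours against --- only the classical route of the cited literature, which is exactly what you have reconstructed. Your core steps are correct: the Sturm--Liouville rewriting; the Frobenius analysis at $x=\pm 1$ (the indicial equation is $r^2=0$, so the solutions bounded near each endpoint form a one-dimensional space and the complementary solution carries a logarithm); the kernel symmetry $L_c^{(x)}\sbrk{e^{icxt}}=L_c^{(t)}\sbrk{e^{icxt}}$ followed by two integrations by parts whose boundary terms vanish because $1-t^2$ vanishes at $t=\pm 1$; the deduction $L_c\psi_n=\chi_n\psi_n$ from simplicity of $\lambda_n$ (legitimate, since $\lambda_n\neq 0$ makes $\psi_n$ smooth on $\sbrk{-1,1}$ via \eqref{eq_prolate_integral}, so $L_c\psi_n\in L^2$); positivity of $\chi_n$ via the quadratic form; and the uniqueness clause from one-dimensionality of the bounded branch at $x=1$.

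The genuine gap is the final step --- strict ordering $\chi_0<\chi_1<\cdots$ and unboundedness --- which you delegate to a self-adjoint realization of $L_c$ with discrete simple spectrum, complete eigenfunctions, and a singular Sturm oscillation theorem, flag as ``the hard part,'' and do not execute. That machinery is both missing from your argument and unnecessary: completeness is already supplied by Theorem~\ref{thm_pswf_main}, and the ordering follows from an elementary Wronskian computation that survives the singular endpoints precisely because $1-x^2$ vanishes there. For $m\neq n$ set $W(x)=\brk{1-x^2}\brk{\psi_m'(x)\psi_n(x)-\psi_m(x)\psi_n'(x)}$; the ODE gives $W'(x)=\brk{\chi_n-\chi_m}\psi_m(x)\psi_n(x)$, and $W(\pm 1)=0$ because both eigenfunctions and their derivatives are bounded up to $\pm 1$. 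If $\chi_m=\chi_n$ then $W\equiv 0$, forcing $\psi_m\propto\psi_n$ and contradicting the zero counts of Theorem~\ref{thm_pswf_main}; if $m<n$ but $\chi_m>\chi_n$, then on each of the $n+1$ subintervals of $(-1,1)$ cut out by the $n$ zeros of $\psi_n$ the usual Sturm sign bookkeeping (at an interior zero $t_i$ one has $W(t_i)=-\brk{1-t_i^2}\psi_m(t_i)\psi_n'(t_i)$, and the contributions at $\pm 1$ vanish) forces $\psi_m$ to vanish inside, giving $\psi_m$ at least $n+1>m$ zeros --- a contradiction. Hence $\chi_m<\chi_n$ whenever $m<n$. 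Unboundedness likewise needs no spectral theory: writing $a_k=\langle\psi_n,\bar{P}_k\rangle$ with $\bar{P}_k$ the normalized Legendre polynomials, one has $\chi_n\geq\int_{-1}^1\brk{1-x^2}\brk{\psi_n'(x)}^2dx=\sum_k k\brk{k+1}a_k^2$ with $\sum_k a_k^2=1$, and Bessel's inequality gives $a_k\to 0$ as $n\to\infty$ for each fixed $k$, so $\chi_n\to\infty$ (this is also the content of the likewise-quoted lower bound in Theorem~\ref{thm_khi_crude}). With these replacements your outline closes into a complete proof.
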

\noindent
The following theorem provides lower and upper bounds on $\chi_n$
of Theorem~\ref{thm_prolate_ode} (see, for example,
\cite{RokhlinXiaoProlate},
\cite{ProlateSlepian1},
\cite{ProlateLandau1}).
\begin{thm}
For all real $c>0$ and all natural $n \geq 0$,
\begin{align}
\label{eq_khi_crude}
n \brk{n + 1} < \chi_n < n \brk{n + 1} + c^2.
\end{align}
\label{thm_khi_crude}
\end{thm}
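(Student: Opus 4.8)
The plan is to characterize $\chi_n$ by the Courant--Fischer min--max principle for the self-adjoint operator $L_c$ of \eqref{eq_intro_lc}, and to compare it against the Legendre operator obtained by setting $c=0$. First I would recast \eqref{eq_prolate_ode} in Sturm--Liouville form: it is exactly $L_c\sbrk{\psi_n} = \chi_n \psi_n$, where $L_c\sbrk{\varphi} = -\brk{(1-x^2)\varphi'}' + c^2 x^2 \varphi$. This operator is self-adjoint with compact resolvent on $L^2\sbrk{-1,1}$, the singular endpoints $x=\pm 1$ requiring only boundedness of the solution; its eigenvalues are the increasing sequence $\chi_0<\chi_1<\cdots$ of Theorem~\ref{thm_prolate_ode}, and by Theorem~\ref{thm_pswf_main} its eigenfunctions $\psi_n$ are orthonormal and complete in $L^2\sbrk{-1,1}$. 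The associated quadratic form follows from integration by parts: for $\varphi$ smooth on $\sbrk{-1,1}$,
\[
\IP{L_c\sbrk{\varphi}}{\varphi} = \int_{-1}^1 (1-x^2)\brk{\varphi'(x)}^2\,dx + c^2\int_{-1}^1 x^2\,\varphi(x)^2\,dx,
\]
the boundary term $\sbrk{-(1-x^2)\varphi'\varphi}_{-1}^1$ vanishing since $1-x^2\to 0$ at $x=\pm 1$. Writing $\mathcal{R}_c[\varphi] = \IP{L_c\sbrk{\varphi}}{\varphi}/\IP{\varphi}{\varphi}$ for the Rayleigh quotient, the key observation is the decomposition
\[
\mathcal{R}_c[\varphi] = \mathcal{R}_0[\varphi] + c^2\,\frac{\int_{-1}^1 x^2\varphi^2\,dx}{\int_{-1}^1\varphi^2\,dx},
\]
where $\mathcal{R}_0$ corresponds to $c=0$, i.e. to the Legendre operator whose $n$-th eigenvalue is $n(n+1)$, with eigenfunctions the Legendre polynomials $P_0,P_1,\dots$.

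Since for every $\varphi\in L^2\sbrk{-1,1}$ one has $\int x^2\varphi^2 \leq \int \varphi^2<\infty$ automatically, the operators $L_0$ and $L_c$ share the same form domain $\mathcal H$. Hence the min--max principle applies to both over the same class of admissible subspaces:
\[
\chi_n = \min_{\substack{V\subset \mathcal H,\ \dim V = n+1}}\ \max_{\varphi\in V\setminus\cbrk{0}}\ \mathcal{R}_c[\varphi], \qquad n(n+1) = \min_{\substack{V\subset \mathcal H,\ \dim V = n+1}}\ \max_{\varphi\in V\setminus\cbrk{0}}\ \mathcal{R}_0[\varphi].
\]

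For the upper bound I would use the trial space $V=\mathrm{span}\cbrk{P_0,\dots,P_n}$. On it $\mathcal{R}_0[\varphi]\leq n(n+1)$, while $x^2<1$ on $(-1,1)$ forces $\int x^2\varphi^2 < \int \varphi^2$ strictly for every nonzero $\varphi$; as the continuous quotient attains its maximum on the compact unit sphere of the finite-dimensional $V$, this yields $\max_{\varphi\in V}\mathcal{R}_c[\varphi] < n(n+1)+c^2$, hence $\chi_n < n(n+1)+c^2$. For the lower bound I would reverse the roles, taking $V^{\ast}=\mathrm{span}\cbrk{\psi_0,\dots,\psi_n}$, on which $\max_{\varphi}\mathcal{R}_c[\varphi]=\chi_n$. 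Feeding $V^{\ast}$ into the min--max for $L_0$ gives $n(n+1)\leq \max_{\varphi\in V^{\ast}}\mathcal{R}_0[\varphi]$; but for a maximizer $\varphi^{\ast}$ on $V^{\ast}$ the decomposition gives $\mathcal{R}_0[\varphi^{\ast}] = \mathcal{R}_c[\varphi^{\ast}] - c^2\int x^2(\varphi^{\ast})^2/\IP{\varphi^{\ast}}{\varphi^{\ast}} \leq \chi_n - c^2\int x^2(\varphi^{\ast})^2/\IP{\varphi^{\ast}}{\varphi^{\ast}} < \chi_n$, the last inequality being strict because $\int x^2(\varphi^{\ast})^2>0$. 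Combining these gives $n(n+1)<\chi_n$.

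The main obstacle is not the comparison itself—which is immediate once the nonnegative perturbation $c^2x^2$ with $0\leq x^2\leq 1$ is isolated—but the rigorous justification of the Courant--Fischer framework for this \emph{singular} Sturm--Liouville operator: identifying the correct form domain $\mathcal H$, verifying self-adjointness and discreteness of the spectrum, and controlling the boundary behavior at $x=\pm 1$ so that the integration by parts is valid on all of $\mathcal H$ (not merely on smooth functions). Once that functional-analytic setup is secured, both inequalities and their strictness follow transparently.
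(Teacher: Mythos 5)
The paper does not actually prove Theorem~\ref{thm_khi_crude}: it is stated in Section~\ref{sec_pswf} as a known preliminary, quoted from the literature (\cite{RokhlinXiaoProlate}, \cite{ProlateSlepian1}, \cite{ProlateLandau1}), and none of the paper's own machinery (the Pr\"ufer transformation, the transformed ODE of Lemma~\ref{lem_trans}) is ever used to re-derive it --- that machinery targets the sharper integral bounds of Theorems~\ref{thm_n_upper} and \ref{thm_n_lower}, which the paper presents as \emph{improvements} of \eqref{eq_khi_crude}. So there is no in-paper proof to compare against, and your proposal should be judged as a self-contained argument. As such it is sound, and it is essentially the standard proof found in the cited references (where it is often phrased in matrix form: in the Legendre basis $L_c$ becomes $D + c^2 M$ with $D = \mathrm{diag}\,(k(k+1))$ and $0 \leq M \leq I$ the matrix of multiplication by $x^2$, which is the same min--max comparison in coordinates). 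Your handling of strictness is correct on both sides: finite-dimensional trial spaces, attainment of the maximum on the unit sphere, and the observation that $\int x^2 \varphi^2\, dx$ lies strictly between $0$ and $\int \varphi^2\, dx$ for nonzero continuous $\varphi$. The one substantive item you defer --- and correctly flag as the real work --- is the identification of the $\chi_n$ of Theorem~\ref{thm_prolate_ode}, defined through boundedness of the ODE solution at $\pm 1$, with the min--max eigenvalues of a self-adjoint realization of $L_c$ whose form domain contains both the Legendre polynomials and the $\psi_n$. This is standard but not free: both endpoints are limit-circle and non-oscillatory (since $p(x) = 1-x^2$ vanishes there), the bounded-solution condition selects the Friedrichs extension, the min--max principle applies to that extension, and its form domain is independent of $c$ because the perturbation $c^2 x^2$ is bounded. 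With that Sturm--Liouville input supplied, your argument closes; what it buys over the paper's choice to cite is a complete elementary proof, at the cost of the functional-analytic scaffolding the paper avoids by quoting the result.
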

\noindent
The following result 
provides an upper bound on $\psi_n^2(1)$ (see \cite{RokhlinXiaoApprox}).
\begin{thm}
For all $c > 0$ and all natural $n \geq 0$,
\begin{align}
\psi_n^2(1) < n + \frac{1}{2}.
\end{align}
\label{thm_psi1_upper_bound}
\end{thm}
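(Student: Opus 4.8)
The plan is to pass from the differential equation to the expansion of $\psi_n$ in normalized Legendre polynomials, since the target constant $n+\tfrac12$ is exactly the squared value at the endpoint of the $L^2$-normalized Legendre polynomial. Write $\overline{P}_k(x)=\sqrt{k+\tfrac12}\,P_k(x)$ for the $L^2\sbrk{-1,1}$-normalized Legendre polynomials, so that $\overline{P}_k(1)^2=k+\tfrac12$ and $\{\overline{P}_k\}$ is a complete orthonormal system. Expanding $\psi_n=\sum_{k\ge0}\beta_k\overline{P}_k$, the normalization $\|\psi_n\|_{L^2\sbrk{-1,1}}=1$ becomes $\sum_k\beta_k^2=1$, and the endpoint value becomes $\psi_n(1)=\sum_k\beta_k\sqrt{k+\tfrac12}$; by the parity statement of Theorem~\ref{thm_pswf_main} only the coefficients with $k\equiv n\pmod 2$ are nonzero. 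The assertion to be proved is thus the purely algebraic inequality $\big(\sum_k\beta_k\sqrt{k+\tfrac12}\big)^2<n+\tfrac12$ for the coefficient sequence of $\psi_n$.

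First I would record the eigen-recurrence for the $\beta_k$. Substituting the expansion into $L_c\psi_n=\chi_n\psi_n$ and using that $x^2\overline{P}_k$ is a combination of $\overline{P}_{k-2},\overline{P}_k,\overline{P}_{k+2}$ shows that $(\beta_k)$ is the eigenvector, for the eigenvalue $\chi_n$, of a symmetric tridiagonal (within each parity class) matrix with diagonal entries $k(k+1)+c^2 d_k$ and strictly positive off-diagonal entries $c^2 a_k$. Two features of this Jacobi eigenproblem are central. The dominant coefficient is $\beta_n$, since the eigenvalue $\chi_n$, localized by Theorem~\ref{thm_khi_crude} in the interval $\big(n(n+1),\,n(n+1)+c^2\big)$, is a perturbation of the $n$-th Legendre value $n(n+1)$; and the coefficients $\beta_k$ with $k>n$ decay rapidly once $k(k+1)$ overtakes $\chi_n$, because there the diagonal entry exceeds the eigenvalue and the three-term recurrence becomes contractive.

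The main work is to turn this coefficient decay into the strict bound on the weighted sum $\sum_k\beta_k\sqrt{k+\tfrac12}$. The index $k=n$ contributes at most $\sqrt{n+\tfrac12}$, since $|\beta_n|\le1$, so the tail $k>n$ must be shown to contribute strictly less than the deficit $(1-\beta_n)\sqrt{n+\tfrac12}$ that is forced by $\sum_k\beta_k^2=1$. Concretely, I would majorize $\beta_k^2$ for $k>n$ by a geometric sequence extracted from the recurrence and the spectral gap $k(k+1)-\chi_n$, estimate the tail of $\sum_k\beta_k\sqrt{k+\tfrac12}$ by Cauchy--Schwarz against this majorant, and handle the finitely many low-order indices $k<n$ directly, again using the localization of Theorem~\ref{thm_khi_crude}. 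The hard part will be precisely this tail estimate: one must show that the unbounded weights $\sqrt{k+\tfrac12}$ cannot overcome the decay of the $\beta_k$, equivalently that switching on the potential $c^2x^2$ (passing from $c=0$, where equality $\psi_n(1)^2=n+\tfrac12$ holds, to $c>0$) strictly lowers the endpoint value and thereby produces the strict inequality.
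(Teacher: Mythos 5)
First, a remark on the comparison itself: the paper never proves Theorem~\ref{thm_psi1_upper_bound}. It is quoted in Section~\ref{sec_pswf} as a known result, with a citation to \cite{RokhlinXiaoApprox}, so there is no internal proof to measure your attempt against; it must be judged on its own.

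Your setup (expansion in normalized Legendre polynomials $\overline{P}_k=\sqrt{k+\tfrac12}\,P_k$, the constraint $\sum_k\beta_k^2=1$, the symmetric tridiagonal eigenproblem) is correct, but the estimation mechanism you specify cannot prove the theorem. You propose to bound the contribution of the indices $k\neq n$ \emph{in absolute value} (geometric majorant of the $\beta_k$ plus Cauchy--Schwarz) and compare it with the deficit $(1-\beta_n)\sqrt{n+\tfrac12}$. That comparison is false for small $c$, and necessarily so, because the constant $n+\tfrac12$ is attained in the limit $c\to0^+$ and survives for $c>0$ only through a sign cancellation that absolute-value estimates discard. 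Concretely, first-order perturbation of $L_c=L_0+c^2x^2$ about the Legendre case gives
\begin{align}
\beta_{n+2} \;=\; -\,\frac{c^2\,\langle \overline{P}_{n+2},x^2\overline{P}_n\rangle}{4n+6}+O(c^4),
\qquad
\beta_{n-2} \;=\; \frac{c^2\,\langle \overline{P}_{n-2},x^2\overline{P}_n\rangle}{4n-2}+O(c^4),
\end{align}
with both inner products positive, while normalization forces $1-\beta_n=O(c^4)$. So the tail measured in absolute value is $\Theta(c^2)$ (already the single term $|\beta_{n-2}|\sqrt{n-\tfrac32}$ is), whereas your deficit is $O(c^4)$: the inequality your plan needs fails for all sufficiently small $c>0$, and indeed the triangle-inequality majorant $\sum_k|\beta_k|\sqrt{k+\tfrac12}$ genuinely exceeds $\sqrt{n+\tfrac12}$ in this regime. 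The theorem holds only because $\beta_{n+2}<0$ and its negative contribution outweighs the positive one from $\beta_{n-2}$; carrying out the computation with $x^2P_n=\frac{(n+1)(n+2)}{(2n+1)(2n+3)}P_{n+2}+\dots+\frac{n(n-1)}{(2n+1)(2n-1)}P_{n-2}$ yields
\begin{align}
\psi_n(1)\;=\;\sqrt{n+\tfrac12}\,\left(1-\frac{c^2}{(2n-1)^2(2n+3)^2}\right)+O(c^4),
\end{align}
so the entire margin is itself $\Theta(c^2)$ and comes from this cancellation. In other words, the monotone decrease of the endpoint value as $c$ is switched on, which you defer to a single closing sentence as ``the hard part,'' is not a refinement of your tail estimate --- it is the whole theorem, and the machinery you actually propose (magnitudes only) is structurally incapable of capturing it.
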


\subsection{Elliptic Integrals}
\label{sec_elliptic}
In this subsection, we summarize several facts about
elliptic integrals. These facts can be found, for example,
in section 8.1 in \cite{Ryzhik}, and in \cite{Abramovitz}.

The incomplete elliptic integrals of the first and second kind
are defined, respectively, by the formulae
\begin{align}
\label{eq_F_y}
& F(y, k) =  \int_0^y \frac{dt}{\sqrt{1 - k^2 \sin^2 t}}, \\
& E(y, k) = \int_0^y \sqrt{1 - k^2 \sin^2 t} \; dt,
\label{eq_E_y}
\end{align}
where $0 \leq y \leq \pi/2$ and $0 \leq k \leq 1$.
By performing the substitution $x = \sin t$, we can write 
\eqref{eq_F_y} and \eqref{eq_E_y} as
\begin{align}
& F(y, k) = \int_0^{\sin(y)}
  \frac{ dx }{ \sqrt{\brk{1 - x^2} \brk{1 - k^2 x^2} } },
\label{eq_F_y_2} \\
\nonumber \\
& E(y, k) = \int_0^{\sin(y)}
\sqrt{ \frac{1 - k^2 x^2}{1 - x^2} } \; dx.
\label{eq_E_y_2}
\end{align}
The complete elliptic integrals of the first and second kind are
defined, respectively, by the formulae
\begin{align}
\label{eq_F}
& F(k) = F\brk{\frac{\pi}{2}, k} = 
\int_0^{\pi/2} \frac{dt}{\sqrt{1 - k^2 \sin^2 t}}, \\
& E(k) = E\brk{\frac{\pi}{2}, k} =
\int_0^{\pi/2} \sqrt{1 - k^2 \sin^2 t} \; dt,
\label{eq_E}
\end{align}
for all $0 \leq k \leq 1$. Moreover,
\begin{align}
E\left( \sqrt{1-k^2} \right) =
1 + \left(-\frac{1}{4}+\log(2)-\frac{\log(k)}{2}\right) \cdot k^2 +
    O\left( k^4 \cdot \log(k) \right).
\label{eq_E_exp}
\end{align}

\subsection{Oscillation Properties of Second Order ODEs}
\label{sec_oscillation_ode}
In this subsection, we state several well known facts from the general theory
of second order ordinary differential equations (see e.g. \cite{Miller}).

The following two theorems appear in Section 3.6 of \cite{Miller}
in a slightly different form.
\begin{thm}[distance between roots]
Suppose that $h(t)$ is a solution of the ODE
\begin{align}
y''(t) + Q(t) y(t) = 0.
\label{eq_25_09_ode}
\end{align}
Suppose also that $x < y$ are two consecutive roots of $h(t)$, and
that
\begin{align}
A^2 \leq Q(t) \leq B^2, 
\label{eq_25_09_cond}
\end{align}
for all $x \leq t \leq y$.
Then,
\begin{align}
\frac{\pi}{B} < y - x < \frac{\pi}{A}.
\label{eq_25_09_thm}
\end{align}
\label{thm_25_09}
\end{thm}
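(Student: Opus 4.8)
The plan is to prove this via the classical Sturm comparison argument, realized through an explicit Wronskian (Picone-type) identity against sinusoidal comparison solutions; this simultaneously yields both bounds with the correct constants $\pi/A$ and $\pi/B$. First I would normalize: since $x<y$ are consecutive roots of $h$, the function $h$ has constant sign on $(x,y)$, and replacing $h$ by $-h$ if necessary I may assume $h(t)>0$ for all $t\in(x,y)$. Uniqueness for \eqref{eq_25_09_ode} forces $h'(x)>0$ and $h'(y)<0$, since a vanishing value together with a vanishing derivative would make $h\equiv 0$. Throughout I assume $A>0$ (if $A=0$ the upper bound $\pi/A$ is infinite and there is nothing to prove), and I note that the displayed inequalities \eqref{eq_25_09_thm} are strict precisely because $Q$ is not identically constant; I would record the degenerate constant case ($Q\equiv A^2$ or $Q\equiv B^2$), in which the corresponding inequality becomes an equality.

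For the lower bound I would introduce the comparison solution $w(t)=\sin\!\big(B(t-x)\big)$ of $v''+B^2 v=0$, whose consecutive zeros are at $x$ and $x+\pi/B$, and form the quantity $W(t)=w(t)h'(t)-h(t)w'(t)$. A direct computation using \eqref{eq_25_09_ode} gives
$$W'(t)=w(t)h''(t)-h(t)w''(t)=\big(B^2-Q(t)\big)\,w(t)h(t),$$
which is nonnegative wherever $w,h\ge 0$, because $Q(t)\le B^2$ by \eqref{eq_25_09_cond}. Arguing by contradiction, suppose $y-x\le \pi/B$; then $w\ge 0$ on all of $[x,y]$, so integrating $W'$ over $[x,y]$ and using $h(x)=h(y)=w(x)=0$ to kill the boundary contributions leaves $w(y)\,h'(y)=\int_x^y(B^2-Q)wh\,dt\ge 0$. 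Since $h'(y)<0$ and $w(y)=\sin(B(y-x))\ge 0$, the left-hand side is $\le 0$, forcing both $w(y)=0$ (i.e. $y-x=\pi/B$) and $Q\equiv B^2$ on $[x,y]$; in the non-degenerate case this is impossible, giving $y-x>\pi/B$.

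The upper bound is entirely symmetric, now using $w(t)=\sin\!\big(A(t-x)\big)$ and the reversed inequality $A^2\le Q(t)$, so that $W'(t)=\big(A^2-Q(t)\big)w(t)h(t)\le 0$. Assuming for contradiction $y-x\ge \pi/A$, the interval $[x,x+\pi/A]$ lies in $[x,y]$ and $w\ge 0$ there; integrating $W'$ over $[x,x+\pi/A]$ and evaluating the boundary terms with $w(x)=h(x)=0$, $w(x+\pi/A)=0$ and $w'(x+\pi/A)=-A$ yields $A\,h(x+\pi/A)=\int_x^{x+\pi/A}(A^2-Q)wh\,dt\le 0$, hence $h(x+\pi/A)\le 0$. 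But $h>0$ on $(x,y)$, so this is impossible when $x+\pi/A<y$, and when $x+\pi/A=y$ it forces $Q\equiv A^2$; in the non-degenerate case we conclude $y-x<\pi/A$. The main obstacle is therefore not analytic but bookkeeping: one must pin down the signs of $h'$ at the endpoints and of the comparison function at its relevant zero, and carefully isolate and discard the degenerate constant-$Q$ case so that the inequalities come out strict. I would finally remark that in the intended application $Q$ arises from the prolate equation \eqref{eq_prolate_ode} and is manifestly non-constant on any interval between consecutive roots, so the strict form \eqref{eq_25_09_thm} holds.
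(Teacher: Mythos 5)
Your proof is correct, but there is nothing in the paper to compare it against: Theorem~\ref{thm_25_09} is stated among the preliminaries as a known fact, quoted (in ``a slightly different form'') from Section 3.6 of \cite{Miller}, and the paper supplies no proof of it. Your Wronskian/Sturm comparison against the sinusoids $\sin\brk{B(t-x)}$ and $\sin\brk{A(t-x)}$ is the standard textbook route -- essentially the argument one would find in \cite{Miller} -- and every step checks out: the sign determinations $h'(x)>0$, $h'(y)<0$ via uniqueness for \eqref{eq_25_09_ode}, the identities $W'=(B^2-Q)wh$ and $W'=(A^2-Q)wh$, and the evaluation of the boundary terms (in particular $W(x+\pi/A)=A\,h(x+\pi/A)$) are all handled properly. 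Your strictness caveat is a genuine and worthwhile observation: with the non-strict hypothesis \eqref{eq_25_09_cond}, the strict conclusion \eqref{eq_25_09_thm} is literally false for constant $Q$ (e.g.\ $Q\equiv B^2$ gives $y-x=\pi/B$ exactly), and your argument correctly shows that equality can occur \emph{only} in that degenerate case; this is almost certainly what the paper's phrase ``in a slightly different form'' is papering over, since the reference states the bounds non-strictly. Your closing remark is also the right one for the paper's purposes, with one small correction: in the applications (Theorems~\ref{thm_khi_n_square}, \ref{thm_spacing_khi}, \ref{thm_n_and_khi}, \ref{thm_tn_upper}) the coefficient is not taken from the prolate equation \eqref{eq_prolate_ode} directly -- which has a first-order term and so is not of the form \eqref{eq_25_09_ode} -- but is the function $Q_n$ of Lemma~\ref{lem_trans}, defined via \eqref{eq_big_q_n} after the substitution $\Psi_n=\psi_n\sqrt{1-t^2}$; that function is non-constant on every subinterval of $(-1,1)$, so the strict form of the theorem indeed applies there.
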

\begin{thm}
Suppose that $a<b$ are real numbers, and that $g:(a,b) \to \Rc$
is a continuous monotone function.
Suppose also that
$y(t)$ is a solution of the ODE
\begin{align}
 y''(t) + g(t) \cdot y(t) = 0,
\label{eq_25_09_ode2}
\end{align}
in the interval $(a,b)$.
Suppose furthermore that
\begin{align}
t_1 < t_2 < t_3 < \dots
\end{align}
are consecutive roots of $y(t)$. 
If $g$ is non-decreasing, then
\begin{align}
t_2 - t_1 \geq t_3 - t_2 \geq t_4 - t_3 \geq \dots.
\label{eq_shrink}
\end{align}
If $g$ is non-increasing, then
\begin{align}
t_2-t_1 \leq t_3-t_2 \leq t_4-t_3 \leq \dots.
\label{eq_stretch}
\end{align}
\label{thm_25_09_2}
\end{thm}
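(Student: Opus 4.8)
The plan is to establish the non-decreasing case \eqref{eq_shrink} directly, and then obtain the non-increasing case \eqref{eq_stretch} by the reflection $t \mapsto -t$. Indeed, if $g$ is non-increasing and we set $\tilde y(s) = y(-s)$, then $\tilde y$ solves $\tilde y''(s) + g(-s)\,\tilde y(s) = 0$ on the reflected interval, the coefficient $s \mapsto g(-s)$ is non-decreasing, and the consecutive root gaps of $\tilde y$ are exactly those of $y$ listed in reverse order. Thus \eqref{eq_shrink} applied to $\tilde y$ is precisely \eqref{eq_stretch} for $y$, and it suffices to treat non-decreasing $g$.

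So assume $g$ is non-decreasing, fix three consecutive roots $t_k < t_{k+1} < t_{k+2}$, and put $h = t_{k+1} - t_k > 0$. The key device is to compare $y$ with its own translate $w(t) = y(t+h)$, which solves $w''(t) + g(t+h)\,w(t) = 0$ and vanishes exactly at the points $t_j - h$. In particular $w(t_k) = y(t_{k+1}) = 0$, and the first zero of $w$ strictly to the right of $t_k$ is $t_{k+2} - h$. Consequently the desired inequality $t_{k+2} - t_{k+1} \leq h$ is \emph{equivalent} to the statement that $w$ possesses a zero in $(t_k, t_{k+1}]$, and I would reduce the whole theorem to proving this comparison.

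To prove it I would run the standard Wronskian argument on $[t_k, t_{k+1}]$. Normalize signs so that $y > 0$ on $(t_k, t_{k+1})$, and suppose toward a contradiction that $w$ has no zero in $(t_k, t_{k+1}]$, so that $w$ may be taken positive there. For $W(t) = y(t)w'(t) - y'(t)w(t)$ the two equations give $W'(t) = \left(g(t) - g(t+h)\right) y(t) w(t)$, and since $g$ is non-decreasing with $h > 0$ we have $g(t) - g(t+h) \leq 0$, whence $W' \leq 0$ on $[t_k, t_{k+1}]$. Together with $W(t_k) = 0$ (both $y$ and $w$ vanish at $t_k$) this forces $W(t_{k+1}) \leq 0$. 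On the other hand $y(t_{k+1}) = 0$ gives $W(t_{k+1}) = -y'(t_{k+1})\,w(t_{k+1})$; as $y$ is positive just to the left of its zero $t_{k+1}$ we have $y'(t_{k+1}) < 0$, while $w(t_{k+1}) > 0$, so $W(t_{k+1}) > 0$, a contradiction. Hence $w$ vanishes in $(t_k, t_{k+1}]$, which is exactly \eqref{eq_shrink} for the index $k$; iterating over $k$ yields the full chain.

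The point requiring the most care — though it is not a deep obstacle once the translation trick is in place — is the bookkeeping at the endpoint, which is what delivers the non-\emph{strict} inequalities as stated. Using the half-open interval $(t_k, t_{k+1}]$ automatically absorbs the equality case: when $g$ is constant on $[t_k, t_{k+2}]$ one has $g(t) \equiv g(t+h)$, the zero of $w$ lands exactly at $t_{k+1}$, and $t_{k+2} - t_{k+1} = t_{k+1} - t_k$, consistently with \eqref{eq_shrink}. I would also record the two background facts the argument uses silently: that every zero of a nontrivial solution of \eqref{eq_25_09_ode2} is simple — a double zero would force $y \equiv 0$ by uniqueness for the initial value problem, which is what legitimizes the sign statement $y'(t_{k+1}) < 0$ — and that the translate $w$ is again a genuine solution of a second-order equation of the same form on the relevant subinterval.
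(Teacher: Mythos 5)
Your proof is correct. There is, however, nothing in the paper to compare it against: Theorem~\ref{thm_25_09_2} is stated in Section~\ref{sec_oscillation_ode} as a known fact, quoted from Section 3.6 of \cite{Miller}, and the paper supplies no proof of it. Your argument is the classical Sturm-type one, and it is complete: the reflection $t \mapsto -t$ correctly reduces \eqref{eq_stretch} to \eqref{eq_shrink} (the reflected coefficient $s \mapsto g(-s)$ is non-decreasing, and the reduction is purely local, so it applies to any triple of consecutive roots even when the root sequence is infinite); the translate $w(t) = y(t+h)$ with $h = t_{k+1} - t_k$ solves the shifted equation on the relevant interval; and the Wronskian identity $W' = \bigl(g(t) - g(t+h)\bigr) y w \leq 0$ together with $W(t_k) = 0$ and $W(t_{k+1}) = -y'(t_{k+1}) w(t_{k+1})$ yields the contradiction. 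You also handle the two points where a sloppier write-up would fail: the simplicity of zeros (justifying $y'(t_{k+1}) < 0$), and the use of the half-open interval $(t_k, t_{k+1}]$, which is what makes the non-strict conclusion come out right when $g$ is constant on the relevant range. One remark worth adding: the comparison theorem that the paper \emph{does} state, Theorem~\ref{thm_08_12_zeros}, assumes the strict inequality $g_1(t) < g_2(t)$ and therefore cannot be invoked directly after translation, where one only has $g(t) \leq g(t+h)$; your direct Wronskian argument is exactly the repair needed for the non-strict case, so your self-contained treatment is not a redundant detour but the substance of the proof.
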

\noindent
The following theorem is a special case
of Theorem 6.2 from Section 3.6 in \cite{Miller}:
\begin{thm}
Suppose that $g_1, g_2$ are continuous
functions, and that, for all real $t$ in the interval $\brk{a, b}$,
the inequality $g_1(t) < g_2(t)$ holds. 
Suppose also that $\phi_1, \phi_2:(a,b) \to \Rc$ are real-valued functions,
and that
\begin{align}
& \phi_1''(t) + g_1(t) \cdot \phi_1(t) = 0, \nonumber \\
& \phi_2''(t) + g_2(t) \cdot \phi_2(t) = 0,
\label{eq_08_12_g12}
\end{align}
for all $a < t < b$.
Then, $\phi_2$ has a root 
between every two consecutive roots of $\phi_1$.
\label{thm_08_12_zeros}
\end{thm}
\begin{cor}
Suppose that the functions $\phi_1, \phi_2$
are those of Theorem~\ref{thm_08_12_zeros} above.
Suppose also that
\begin{align}
\phi_1(t_0) = \phi_2(t_0), \quad \phi_1'(t_0) = \phi_2'(t_0),
\end{align}
for some $a < t_0 < b$.
Then, $\phi_2$ has at least as many roots in $\brk{t_0, b}$ as $\phi_1$.
\label{cor_08_12_zeros}
\end{cor}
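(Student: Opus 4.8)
The plan is to count the roots of $\phi_2$ interval by interval: I would use Theorem~\ref{thm_08_12_zeros} on the gaps between consecutive roots of $\phi_1$, and then a separate Wronskian argument to capture one additional root forced by the matched initial data at $t_0$. Concretely, I would first enumerate the roots of $\phi_1$ lying in $\brk{t_0, b}$ as $s_1 < s_2 < \dots$ (if there are none the claim is trivial, and if there are infinitely many it suffices to treat the first $m$ for every $m$). Theorem~\ref{thm_08_12_zeros} immediately yields a root of $\phi_2$ strictly between $s_i$ and $s_{i+1}$ for each $i$, producing one root of $\phi_2$ in each gap $\brk{s_i, s_{i+1}}$. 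This accounts for all but one of the roots I need; the remaining task, and the heart of the argument, is to produce a root of $\phi_2$ in the initial interval $\brk{t_0, s_1}$, about which Theorem~\ref{thm_08_12_zeros} says nothing.

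For that I would introduce the Wronskian-type quantity $W(t) = \phi_1(t)\phi_2'(t) - \phi_1'(t)\phi_2(t)$. Differentiating and substituting the two equations in \eqref{eq_08_12_g12} gives $W'(t) = \brk{g_1(t) - g_2(t)} \phi_1(t)\phi_2(t)$, while the matching conditions $\phi_1(t_0) = \phi_2(t_0)$ and $\phi_1'(t_0) = \phi_2'(t_0)$ give $W(t_0) = 0$. After multiplying both $\phi_1$ and $\phi_2$ by $-1$ if necessary (which preserves the hypotheses, the matching, and the root sets, and leaves $W$ unchanged), I may assume $\phi_1 > 0$ on $\brk{t_0, s_1}$.

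Suppose, for contradiction, that $\phi_2$ has no root in $\brk{t_0, s_1}$. A short check at $t_0$, distinguishing $\phi_1(t_0) > 0$ from the degenerate case $\phi_1(t_0) = 0$ (the latter forcing $\phi_1'(t_0) = \phi_2'(t_0) > 0$ by uniqueness of solutions, so that $\phi_2$ rises through zero at $t_0$), shows $\phi_2 > 0$ throughout $\brk{t_0, s_1}$. Then $g_1 < g_2$ makes $W'$ strictly negative on $\brk{t_0, s_1}$, so $W(s_1) < W(t_0) = 0$. On the other hand, $\phi_1(s_1) = 0$ gives $W(s_1) = -\phi_1'(s_1)\phi_2(s_1)$, and this is nonnegative because $\phi_1'(s_1) < 0$ (again by uniqueness, since $\phi_1 > 0$ just to the left of $s_1$) while $\phi_2(s_1) \geq 0$ by continuity. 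This contradiction forces a root of $\phi_2$ in $\brk{t_0, s_1}$.

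Finally I would assemble the count: the root in $\brk{t_0, s_1}$ together with the roots in the gaps $\brk{s_1, s_2}, \dots, \brk{s_{m-1}, s_m}$ are distinct and all lie in $\brk{t_0, b}$, giving at least $m$ roots of $\phi_2$ whenever $\phi_1$ has $m$ roots there, which is the assertion. I expect the main obstacle to be precisely this extra initial root: Theorem~\ref{thm_08_12_zeros} is silent on the interval before the first root of $\phi_1$, so the whole weight of the hypothesis on the matched initial conditions is concentrated in the Wronskian step, and the sign bookkeeping there — especially the degenerate case $\phi_1(t_0) = 0$ and the non-strict inequality $\phi_2(s_1) \geq 0$ — is where care is needed to make the contradiction airtight.
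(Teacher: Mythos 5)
Your proposal is correct and follows essentially the same route as the paper: reduce to producing a root of $\phi_2$ in the initial interval $\brk{t_0, s_1}$ via Theorem~\ref{thm_08_12_zeros}, then derive a contradiction from the sign of the Wronskian-type quantity $\phi_1'\phi_2 - \phi_1\phi_2'$, which vanishes at $t_0$ by the matching conditions and has a definite sign at $s_1$. The paper phrases this by integrating the identity $\brk{\phi_1'\phi_2 - \phi_1\phi_2'}' = \brk{g_2-g_1}\phi_1\phi_2$ over $\brk{t_0,s_1}$ rather than by monotonicity of $W$, and it simply asserts the positivity of both functions ``without loss of generality'' where you justify it (including the degenerate case $\phi_1(t_0)=0$), but these are presentational differences only.
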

\begin{proof}
Due to Theorem~\ref{thm_08_12_zeros}, we only need to show
that if $t_1$ is the minimal root of $\phi_1$ in $\brk{t_0, b}$,
then there exists a root of $\phi_2$ in $\brk{t_0, t_1}$.
By contradiction, suppose that this is not the case. In addition, without
loss of generality, suppose that $\phi_1(t), \phi_2(t)$ are positive
in $\brk{t_0, t_1}$. Then, due to \eqref{eq_08_12_g12},
\begin{align}
\phi_1'' \phi_2 - \phi_2'' \phi_1 = \brk{g_2 - g_1} \phi_1 \phi_2,
\end{align}
and hence
\begin{align}
0 & \; <
\int_{t_0}^{t_1} \brk{g_2(s) - g_1(s)} \phi_1(s) \phi_2(s) ds 
\nonumber \\
& \; = 
\sbrk{\phi_1'(s) \phi_2(s) - \phi_1(s) \phi_2'(s)}_{t_0}^{t_1} \nonumber \\
& \; =
\phi_1'(t_1) \phi_2(t_1) \leq 0,
\end{align}
which is a contradiction.
\end{proof}

\subsection{Pr\"ufer Transformations}
\label{sec_prufer}

In this subsection, we describe the classical Pr\"ufer
transformation of a second order ODE 
(see e.g. \cite{Miller},\cite{Fedoryuk}).
Also, we describe a modification of Pr\"ufer transformation,
introduced in \cite{Glaser} and used in the rest of the paper.

Suppose that in the second order ODE
\begin{align}
\frac{ d }{ dt } \brk{ p(t) u'(t) } + q(t) u(t) = 0
\label{eq_prufer_0}
\end{align}
the variable $t$ varies over some interval $I$ in which $p$ and $q$
are continuously differentiable and have no roots. 
We define the function 
$\theta: I \to \Rc$ via 
\begin{align}
\frac{ p(t) u'(t) }{ u(t) } =
\gamma(t) \tan \theta(t),
\label{eq_general_prufer}
\end{align}
where $\gamma: I \to \Rc$ is an arbitrary 
positive continuously differentiable function.
The function $\theta(t)$ satisfies, for all $t$ in $I$,
\begin{align}
\theta'(t) = -\frac{\gamma(t)}{p(t)} \sin^2 \theta(t)
             -\frac{q(t)}{\gamma(t)} \cos^2 \theta(t)
             -\brk{ \frac{\gamma'(t)}{\gamma(t)}} 
              \frac{\sin\brk{2\theta(t)}}{2}.
\label{eq_general_dtheta}
\end{align}
One can observe that if $u'(\tilde{t}) = 0$ for $\tilde{t} \in I$, then 
by \eqref{eq_general_prufer} 
\begin{align}
\theta(\tilde{t}) = k \pi, \quad
k \text{ is integer}.
\label{eq_prufer_du_zero}
\end{align}
Similarly, if $u(\tilde{t}) = 0$ for $\tilde{t} \in I$, then
\begin{align}
\theta(\tilde{t}) = \brk{k + 1/2} \pi, \quad
k \text{ is integer}.
\label{eq_prufer_u_zero}
\end{align}
The choice $\gamma(t) = 1$ in \eqref{eq_general_prufer}
gives rise to the classical Pr\"ufer transformation (see e.g. 
section 4.2 in \cite{Miller}).

In \cite{Glaser}, the choice $\gamma(t) = \sqrt{q(t) p(t)}$ is
suggested and shown to be more convenient numerically in 
several applications. In this paper, this choice also leads to a
more convenient analytical tool than the classical Pr\"ufer transformation.

Writing \eqref{eq_prolate_ode} in the form of \eqref{eq_prufer_0} yields
\begin{align}
p(t) = 1 - t^2, \quad q(t) = \chi_n - c^2 t^2,
\label{eq_prufer_1}
\end{align}
for $\abrk{t} < \min\cbrk{\sqrt{\chi_n}/c, 1}$.
The equation \eqref{eq_general_prufer} admits the form
\begin{align}
\frac{ p(t) \psi_n'(t) }{ \psi_n(t) } =
\sqrt{ p(t) q(t) } \tan \theta(t),
\label{eq_modified_prufer}
\end{align}
which implies that
\begin{align}
\theta(t) = \text{atan} \brk{ \sqrt{\frac{p(t)}{q(t)}} 
                        \frac{\psi_n'(t)}{\psi_n(t)} } + \pi m(t),
\label{eq_modified_prufer_theta}
\end{align}
where $m(t)$ is an integer determined for all $t$ by an arbitrary choice
at some $t = t_0$ (the role of $\pi m(t)$ in
\eqref{eq_modified_prufer_theta} is to enforce the continuity
of $\theta$ at the roots of $\psi_n$).
The first order ODE \eqref{eq_general_dtheta} admits the form
(see \cite{Glaser}, \cite{Fedoryuk})
\begin{align}
\theta'(t) = -f(t) + \sin \brk{2 \theta(t)} v(t),
\label{eq_jan_prufer_fv}
\end{align}
where the functions $f, v$ are defined, respectively, via the formulae
\begin{align}
f(t) = \sqrt{ \frac{ q(t) }{ p(t) } }
     = \sqrt{ \frac{ \chi_n - c^2 t^2 }{ 1 - t^2 } }
\label{eq_jan_f}
\end{align}
and
\begin{align}
v(t) = -
\frac{1}{4} \cdot \frac{ p(t)q'(t) + q(t)p'(t) }{ p(t) q(t) } 
= 
\frac{1}{2} \brk{ \frac{t}{1-t^2} + \frac{c^2 t}{\chi_n - c^2 t^2} }.
\label{eq_jan_v}
\end{align}

\section{Summary}
\label{sec_summary}
In this section, we summarize some of the properties of
prolate spheroidal wave functions (PSWFs),
proved in Section~\ref{sec_analytical}.
The PSWFs and the related notation were introduced in 
Section~\ref{sec_pswf}.
Throughout this section, the band limit $c > 0$ is
assumed to be a positive real number.

Many properties of the PSWF $\psi_n$ depend on
whether the eigenvalue $\chi_n$ of the ODE \eqref{eq_prolate_ode}
is greater than or less than $c^2$. The following simple relation 
between $c, n$ and $\chi_n$ is
proven in Theorem~\ref{thm_n_and_khi} in Section~\ref{sec_sharp}.
\begin{prop}
Suppose that $n \geq 2$ is a non-negative integer.
\begin{itemize}
\item If $n \leq (2c/\pi)-1$, then $\chi_n < c^2$.
\item If $n \geq (2c/\pi)$, then $\chi_n > c^2$.
\item If $(2c/\pi)-1 < n < (2c/\pi)$, then either inequality is possible.
\end{itemize}
\label{prop_n_and_khi}
\end{prop}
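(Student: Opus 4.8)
The plan is to translate the statement, which concerns the eigenvalue $\chi_n$, into a statement about the number of roots of $\psi_n$ in $(-1,1)$, and then to count those roots by oscillation theory. By Theorem~\ref{thm_pswf_main}, $\psi_n$ has exactly $n$ roots in $(-1,1)$, and by Sturm comparison (Theorem~\ref{thm_08_12_zeros}, Corollary~\ref{cor_08_12_zeros}) the interior root count of the regular solution of \eqref{eq_prolate_ode} increases monotonically with the coefficient $\chi_n - c^2 x^2$, hence with $\chi_n$. Thus the whole question reduces to locating the index $n$ at which the root count crosses the value attained at the threshold $\chi_n = c^2$; I will show this threshold count equals $(2c/\pi) + O(1)$, with the $O(1)$ fitting inside the slack built into the three bullets.

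First I would remove the first-order term. Writing \eqref{eq_prolate_ode} in self-adjoint form and substituting $w = (1-x^2)^{1/2}\psi_n$ gives the normal form $w''(x) + R(x)\,w(x) = 0$ on $(-1,1)$ with
\begin{align}
R(x) = \frac{\chi_n - c^2 x^2}{1 - x^2} + \frac{1}{(1-x^2)^2},
\label{eq_plan_R}
\end{align}
and $w$ has exactly the same $n$ interior roots as $\psi_n$. The crucial algebraic fact is that $\frac{\chi_n - c^2 x^2}{1-x^2} \geq c^2$ if and only if $\chi_n \geq c^2$ (with equality holding identically when $\chi_n = c^2$), so at the threshold $R(x) = c^2 + (1-x^2)^{-2}$. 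Equivalently, the substitution $x = \tanh s$ turns \eqref{eq_prolate_ode} into $\psi_{ss} + V(s)\psi = 0$ on $\mathbb{R}$ with $V(s) = (\chi_n - c^2)\,\mathrm{sech}^2 s + c^2\,\mathrm{sech}^4 s$, whose WKB phase integral at threshold is $\frac{1}{\pi}\int_{-\infty}^{\infty}\sqrt{V}\,ds = \frac{c}{\pi}\int_{-\infty}^{\infty}\mathrm{sech}^2 s\,ds = \frac{2c}{\pi}$; this is the source of the constant $2c/\pi$.

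The lower bound (the contrapositive of the first bullet) is the easy direction. When $\chi_n \geq c^2$ we have $R(x) > c^2$ on all of $(-1,1)$, so comparing $w$ with a solution $v$ of the explicitly solvable model $v'' + c^2 v = 0$---whose roots are equally spaced $\pi/c$ apart---via Theorem~\ref{thm_08_12_zeros} and Corollary~\ref{cor_08_12_zeros} forces $w$ to have at least as many interior roots as $v$, which is at least $\lfloor 2c/\pi\rfloor - 1$. A careful accounting of the outermost roots near $x = \pm 1$ upgrades this to $n > (2c/\pi) - 1$, which is exactly the contrapositive of the first bullet.

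The upper bound (second bullet, $\chi_n \leq c^2 \Rightarrow n < 2c/\pi$) is where the real difficulty lies, and it is genuinely one-sided: because the endpoint term $(1-x^2)^{-2}$ in \eqref{eq_plan_R} is strictly positive, one always has $R(x) > c^2$, so no comparison with the constant model $v'' + c^2 v = 0$ can cap the root count from above. The resolution rests on the observation that this endpoint term is exactly the \emph{critical} inverse-square singularity: near $x = 1$ it behaves like $\tfrac14(1-x)^{-2}$, whose indicial exponents coincide (both equal $1/2$), so the equation is non-oscillatory at each endpoint and the two end-collars contribute only an $O(1)$ number of roots. Concretely, I would fix a small $\delta$, bound $R$ above by a constant on $(-1+\delta,\,1-\delta)$ and invoke Theorem~\ref{thm_25_09} to cap the roots there by $(2c/\pi) + O(\delta) + O(1)$, while bounding the roots in each collar by $O(1)$ via the explicit Euler-equation asymptotics. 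The hard part is squeezing all of these $O(1)$ contributions---from the turning-point/endpoint behavior and from the $\pm 1$ boundary effects in the root count---inside the unit-width gap the proposition leaves between its thresholds; this is precisely why the statement is phrased with the intermediate ambiguous range $(2c/\pi)-1 < n < (2c/\pi)$ (the third bullet, which asserts nothing beyond the non-overlap of the two one-sided bounds) rather than a single sharp cutoff.
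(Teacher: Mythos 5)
Your treatment of the first bullet is, in substance, the paper's own argument: the normal form $w=(1-x^2)^{1/2}\psi_n$, $w''+Rw=0$ with $R>c^2$ when $\chi_n>c^2$, is exactly Lemma~\ref{lem_trans}, and the paper's Theorem~\ref{thm_n_and_khi} likewise combines it with the root-spacing bound of Theorem~\ref{thm_25_09}; the ``careful accounting of the outermost roots'' you allude to is carried out there by noting that $w$ itself vanishes at $\pm 1$, so the gaps $1-t_n$ and $t_1+1$ are also gaps between consecutive roots of $w$ and are each less than $\pi/c$, whence $2<(n+1)\pi/c$, i.e.\ $n>2c/\pi-1$. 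That half is fine.

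The second bullet is where your proposal has a genuine gap, and it is not a removable one within your framework. Your collar decomposition (bound $R$ by a constant on $(-1+\delta,1-\delta)$, apply Theorem~\ref{thm_25_09}, treat the collars separately) can only ever yield $n<2c/\pi+O(1)$: on the middle interval $R\leq c^2+O(\delta^{-2})$ already costs a term of order $(c\delta^2)^{-1}$ plus the unavoidable $+1$ from converting a spacing bound into a count, and the collars add more. But the bullet demands \emph{exactly} $n<2c/\pi$ whenever $\chi_n\leq c^2$, with zero additive slack. Your justification --- that the ``unit-width gap'' between the thresholds $2c/\pi-1$ and $2c/\pi$ absorbs these $O(1)$ losses --- is a misreading of the statement: that gap separates the hypotheses of the two bullets from each other; it is not slack inside either implication, each of which is sharp at its own threshold. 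The paper proves this direction by a completely different mechanism, the modified Pr\"ufer transformation (Lemma~\ref{lem_prufer} through Theorem~\ref{thm_n_upper}): the phase $\theta$ is shown to be strictly increasing (Lemma~\ref{lem_theta_increasing}), it advances by exactly $\pi$ between consecutive roots of $\psi_n$, and the oscillatory correction $v\sin(2\theta)$ integrates with a definite sign (Lemma~\ref{lem_vsin_1}), giving $n<\frac{2}{\pi}\int_0^T\sqrt{(\chi_n-c^2t^2)/(1-t^2)}\,dt$ with $T<1$ the last root of $\psi_n'$. When $\chi_n<c^2$ the integrand is pointwise less than $c$, so $n<2cT/\pi<2c/\pi$ with no additive loss --- this pointwise estimate inside an integral is precisely what a comparison-plus-counting argument cannot reproduce. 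Without this (or an equally lossless substitute), the second bullet remains unproven in your proposal; note also that the third bullet is a positive existence claim (both signs actually occur in the intermediate range, cf.\ Table~\ref{t:test99}), not merely the absence of a claim.
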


In the following proposition, we describe the location of ``special points''
(roots of $\psi_n$, roots of $\psi_n'$, turning points of the
ODE \eqref{eq_prolate_ode}) that depends on whether $\chi_n > c^2$
or $\chi_n < c^2$. 
It is proven in Lemma~\ref{lem_five} in Section~\ref{sec_special_points}
and is illustrated in
Figures~\ref{fig:test75a}, \ref{fig:test75b}.
\begin{prop}
Suppose that $n \geq 2$ is a positive integer. Suppose
also that $t_1<\dots<t_n$ are the roots of $\psi_n$ in $(-1,1)$,
and $x_1<\dots<x_{n-1}$ are the roots of $\psi_n'$ in $(t_1, t_n)$.
Suppose furthermore that the real number $x_n$ is defined via
the formula
\begin{align}
x_n = 
\begin{cases}
\text{maximal root of } \psi_n' \text{ in } (-1,1),
 & \text{ if } \chi_n < c^2, \\
1, & \text{ if } \chi_n > c^2.
\end{cases}
\label{eq_xn_prop}
\end{align}
Then,
\begin{align}
-\frac{\sqrt{\chi_n}}{c} < -x_n < 
t_1 < x_1 < t_2 < \dots < t_{n-1} < t_n < x_n < \frac{\sqrt{\chi_n}}{c}.
\label{eq_all_tx_prop}
\end{align}
In particular, if $\chi_n < c^2$, then
\begin{align}
t_n < x_n < \frac{\sqrt{\chi_n}}{c} < 1,
\label{eq_khi_small_prop}
\end{align}
and $\psi_n'$ has $n+1$ roots in the interval $(-1,1)$; and,
if $\chi_n > c^2$, then
\begin{align}
t_n < x_n = 1 < \frac{\sqrt{\chi_n}}{c},
\label{eq_khi_large_prop}
\end{align}
and $\psi_n'$ has $n-1$ roots in the interval $(-1,1)$.
\label{prop_five}
\end{prop}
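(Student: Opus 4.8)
The plan is to exploit the Pr\"ufer machinery of Section~\ref{sec_prufer} together with the oscillation theorems, and to treat the symmetric structure of $\psi_n$ at the very end. First I would note that by Theorem~\ref{thm_prolate_ode} the function $\psi_n$ solves \eqref{eq_prolate_ode}, which in the form $\psi_n''(t) + g(t)\psi_n(t)=0$ has ``potential''
\begin{align}
g(t) = \frac{\chi_n - c^2 t^2}{1-t^2} - \frac{c^2}{1-t^2}\cdot\frac{(1-t^2)}{\text{(lower order)}},
\label{eq_plan_g}
\end{align}
so more precisely, after dividing \eqref{eq_prolate_ode} by $(1-t^2)$ and absorbing the first-order term, the turning points of the equation sit at $t=\pm\sqrt{\chi_n}/c$. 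The strict interlacing $t_1<x_1<t_2<\dots<t_{n-1}<x_1<\dots$ is the classical Rolle-type statement: between any two consecutive roots $t_k,t_{k+1}$ of $\psi_n$ the derivative $\psi_n'$ must vanish at least once, and by the Pr\"ufer angle \eqref{eq_modified_prufer_theta} it vanishes exactly once, since $\theta$ is monotone in $t$ wherever $q(t)=\chi_n-c^2t^2>0$, i.e. for $|t|<\sqrt{\chi_n}/c$. Indeed by \eqref{eq_prufer_du_zero} and \eqref{eq_prufer_u_zero} the roots of $\psi_n$ and $\psi_n'$ correspond to $\theta$ crossing the values $(k+1/2)\pi$ and $k\pi$ respectively; monotonicity of $\theta$ forces them to strictly alternate, giving exactly one $x_k\in(t_k,t_{k+1})$ and establishing the middle chain of inequalities.

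The crux is then to locate the extreme special points relative to the turning point $\sqrt{\chi_n}/c$, and here the two cases genuinely differ. The key observation is that for $|t|<\sqrt{\chi_n}/c$ the coefficient $q(t)>0$, so the equation is oscillatory and $\theta'$ has a definite sign; once $|t|$ exceeds $\sqrt{\chi_n}/c$ the coefficient $q(t)$ becomes negative and the equation is non-oscillatory, so $\psi_n$ can have no further roots there and is monotone beyond the last sign change of $\theta$. I would argue that all $n$ roots $t_1,\dots,t_n$ of $\psi_n$ lie strictly inside $(-\sqrt{\chi_n}/c,\sqrt{\chi_n}/c)$: a root outside would require $\psi_n$ to oscillate in the non-oscillatory region, which is impossible (this is a comparison/Sturm argument, using Theorem~\ref{thm_08_12_zeros} against the trivial solution in the region $q<0$). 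This gives $t_n<\sqrt{\chi_n}/c$. For the derivative, by symmetry (even/odd parity from Theorem~\ref{thm_pswf_main}) I handle the largest critical point $x_n$.

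For the case $\chi_n>c^2$, the turning point $\sqrt{\chi_n}/c>1$ lies outside $[-1,1]$, so the entire interval $[-1,1]$ is oscillatory; then $\psi_n'(1)\ne0$ generically but the \emph{definition} \eqref{eq_xn_prop} sets $x_n=1$, and one checks that $\psi_n'$ has no root in $(t_n,1)$ because $\theta$ has not yet reached the next multiple of $\pi$ before $t=1$—this pins down $x_n=1<\sqrt{\chi_n}/c$ and yields exactly $n-1$ interior roots of $\psi_n'$. For the case $\chi_n<c^2$, the turning point satisfies $\sqrt{\chi_n}/c<1$, and here $\psi_n'$ picks up one extra root beyond $t_n$: as $t$ increases past $t_n$ toward the turning point, $\theta$ continues to increase through the next multiple of $\pi$ (giving one more root $x_n$ of $\psi_n'$) but cannot reach a further half-integer multiple before $q$ changes sign, so $\psi_n$ acquires no new root while $\psi_n'$ acquires exactly one, located in $(t_n,\sqrt{\chi_n}/c)$; this produces $n+1$ interior roots of $\psi_n'$ and the ordering \eqref{eq_khi_small_prop}. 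The main obstacle I anticipate is the bookkeeping at the turning point $t=\sqrt{\chi_n}/c$ itself, where the coefficient $f(t)$ in \eqref{eq_jan_prufer_fv} vanishes and the Pr\"ufer ODE degenerates; I would need a careful continuity/monotonicity argument (or a direct Sturm comparison via Theorem~\ref{thm_25_09} and Theorem~\ref{thm_08_12_zeros}) to show $\theta$ attains exactly the claimed multiple of $\pi/2$ at the turning point and no more, thereby certifying that no further zero of $\psi_n$ or $\psi_n'$ is created or missed.
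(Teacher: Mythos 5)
Your middle interlacing chain $t_1<x_1<t_2<\dots<t_n$ is fine in outline, but note a subtlety: the monotonicity you invoke is automatic only for the \emph{classical} Pr\"ufer angle ($\gamma\equiv 1$ in \eqref{eq_general_prufer}, for which $\theta'=-\sin^2\theta/p-q\cos^2\theta<0$ whenever $p,q>0$). The angle you actually cite, \eqref{eq_modified_prufer_theta}, is the \emph{modified} one, whose derivative is $-f+v\sin 2\theta$; its monotonicity on the region $q>0$ is not automatic at all --- it is precisely Lemma~\ref{lem_theta_increasing} of the paper, proved by a rather delicate contradiction argument, and the coefficient $v$ in \eqref{eq_jan_v} blows up at the turning point $\sqrt{\chi_n}/c$, exactly where your case $\chi_n<c^2$ needs control. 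So as written, that step silently borrows a non-trivial result (which, in the paper's logical order, is itself proved \emph{after} and \emph{using} this lemma).

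The genuine gap, however, is in the case analysis. For $\chi_n>c^2$ you write that one ``checks that $\psi_n'$ has no root in $(t_n,1)$ because $\theta$ has not yet reached the next multiple of $\pi$ before $t=1$,'' and for $\chi_n<c^2$ that ``$\theta$ continues to increase through the next multiple of $\pi$ \dots but cannot reach a further half-integer multiple.'' These statements are exactly the assertions to be proved (the number and location of roots of $\psi_n'$ past $t_n$), merely restated in the $\theta$ variable; no mechanism is offered for either, so the proposal is circular at its crux. What is missing is the endpoint information that drives the paper's proof: since $t=1$ is a regular singular point, evaluating \eqref{eq_prolate_ode} at $t=1$ for the bounded solution gives $-2\psi_n'(1)+(\chi_n-c^2)\psi_n(1)=0$, i.e. $\psi_n'(1)=\tfrac{1}{2}(\chi_n-c^2)\,\psi_n(1)$, so $\psi_n'(1)$ has the same sign as $\psi_n(1)$ when $\chi_n>c^2$ and the opposite sign when $\chi_n<c^2$. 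The paper combines this with the observation that at any zero $x$ of $\psi_n'$ in $(t_n,1)$ one has $\psi_n''(x)=-\frac{\chi_n-c^2x^2}{1-x^2}\psi_n(x)$: for $\chi_n<c^2$ the sign change of $\psi_n'$ on $(t_n,1)$ forces exactly one extra critical point $x_n$, and $\psi_n''(x_n)\le 0$ together with $\psi_n(x_n)>0$ forces $c^2x_n^2<\chi_n$, i.e. $x_n<\sqrt{\chi_n}/c$; for $\chi_n>c^2$, since $\psi_n'(t_n)>0$ and $\psi_n'(1)>0$ while $\psi_n'$ could cross zero only downward, it has no root in $(t_n,1)$ at all. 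Without this endpoint relation (or some substitute), your Pr\"ufer/Sturm framework cannot distinguish the two cases: the comparison results you point to (Theorems~\ref{thm_25_09}, \ref{thm_08_12_zeros}) control zeros of solutions, not whether the phase stops short of, or passes, a multiple of $\pi$ before $t=1$, so the bookkeeping you defer in your last paragraph is not a technicality but the heart of the proof.
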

The following two inequalities improve the inequality
\eqref{eq_khi_crude} in Section~\ref{sec_pswf}. Their proof can be found
in Theorems~\ref{thm_n_upper},\ref{thm_n_lower} in Section~\ref{sec_sharp}.
This is one of the principal
analytical results of this paper.
The inequalities \eqref{eq_jan_n_both_large_prop}, 
\eqref{eq_jan_n_both_small_prop} below
are illustrated in Tables~\ref{t:test77a}, \ref{t:test77b}, \ref{t:test77c},
\ref{t:test99}.
\begin{prop}
Suppose that $n \geq 2$ is a positive integer. 
Suppose also that
$t_n$ and $T$ are 
the maximal roots of $\psi_n$ and $\psi_n'$ in the interval $\brk{-1, 1}$, 
respectively.
If $\chi_n > c^2$, then
\begin{align}
1 + \frac{2}{\pi} \int_0^{t_n} \sqrt{ \frac{\chi_n - c^2 t^2}{1 - t^2} } \; dt
< n <
\frac{2}{\pi} \int_0^1 \sqrt{ \frac{\chi_n - c^2 t^2}{1 - t^2} } \; dt.
\label{eq_jan_n_both_large_prop}
\end{align}
If $\chi_n < c^2$, then
\begin{align}
1 + \frac{2}{\pi} \int_0^{t_n} \sqrt{ \frac{\chi_n - c^2 t^2}{1 - t^2} } \; dt
< n <
\frac{2}{\pi} \int_0^{T} 
 \sqrt{ \frac{\chi_n - c^2 t^2}{1 - t^2} } \; dt.
\label{eq_jan_n_both_small_prop}
\end{align}
Note that \eqref{eq_jan_n_both_large_prop} 
and \eqref{eq_jan_n_both_small_prop}
differ only in
the range of integration on their right-hand sides.
\label{prop_jan_n_both}
\end{prop}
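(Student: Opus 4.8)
The plan is to exploit the modified Pr\"ufer phase $\theta$ of \eqref{eq_modified_prufer_theta}, whose derivative obeys \eqref{eq_jan_prufer_fv}, namely $\theta'=-f+\sin\brk{2\theta}v$, with $f,v$ given by \eqref{eq_jan_f}, \eqref{eq_jan_v}. By \eqref{eq_prufer_u_zero} the roots of $\psi_n$ occur where $\theta\in\brk{\Zc+1/2}\pi$ and by \eqref{eq_prufer_du_zero} the roots of $\psi_n'$ where $\theta\in\Zc\pi$; call these the \emph{special points}. The first observation is that at every special point $\sin\brk{2\theta}=0$, so there $\theta'=-f<0$. Hence $\theta$ crosses each level of $\brk{\pi/2}\Zc$ exactly once and always downward, so it can never return to a level it has already passed. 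Consequently the number of special points in an interval is governed purely by the total drop of $\theta$, independently of any non-monotone wiggling of $\theta$ between grid levels.

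Next I would convert root counts into phase drops. Using Theorem~\ref{thm_pswf_main} (the parity of $\psi_n$ and its exact number of roots) together with the interlacing and location of special points in Proposition~\ref{prop_five}, the $n-1$ special points in $\brk{0,t_n}$ (together with the base point $t=0$, which is itself a special point by parity) give $\theta(0)-\theta(t_n)=(n-1)\pi/2$. For the right endpoint I split into the two regimes. If $\chi_n>c^2$, then $p(t)=1-t^2\to0$ as $t\to1^-$ forces $\sqrt{p/q}\to0$, so $\theta(1^-)\in\Zc\pi$; since Proposition~\ref{prop_five} places no root of $\psi_n'$ in $\brk{t_n,1}$, the downward-crossing property yields $\theta(t_n)-\theta(1^-)=\pi/2$, whence $\theta(0)-\theta(1^-)=n\pi/2$. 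If $\chi_n<c^2$, the same computation carried up to the largest root $T=x_n$ of $\psi_n'$, where $\theta(T)\in\Zc\pi$, gives $\theta(0)-\theta(T)=n\pi/2$.

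Integrating \eqref{eq_jan_prufer_fv} and inserting these phase drops produces the exact identities
\begin{align}
\frac{2}{\pi}\int_0^{t_n} f(t)\,dt &= (n-1) + \frac{2}{\pi}\int_0^{t_n}\sin\brk{2\theta(t)}v(t)\,dt, \nonumber \\
\frac{2}{\pi}\int_0^{b} f(t)\,dt &= n + \frac{2}{\pi}\int_0^{b}\sin\brk{2\theta(t)}v(t)\,dt, \nonumber
\end{align}
where $b=1$ if $\chi_n>c^2$ and $b=T$ if $\chi_n<c^2$. Both claimed inequalities then reduce to sign statements: the lower bound $1+\frac{2}{\pi}\int_0^{t_n}f<n$ is equivalent to $\int_0^{t_n}\sin\brk{2\theta}v\,dt<0$, and the upper bound $n<\frac{2}{\pi}\int_0^{b}f$ is equivalent to $\int_0^{b}\sin\brk{2\theta}v\,dt>0$.

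The main obstacle is establishing these two signs. I would split the range at consecutive special points into half-oscillations, on each of which $2\theta$ sweeps an interval of length $\pi$ so that $\sin\brk{2\theta}$ keeps a fixed sign, alternating from one half-oscillation to the next. Since $v$ in \eqref{eq_jan_v} is positive and strictly increasing on $\brk{0,\sqrt{\chi_n}/c}$, the rightmost half-oscillation carries the largest weight: it ends at $t_n$, a root of $\psi_n$, where $\sin\brk{2\theta}<0$, for the first identity, and it ends at $b$, a root of $\psi_n'$ (or the limit $t\to1^-$), where $\sin\brk{2\theta}>0$, for the second. The delicate part is turning ``largest weight at the right'' into a rigorous domination of the alternating sum: I would pair each positive half-oscillation with the negative one immediately to its right and show each pair is net negative (resp.\ positive), using the monotonicity of $v$ together with the monotone behaviour of the inter-root spacings from Theorem~\ref{thm_25_09_2} (the sign of $\frac{d}{dt}\brk{q/p}$ equals that of $\chi_n-c^2$ and fixes whether the spacings shrink or grow). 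The endpoint $t\to1^-$ when $\chi_n>c^2$, where $f$ and $v$ both blow up while the integrals stay convergent, needs separate care. An alternative route that avoids the alternating-sum bookkeeping is to pass to the Liouville normal form in the variable $\xi=\int f\,dt$ and compare the resulting equation with $\ddot w+w=0$ via Theorem~\ref{thm_08_12_zeros}, so that the sign of the normal-form remainder delivers each inequality directly.
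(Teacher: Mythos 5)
Your reduction is exactly the paper's: the phase-drop bookkeeping, the identities $\theta(0)-\theta(t_n)=(n-1)\pi/2$ and $\theta(0)-\theta(b)=n\pi/2$, and the equivalence of the two claimed inequalities with sign statements for $\int v\sin(2\theta)\,dt$ constitute precisely the paper's Theorems~\ref{thm_n_upper} and~\ref{thm_n_lower} (the paper flips the sign convention so that $\theta$ increases, which is immaterial). The upper bound also goes through essentially as you describe: grouping the oscillatory integral from one root of $\psi_n'$ to the next and substituting $\eta=\theta(t)$, the half-oscillation you want to win is weighted by $1/(f-v|\sin 2\theta|)$ and sits where $f/v$ is smaller, so both effects point the same way, and the purely qualitative fact that $f/v$ decreases (the paper's inequality \eqref{eq_dvf_positive}; note it is the monotonicity of $f/v$, not of $v$ alone, that matters) settles each group's sign. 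This is the paper's Lemma~\ref{lem_vsin_1}.

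The genuine gap is the other direction, i.e.\ the claim that each root-to-root pair $\left[t_i,x_i\right]\cup\left[x_i,t_{i+1}\right]$ has the net sign you need, which is what the lower bound $1+\frac{2}{\pi}\int_0^{t_n}f<n$ rests on. After the phase substitution, the half-oscillation whose contribution you are trying to beat is traversed with $|\theta'|=f-v|\sin 2\theta|$, hence carries weight $1/(f-v|\sin 2\theta|)$ per unit phase, while the half you want to win carries weight $1/(f+v|\sin 2\theta|)$; so at matching phases the ``wrong'' half is weighted \emph{more}, and neither the monotonicity of $v$ nor the spacing monotonicity of Theorem~\ref{thm_25_09_2} can overcome this. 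What is actually needed is quantitative: the drop of $f/v$ across a half-period must exceed $2\sin(2\delta)$ at every matching phase offset $\delta$. This is exactly what the paper spends Lemmas~\ref{lem_jan_fv}--\ref{lem_delta_large} establishing --- the pointwise decay bound $-\frac{d}{dt}(f/v)\geq\frac{3}{2}f$ of Lemma~\ref{lem_dfv}, the bounds $v<Z_0 f$ and $v<Z_\delta f$ of Lemmas~\ref{lem_z0}, \ref{lem_z_delta} controlling the phase speed, and the two half-period decay estimates of Lemmas~\ref{lem_delta_small}, \ref{lem_delta_large}, which even require the numerical verification \eqref{eq_h_delta_min} --- before Lemma~\ref{lem_vsin_2} (the net-sign statement for root-to-root pairs) follows. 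Two further points: your level-crossing observation shows $\sin 2\theta$ has constant sign between consecutive special points, but it does not give monotonicity of $\theta$, which is needed to compare the two halves at matching phases (the paper proves strict monotonicity separately, Lemma~\ref{lem_theta_increasing}); and your fallback via the Liouville normal form has the same defect as the main route, since the desired conclusion is equivalent to a sign condition on the normal-form remainder that is not obviously definite and that you do not verify. So the skeleton is right and matches the paper, but the load-bearing step of the lower bound is missing.
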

In the following proposition, we simplify the inequality
\eqref{eq_jan_n_both_large_prop} in Proposition~\ref{prop_jan_n_both}.
It is proven in Theorem~\ref{thm_n_khi_simple} and 
Corollary~\ref{cor_khi_simple}
in Section~\ref{sec_first_order}.
\begin{prop}
Suppose that $n \geq 2$ is a positive integer, and that $\chi_n > c^2$. Then,
\begin{align}
n < & \; 
\frac{2}{\pi} \int_0^1 \sqrt{ \frac{\chi_n - c^2 t^2}{1 - t^2} } \; dt
= \nonumber \\
& \; \frac{2}{\pi} \sqrt{\chi_n} \cdot E \brk{ \frac{c}{\sqrt{\chi_n}} }
< n+3,
\label{eq_both_large_simple_prop}
\end{align}
where the function $E:\left[0,1\right] \to \Rc$ is defined via
\eqref{eq_E} in Section~\ref{sec_elliptic}.
\label{prop_n_khi_simple}
\end{prop}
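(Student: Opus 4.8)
The goal is to prove Proposition~\ref{prop_n_khi_simple}, which asserts that under the hypothesis $\chi_n > c^2$, the quantity $\frac{2}{\pi}\int_0^1 \sqrt{(\chi_n - c^2 t^2)/(1-t^2)}\,dt$ both equals $\frac{2}{\pi}\sqrt{\chi_n}\,E(c/\sqrt{\chi_n})$ and is squeezed between $n$ and $n+3$. The plan is to separate this into two independent tasks: first establish the closed-form evaluation in terms of the complete elliptic integral $E$, and second derive the numerical bounds.

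For the closed-form identity, I would start from the integral $\int_0^1 \sqrt{(\chi_n - c^2 t^2)/(1-t^2)}\,dt$ and factor out $\sqrt{\chi_n}$ to write the integrand as $\sqrt{\chi_n}\sqrt{(1 - (c^2/\chi_n) t^2)/(1-t^2)}$. Setting $k = c/\sqrt{\chi_n}$, which satisfies $0 < k < 1$ precisely because $\chi_n > c^2$, this becomes $\sqrt{\chi_n}\int_0^1 \sqrt{(1-k^2 t^2)/(1-t^2)}\,dt$. Comparing with the form of the complete elliptic integral of the second kind in \eqref{eq_E_y_2} and \eqref{eq_E}, the substitution $t = \sin(\tau)$ carries this integral to $\sqrt{\chi_n}\int_0^{\pi/2}\sqrt{1-k^2\sin^2\tau}\,d\tau = \sqrt{\chi_n}\,E(k)$, which is exactly $\sqrt{\chi_n}\,E(c/\sqrt{\chi_n})$. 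This step is purely a matching of definitions and should be routine.

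The bounds $n < \frac{2}{\pi}\int_0^1 \sqrt{(\chi_n - c^2 t^2)/(1-t^2)}\,dt < n+3$ are where the real content lies. The left inequality $n < \cdots$ is already supplied by the right-hand inequality of \eqref{eq_jan_n_both_large_prop} in Proposition~\ref{prop_jan_n_both}, so I may invoke it directly. For the upper bound $\frac{2}{\pi}\int_0^1 \sqrt{(\chi_n - c^2 t^2)/(1-t^2)}\,dt < n+3$, the natural strategy is to combine the left-hand inequality of \eqref{eq_jan_n_both_large_prop}, namely $1 + \frac{2}{\pi}\int_0^{t_n}\sqrt{(\chi_n - c^2 t^2)/(1-t^2)}\,dt < n$, with an estimate on the ``tail'' contribution $\frac{2}{\pi}\int_{t_n}^1 \sqrt{(\chi_n - c^2 t^2)/(1-t^2)}\,dt$. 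If I can show this tail is bounded by a constant less than $2$, then adding gives $\frac{2}{\pi}\int_0^1 < n - 1 + (\text{tail}) < n+3$.

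The main obstacle is therefore bounding the tail integral over $[t_n, 1]$, where the integrand has an integrable singularity at $t=1$ coming from the $1/\sqrt{1-t^2}$ factor. Near $t=1$ the numerator $\sqrt{\chi_n - c^2 t^2}$ approaches $\sqrt{\chi_n - c^2}$, a finite positive constant since $\chi_n > c^2$, so the integral converges, but I must control its size uniformly. I expect the key is to use the location information from Proposition~\ref{prop_five}: since $\chi_n > c^2$, we have $t_n < 1 < \sqrt{\chi_n}/c$, and the turning point lies beyond $1$. One approach is to bound $\sqrt{\chi_n - c^2 t^2} \le \sqrt{\chi_n - c^2 t_n^2}$ on $[t_n,1]$ and then integrate $1/\sqrt{1-t^2}$ explicitly to get an $\arcsin$ contribution; combined with a lower bound on $t_n$ (so that $\arccos(t_n)$ or equivalently the remaining arc is small), this should yield the constant $2$. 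Pinning down the precise constant so that the sum lands below $n+3$ rather than merely $n+C$ for some larger $C$ is the delicate part, and may require the finer estimates on $t_n$ and $\chi_n$ that presumably appear in the referenced Theorem~\ref{thm_n_khi_simple} and Corollary~\ref{cor_khi_simple}.
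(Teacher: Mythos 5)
Your decomposition is structurally the same as the paper's: the closed-form identity is the substitution $t=\sin\tau$ matching \eqref{eq_E_y_2} and \eqref{eq_E}, the left inequality is quoted from \eqref{eq_jan_n_both_large_prop}, and the upper bound is obtained by splitting $\int_0^1=\int_0^{t_n}+\int_{t_n}^1$, using $1+\frac{2}{\pi}\int_0^{t_n}\sqrt{(\chi_n-c^2t^2)/(1-t^2)}\,dt<n$, and bounding the tail by an absolute constant (this is the paper's \eqref{eq_bound_4}; note a tail bound of $4$ suffices --- your target of $2$ is more than is needed and is not what this style of argument delivers). The genuine gap is that the tail bound is the entire content of the statement beyond Proposition~\ref{prop_jan_n_both}, and your proposal contains no mechanism for it. The ``lower bound on $t_n$'' your sketch requires must have the specific quantitative form
\begin{align*}
(1-t_n)\cdot\frac{\chi_n-c^2t_n^2}{1+t_n}<\pi^2,
\end{align*}
i.e., the distance from the last root $t_n$ to the endpoint $1$ is at most one local half-wavelength of the oscillation; with this, your own estimate $\sqrt{\chi_n-c^2t_n^2}\,\arccos(t_n)\le 2\bigl((1-t_n)(\chi_n-c^2t_n^2)/(1+t_n)\bigr)^{1/2}<2\pi$ closes the argument and gives exactly the constant $3$. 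Deferring this input to ``finer estimates that presumably appear in the referenced Theorem~\ref{thm_n_khi_simple} and Corollary~\ref{cor_khi_simple}'' is circular: those two results \emph{are} the proposition being proved.

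In the paper, the needed inequality is \eqref{eq_tn_pi} of Theorem~\ref{thm_tn_upper}, and it does not come from the Pr\"ufer machinery behind Proposition~\ref{prop_jan_n_both} but from a separate idea your proposal never mentions: the substitution $\Psi_n=\psi_n\sqrt{1-t^2}$ of Lemma~\ref{lem_trans}, which turns \eqref{eq_prolate_ode} into $\Psi_n''+Q_n\Psi_n=0$ with $Q_n$ increasing (since $\chi_n>c^2$), followed by Sturm comparison (Theorem~\ref{thm_08_12_zeros}, Corollary~\ref{cor_08_12_zeros}) against the constant-coefficient equation frozen at $t_n$: if $1-t_n\ge\pi/\sqrt{Q_n(t_n)}$, then $\Psi_n$ would vanish again in $(t_n,1)$, contradicting the maximality of $t_n$; hence $(1-t_n)^2Q_n(t_n)<\pi^2$, which implies \eqref{eq_tn_pi}. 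Be aware also that the packaged bound you could legitimately cite instead, \eqref{eq_tn_both} of Proposition~\ref{prop_tn_both}, is too lossy to rescue your plan: since it only bounds $1-t_n$ by the positive root of $c^2x^2+\tfrac{1}{2}(\chi_n-c^2)x=\pi^2$, it yields only $(1-t_n)(\chi_n-c^2t_n^2)/(1+t_n)<2\pi^2$, hence a normalized tail constant of $4\sqrt{2}$ and the weaker conclusion $\frac{2}{\pi}\int_0^1\sqrt{(\chi_n-c^2t^2)/(1-t^2)}\,dt<n-1+4\sqrt{2}\approx n+4.66$. So to land on $n+3$ you must reconstruct \eqref{eq_tn_pi} itself, and that step --- the transformed ODE plus the comparison argument --- is the missing idea.
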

The following proposition is an immediate consequence
of Proposition~\ref{prop_n_khi_simple}.
It is proven in Theorem~\ref{thm_exp_term} in Section~\ref{sec_first_order},
and is illustrated in Figures~\ref{fig:test171a},~\ref{fig:test171b}.
\begin{prop}
Suppose that $n \geq 2$ is a positive integer such that
$n > 2c/\pi$, and that the function
$f:[0,\infty) \to \Rc$ is defined via the formula
\begin{align}
f(x) = -1 + \int_0^{\pi/2} \sqrt{ x + \cos^2(\theta)} \; d\theta.
\label{eq_f_def_prop}
\end{align}
Suppose also that the function $H: [0,\infty) \to \Rc$ is the inverse of $f$,
in other words, 
\begin{align}
y = f(H(y)) = 
-1 + \int_0^{\pi/2} \sqrt{ H(y) + \cos^2(\theta)} \; d\theta,
\label{eq_big_h_def_prop}
\end{align}
for all real $y \geq 0$.
Then,
\begin{align}
H\left( \frac{n\pi}{2c} - 1 \right) < 
\frac{\chi_n - c^2}{c^2} < 
H\left( \frac{n\pi}{2c} - 1 + \frac{3\pi}{2c} \right).
\label{eq_khi_via_h_prop}
\end{align}
\label{prop_exp_term}
\end{prop}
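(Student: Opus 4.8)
The plan is to read \eqref{eq_khi_via_h_prop} off directly from Proposition~\ref{prop_n_khi_simple}, after recognizing that the complete elliptic integral appearing there is, up to an affine change of variables, exactly the function $f$ of \eqref{eq_f_def_prop}. Since $n > 2c/\pi$, Proposition~\ref{prop_n_and_khi} guarantees $\chi_n > c^2$, so the quantity $x := (\chi_n - c^2)/c^2$ is strictly positive and lies in the domain $[0,\infty)$ of $f$; this hypothesis also places $n$ in the regime where Proposition~\ref{prop_n_khi_simple} is valid.

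The one computation to carry out is the identity relating $E$ to $f$. Starting from the definition \eqref{eq_E} of $E$, pulling the factor $\sqrt{\chi_n}$ under the integral sign, and using $\sin^2\theta = 1 - \cos^2\theta$, I would write
\begin{align}
\frac{2}{\pi} \sqrt{\chi_n} \cdot E\brk{ \frac{c}{\sqrt{\chi_n}} }
&= \frac{2}{\pi} \int_0^{\pi/2} \sqrt{ \chi_n - c^2 \sin^2 \theta } \; d\theta \nonumber \\
&= \frac{2}{\pi} \int_0^{\pi/2} \sqrt{ \brk{\chi_n - c^2} + c^2 \cos^2 \theta } \; d\theta \nonumber \\
&= \frac{2c}{\pi} \int_0^{\pi/2} \sqrt{ x + \cos^2 \theta } \; d\theta
 = \frac{2c}{\pi} \brk{ f(x) + 1 },
\end{align}
where the last equality is just the definition \eqref{eq_f_def_prop} of $f$, after factoring $c$ out of the square root and substituting $x = (\chi_n - c^2)/c^2$.

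Substituting this identity into the two-sided bound $n < (2/\pi)\sqrt{\chi_n}\,E(c/\sqrt{\chi_n}) < n+3$ of Proposition~\ref{prop_n_khi_simple}, dividing through by $2c/\pi$, and subtracting $1$, I would obtain
\begin{align}
\frac{n\pi}{2c} - 1 < f(x) < \frac{n\pi}{2c} - 1 + \frac{3\pi}{2c}.
\end{align}
It then remains only to invert $f$. The integrand $\sqrt{x+\cos^2\theta}$ is continuous and strictly increasing in $x$ for each fixed $\theta$, so $f$ is continuous and strictly increasing on $[0,\infty)$; together with $f(0)=0$ (since $\int_0^{\pi/2}\abrk{\cos\theta}\,d\theta = 1$) and $f(x)\to\infty$ as $x\to\infty$, this shows $f$ is a bijection of $[0,\infty)$ onto $[0,\infty)$, so its inverse $H$ is well defined, strictly increasing, and order-preserving. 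Because $n>2c/\pi$ forces both arguments $\tfrac{n\pi}{2c}-1$ and $\tfrac{n\pi}{2c}-1+\tfrac{3\pi}{2c}$ to be nonnegative, $H$ may legitimately be applied to the displayed inequality; doing so on all three sides and recalling $x = (\chi_n-c^2)/c^2$ yields \eqref{eq_khi_via_h_prop}.

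There is no real obstacle here: the statement is a genuine corollary, and the only points needing attention are the strict monotonicity of $f$ (so that $H$ is well defined and preserves the strict inequalities) and the verification that the arguments of $H$ are nonnegative, both of which rely on the hypothesis $n>2c/\pi$. The elliptic-integral manipulation is the entire mathematical content, and it amounts to a one-line change of variables.
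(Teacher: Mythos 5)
Your proposal is correct and follows essentially the same route as the paper's own proof: both rest on the monotonicity of $f$ (hence of $H$) together with Theorem~\ref{thm_n_and_khi} (so that $n > 2c/\pi$ gives $\chi_n > c^2$) and the two-sided bound of Theorem~\ref{thm_n_khi_simple}. The only difference is that you spell out the change of variables identifying $\frac{2}{\pi}\sqrt{\chi_n}\,E(c/\sqrt{\chi_n})$ with $\frac{2c}{\pi}(f(x)+1)$, a step the paper leaves implicit.
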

In the following proposition, we describe a relation
between $\chi_n$ and the maximal root $t_n$ of $\psi_n$
in $(-1,1)$, by providing lower and upper bounds
on $1-t_n$ in terms of $\chi_n$ and $c$.
It is proven in Theorem~\ref{thm_tn_upper}, \ref{thm_tn_lower}
in Section~\ref{sec_first_order}.
\begin{prop}
Suppose that $n \geq 2$ is a positive integer, and
that $\chi_n > c^2$. Suppose also that $t_n$ is the maximal root
of $\psi_n$ in the interval $(-1,1)$. 
Then,
\begin{align}
\frac{ \pi^2/8 }{\chi_n-c^2 + \sqrt{(\chi_n-c^2)^2+(\pi c/2)^2} } 
& \; <
1-t_n \nonumber \\
& \; < \frac{ 4\pi^2 }{\chi_n-c^2 + \sqrt{(\chi_n-c^2)^2+(4\pi c)^2} }.
\label{eq_tn_both}
\end{align}
\label{prop_tn_both}
\end{prop}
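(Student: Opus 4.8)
The plan is to reduce the two-sided estimate \eqref{eq_tn_both} to a pair of equivalent algebraic inequalities obtained by rationalizing the denominators. Writing $a=\chi_n-c^2>0$ and $\delta=1-t_n$, the right-hand inequality of \eqref{eq_tn_both} is equivalent to $2c^2\delta^2+a\delta<2\pi^2$, and the left-hand one to $c^2\delta^2+a\delta>\pi^2/16$. I would prove these by two different comparison arguments, both built on Sections~\ref{sec_oscillation_ode} and~\ref{sec_prufer}. The key preliminary fact, from Proposition~\ref{prop_five} in the case $\chi_n>c^2$, is that $t_n<1$ (and $t_n\ge0$ by the symmetry of the roots), and that neither $\psi_n$ nor $\psi_n'$ has a root in $(t_n,1)$, so $\psi_n$ is sign-definite and monotone on $(t_n,1]$.

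For the upper bound I would pass to the Liouville normal form. The substitution $y=\psi_n\sqrt{1-t^2}$ turns the prolate ODE \eqref{eq_prolate_ode} into $y''+Q\,y=0$ with $Q(t)=\frac{1}{(1-t^2)^2}+\frac{a}{1-t^2}+c^2$ (a short computation, cleanest after writing $s=1-t^2$, when the numerator becomes $1+as+c^2s^2$). Since $\sqrt{1-t^2}>0$ on $(-1,1)$ and $\psi_n(1)\ne0$, the function $y$ vanishes at $t_n$, tends to $0$ as $t\to1^-$, and has no zero in between; thus $t_n$ and $1$ are consecutive zeros of $y$. On $[t_n,1]$ the coefficient $Q$ increases toward the singularity at $t=1$, so $Q\ge Q(t_n)=:A^2$, and comparing $y$ with $\sin\!\big(A(t-t_n)\big)$ via Theorem~\ref{thm_08_12_zeros} (equivalently, the upper estimate of Theorem~\ref{thm_25_09}) forces $\delta<\pi/A$. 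Finally, bounding $A^2=Q(t_n)$ from below through $1-t_n^2=\delta(1+t_n)\le2\delta$ gives $A^2\delta^2\ge\tfrac14+\tfrac12 a\delta+c^2\delta^2$, and $A^2\delta^2<\pi^2$ then yields $2c^2\delta^2+a\delta<2\pi^2$, as desired.

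For the lower bound the same normal form is useless, since $Q\to+\infty$ as $t\to1$ (in fact $Q\sim\tfrac14(1-t)^{-2}$, the critical non-oscillatory Euler coefficient, which is precisely why $y$ has a single boundary zero at $t=1$ rather than accumulating zeros there). I would instead use the modified Pr\"ufer phase $\theta$ of \eqref{eq_modified_prufer_theta}. At $t_n$ one has $\theta=(k+\tfrac12)\pi$, while $\sqrt{p/q}\to0$ as $t\to1$ forces $\theta(1)\in\pi\Zc$; since $\theta$ cannot cross the special levels in $\tfrac\pi2\Zc$ on $(t_n,1)$, the total phase change is exactly $\pi/2$. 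Integrating \eqref{eq_jan_prufer_fv} gives $\int_{t_n}^1 f\,dt=\tfrac\pi2+\int_{t_n}^1 v\,\sin(2\theta)\,dt$, and on $(t_n,1)$ we have $\sin(2\theta)>0$ (as $2\theta\in(2k\pi,(2k+1)\pi)$) and $v>0$ (by \eqref{eq_jan_v}, since $0<t<1$ and $\chi_n-c^2t^2>0$), whence $\int_{t_n}^1 f\,dt>\pi/2$. The substitution $t=\cos\phi$ rewrites this as $\int_0^{\phi_n}\sqrt{a+c^2\sin^2\phi}\,d\phi>\pi/2$, where $\phi_n=\arccos t_n$. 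Using $\sqrt{a+c^2\sin^2\phi}\le\sqrt a+c\sin\phi$ together with the elementary bound $\phi_n\le2\sqrt\delta$, this integral is at most $2\sqrt{a\delta+c^2\delta^2}$, and combining with the lower bound $\pi/2$ gives $c^2\delta^2+a\delta>\pi^2/16$.

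I expect the lower bound to be the main obstacle. The positivity $\int_{t_n}^1 f\,dt>\pi/2$ is clean, but it must be matched against an upper estimate for $\int_0^{\phi_n}\sqrt{a+c^2\sin^2\phi}\,d\phi$ whose constant is sharp enough to produce the stated coefficients: the naive bound $\arccos(1-\delta)\le\tfrac\pi2\sqrt{2\delta}$ is too lossy (its constant exceeds the one required), so some care is needed to establish $\phi_n\le2\sqrt\delta$ (equivalent to $\sin(\phi_n/2)\ge(\phi_n/2)/\sqrt2$ on $[0,\pi/4]$) and then to verify the algebraic inequality $\sqrt a\,\phi_n+c\delta\le 2\sqrt{a\delta+c^2\delta^2}$. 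The two delicate points will be (i) making the endpoint comparison at the singular point $t=1$ rigorous, via a limiting argument on $[t_n,1-\varepsilon]$ and $\varepsilon\to0$, and (ii) controlling the constants in this last integral estimate.
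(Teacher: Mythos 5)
Your upper-bound argument coincides with the paper's Theorem~\ref{thm_tn_upper}: the same Liouville substitution $\Psi_n=\psi_n\sqrt{1-t^2}$ of Lemma~\ref{lem_trans}, the same Sturm comparison of $Q_n$ against $Q_n(t_n)$, and the same algebra (the paper's \eqref{eq_tn_pi} with $1+t_n\le 2$); that half is correct. In the lower bound, your derivation of the key inequality $\int_{t_n}^1\sqrt{(\chi_n-c^2t^2)/(1-t^2)}\,dt>\pi/2$ is also correct, and it is genuinely different from the paper's: you obtain it by a local Pr\"ufer phase count on $(t_n,1)$ (phase change exactly $\pi/2$, oscillatory term $v\sin(2\theta)$ of one sign), whereas the paper's Theorem~\ref{thm_tn_lower} gets the same inequality \eqref{eq_lower_1} by subtracting Theorem~\ref{thm_n_lower} from Theorem~\ref{thm_n_upper}, i.e.\ by invoking the global machinery of Section~\ref{sec_sharp}. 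For this proposition in isolation your route is more self-contained.

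However, the final step of your lower bound fails as stated. Write $a=\chi_n-c^2$, $\delta=1-t_n$, $u=\sqrt{a\delta}$, $w=c\delta$. Your chain is $\pi/2<\int_0^{\phi_n}\sqrt{a+c^2\sin^2\phi}\,d\phi\le\sqrt{a}\,\phi_n+c\delta\le 2u+w$, and to conclude $u^2+w^2>\pi^2/16$ you need $2u+w\le 2\sqrt{u^2+w^2}$. Squaring, this is equivalent to $4uw\le 3w^2$, i.e.\ $16(\chi_n-c^2)\le 9c^2(1-t_n)$ --- which is false in the typical regime. Indeed, whenever $\chi_n\ge 2c^2$ (true for all sufficiently large $n$ by \eqref{eq_khi_crude}) one has $16a\ge 16c^2>9c^2\ge 9c^2\delta$, so the inequality you need is violated; knowing $\pi/2<2u+w$ then tells you nothing about $2\sqrt{u^2+w^2}$, since that quantity is the smaller of the two. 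The culprit is the constant $2$ in $\phi_n\le 2\sqrt{\delta}$: it is valid but too weak.

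The gap is repairable. Since $t_n\ge 0$ one has $\phi_n\le\pi/2$, and the map $\phi\mapsto\phi^2/(1-\cos\phi)$ is increasing on $(0,\pi/2]$, which gives the sharp bound $\phi_n\le\tfrac{\pi}{2}\sqrt{\delta}$. Then, by Cauchy--Schwarz, $\tfrac{\pi}{2}u+w\le\sqrt{(\pi^2/4+1)(u^2+w^2)}<2\sqrt{u^2+w^2}$, and the contradiction argument closes (any constant $K<\sqrt{3}$ in $\phi_n\le K\sqrt{\delta}$ would do; your $K=2$ exceeds $\sqrt{3}$, while $\pi/2$ does not). Alternatively, the paper's own conversion avoids trigonometric bounds entirely: it writes the integrand as $\sqrt{(\chi_n-c^2t^2)/(1+t)}\cdot(1-t)^{-1/2}$, observes via \eqref{eq_simple_1} that the first factor is decreasing in $t$, pulls it out at $t=t_n$, and integrates $(1-t)^{-1/2}$ exactly, obtaining $\int_{t_n}^1 f\,dt\le 2\sqrt{c^2\delta^2+a\delta/(1+t_n)}\le 2\sqrt{c^2\delta^2+a\delta}$, which is precisely the estimate your argument was aiming for.
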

The following proposition is a special case of
Proposition~\ref{prop_tn_both}. It is proven
in Theorem~\ref{thm_tn_simple} in Section~\ref{sec_first_order},
and is illustrated in Figure~\ref{fig:test170}.
\begin{prop}
Suppose that $c > 10/\pi$.
Suppose also that $n \geq 2$ is a positive integer, and that
\begin{align}
n > \frac{2c}{\pi} + 1 + \frac{1}{4} \cdot \log(c).
\label{eq_tn_simple_2_prop}
\end{align}
Suppose furthermore that $t_n$ is the maximal root of $\psi_n$
in the interval $(-1,1)$. Then,
\begin{align}
\frac{\pi^2}{8 \cdot (1+\sqrt{2})} \cdot \frac{1}{\chi_n-c^2} <
1-t_n <
\frac{2\pi^2}{\chi_n - c^2}.
\label{eq_tn_simple_prop}
\end{align}
\label{prop_tn_simple}
\end{prop}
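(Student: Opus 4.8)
The plan is to derive both inequalities of \eqref{eq_tn_simple_prop} directly from Proposition~\ref{prop_tn_both} by showing that, under the stated hypotheses, the quantity $\chi_n - c^2$ is large enough to collapse the square roots appearing in \eqref{eq_tn_both}. Write $D = \chi_n - c^2$. For the upper bound, observe that since $c > 0$ we have $\sqrt{D^2 + (4\pi c)^2} > D$, hence $D + \sqrt{D^2 + (4\pi c)^2} > 2D$, so the right-hand side of \eqref{eq_tn_both} satisfies $4\pi^2/(D + \sqrt{D^2 + (4\pi c)^2}) < 2\pi^2/D$; combined with \eqref{eq_tn_both} this gives $1 - t_n < 2\pi^2/(\chi_n - c^2)$ with no further hypotheses. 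For the lower bound, the left-hand side of \eqref{eq_tn_both} is $\geq \pi^2/(8(1+\sqrt 2)D)$ exactly when $D + \sqrt{D^2 + (\pi c/2)^2} \leq (1+\sqrt 2)D$, i.e. exactly when $\sqrt{D^2 + (\pi c/2)^2} \leq \sqrt 2\,D$, which holds iff $D \geq \pi c/2$. Thus the whole proposition reduces to the single claim $\chi_n - c^2 > \pi c/2$; note this simultaneously guarantees $\chi_n > c^2$, so that Proposition~\ref{prop_tn_both} does apply.

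To establish $\chi_n - c^2 > \pi c/2$ I would invoke Proposition~\ref{prop_exp_term}. Since $c > 10/\pi$ and $n > 2c/\pi + 1 + \tfrac14\log c$, we have $n > 2c/\pi$, so Proposition~\ref{prop_exp_term} applies and yields $(\chi_n - c^2)/c^2 > H(n\pi/(2c) - 1)$. Because $H$ is the inverse of the strictly increasing function $f$ of \eqref{eq_f_def_prop}, the inequality $H(n\pi/(2c) - 1) > \pi/(2c)$ is equivalent to $n\pi/(2c) - 1 > f(\pi/(2c))$, that is, to $n\pi/(2c) > \int_0^{\pi/2}\sqrt{\pi/(2c) + \cos^2\theta}\,d\theta$. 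Since the hypothesis on $n$ gives $n\pi/(2c) > 1 + \pi/(2c) + \pi\log(c)/(8c)$, it therefore suffices to prove the purely analytic inequality
\begin{align}
\int_0^{\pi/2}\sqrt{\tfrac{\pi}{2c} + \cos^2\theta}\;d\theta \;\leq\; 1 + \frac{\pi}{2c} + \frac{\pi\log c}{8c}.
\label{eq_plan_key}
\end{align}
Indeed, \eqref{eq_plan_key} forces $n\pi/(2c) > f(\pi/(2c)) + 1$, hence $H(n\pi/(2c)-1) > \pi/(2c)$, hence $\chi_n - c^2 > c^2\cdot\pi/(2c) = \pi c/2$, as required.

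To handle \eqref{eq_plan_key} I would set $x = \pi/(2c)$ and recognize the integral as a complete elliptic integral: substituting $\cos^2\theta = (1+x) - \sin^2\theta$ gives $\int_0^{\pi/2}\sqrt{x + \cos^2\theta}\,d\theta = \sqrt{1+x}\,E(1/\sqrt{1+x})$, with $E$ as in \eqref{eq_E}. Writing $1/\sqrt{1+x} = \sqrt{1 - x/(1+x)}$ and applying the expansion \eqref{eq_E_exp} with small parameter $k = \sqrt{x/(1+x)}$, the leading behaviour is $\sqrt{1+x}\,E(1/\sqrt{1+x}) = 1 + x\bigl(\tfrac14 + \log 2 - \tfrac14\log x\bigr) + O(x^2\log x)$. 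Substituting $x = \pi/(2c)$ turns the $-\tfrac14 x\log x$ term into precisely $\pi\log(c)/(8c)$ plus lower order, so after cancelling the common terms, \eqref{eq_plan_key} reduces to checking that the residual constant coefficient $\tfrac14 + \log 2 - \tfrac14\log(\pi/2)$, which is numerically about $0.83$, stays below $1$ even after the remainder is added. The main obstacle is exactly this last step: one must make the error term in \eqref{eq_E_exp} fully explicit over the entire range $0 < x < \pi^2/20$ permitted by $c > 10/\pi$ — rather than merely asymptotically — so as to guarantee that the margin $1 - (\tfrac14 + \log 2 - \tfrac14\log(\pi/2)) \approx 0.17$ is not consumed by it. This is the only place where the precise threshold $10/\pi$ and the coefficient $\tfrac14$ on $\log c$ are genuinely used; once \eqref{eq_plan_key} is secured, $\chi_n - c^2 > \pi c/2$ follows and the two elementary reductions of the first paragraph complete the proof.
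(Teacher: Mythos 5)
Your proposal follows essentially the same route as the paper's proof of Theorem~\ref{thm_tn_simple}: the paper also first establishes $\chi_n > c^2 + \tfrac{\pi}{2}c$ (its \eqref{eq_tn_simple_khi}) and then feeds this into Theorems~\ref{thm_tn_upper} and~\ref{thm_tn_lower}, collapsing the square roots exactly as in your first paragraph (your observation that the lower bound needs precisely $\chi_n - c^2 \geq \pi c/2$ and the upper bound needs nothing beyond $\chi_n > c^2$ is the algebra the paper leaves implicit). The one divergence is how $\chi_n - c^2 > \pi c/2$ is obtained. The paper invokes its ready-made Theorem~\ref{thm_khi_lower} with $\alpha = \pi/2$: the hypothesis there becomes $n > 2c/\pi + \tfrac14\log(32ec/\pi)$, which follows from \eqref{eq_tn_simple_2_prop} because $\tfrac14\log(32e/\pi) \approx 0.83 < 1$ --- the very same $0.17$ margin you compute. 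You instead re-derive this specialization from Proposition~\ref{prop_exp_term}, reducing everything to the inequality $f(x) \leq \tfrac{x}{4}\log\left(e^4\pi/(2x)\right)$ at $x = \pi/(2c)$, and you flag the explicit (non-asymptotic) remainder bound for \eqref{eq_E_exp} as the unresolved obstacle. That obstacle is precisely the content of the paper's Theorem~\ref{thm_big_h}: applying the increasing function $f$ to both sides of \eqref{eq_h_bounds} gives $f(s) \leq \tfrac{s}{4}\log(16e/s)$ for $0 \leq s \leq 5$, and since $16e < e^4\pi/2$, this is strictly stronger than what you need, on the whole range $0 < \pi/(2c) < \pi^2/20$. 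So your argument closes completely by citing Theorem~\ref{thm_big_h} (or Theorem~\ref{thm_khi_lower} directly), and is otherwise correct. Two remarks for perspective: as written, your plan does have a genuine unproven step, since the explicit elliptic-integral estimate is exactly what you defer; but the paper itself proves Theorem~\ref{thm_big_h} only by assertion (``straightforward, elementary \dots validated numerically''), so once that theorem is taken as given, your proposal sits at the same level of rigor as the published argument.
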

In the following proposition, 
we provide yet another upper bound on $\chi_n$ in terms of $n$.
Its proof can be found in
Theorem~\ref{thm_khi_n_square} in Section~\ref{sec_first_order},
and it is illustrated 
in Tables~\ref{t:test80a},~\ref{t:test80b}
and Figure~\ref{fig:test171a}.
\begin{prop}
Suppose that $n \geq 2$ is a positive integer, and that $\chi_n > c^2$. Then,
\begin{align}
\chi_n < \brk{ \frac{\pi}{2} \brk{n+1} }^2.
\label{eq_khi_n_square_prop}
\end{align}
\label{prop_khi_n_square}
\end{prop}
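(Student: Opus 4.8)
The plan is to take square roots in \eqref{eq_khi_n_square_prop} and reduce the statement to the single inequality $\frac{2}{\pi}\sqrt{\chi_n} < n+1$. Since $\chi_n > c^2$, the left-hand inequality of Proposition~\ref{prop_jan_n_both} applies and gives $n > 1 + \frac{2}{\pi}\int_0^{t_n} f(t)\,dt$, where $t_n$ is the maximal root of $\psi_n$ in $(-1,1)$ and $f$ is the function of \eqref{eq_jan_f}. Consequently it suffices to prove
\begin{align}
\sqrt{\chi_n} - \int_0^{t_n} f(t)\,dt \leq \pi,
\label{eq_plan_dagger}
\end{align}
because then $\frac{2}{\pi}\sqrt{\chi_n} \leq 2 + \frac{2}{\pi}\int_0^{t_n} f < n+1$.

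The first step is a monotonicity observation. Differentiating, one finds $\brk{f^2}'(t) = 2t\,(\chi_n - c^2)/(1-t^2)^2$, which is positive on $(0,1)$ exactly because $\chi_n > c^2$. Hence $f$ is increasing there and $f(t) \geq f(0) = \sqrt{\chi_n}$ for $0 \leq t < 1$, so that $\int_0^{t_n} f \geq \sqrt{\chi_n}\,t_n$. The left-hand side of \eqref{eq_plan_dagger} is therefore at most $\sqrt{\chi_n}\,(1-t_n)$, and it is enough to establish the clean estimate
\begin{align}
\sqrt{\chi_n}\,\brk{1 - t_n} \leq \pi.
\label{eq_plan_clean}
\end{align}

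Next I would feed the upper bound on $1-t_n$ from Proposition~\ref{prop_tn_both} into \eqref{eq_plan_clean}. Writing $D = \chi_n - c^2 > 0$, that bound turns \eqref{eq_plan_clean} into the purely algebraic claim $4\pi\sqrt{\chi_n} \leq D + \sqrt{D^2 + (4\pi c)^2}$; isolating the square root and squaring (the preliminary case $4\pi\sqrt{\chi_n}\le D$ being trivial) collapses this, after cancelling $D^2$ and then $D$, to the remarkably clean condition $\chi_n \geq 4\pi^2$. Thus \eqref{eq_plan_clean}, and with it the theorem, holds for every $\chi_n \geq 4\pi^2$. When $\chi_n < 4\pi^2$ and $n \geq 3$ the conclusion is immediate, since $\brk{\frac{\pi}{2}(n+1)}^2 \geq 4\pi^2 > \chi_n$.

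The main obstacle is the leftover band $\chi_n < 4\pi^2$ with $n = 2$; by Theorem~\ref{thm_khi_crude} (which forces $\chi_2 < 6 + c^2$, so that $\chi_2 \geq 4\pi^2$ is incompatible with $\chi_2 > c^2$) this is the only case the argument above does not reach, and it is precisely the delicate near-transition regime, where $\sqrt{\chi_n}\,(1-t_n)$ is asymptotically equal to $\pi$ and the bound \eqref{eq_plan_clean} deduced from Proposition~\ref{prop_tn_both} is no longer quite sharp enough. I expect to close it either by a more careful, non-squared estimate of $\sqrt{\chi_n} - \int_0^{t_n} f$ that retains the growth of $f$ away from $t=0$, or by invoking the monotonicity of $\chi_n$ as a function of $c$: since $\chi_n(c)$ is increasing and meets $c^2$ at a transition value $c^\ast \leq \frac{\pi}{2}(n+1)$ (the bound on $c^\ast$ being exactly what Proposition~\ref{prop_n_and_khi} provides), the regime $\chi_n > c^2$ forces $c < c^\ast$ and hence $\chi_n(c) < \chi_n(c^\ast) = (c^\ast)^2 \leq \brk{\frac{\pi}{2}(n+1)}^2$.
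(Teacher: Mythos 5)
Your main reduction is sound and the algebra checks out: Proposition~\ref{prop_jan_n_both} plus the monotonicity $f(t) \geq f(0) = \sqrt{\chi_n}$ (your derivative computation is correct) reduces everything to $\sqrt{\chi_n}\,(1-t_n) \leq \pi$, and substituting the upper bound of Proposition~\ref{prop_tn_both} does collapse, after squaring, to the condition $\chi_n \geq 4\pi^2$; together with the trivial observation this settles every $n \geq 3$. But the case $n=2$ is a genuine gap, not a loose end. Your first proposed fix (``a more careful, non-squared estimate'') is a hope with no estimate behind it. Your second fix invokes continuity and strict monotonicity of $\chi_n$ as a function of $c$, together with a well-defined ``transition value'' $c^\ast$ at which $\chi_n(c^\ast)=(c^\ast)^2$ and beyond which $\chi_n < c^2$; these facts are true (monotonicity follows, e.g., from $\partial \chi_n / \partial (c^2) = \int_{-1}^1 t^2 \psi_n^2(t)\,dt$), but they are proved nowhere in the paper and nowhere in your proposal, so the $n=2$ case rests on unproven ingredients. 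Your parenthetical is also not right as stated: $\chi_2 < 6 + c^2$ from Theorem~\ref{thm_khi_crude} alone rules out nothing; you must first use Proposition~\ref{prop_n_and_khi} to force $c < 3\pi/2$, and even then you only get $\chi_2 < 6 + 9\pi^2/4 \approx 28.2$ --- below $4\pi^2 \approx 39.5$ (so the incompatibility holds), but above the bound $9\pi^2/4 \approx 22.2$ that \eqref{eq_khi_n_square_prop} demands when $n=2$, so Theorem~\ref{thm_khi_crude} cannot close the case either.

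The paper avoids this entirely by bounding $1-t_n$ with a different estimate, sharper precisely in the small-$\chi_n$ regime where yours fails. In the paper's proof of Theorem~\ref{thm_khi_n_square}, the substitution $\Psi_n(t)=\psi_n(t)\sqrt{1-t^2}$ of Lemma~\ref{lem_trans} gives $\Psi_n'' + Q_n\Psi_n = 0$ with $Q_n(t) > \chi_n+1$ whenever $\chi_n > c^2$, and Sturm comparison (Theorem~\ref{thm_08_12_zeros}, Corollary~\ref{cor_08_12_zeros}) then yields $1 - t_n \leq \pi/\sqrt{\chi_n+1}$ directly. Hence $\sqrt{\chi_n}\,(1-t_n) \leq \pi\sqrt{\chi_n}/\sqrt{\chi_n+1} < \pi$ unconditionally --- exactly the clean estimate you reduced to, but valid for all $n \geq 2$ with no case split and no appeal to properties of $c \mapsto \chi_n(c)$. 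If you replace your use of Proposition~\ref{prop_tn_both} by this Sturm-comparison bound, your argument (which up to that point coincides with the paper's) finishes uniformly; as written, it does not prove the statement for $n=2$.
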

We observe that, 
for sufficiently large $n$, 
the inequality
\eqref{eq_khi_n_square_prop} is even weaker than
\eqref{eq_khi_crude}.
On the other hand, \eqref{eq_khi_n_square_prop}
can be useful for $n$ near
$2c/\pi$,
as illustrated in
Tables~\ref{t:test80a}, \ref{t:test80b}.

The following proposition summarizes
Theorem~\ref{thm_zeros_inside_bounds} in Section~\ref{sec_sharp}
and Theorems~\ref{thm_spacing_khi},
\ref{thm_spacing_inside}
in Section~\ref{sec_first_order}.
It is illustrated in
Tables~\ref{t:test80a}, \ref{t:test80b},
\ref{t:test98a}, \ref{t:test98b}.
\begin{prop}
Suppose that $n \geq 2$ is a positive integer,
and that $\chi_n > c^2$. Suppose also that
$-1 < t_1 < t_2 < \dots < t_n < 1$ are the roots of $\psi_n$
in the interval $(-1,1)$. Suppose furthermore
that the functions $f,v$ are defined,
respectively, via \eqref{eq_jan_f},\eqref{eq_jan_v}
in Section~\ref{sec_prufer}.
Then:
\begin{itemize}
\item For each integer $(n+1)/2 \leq i \leq n-1$, i.e. for each integer $i$
such that $0 \leq t_i < t_n$,
\begin{align}
\frac{\pi}{ f(t_{i+1}) + v(t_{i+1})/2 } <
t_{i+1} - t_{i} <
\frac{\pi}{ f(t_i) }.
\label{eq_zeros_inside_bounds_prop}
\end{align}
\item For each integer $(n+1)/2 \leq i \leq n-1$, i.e. for each integer $i$
such that $0 \leq t_i < t_n$,
\begin{align}
t_{i+1} - t_i > t_{i+2} - t_{i+1} > \dots > t_n - t_{n-1}.
\label{eq_spacing_shrink_prop}
\end{align}
\item For all integer $j = 1, \dots, n-1$,
\begin{align}
t_{j+1} - t_j < \frac{ \pi }{ \sqrt{\chi_n + 1} }.
\end{align}
\end{itemize}
\label{prop_spacing_big}
\end{prop}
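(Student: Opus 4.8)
The plan rests on two independent engines from Sections~\ref{sec_oscillation_ode} and~\ref{sec_prufer}: the Liouville (normal-form) reduction together with the Sturm comparison Theorems~\ref{thm_25_09} and~\ref{thm_25_09_2}, which dispatch the third bullet, the second bullet, and the \emph{upper} estimate in the first bullet; and the modified Pr\"ufer equation \eqref{eq_jan_prufer_fv}, which is needed only for the \emph{lower} estimate in the first bullet. First I would pass to $u(t)=\psi_n(t)\sqrt{1-t^2}$, which has exactly the same roots as $\psi_n$ on $(-1,1)$ and satisfies the normal-form ODE $u''(t)+Q(t)u(t)=0$ with
\[
Q(t)=\frac{\chi_n-c^2t^2}{1-t^2}+\frac{1}{(1-t^2)^2}=f(t)^2+\frac{1}{(1-t^2)^2}.
\]
Because $\chi_n>c^2$, a direct differentiation shows that $f^2$, $v$, and $(1-t^2)^{-2}$ are all strictly increasing on $(0,1)$, that $f(t)^2\ge f(0)^2=\chi_n$ on all of $(-1,1)$, and that $(1-t^2)^{-2}\ge1$; hence $Q$ is strictly increasing on $(0,1)$ and $Q(t)\ge\chi_n+1$ throughout $(-1,1)$.

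These facts settle three of the four assertions immediately. For the third bullet, $Q\ge(\sqrt{\chi_n+1})^{2}$ on every subinterval $[t_j,t_{j+1}]$, so the upper half of Theorem~\ref{thm_25_09} gives $t_{j+1}-t_j<\pi/\sqrt{\chi_n+1}$ for all $j$. For the second bullet, $Q$ is strictly increasing on $(0,1)$, so Theorem~\ref{thm_25_09_2} applied on $(0,1)$ to the roots $t_i<\dots<t_n$ lying in $(0,1)$ yields the shrinking chain \eqref{eq_spacing_shrink_prop}, the strictness coming from the strict monotonicity of $Q$. For the upper bound in the first bullet, $t_i\ge0$ and $Q$ increasing give $Q(t)\ge Q(t_i)>f(t_i)^2$ on $[t_i,t_{i+1}]$, so taking $A=f(t_i)$ in Theorem~\ref{thm_25_09} produces $t_{i+1}-t_i<\pi/f(t_i)$.

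The lower bound in the first bullet is the real content, and I would obtain it from the Pr\"ufer picture. Since $t_i$ and $t_{i+1}$ are consecutive roots of $\psi_n$ with exactly one root $m$ of $\psi_n'$ between them (Proposition~\ref{prop_five}), the phase changes by $\theta(t_{i+1})-\theta(t_i)=-\pi$, so integrating \eqref{eq_jan_prufer_fv} gives, unconditionally,
\[
\int_{t_i}^{t_{i+1}}\bigl(f(t)-v(t)\sin(2\theta(t))\bigr)\,dt=\pi.
\]
Using $f\le f(t_{i+1})$ on the interval, it suffices to prove $\int_{t_i}^{t_{i+1}}v\sin(2\theta)\,dt\ge-\tfrac12\,v(t_{i+1})\,(t_{i+1}-t_i)$; combining this with the displayed identity then yields $\pi\le\bigl(f(t_{i+1})+v(t_{i+1})/2\bigr)(t_{i+1}-t_i)$, which is the claim. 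To get the key estimate I note that $\theta(m)$ is an integer multiple of $\pi$, so $\sin(2\theta)\ge0$ on $(t_i,m)$ and $\sin(2\theta)\le0$ on $(m,t_{i+1})$; the first interval contributes a nonnegative amount, and bounding $v\le v(t_{i+1})$ on the second reduces everything to $\int_{(m,t_{i+1})}|\sin(2\theta)|\,dt\le\tfrac12(t_{i+1}-t_i)$, for which it is enough to show $t_{i+1}-m\le m-t_i$. This last inequality I would prove by the substitution $s=\theta(t_i)-\theta$, under which $m-t_i=\int_0^{\pi/2}\frac{ds}{f(t(s))-v(t(s))\sin(2s)}$ while $t_{i+1}-m=\int_0^{\pi/2}\frac{ds}{f(t(\pi-s))+v(t(\pi-s))\sin(2s)}$; comparing the two integrands at equal $s$ and using that $f$ and $v$ are increasing (so their values at the later point $t(\pi-s)$ dominate) makes the second integrand pointwise no larger than the first.

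The hard part, and the one place where care is essential, is that this substitution is legitimate only where $\theta$ is strictly monotone, i.e. where $-\theta'=f-v\sin(2\theta)>0$; since $|\sin(2\theta)|\le1$, this holds as soon as $f(t)>v(t)$. Near the endpoint one has $v(t)\sim\tfrac{1}{4(1-t)}$ but $f(t)\sim(1-t)^{-1/2}$, so $v>f$ in a small neighborhood of $1$, and there $\theta$ might fail to decrease monotonically. The remedy is the a priori separation of the largest root from $1$: by Proposition~\ref{prop_tn_both}, $1-t_n$ exceeds a constant multiple of $1/(\chi_n-c^2)$, which places $t_n$ — and hence every root interval, all contained in $(0,t_n]$ — strictly to the left of the region where $v\ge f$. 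I would therefore first verify $f(t)>v(t)$ for $0\le t\le t_n$, which guarantees $\theta'<0$ on each $[t_i,t_{i+1}]$ and legitimizes the substitution. Getting this monotonicity cleanly, and arranging the exposition so that the appeal to Proposition~\ref{prop_tn_both} is not circular with the present statement, is the main obstacle; once it is in place, the remaining steps are the elementary monotonicity comparisons sketched above.
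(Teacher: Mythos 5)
Three of your four assertions are proved correctly, and one of them by a route that is genuinely cleaner than the paper's. Your treatment of the second and third bullets (normal form of Lemma~\ref{lem_trans}, monotonicity of $Q_n$ in \eqref{eq_big_q_n}, then Theorems~\ref{thm_25_09}, \ref{thm_25_09_2}) coincides in substance with the paper's Theorems~\ref{thm_spacing_inside} and \ref{thm_spacing_khi}. For the upper bound $t_{i+1}-t_i<\pi/f(t_i)$ you do something better than the paper: since $Q_n(t)\geq Q_n(t_i)>f(t_i)^2$ on $[t_i,t_{i+1}]$ (using $t_i\geq 0$, $\chi_n>c^2$ and the extra term $(1-t^2)^{-2}$), Sturm comparison gives the bound at once, whereas the paper extracts exactly this inequality in Theorem~\ref{thm_zeros_inside_bounds} from the positivity of $\int_{t_i}^{t_{i+1}}v\sin(2\theta)$ (Lemma~\ref{lem_vsin_2}), which costs the entire chain of Lemmas~\ref{lem_jan_fv}--\ref{lem_delta_large}. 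That is a real simplification.

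The gap is in the lower bound of the first bullet, which is the genuinely hard part of the proposition. Your skeleton is essentially the paper's (split at the critical point $x_i$ between $t_i$ and $t_{i+1}$, use the sign of $\sin(2\theta)$ on each half, and reduce to $t_{i+1}-x_i\leq x_i-t_i$, the paper's \eqref{eq_x_t_tp_x}, via the phase substitution -- the same substitution that drives Lemma~\ref{lem_vsin_1}). But everything hinges on the Pr\"ufer phase being strictly monotone on $[t_i,t_{i+1}]$, and this you do not prove: you propose to force it by showing $f>v$ on $[0,t_n]$ via Proposition~\ref{prop_tn_both}, and you explicitly defer that verification as ``the main obstacle.'' As written, this is a hole. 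The implication is quantitatively delicate -- the constants in the lower bound of Proposition~\ref{prop_tn_both} only just suffice, and only after one also uses $1-t_n^2\approx 2(1-t_n)$ together with the crude bound $\chi_n>n(n+1)\geq 6$ of Theorem~\ref{thm_khi_crude} to exclude small $c$ and small $\chi_n-c^2$; none of this is in your sketch. Moreover, even when completed, the route is a heavy detour rather than an alternative: the only available proof of the lower bound in Proposition~\ref{prop_tn_both} (Theorem~\ref{thm_tn_lower}) passes through Theorem~\ref{thm_n_lower}, hence through Lemma~\ref{lem_vsin_2}, hence through Lemma~\ref{lem_z0} -- and Lemma~\ref{lem_z0} is precisely the statement $v<Z_0\,f$ on $(0,t_n]$, a stronger form of what you are trying to recover. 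This is not circular (those results are proved independently of the present proposition), but it imports the full hard machinery in order to re-derive a weakened version of one of its own lemmas.

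The idea you are missing is that phase monotonicity does not require $f>v$ at all. The paper's Lemma~\ref{lem_theta_increasing} proves that $\theta$ is strictly monotone on all of $[-x_n,x_n]$, including the region near $t=1$ where $v\geq f$, by a contradiction argument that uses only the monotonicity of $v/f$ (the direct computation \eqref{eq_dvf_positive}) and the boundary values of $\theta$ from Lemma~\ref{lem_prufer}: if the phase ever moved the wrong way, it would have to re-cross the same level later with the opposite sign of $\theta'$, which is incompatible with $v/f$ being increasing. With that lemma in hand, your substitution $s=\theta(t_i)-\theta$ is legitimate on every $[t_i,t_{i+1}]$ with $t_i\geq 0$, your pointwise comparison of the two half-integrals yields $t_{i+1}-x_i\leq x_i-t_i$, and the rest of your argument closes the proof with no appeal to Proposition~\ref{prop_tn_both} whatsoever.
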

The following proposition summarizes
Theorem~\ref{thm_spacing_inside}
in Section~\ref{sec_first_order}.
\begin{prop}
Suppose that $n \geq 2$ is an integer,
and that $\chi_n < c^2 - c\sqrt{2}$. Suppose also that
$-1 < t_1 < t_2 < \dots < t_n < 1$ are the roots of $\psi_n$
in the interval $(-1,1)$. 
Then,
\begin{align}
t_{i+1} - t_i < t_{i+2} - t_{i+1} < \dots < t_n - t_{n-1},
\label{eq_spacing_stretch_prop}
\end{align}
for all integer $(n+1)/2 \leq i \leq n-1$, i.e. for all integer $i$
such that $0 \leq t_i < t_n$.
\label{prop_spacing_small}
\end{prop}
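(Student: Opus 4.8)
The plan is to reduce the statement to a monotonicity question about the coefficient of a second order ODE in Liouville normal form, and then to invoke Theorem~\ref{thm_25_09_2}. First I would strip the first-order term from the prolate ODE \eqref{eq_prolate_ode}. Dividing \eqref{eq_prolate_ode} by $1-x^2$ and substituting $\psi_n(x) = (1-x^2)^{-1/2}\, w(x)$, one obtains, on $(-1,1)$, an equation of the form $w''(x) + g(x)\, w(x) = 0$ with
\[
g(x) = c^2 + \frac{\chi_n - c^2}{1-x^2} + \frac{1}{(1-x^2)^2}.
\]
Since $(1-x^2)^{-1/2}$ has no zeros in $(-1,1)$, the roots of $w$ coincide with the roots $t_1 < \dots < t_n$ of $\psi_n$, so it suffices to prove the stretch inequality \eqref{eq_spacing_stretch_prop} for the roots of $w$. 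By the non-increasing case \eqref{eq_stretch} of Theorem~\ref{thm_25_09_2}, this will follow once I show that $g$ is non-increasing on an interval containing all the nonnegative roots $t_i$.

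Next I would compute the derivative,
\[
g'(x) = \frac{2x}{(1-x^2)^2}\left( (\chi_n - c^2) + \frac{2}{1-x^2} \right),
\]
so that on $[0,1)$ the sign of $g'$ is governed by the bracket $B(x) = (\chi_n - c^2) + 2/(1-x^2)$, which is strictly increasing in $x$. Thus $g$ is non-increasing on $[0,b]$ precisely when $B(b) \leq 0$. All nonnegative roots of $\psi_n$ lie strictly to the left of the turning point $\sqrt{\chi_n}/c$: indeed, since $\chi_n < c^2 - c\sqrt{2} < c^2$, inequality \eqref{eq_khi_small_prop} of Proposition~\ref{prop_five} gives $t_n < \sqrt{\chi_n}/c$. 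Because $B$ is increasing, it therefore suffices to check $B \leq 0$ at the turning point.

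The key step is the evaluation of $B$ at $x = \sqrt{\chi_n}/c$. There $1-x^2 = (c^2-\chi_n)/c^2$, and writing $D = c^2 - \chi_n > 0$ one gets
\[
B\!\left( \frac{\sqrt{\chi_n}}{c} \right) = -D + \frac{2c^2}{D} = \frac{2c^2 - D^2}{D},
\]
which is negative exactly when $D^2 > 2c^2$, i.e. when $c^2 - \chi_n > c\sqrt{2}$ --- precisely the hypothesis $\chi_n < c^2 - c\sqrt{2}$. Since $B$ is increasing and $t_n < \sqrt{\chi_n}/c$, it follows that $B(x) < 0$, hence $g'(x) < 0$, for every $x \in (0, t_n]$; so $g$ is strictly decreasing on $(0,t_n]$ and non-increasing on $[0,t_n]$. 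Applying Theorem~\ref{thm_25_09_2} to $w'' + g\,w = 0$ on this interval yields $t_{i+1}-t_i \leq t_{i+2}-t_{i+1} \leq \dots \leq t_n - t_{n-1}$, and the strict monotonicity of $g$ on $(0,t_n]$ upgrades each of these to a strict inequality via the strict form of the underlying comparison, giving \eqref{eq_spacing_stretch_prop}.

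I expect the main obstacle to be correctly locating the worst case for the monotonicity of $g$: because $B$ increases toward the turning point, the binding constraint occurs at $x = \sqrt{\chi_n}/c$, and it is exactly this evaluation that produces the threshold $c^2 - c\sqrt{2}$; pinning down the algebra and the origin of the $\sqrt{2}$ is the crux of the argument. A secondary, purely technical point arises when $n$ is odd, in which case the central root $t_{(n+1)/2}=0$ sits at the left endpoint of the monotonicity interval $[0,t_n]$, where $g'$ vanishes; this is harmless, since $g$ is an even function, so the root pattern is symmetric about the origin and Theorem~\ref{thm_25_09_2} (which only uses that $g$ is non-increasing, a property that persists across $x=0$) applies on $[0,t_n]$ without modification.
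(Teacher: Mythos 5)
Your proposal is correct and follows essentially the same route as the paper: the substitution $w=\psi_n\sqrt{1-x^2}$ is exactly the paper's Lemma~\ref{lem_trans}, your $g$ is the paper's $Q_n$, and the paper likewise shows $Q_n$ is decreasing and positive on $\left(0,\sqrt{\chi_n}/c\right)$ (via the substitution $y=1/(1-t^2)$ and the quadratic $P_n(y)=y^2+(\chi_n-c^2)\,y+c^2$, rather than your direct differentiation --- the algebra, including the threshold $(c^2-\chi_n)^2>2c^2$ arising at the turning point, is identical), then invokes Theorem~\ref{thm_25_09_2} together with \eqref{eq_khi_small} of Lemma~\ref{lem_five}. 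One small slip in your closing remark: since $g$ is even and strictly decreasing on $(0,t_n]$, it is \emph{increasing} for negative $x$, so monotonicity does not ``persist across $x=0$''; this is harmless only because every root and every gap under consideration lies in $[0,t_n]$, which is also how the paper implicitly treats the odd-$n$ case.
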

The following proposition summarizes
Theorem~\ref{thm_psi1_upper_bound} in Section~\ref{sec_pswf}
and Theorem~\ref{thm_psi1} in Section~\ref{sec_growth}.
\begin{prop}
Suppose that $n \geq 0$ is a non-negative integer, and that
$\chi_n > c^2$. Then,
\begin{align}
\frac{1}{2} < \psi_n^2(1) < n + \frac{1}{2}.
\end{align}
\label{prop_psi1}
\end{prop}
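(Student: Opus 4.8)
The upper bound $\psi_n^2(1) < n + 1/2$ is exactly Theorem~\ref{thm_psi1_upper_bound}, which holds for all $c>0$ and all $n$, so the real work lies entirely in the lower bound $\psi_n^2(1) > 1/2$, and this is where the hypothesis $\chi_n > c^2$ enters. The plan is to exhibit an ``amplitude'' function that is monotone on $[0,1]$ and attains its maximum at the endpoint, and then to combine this with the $L^2$-normalization $\int_{-1}^1 \psi_n^2 = 1$.

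Writing the ODE \eqref{eq_prolate_ode} in self-adjoint form $(p\psi_n')' + q\psi_n = 0$ with $p(x) = 1-x^2$ and $q(x) = \chi_n - c^2 x^2$ as in \eqref{eq_prufer_1}, I note that $\chi_n > c^2$ forces $q(x) \geq \chi_n - c^2 > 0$ throughout $[-1,1]$, so the function
\[
u(x) = \psi_n^2(x) + \frac{p(x)}{q(x)} \, \big(\psi_n'(x)\big)^2
\]
is well defined and smooth on $[-1,1]$ (recall $\psi_n$ is analytic up to the endpoints, so $\psi_n'(1)$ is finite). First I would compute $u'$, using the ODE to eliminate $\psi_n''$; the $2\psi_n\psi_n'$ terms cancel and everything collapses to the single term
\[
u'(x) = -\frac{(pq)'(x)}{q(x)^2} \, \big(\psi_n'(x)\big)^2 .
\]
A direct computation gives $(pq)'(x) = -2x \, (\chi_n + c^2 - 2c^2 x^2)$, and the hypothesis $\chi_n > c^2$ guarantees $\chi_n + c^2 - 2c^2 x^2 \geq \chi_n - c^2 > 0$ on $[0,1]$; hence $(pq)' \leq 0$, so $u' \geq 0$ there, i.e.\ $u$ is non-decreasing on $[0,1]$.

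The endpoint value is clean: since $p(1) = 0$ we have $u(1) = \psi_n^2(1)$, and since $p/q \geq 0$ on $[0,1]$ we also have $\psi_n^2(x) \leq u(x)$. Combining these with the monotonicity gives $\psi_n^2(x) \leq u(x) \leq u(1) = \psi_n^2(1)$ for all $0 \leq x \leq 1$; because $\psi_n^2$ is even, the bound $\psi_n^2(x) \leq \psi_n^2(1)$ then holds on all of $[-1,1]$. Integrating and invoking the normalization yields
\[
1 = \int_{-1}^1 \psi_n^2(x) \, dx \leq 2 \, \psi_n^2(1),
\]
and the inequality is strict since $\psi_n^2$ is not identically $\psi_n^2(1)$ (it vanishes at the $n$ roots of $\psi_n$ in $(-1,1)$). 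Hence $\psi_n^2(1) > 1/2$.

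The only genuinely delicate point, and the step I expect to demand the most care, is the choice of the amplitude $u$: the factor $p/q$ is precisely what makes the $2\psi_n\psi_n'$ contributions cancel in $u'$, leaving the sign-definite expression $-(pq)'(\psi_n')^2/q^2$. After that the argument reduces to the observation that, under $\chi_n > c^2$, the product $pq$ is decreasing on $[0,1]$, so the amplitude peaks at the endpoint, and the normalization supplies the constant $1/2$. I would also take care to confirm continuity of $u$ at $x=1$ via the analyticity of $\psi_n$ on the closed interval, which ensures the limiting value $u(1) = \psi_n^2(1)$ is legitimate.
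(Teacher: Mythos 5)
Your proposal is correct and follows essentially the same route as the paper: your amplitude function $u$ is exactly the function $Q$ of Lemma~\ref{lem_Q_Q_tilde} (equation \eqref{eq_psi1_q}), your identity $u' = -(pq)'(\psi_n')^2/q^2$ is the paper's \eqref{eq_dq_short} in disguise, and the endpoint evaluation $u(1)=\psi_n^2(1)$ plus integration against the normalization is precisely the proof of Theorem~\ref{thm_psi1}, with the upper bound cited from Theorem~\ref{thm_psi1_upper_bound} just as the paper does.
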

The following proposition is
illustrated in Figures~\ref{fig:test75a}, \ref{fig:test75b}.
It is proven in Theorem~\ref{thm_extrema} in Section~\ref{sec_first_order}.
\begin{prop}
Suppose that $n \geq 0$ is a non-negative integer, and that
$x, y$ are two arbitrary extremum points of $\psi_n$ in $(-1,1)$.
If $\abrk{x} < \abrk{y}$, then
\begin{align}
\abrk{\psi_n(x)} < \abrk{\psi_n(y)}.
\label{eq_extremum_general_prop}
\end{align}
If, in addition, $\chi_n > c^2$, then
\begin{align}
\abrk{\psi_n(x)} < \abrk{\psi_n(y)} < \abrk{\psi_n(1)}.
\label{eq_extremum_special_prop}
\end{align}
\label{prop_extrema}
\end{prop}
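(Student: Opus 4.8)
The plan is to track the oscillation amplitude of $\psi_n$ through an energy-type function that, at each extremum, returns the square of $\psi_n$ there. Writing the prolate ODE \eqref{eq_prolate_ode} in self-adjoint form with $p(x) = 1 - x^2$ and $q(x) = \chi_n - c^2 x^2$, I would set, on the open interval where $q > 0$,
\[
W(x) = \psi_n(x)^2 + \frac{1 - x^2}{\chi_n - c^2 x^2} \, \psi_n'(x)^2.
\]
At any extremum $x_0$ of $\psi_n$ one has $\psi_n'(x_0) = 0$, so $W(x_0) = \psi_n(x_0)^2 = \abrk{\psi_n(x_0)}^2$; hence the monotonicity of $W$ directly controls the extremal amplitudes.

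The central computation is to differentiate $W$ and eliminate $\psi_n''$ using the ODE. I expect the term $2\psi_n\psi_n'$ to cancel, leaving the clean expression
\[
W'(x) = \frac{2x \brk{ \brk{\chi_n + c^2} - 2 c^2 x^2 } }{ \brk{ \chi_n - c^2 x^2 }^2 } \, \psi_n'(x)^2.
\]
For $x > 0$ this is positive precisely when $x^2 < \brk{\chi_n + c^2}/\brk{2 c^2}$. The crucial structural input is then that every extremum of $\psi_n$ in $(0,1)$ lies strictly inside this window: by Proposition~\ref{prop_five} all roots of $\psi_n'$ satisfy $\abrk{x} < \min\cbrk{ \sqrt{\chi_n}/c, 1 }$, and an elementary inequality shows $\brk{\chi_n + c^2}/\brk{2 c^2} > \min\cbrk{ \chi_n/c^2, 1 }$ in both regimes $\chi_n > c^2$ and $\chi_n < c^2$. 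Consequently $W' > 0$ between consecutive positive extrema, so $W$ is strictly increasing across them.

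Given this monotonicity, the first assertion follows at once: for extrema with $0 \leq \abrk{x} < \abrk{y}$, the parity of $\psi_n$ lets me pass to $\abrk{x}, \abrk{y}$, whence $\abrk{\psi_n(x)}^2 = W(\abrk{x}) < W(\abrk{y}) = \abrk{\psi_n(y)}^2$. For the sharper statement when $\chi_n > c^2$, I would observe that then $\sqrt{\chi_n}/c > 1$, so $q > 0$ and $W$ is defined and increasing on all of $(0,1)$; since $p(1) = 0$ and $\psi_n'(1)$ is finite, $W$ extends continuously to the endpoint with $W(1) = \psi_n(1)^2$, and monotonicity yields $\abrk{\psi_n(y)} < \abrk{\psi_n(1)}$ for every interior extremum $y$.

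The step I expect to be the main obstacle is the sign analysis of $W'$, specifically confirming that the window $x^2 < \brk{\chi_n + c^2}/\brk{2 c^2}$ genuinely covers the entire range of extrema in both cases $\chi_n > c^2$ and $\chi_n < c^2$; this is exactly where the location information of Proposition~\ref{prop_five}, together with the crude bound $\chi_n > n\brk{n+1}$ of Theorem~\ref{thm_khi_crude}, is indispensable. A secondary technical point is the continuity of $W$ up to $x = 1$ in the case $\chi_n > c^2$, which rests on the finiteness of $\psi_n'(1)$.
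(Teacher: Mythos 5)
Your proposal is correct and takes essentially the same approach as the paper: your energy function $W$ is exactly the function $Q$ of Lemma~\ref{lem_Q_Q_tilde}, your formula for $W'$ and the sign condition $\chi_n + c^2 - 2c^2x^2 > 0$ on $\left(0, \min\left\{\sqrt{\chi_n}/c,\,1\right\}\right)$ reproduce \eqref{eq_dq_short} and \eqref{eq_term_pos}, and the parity-plus-monotonicity conclusion (with $Q$ continuous up to $t=1$ when $\chi_n > c^2$) is precisely the paper's proof of Theorem~\ref{thm_extrema}. The only cosmetic difference is that the crude bound of Theorem~\ref{thm_khi_crude} you invoke is not actually needed, since the elementary inequality $(\chi_n+c^2)/(2c^2) > \min\left\{\chi_n/c^2,\,1\right\}$ holds in both regimes on its own.
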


\section{Analytical Apparatus}
\label{sec_analytical}
The purpose of this section is to provide
the analytical apparatus to be used in the
rest of the paper, as well as to prove the results
summarized in Section~\ref{sec_summary}.

\subsection{Oscillation Properties of PSWFs}
\label{sec_oscillation}
In this subsection, we prove several facts
about the distance between consecutive roots of PSWFs 
\eqref{eq_prolate_integral} and find a more
subtle relationship between $n$ and $\chi_n$ \eqref{eq_prolate_ode}
than the one given by \eqref{eq_khi_crude}. Throughout this subsection
$c > 0$ is a positive real number 
and $n$ is a non-negative integer.
The principal results of this subsection are
Theorems~\ref{thm_n_upper}, \ref{thm_n_lower},
\ref{thm_good_n_good_khi},
and
\ref{thm_khi_n_square}.

\subsubsection{Special Points of $\psi_n$}
\label{sec_special_points}
We refer to the roots of $\psi_n$, the roots of $\psi_n'$ and
the turning points of the ODE \eqref{eq_prolate_ode} as 
''special points''. Some of them 
play an important role in the subsequent analysis.
These points are introduced in the following definition.
\begin{definition}[Special points]
Suppose that $n \geq 2$ is a positive integer. We define
\begin{itemize}
\item $t_1 < t_2 < \dots < t_n$ to be the roots of $\psi_n$ in $(-1,1)$,
\item $x_1 < \dots <x_{n-1}$ to be the roots of $\psi_n'$ in $(t_1,t_n)$,
\item $x_n$ via the formula
\begin{align}
x_n = 
\begin{cases}
\text{maximal root of } \psi_n' \text{ in } (-1,1),
 & \text{ if } \chi_n < c^2, \\
1, & \text{ if } \chi_n > c^2.
\end{cases}
\label{eq_xn}
\end{align}
\end{itemize}
\label{def_special}
\end{definition}
\noindent
This definition will be used throughout all of
Section~\ref{sec_analytical}. The relative location of some of 
the special points
depends on whether $\chi_n > c^2$ or $\chi_n < c^2$.
This is illustrated in
Figures~\ref{fig:test75a}, \ref{fig:test75b}
and is described by the following lemma.
\begin{lem}[Special points]
Suppose that $n \geq 2$ is a positive integer. Suppose
also that $t_1<\dots<t_n$ and $x_1<\dots<x_n$
are those of Definition~\ref{def_special}.
Then,
\begin{align}
-\frac{\sqrt{\chi_n}}{c} < -x_n < 
t_1 < x_1 < t_2 < \dots < t_{n-1} < t_n < x_n < \frac{\sqrt{\chi_n}}{c}.
\label{eq_all_tx}
\end{align}
In particular, if $\chi_n < c^2$, then
\begin{align}
t_n < x_n < \frac{\sqrt{\chi_n}}{c} < 1,
\label{eq_khi_small}
\end{align}
and $\psi_n'$ has $n+1$ roots in the interval $(-1,1)$; and,
if $\chi_n > c^2$, then
\begin{align}
t_n < x_n = 1 < \frac{\sqrt{\chi_n}}{c},
\label{eq_khi_large}
\end{align}
and $\psi_n'$ has $n-1$ roots in the interval $(-1,1)$.
\label{lem_five}
\end{lem}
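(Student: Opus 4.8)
The plan is to control the entire configuration through the single auxiliary function $g(t) = \brk{1 - t^2} \psi_n'(t)$, i.e. the momentum $p\psi_n'$ of the self-adjoint form of \eqref{eq_prolate_ode}. Rewriting the ODE as $g'(t) = -\brk{\chi_n - c^2 t^2} \psi_n(t)$, one sees that the sign of $g'$ is opposite to that of $\psi_n$ on the oscillatory region $\abrk{t} < \sqrt{\chi_n}/c$ and agrees with that of $\psi_n$ on the non-oscillatory region $\abrk{t} > \sqrt{\chi_n}/c$. Two boundary facts anchor the argument: the continuous solution of \eqref{eq_prolate_ode} is analytic at the regular singular point $t = 1$, so $\psi_n'(1)$ is finite and $g(1) = 0$; and evaluating \eqref{eq_prolate_ode} at $t = 1$ gives $\psi_n'(1) = \tfrac{1}{2}\brk{\chi_n - c^2} \psi_n(1)$. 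Since the analytic solution is determined up to a scalar by its value at $t=1$, a vanishing $\psi_n(1)$ would force $\psi_n \equiv 0$; hence $\psi_n(1) \neq 0$, and (after possibly replacing $\psi_n$ by $-\psi_n$) I may assume $\psi_n > 0$ on $\brk{t_n, 1}$. By the parity of $\psi_n$ (Theorem~\ref{thm_pswf_main}) the whole configuration is symmetric about the origin, so it suffices to establish the right half of \eqref{eq_all_tx} and to count the roots of $\psi_n'$ in $\brk{t_n, 1}$; the left half and $\brk{-1, t_1}$ follow by reflection.

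First I would show that $\psi_n$ has no root in $\sbrk{\sqrt{\chi_n}/c, 1}$, so that $t_n < \sqrt{\chi_n}/c$ (this is vacuous when $\chi_n > c^2$, where $\sqrt{\chi_n}/c > 1$). If $\tau$ were the largest root with $\tau \geq \sqrt{\chi_n}/c$, then on $\brk{\tau, 1}$ one has $\psi_n > 0$ and $\chi_n - c^2 t^2 < 0$, hence $g' > 0$; but $g(1) = 0$ then forces $g < 0$ on $\brk{\tau, 1}$, contradicting $g(\tau) = \brk{1 - \tau^2}\psi_n'(\tau) > 0$ (the root is simple and $\psi_n$ increases out of it). With $t_n < \sqrt{\chi_n}/c$ in hand, every root $x_i$ of $\psi_n'$ lying in $\brk{t_1, t_n}$ is automatically in the oscillatory region; evaluating \eqref{eq_prolate_ode} there gives $\psi_n''\brk{x_i} = -\tfrac{\chi_n - c^2 x_i^2}{1 - x_i^2}\,\psi_n\brk{x_i}$, whose sign is opposite to that of $\psi_n\brk{x_i}$. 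Thus each interior critical point is a genuine extremum, and since two successive maxima (or minima) would require an intervening extremum of the forbidden sign, there is exactly one critical point between consecutive roots of $\psi_n$. This yields the interlacing $t_1 < x_1 < t_2 < \dots < t_{n-1} < x_{n-1} < t_n$ with precisely $n - 1$ roots of $\psi_n'$ in $\brk{t_1, t_n}$.

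It remains to analyze $\psi_n'$ on $\brk{t_n, 1}$, where I again read off the behavior of $g$ from the sign of $g' = -\brk{\chi_n - c^2 t^2}\psi_n$ together with $g(t_n) = \brk{1 - t_n^2}\psi_n'(t_n) > 0$ and $g(1) = 0$. If $\chi_n > c^2$ then $\chi_n - c^2 t^2 > 0$ throughout $\brk{t_n, 1}$, so $g$ is strictly decreasing; being $0$ at $t = 1$, it stays positive on $\brk{t_n, 1}$, and $\psi_n'$ has no root there. Setting $x_n = 1$, the chain reduces to $t_n < 1 < \sqrt{\chi_n}/c$, and $\psi_n'$ has exactly $n - 1$ roots in $\brk{-1, 1}$. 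If $\chi_n < c^2$ then $g$ is decreasing on $\brk{t_n, \sqrt{\chi_n}/c}$ and increasing on $\brk{\sqrt{\chi_n}/c, 1}$; since $g(1) = 0$ it is negative just left of $1$, so $g\brk{\sqrt{\chi_n}/c} < 0$ while $g(t_n) > 0$. Hence $g$, equivalently $\psi_n'$, has a single zero $x_n$ lying strictly in $\brk{t_n, \sqrt{\chi_n}/c}$, giving $t_n < x_n < \sqrt{\chi_n}/c < 1$ and, with the mirror root $-x_n$ in $\brk{-1, t_1}$, a total of $n + 1$ roots of $\psi_n'$ in $\brk{-1, 1}$. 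Combining the two cases with the interior interlacing and reflecting about the origin yields \eqref{eq_all_tx}, \eqref{eq_khi_small} and \eqref{eq_khi_large}.

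The main obstacle is the careful bookkeeping near the right endpoint, where one must simultaneously exclude roots of $\psi_n$ in the non-oscillatory region, pin down the number of critical points of $\psi_n'$ in $\brk{t_n, 1}$, and locate the extreme critical point $x_n$ relative to the turning point $\sqrt{\chi_n}/c$ — all of which hinge on the endpoint identities $g(1) = 0$ and $\psi_n'(1) = \tfrac12\brk{\chi_n - c^2}\psi_n(1)$ and on $\psi_n(1) \neq 0$. Packaging these tasks through the monotonicity of the single function $g = \brk{1 - t^2}\psi_n'$, whose derivative changes sign exactly at the turning point, is what keeps the argument clean and avoids a more delicate Sturm-comparison treatment (Theorem~\ref{thm_08_12_zeros}); the latter could furnish an alternative route but would require constructing explicit comparison solutions on both sides of $\sqrt{\chi_n}/c$.
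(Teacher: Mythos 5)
Your proposal is correct, and it takes a genuinely different route from the paper's. The paper works pointwise with the ODE \eqref{eq_prolate_ode}: after normalizing $\psi_n(1)>0$, it reads off $\psi_n'(1)=\tfrac{1}{2}(\chi_n-c^2)\psi_n(1)$, locates $x_n$ (when $\chi_n<c^2$) by the intermediate value theorem between $t_n$, where $\psi_n'>0$, and $1$, where $\psi_n'<0$, and then controls both the uniqueness of the critical point in $(t_n,1)$ and its position relative to the turning point through the sign of $\psi_n''(x)=-\frac{\chi_n-c^2x^2}{1-x^2}\,\psi_n(x)$ at roots $x$ of $\psi_n'$. You instead compress the same ODE information into the single momentum function $g(t)=(1-t^2)\psi_n'(t)$, with $g'(t)=-(\chi_n-c^2t^2)\psi_n(t)$ and the boundary value $g(1)=0$, and argue by monotonicity of $g$ on either side of $\sqrt{\chi_n}/c$. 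Both proofs hinge on the same two endpoint facts, but your packaging has concrete advantages: uniqueness and location of $x_n$ drop out of one monotonicity statement, and the strict inequality $x_n<\sqrt{\chi_n}/c$ follows from $g(\sqrt{\chi_n}/c)<0$ rather than from strictness of $\psi_n''(x_n)<0$ --- a point where the paper is terse, since its claim that every root of $\psi_n'$ in $(t_n,1)$ satisfies $\psi_n''<0$ silently excludes both critical points beyond the turning point and degenerate critical points with $\psi_n''=0$, cases your argument rules out automatically (on $(\sqrt{\chi_n}/c,1)$ you get $g<0$ outright, so $\psi_n'$ has no roots there at all). You also justify $\psi_n(1)\neq 0$ via the Frobenius-series argument at the regular singular point, whereas the paper's ``without loss of generality $\psi_n(1)>0$'' tacitly assumes it; and your first step makes explicit the containment $t_n<\sqrt{\chi_n}/c$ that the interior interlacing needs. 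The interlacing argument itself (exactly one critical point between consecutive roots, by the convexity sign) is essentially the same local argument in both proofs, so the gain from your approach is concentrated in the treatment of the interval $(t_n,1]$.
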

\begin{proof}
Without loss of generality, we assume that
\begin{align}
\psi_n(1) > 0.
\label{eq_psi_at_1}
\end{align}
Obviously, \eqref{eq_psi_at_1} implies that
\begin{align}
\psi_n'(t_n) > 0.
\label{eq_dpsi_at_tn}
\end{align}
Suppose first that $\chi_n < c^2$. Then, due to the ODE \eqref{eq_prolate_ode}
in Section~\ref{sec_pswf},
\begin{align}
\psi_n'(1) = \frac{\chi_n-c^2}{2} \cdot \psi_n(1) < 0.
\label{eq_dpsi_at_1}
\end{align}
We combine \eqref{eq_prolate_ode} and \eqref{eq_dpsi_at_tn}
to obtain
\begin{align}
\psi_n''(t_n) = \frac{2t_n}{1-t_n^2} \cdot \psi_n'(t_n) > 0.
\label{eq_ddpsi_at_tn}
\end{align}
In addition, we combine \eqref{eq_xn}, \eqref{eq_dpsi_at_tn},
\eqref{eq_dpsi_at_1}
to conclude that the maximal root $x_n$
of $\psi_n'$ in $(-1,1)$ satisfies 
\begin{align}
t_n < x_n < 1.
\label{eq_tn_xn_1}
\end{align}
Moreover, \eqref{eq_psi_at_1} implies that,
for any root $x$ of $\psi_n'$ in $(t_n,1)$,
\begin{align}
\psi_n''(x) = -\frac{\chi_n-c^2 x^2}{1-x^2} \cdot \psi_n(x) < 0.
\label{eq_ddpsi_at_xn}
\end{align}
We combine \eqref{eq_psi_at_1}, \eqref{eq_tn_xn_1}, \eqref{eq_ddpsi_at_xn}
with \eqref{eq_prolate_ode} to obtain
\begin{align}
\frac{c^2 x_n^2 - \chi_n}{1 - x_n^2} = \frac{\psi_n''(x_n)}{\psi_n(x_n)} < 0,
\end{align}
which implies both \eqref{eq_all_tx} and \eqref{eq_khi_small}. In addition, 
we combine \eqref{eq_prolate_ode}, \eqref{eq_khi_small} 
and
\eqref{eq_ddpsi_at_xn}
to conclude
that $x_n$ is the only root of $\psi_n'$ between $t_n$ and 1.
Thus, $\psi_n'$ indeed has $n+1$ roots in $(-1,1)$.

Suppose now that $\chi_n > c^2$. We combine \eqref{eq_prolate_ode}
and \eqref{eq_psi_at_1} to obtain
\begin{align}
\psi_n'(1) = \frac{\chi_n-c^2}{2} \cdot \psi_n(1) > 0.
\label{eq_dpsi_at_1a}
\end{align}
If $t_n < x < 1$ is a root of $\psi_n'$, then
\begin{align}
\psi_n''(x) = -\frac{\chi_n-c^2x^2}{1-x^2} \cdot \psi_n(x) < 0,
\label{eq_ddpsi_at_x}
\end{align}
therefore $\psi_n'$ can have at most one root in $(t_n,1)$. 
We combine this observation with
\eqref{eq_xn}, \eqref{eq_dpsi_at_tn}, \eqref{eq_dpsi_at_1a}  and
\eqref{eq_ddpsi_at_x} to conclude that, in fact, $\psi_n'$ has
no roots in $(t_n,1)$, and hence both
\eqref{eq_all_tx} and \eqref{eq_khi_large} hold. In particular, $\psi_n'$
has $n-1$ roots in $(-1,1)$.
\end{proof}
\subsubsection{A Sharper Inequality for $\chi_n$}
\label{sec_sharp}
In this subsection,
we use the modified Pr\"ufer transformation
(see Section~\ref{sec_prufer}) to analyze
the relationship between $n, c$ and $\chi_n$. In particular,
this analysis yields
fairly tight lower and upper bounds on $\chi_n$
in terms of $c$ and $n$. These bounds are described in
Theorems~\ref{thm_n_upper},\ref{thm_n_lower} 
below. These theorems are not only one of the principal
results of this paper, but are also subsequently used in the proofs of
Theorems~\ref{thm_zeros_inside_bounds},
\ref{thm_good_n_good_khi}, \ref{thm_khi_n_square},
\ref{thm_n_khi_simple}, \ref{thm_tn_lower}.

We start with developing the required analytical machinery.
In the following lemma,
we describe several properties of the modified
Pr\"ufer transformation (see Section~\ref{sec_prufer}),
applied to the prolate differential equation \eqref{eq_prolate_ode}.
\begin{lem}
Suppose that $n \geq 2$ is a positive integer. Suppose
also that the numbers $t_1,\dots,t_n$ and $x_1,\dots,x_n$
are those of Definition~\ref{def_special} in Section~\ref{sec_special_points},
and that the function $\theta: \left[-x_n,x_n\right] \to \Rc$ is
defined via the formula
\begin{align}
\theta(t) = 
\begin{cases}
\left(i-\frac{1}{2}\right) \cdot \pi, & \text{ if } t = t_i
\text{ for some } 1 \leq i \leq n, \\
\\
\text{atan}\left( -\sqrt{\frac{1-t^2}{\chi_n-c^2t^2}} \cdot
                        \frac{\psi_n'(t)}{\psi_n(t)} \right)
          + m(t) \cdot \pi, & \text{ if } \psi_n(t) \neq 0, \\
\end{cases}
\label{eq_prufer_theta}
\end{align}
where $m(t)$ is the number of the roots
of $\psi_n$ in the interval $(-1,t)$.
Then, $\theta$ has
the following properties:
\begin{itemize}
\item $\theta$ is continuously differentiable in 
the interval $\left[-x_n,x_n\right]$.
\item $\theta$ satisfies, for all $-x_n < t < x_n$, the
differential equation
\begin{align}
\theta'(t) = f(t) + v(t) \cdot \sin( 2 \theta(t) ),
\label{eq_prufer_theta_ode}
\end{align}
where the functions $f,v$ are defined, respectively, 
via \eqref{eq_jan_f}, \eqref{eq_jan_v} in
Section~\ref{sec_prufer}.
\item for each integer $0 \leq j \leq 2n$, there is a unique solution
to the equation
\begin{align}
\theta(t) = j \cdot \frac{\pi}{2},
\label{eq_prufer_sm2_equation}
\end{align}
for the unknown $t$ in $\left[-x_n,x_n\right]$.
More specifically,
\begin{align}
\label{eq_theta_at_xn}
& \theta(-x_n) = 0, \\
\label{eq_theta_at_t}
& \theta(t_i) = \left(i-\frac{1}{2}\right) \cdot \pi, \\
\label{eq_theta_at_x}
& \theta(x_i) = i \cdot \pi,
\end{align}
for each $i = 1, \dots, n$. In particular, $\theta(x_n) = n \cdot \pi$.
\end{itemize}
\label{lem_prufer}
\end{lem}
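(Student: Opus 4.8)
The plan is to verify the three bulleted properties in the order they are stated, since each relies on the previous one. The function $\theta$ defined in \eqref{eq_prufer_theta} is essentially the modified Pr\"ufer angle from Section~\ref{sec_prufer}, with the integer shift $m(t)$ chosen specifically to force continuity across the roots $t_i$ of $\psi_n$. First I would establish continuous differentiability on $[-x_n, x_n]$. On any subinterval where $\psi_n(t) \neq 0$, the expression $\mathrm{atan}\brk{-\sqrt{(1-t^2)/(\chi_n-c^2t^2)} \cdot \psi_n'/\psi_n}$ is manifestly smooth, because $1-t^2 > 0$ and $\chi_n - c^2 t^2 > 0$ throughout $(-x_n, x_n)$ by \eqref{eq_all_tx} of Lemma~\ref{lem_five} (which guarantees $x_n < \sqrt{\chi_n}/c$), and $\psi_n$ is smooth. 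The only delicate points are the roots $t_i$: there $\psi_n(t_i) = 0$ but $\psi_n'(t_i) \neq 0$ (roots are simple by Theorem~\ref{thm_pswf_main}), so the argument of $\mathrm{atan}$ blows up to $\pm\infty$. Here I would argue that as $t \to t_i$, the $\mathrm{atan}$ term approaches $\pm\pi/2$, and the jump of $\pi$ in $m(t)$ is exactly calibrated so that $\theta(t_i) = (i-\tfrac12)\pi$ is the continuous limiting value; to get differentiability at $t_i$ one passes to the ODE.

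The second property is the key computational step, and it is where I expect the main work to lie. I would derive \eqref{eq_prufer_theta_ode} directly from the general Pr\"ufer ODE \eqref{eq_general_dtheta} with the choice $\gamma = \sqrt{pq}$ already carried out in \eqref{eq_jan_prufer_fv}, namely $\theta' = -f(t) + \sin(2\theta)\,v(t)$. The apparent discrepancy in sign of the $f$ term between \eqref{eq_jan_prufer_fv} and the target \eqref{eq_prufer_theta_ode} is resolved by the sign convention in \eqref{eq_prufer_theta}: the argument of $\mathrm{atan}$ here carries a leading minus sign relative to \eqref{eq_modified_prufer_theta}, which corresponds to replacing $\theta$ by $-\theta$ (up to the additive shifts) and flips the sign of $f$ while preserving the $\sin(2\theta)$ term. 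I would make this substitution explicit, differentiate the defining relation, and invoke the prolate ODE \eqref{eq_prolate_ode} to eliminate $\psi_n''$, confirming that $\theta$ satisfies \eqref{eq_prufer_theta_ode} on $(-x_n, x_n)$. Crucially, because $f(t) > 0$ on this interval (again using $\chi_n - c^2 t^2 > 0$), the ODE shows $\theta'(t) > 0$ wherever $\sin(2\theta)$ is not too negative; establishing strict monotonicity of $\theta$ is what makes the uniqueness claim in the third property work, so I would record that $\theta$ is in fact strictly increasing.

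For the third property I would first pin down the boundary and interior values \eqref{eq_theta_at_xn}--\eqref{eq_theta_at_x}. The values $\theta(t_i) = (i-\tfrac12)\pi$ are immediate from the definition \eqref{eq_prufer_theta}. For $\theta(x_i) = i\pi$, I would use \eqref{eq_prufer_du_zero}: at a root $x_i$ of $\psi_n'$ the $\mathrm{atan}$ argument vanishes, giving an integer multiple of $\pi$, and counting intervening roots of $\psi_n$ (via $m$) shows it is exactly $i\pi$. The normalization $\theta(-x_n) = 0$ follows by symmetry together with the parity of $\psi_n$ (even/odd by Theorem~\ref{thm_pswf_main}), which forces $-x_n$ to be a symmetric endpoint, combined with the root-counting at the left end. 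Once these discrete values are fixed and $\theta$ is known to be continuous and strictly increasing from $\theta(-x_n)=0$ to $\theta(x_n)=n\pi$, the existence and uniqueness of a solution to $\theta(t) = j\pi/2$ for each $0 \le j \le 2n$ is an immediate consequence of the intermediate value theorem and strict monotonicity. The main obstacle throughout is the careful bookkeeping at the roots $t_i$ and $x_i$ — matching the $\mathrm{atan}$ branch, the counter $m(t)$, and the sign conventions so that continuity, the ODE, and the exact half-integer/integer values of $\theta$ are all simultaneously consistent.
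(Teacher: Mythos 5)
Your treatment of the first two bullets matches the paper's: well-definedness and the pinned values at the special points follow from Lemma~\ref{lem_five} together with the definition \eqref{eq_prufer_theta}, and the ODE \eqref{eq_prufer_theta_ode} is obtained from the modified Pr\"ufer machinery \eqref{eq_general_dtheta}, \eqref{eq_modified_prufer}, \eqref{eq_jan_prufer_fv}; your explicit resolution of the sign discrepancy (the minus sign inside the arctangent in \eqref{eq_prufer_theta} flips $-f$ to $+f$ while preserving the $\sin(2\theta)$ term) is a point the paper glosses over. The gap is in the third bullet. You derive uniqueness of the solutions of $\theta(t) = j\pi/2$ from strict monotonicity of $\theta$, but you never prove monotonicity: you only remark that $\theta'(t) = f(t) + v(t)\sin(2\theta(t)) > 0$ ``wherever $\sin(2\theta)$ is not too negative,'' and then ``record'' that $\theta$ is increasing. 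This does not suffice. The ratio $v/f$ vanishes at $t=0$ but is increasing and exceeds $1$ well before $x_n$; indeed $v(t)/f(t) \to \infty$ as $t \to 1$ when $\chi_n > c^2$, so near $x_n$ the oscillatory term dominates $f$, and positivity of $\theta'$ where $\sin(2\theta) < 0$ is genuinely nontrivial. In the paper, monotonicity is a separate result (Lemma~\ref{lem_theta_increasing}) whose proof requires showing that $(v/f)' > 0$ and then running a contradiction argument; crucially, that proof itself invokes the present lemma, since the pinned values and the uniqueness property are what squeeze $\theta(x)$ between $\left(i - \tfrac{1}{2}\right)\pi$ and $i\pi$ in \eqref{eq_thetax_squeezed}. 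So deducing the third bullet from monotonicity, while obtaining monotonicity as the paper does, would be circular; your proposal would need an independent proof of $\theta' > 0$, which it does not supply.

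The repair is to prove uniqueness as the paper does, with no monotonicity at all. By \eqref{eq_prufer_theta}, for $t$ in $(-x_n, x_n)$ the value $\theta(t)$ is an odd multiple of $\pi/2$ if and only if $\psi_n(t) = 0$, and an integer multiple of $\pi$ if and only if $\psi_n'(t) = 0$ (the arctangent takes values in the open interval $\left(-\pi/2, \pi/2\right)$, so it contributes $0$ modulo $\pi/2$ only when its argument vanishes). Hence any solution of \eqref{eq_prufer_sm2_equation} with $0 \leq j \leq 2n$ must be one of the $2n+1$ points $-x_n < t_1 < x_1 < t_2 < \dots < t_n < x_n$, and by \eqref{eq_theta_at_xn}--\eqref{eq_theta_at_x} these carry exactly the values $0, \pi/2, \pi, \dots, n\pi$, one each. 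Existence and uniqueness then follow by counting, and monotonicity can be left to the subsequent lemma where it belongs.
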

\begin{proof}
We combine \eqref{eq_all_tx} in 
Lemma~\ref{lem_five} with \eqref{eq_prufer_theta} to conclude that
$\theta$
is well defined for all $-x_n \leq t \leq x_n$,
where $x_n$ is
given via \eqref{eq_xn} in Definition~\ref{def_special}.
Obviously, $\theta$ is continuous, and the identities
\eqref{eq_theta_at_xn}, \eqref{eq_theta_at_t}, \eqref{eq_theta_at_x}
follow immediately from the combination
of Lemma~\ref{lem_five} and \eqref{eq_prufer_theta}.
In addition, $\theta$ satisfies the ODE
\eqref{eq_prufer_theta_ode} in $(-x_n,x_n)$ due to
\eqref{eq_general_dtheta}, \eqref{eq_modified_prufer},
\eqref{eq_jan_prufer_fv} in Section~\ref{sec_prufer}.

Finally, to establish the uniqueness of
the solution to the equation \eqref{eq_prufer_sm2_equation},
we make the following observation. Due to \eqref{eq_prufer_theta},
for any point $t$ in $(-x_n,x_n)$, the value $\theta(t)$ is an integer
multiple of $\pi/2$ if and only if $t$ is either a root of $\psi_n$
or a root of $\psi_n'$. 
We conclude the proof by combining this observation with 
\eqref{eq_xn},
\eqref{eq_theta_at_xn} and \eqref{eq_theta_at_x}.
\end{proof}
\begin{remark}
We observe that, due to \eqref{eq_theta_at_xn}, \eqref{eq_theta_at_t},
\eqref{eq_theta_at_x},
for all $i = 1, \dots, n$,
\begin{align}
\sin(2\theta(t_i)) = \sin(2\theta(x_i)) = 0,
\end{align}
where $t_1,\dots,t_n$, $x_1,\dots,x_n$ are those of
Definition~\ref{def_special} in Section~\ref{sec_special_points},
and $\theta$ is that of Lemma~\ref{lem_prufer}. This observation
will play an important role in the analysis of the ODE
\eqref{eq_prufer_theta_ode} throughout the rest of this section.
\end{remark}

In the following lemma, we prove that $\theta$ of Lemma~\ref{lem_prufer}
is monotonically increasing.
\begin{lem}
Suppose that $n \geq 2$ is a positive integer.
Suppose also that the real number $x_n$
and the function $\theta: \left[-x_n,x_n\right] \to \Rc$
are those of Lemma~\ref{lem_prufer} above. Then, 
$\theta$ is strictly increasing in $\left[-x_n,x_n\right]$,
in other words, 
\begin{align}
\theta'(t) > 0,
\label{eq_theta_increasing}
\end{align}
for all $-x_n < t < x_n$.
\label{lem_theta_increasing}
\end{lem}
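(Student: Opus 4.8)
The plan is to rule out any interior zero of $\theta'$. By \eqref{eq_prufer_theta_ode}, at every special point ($t_i$ or $x_i$) we have $\sin(2\theta)=0$, and there $\theta'=f>0$, since $f(t)=\sqrt{(\chi_n-c^2t^2)/(1-t^2)}>0$ for $|t|<x_n\le\min\{1,\sqrt{\chi_n}/c\}$. Moreover, Lemma~\ref{lem_prufer} tells us that $\theta(t)=j\pi/2$ has a unique solution for each $0\le j\le 2n$, so on each interval between two consecutive special points $\theta$ is confined strictly between the two adjacent multiples of $\pi/2$; in particular $\sin(2\theta)$ keeps a constant sign there, and $0$ is itself a special point (as $\theta(0)=n\pi/2$), so each such subinterval lies entirely in $\{t>0\}$ or in $\{t<0\}$. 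It therefore suffices to show $\theta'$ has no zero strictly between consecutive special points, for then continuity and the positivity of $\theta'$ at the special points give $\theta'>0$ on all of $(-x_n,x_n)$ by the intermediate value theorem.

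So suppose $\theta'(t_*)=0$ at an interior non-special point $t_*$. From \eqref{eq_prufer_theta_ode} this forces $\sin(2\theta(t_*))=-f(t_*)/v(t_*)$ (so $v(t_*)\neq0$, i.e. $t_*\neq0$), and differentiating \eqref{eq_prufer_theta_ode} and using $\theta'(t_*)=0$ yields
\begin{align}
\theta''(t_*)=f'(t_*)+v'(t_*)\sin(2\theta(t_*))=\frac{f'(t_*)v(t_*)-v'(t_*)f(t_*)}{v(t_*)}.
\end{align}
The crux is thus the sign of $f'v-v'f$. Writing $p=1-t^2$, $q=\chi_n-c^2t^2$, so that $f=\sqrt{q/p}$, $f'=t(\chi_n-c^2)/(fp^2)$, and $v=\tfrac12(t/p+c^2t/q)$, a direct computation expresses $2f\,(f'v-v'f)$ as $N/(p^3q)$, where $N$ is an explicit polynomial in $u=t^2$ that can be put in the form $N=-(\chi_n-\tfrac{B}{2})^2+\tfrac14 c^4P(u)$ with $B=c^2(-1+4u-u^2)$ and $P(u)=u^4-16u^3+30u^2-16u+1$. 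Since $(u^2-4u+1)^2-P(u)=4u(2u^2-3u+2)\ge0$ and $\chi_n>0$, one checks that $N<0$ throughout, hence $f'v-v'f<0$.

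Because $v$ has the sign of $t$, the displayed formula then shows $\theta''(t_*)$ has the sign of $-t_*$: every interior zero of $\theta'$ is a strict local maximum of $\theta$ when $t_*>0$ and a strict local minimum when $t_*<0$. This contradicts the confinement. Indeed, if $t_*>0$ lies in a subinterval $(s_j,s_{j+1})$, then (all interior critical points there being maxima) $\theta$ has no local minimum to the right of $t_*$, so it decreases monotonically from $t_*$ to $s_{j+1}$, giving $\theta(s_{j+1})<\theta(t_*)$; but confinement forces $\theta(t_*)<\theta(s_{j+1})$, a contradiction. The case $t_*<0$ is symmetric. Hence $\theta'$ never vanishes, and $\theta'>0$ on $(-x_n,x_n)$, as claimed.

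The main obstacle is the sign computation for $f'v-v'f$: the naive estimate $|v|<f$ fails near $t=\pm1$ when $\chi_n>c^2$ (there both $f$ and $v$ blow up), so one cannot conclude $\theta'>0$ by a pointwise bound, and it is precisely the \emph{global} argument — confinement from the uniqueness in Lemma~\ref{lem_prufer} together with the definite sign of $\theta''$ at critical points — that forces monotonicity and absorbs the endpoint behaviour through the boundary \emph{values} $\theta(\pm x_n)$ rather than the derivatives. Everything else is bookkeeping with Lemma~\ref{lem_prufer} and the routine differentiation above.
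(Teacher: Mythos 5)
Your structural route is genuinely different from the paper's, and the skeleton is sound: you confine $\theta$ between consecutive multiples of $\pi/2$ using the uniqueness part of Lemma~\ref{lem_prufer} (this confinement does follow from uniqueness, and your observation that $0$ is itself a special point keeps each subinterval on one side of the origin), and you then eliminate interior critical points by a second-derivative test, each such point being a strict local maximum for $t_*>0$ and a strict local minimum for $t_*<0$. The paper argues instead by a level-crossing contradiction: if $\theta'(x)<0$ then $\sin(2\theta(x))<0$ and $\theta$ must return to the same level at some $y>x$ with $\theta'(y)>0$, which is impossible because $v/f$ is increasing. Both proofs, however, rest on the identical computational fact: your $f'v-v'f<0$ is literally equivalent to the paper's inequality \eqref{eq_dvf_positive}, $(v/f)'>0$, since $f'v-v'f=-f^2\,(v/f)'$.

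It is exactly at that computation that your write-up has a genuine gap. The decomposition $N=-(\chi_n-B/2)^2+\tfrac14c^4P(u)$ is correct (I verified it), but the inference that $N<0$ follows from ``$(u^2-4u+1)^2-P(u)\ge 0$ and $\chi_n>0$'' is not valid. Those two facts give only $\tfrac14c^4P(u)\le\tfrac14B^2$, hence $N\le-\chi_n(\chi_n-B)$, which is useless whenever $B>\chi_n$ --- and that happens at points of the domain in a realistic regime: for $\chi_n=1.2c^2$ (so $x_n=1$) and $u=t^2=0.8$ one gets $B=1.56c^2>\chi_n$. What saves the conclusion there is that $P(0.8)<0$, so $N<0$ for trivial reasons; and in the complementary case $P(u)\ge0$ with $u<1$, the factorization $P(u)=(u-1)^2(u^2-14u+1)$ forces $u\le 7-4\sqrt3$, hence $u^2-4u+1>0$, i.e.\ $B<0$, and only then does your cited inequality clinch $N<0$ via $\chi_n-B/2>-B/2\ge\tfrac12c^2\sqrt{P(u)}$. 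So a two-case analysis keyed to the sign of $P(u)$ (equivalently, of $B$) is genuinely needed and is not implied by the facts you cite; ``one checks'' hides the essential step. Alternatively, you can bypass the polynomial bookkeeping entirely with the paper's computation, $(v/f)'=\tfrac14\,q^{-5/2}p^{-3/2}\left[\tfrac32p^2(q')^2+\tfrac12q^2(p')^2-q^2pp''-p^2qq''\right]$, which is manifestly positive on $(-x_n,x_n)$ because $p,q>0$, $p''=-2$ and $q''=-2c^2$; with that substitution the rest of your argument goes through.
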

\begin{proof}
We first prove that
\begin{align}
\frac{d}{dt} \left( \frac{v}{f} \right)(t) > 0,
\label{eq_dvf_positive}
\end{align}
for $-x_n < t < x_n$, where the functions $f,v$ are defined,
respectively, via \eqref{eq_jan_f}, \eqref{eq_jan_v}
in Section~\ref{sec_prufer}. We differentiate $v/f$ with respect
to $t$ to obtain
\begin{align}
\brk{ \frac{ v }{ f } }'
& \; = - \brk{ \frac{ p'q + q'p }{ 4pq } \cdot \sqrt{ \frac{p}{q} } }'
     = - \brk{ \frac{ p'q + q'p }{ 4q^{3/2} p^{1/2} } }' \nonumber \\
& \; = \frac{q^{-3} p^{-1}}{4} \cdot \left[
  \brk{\frac{3}{2} q^{1/2} p^{1/2} q' + \frac{1}{2} q^{3/2} p^{-1/2} p'}
  \brk{p'q + q'p} - \right. \nonumber \\
& \quad \quad \quad \quad \quad \; \left. 
  \brk{p''q + 2p'q' + q''p} q^{3/2} p^{1/2} \right] \nonumber \\
& \; = \frac{q^{-5/2} p^{-3/2}}{4} \cdot \sbrk{
  \brk{\frac{3}{2} q'p + \frac{1}{2} p'q} \brk{p'q + q'p} -
  pq \brk{p''q + 2p'q' + q''p} } \nonumber \\
& \; = \frac{q^{-5/2} p^{-3/2}}{4} \cdot \sbrk{
  \frac{3}{2} p^2 \brk{q'}^2 + \frac{1}{2} q^2 \brk{p'}^2 - 
  q^2 p p'' - p^2 q q'' } > 0,
\label{eq_d4vf}
\end{align}
since, due to \eqref{eq_prufer_1},
\begin{align}
p(t) > 0, \quad p''(t) = -2 < 0, \quad
q(t) > 0, \quad q''(t) = -2 c^2 < 0.
\end{align}
We now proceed to prove \eqref{eq_theta_increasing} for
$0 < t < x_n$.
Suppose that, by contradiction,
there exists $0 < x < x_n$ such that
\begin{align}
\theta'(x) < 0.
\label{eq_theta_x_contra}
\end{align}
Combined with \eqref{eq_prufer_theta_ode} in Lemma~\ref{lem_prufer} above,
\eqref{eq_theta_x_contra} implies that
\begin{align}
1 + \frac{v(x)}{f(x)} \cdot \sin(2\theta(x)) =
\frac{f(x) + v(x) \cdot \sin(2\theta(x))}{f(x)} < 0,
\label{eq_dtheta_contra}
\end{align}
and, in particular, that
\begin{align}
\sin(2\theta(x)) < 0.
\label{eq_sin_contra}
\end{align}
Due to Lemma~\ref{lem_prufer} above,
there exists an integer
$(n+1)/2 \leq i \leq n$ such that
\begin{align}
\left(i-\frac{1}{2}\right) \cdot \pi  < \theta(x) < i \cdot \pi.
\label{eq_thetax_squeezed}
\end{align}
Moreover, due to \eqref{eq_theta_at_xn}, 
\eqref{eq_theta_at_t}, \eqref{eq_theta_at_x}, 
\eqref{eq_theta_x_contra}, \eqref{eq_sin_contra},
\eqref{eq_thetax_squeezed}, there exists a point $y$ such that
\begin{align}
0 \leq t_i < x < y < x_i \leq x_n,
\end{align}
and also
\begin{align}
\theta(x) = \theta(y), \quad \theta'(y) > 0.
\label{eq_theta_xy}
\end{align}
for otherwise \eqref{eq_theta_at_x} would be impossible.
We combine \eqref{eq_prufer_theta_ode} and \eqref{eq_theta_xy} to obtain
\begin{align}
1 + \frac{v(y)}{f(y)} \cdot \sin(2\theta(x)) =
\frac{f(y) + v(y) \cdot \sin(2\theta(y)) }{f(y)} =
\frac{\theta'(y)}{f(y)} > 0,
\end{align}
in contradiction to \eqref{eq_dvf_positive}, \eqref{eq_dtheta_contra} and
\eqref{eq_sin_contra}. This concludes the proof
of \eqref{eq_theta_increasing} for $0 < t < x_n$.
For $-x_n < t < 0$, the identity
\eqref{eq_theta_increasing} follows now from the symmetry considerations.
\end{proof}
The right-hand side of the ODE \eqref{eq_prufer_theta_ode}
of Lemma~\ref{lem_prufer} contains a monotone term and
an oscillatory term. In the following lemma,
we study the integrals of the oscillatory term between various special
points, introduced in Definition~\ref{def_special} 
in Section~\ref{sec_special_points}.
\begin{lem}
Suppose that $n \geq 2$ is an integer.
Suppose also that the real numbers $t_1<\dots<t_n$ and $x_1<\dots <x_n$
are those of 
Definition~\ref{def_special} in Section~\ref{sec_special_points},
and the function $\theta: \left[-x_n, x_n\right] \to \Rc$ is
that of Lemma~\ref{lem_prufer} above. Suppose furthermore
that the function $v$ is defined via \eqref{eq_jan_v}
in Section~\ref{sec_prufer}. Then,
\begin{align}
\label{eq_int_x_tp}
& \int_{x_i}^{t_{i+1}} v(t) \cdot \sin(2\theta(t)) \; dt > 0, \\
\label{eq_int_tp_xp}
& \int_{t_{i+1}}^{x_{i+1}} v(t) \cdot \sin(2\theta(t)) \; dt < 0, \\
\label{eq_int_x_xp}
& \int_{x_i}^{x_{i+1}} v(t) \cdot \sin(2\theta(t)) \; dt < 0,
\end{align}
for all integer $(n-1)/2 \leq i \leq n-1$, i.e. for all integer $i$
such that $0 \leq x_i < x_n$. Note that the integral in \eqref{eq_int_x_xp}
is the sum of the integrals in \eqref{eq_int_x_tp} and \eqref{eq_int_tp_xp}.
\label{lem_vsin_1}
\end{lem}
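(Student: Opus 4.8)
The plan is to dispatch the first two inequalities by a direct sign analysis and to reserve the real work for \eqref{eq_int_x_xp}, which does \emph{not} follow formally from \eqref{eq_int_x_tp} and \eqref{eq_int_tp_xp}, since a positive quantity plus a negative quantity need not be negative.

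First I would record the elementary sign information. By \eqref{eq_jan_v} we may write $v(t) = (t/2)\brk{(1-t^2)^{-1} + c^2(\chi_n - c^2 t^2)^{-1}}$, and since both reciprocals are positive for $\abrk{t} < \min\cbrk{1, \sqrt{\chi_n}/c}$, the function $v$ has the same sign as $t$; in particular $v(t) > 0$ for $0 < t \leq x_n$. The hypothesis $0 \leq x_i$ places the whole interval $[x_i,x_{i+1}]$ in $t \geq 0$, so $v \geq 0$ throughout (and $v>0$ in the interior). By Lemma~\ref{lem_theta_increasing} together with \eqref{eq_theta_at_t} and \eqref{eq_theta_at_x}, on $(x_i,t_{i+1})$ the angle $\theta$ increases from $i\pi$ to $\brk{i+\tfrac12}\pi$, so $2\theta \in \brk{2i\pi,(2i+1)\pi}$ and $\sin(2\theta) > 0$; on $(t_{i+1},x_{i+1})$ it increases from $\brk{i+\tfrac12}\pi$ to $(i+1)\pi$, whence $\sin(2\theta) < 0$. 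Combined with $v>0$ these give \eqref{eq_int_x_tp} and \eqref{eq_int_tp_xp} at once.

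For \eqref{eq_int_x_xp} I would pass to the Pr\"ufer angle as the integration variable. Since $\theta$ is strictly increasing, $dt = d\theta/\theta'$ with $\theta' = f + v\sin(2\theta) > 0$ by \eqref{eq_prufer_theta_ode} and Lemma~\ref{lem_theta_increasing}. Writing $\theta = i\pi + s$ on the first interval and $\theta = (i+1)\pi - s$ on the second, so that in both cases $s$ runs over $(0,\pi/2)$ and the corresponding points $t^-(s), t^+(s)$ satisfy $\sin(2\theta(t^-)) = \sin(2s)$ and $\sin(2\theta(t^+)) = -\sin(2s)$, I can pair the two integrands. The sum \eqref{eq_int_x_tp}$+$\eqref{eq_int_tp_xp} then becomes
\begin{align}
\int_0^{\pi/2} \sin(2s) \brk{ \frac{v^-}{f^- + v^-\sin(2s)} - \frac{v^+}{f^+ - v^+\sin(2s)} } \; ds,
\label{eq_plan_pair}
\end{align}
where $f^\pm = f(t^\pm)$, $v^\pm = v(t^\pm)$, and both denominators equal $\theta'(t^\pm) > 0$.

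The crux is a pointwise comparison of the bracket in \eqref{eq_plan_pair}. Clearing the positive denominators, its numerator is $\brk{v^- f^+ - v^+ f^-} - 2 v^+ v^- \sin(2s)$. Since $s < \pi/2$ forces $\theta(t^+) - \theta(t^-) = \pi - 2s > 0$, monotonicity of $\theta$ gives $t^+ > t^-$, and the key fact \eqref{eq_dvf_positive} (monotonicity of $v/f$) yields $v^+/f^+ > v^-/f^-$, i.e. $v^- f^+ - v^+ f^- < 0$; together with $v^\pm > 0$ and $\sin(2s) > 0$ this makes the entire numerator negative. Hence the integrand in \eqref{eq_plan_pair} is strictly negative on $(0,\pi/2)$, proving \eqref{eq_int_x_xp}. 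The monotonicity \eqref{eq_dvf_positive} is precisely the main obstacle: it is what converts ``positive $+$ negative'' into ``negative.'' The only technical point to watch is the case $\chi_n > c^2$, where $x_n = 1$ and, for $i = n-1$, one has $t^+ \to 1$ with $f,v$ blowing up at the right endpoint; there I would note that $\int_{x_i}^{x_{i+1}} \theta' \, dt = \pi$ while $\int f \, dt$ converges (the singularity is of order $(1-t)^{-1/2}$), so the transformed integral \eqref{eq_plan_pair} is a convergent improper integral and the sign conclusion is unaffected.
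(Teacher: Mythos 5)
Your proof is correct and takes essentially the same approach as the paper: both pass to the Pr\"ufer angle as the integration variable, and both reduce \eqref{eq_int_x_xp} to a pointwise comparison between the positive arc and the negative arc that is powered by exactly the same key fact, the monotonicity of $v/f$ established in \eqref{eq_dvf_positive}. The only cosmetic differences are that you verify \eqref{eq_int_x_tp} and \eqref{eq_int_tp_xp} by direct sign inspection rather than after the substitution, and you pair angles by reflection about $\left(i+\tfrac{1}{2}\right)\pi$ where the paper pairs by a shift of $\pi/2$; your added care about the convergence of the improper integral when $x_{i+1}=x_n=1$ is a point the paper passes over silently.
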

\begin{proof}
Suppose that $i$ is a positive integer such that $(n-1)/2 \leq i \leq n-1$.
Suppose also that the function
$s: \left[0, n\cdot\pi\right] \to \left[-x_n,x_n\right]$ 
is the 
inverse of $\theta$. In other words, for all $0 \leq \eta \leq n \cdot \pi$,
\begin{align}
\theta(s(\eta)) = \eta.
\label{eq_s_eta}
\end{align}
Using \eqref{eq_prufer_theta_ode}, \eqref{eq_theta_at_t},
\eqref{eq_theta_at_x} in Lemma~\ref{lem_prufer}, we expand
the left-hand side of \eqref{eq_int_x_tp} to obtain
\begin{align}
& \int_{x_i}^{t_{i+1}} v(t) \cdot \sin(2\theta(t)) \; dt = \nonumber \\
& \int_{\theta(x_i)}^{\theta(t_{i+1})}
   v(s(\eta)) \cdot \sin(2\eta) \cdot s'(\eta) \; d\eta = \nonumber \\
& \int_{i \cdot \pi}^{(i+1/2) \cdot \pi}
   \frac{v(s(\eta)) \cdot \sin(2\eta) \; d\eta}
   {f(s(\eta)) + v(s(\eta)) \cdot \sin(2\eta)} = \nonumber \\
& \int_0^{\pi/2}
   \frac{v(s(\eta+i\cdot\pi)) \cdot \sin(2\eta) \; d\eta}
   {f(s(\eta+i\cdot\pi)) + v(s(\eta+i\cdot\pi)) \cdot \sin(2\eta)},
\label{eq_vsin_a}
\end{align}
from which \eqref{eq_int_x_tp} readily follows due to
\eqref{eq_jan_v} in Section~\ref{sec_prufer} and 
\eqref{eq_theta_increasing} in Lemma~\ref{lem_theta_increasing}.
By the same token, we expand the left-hand side of \eqref{eq_int_tp_xp}
to obtain
\begin{align}
& \int_{t_{i+1}}^{x_{i+1}} v(t) \cdot \sin(2\theta(t)) \; dt = \nonumber \\
& \int_{(i +1/2) \cdot \pi}^{(i+1) \cdot \pi}
   \frac{v(s(\eta)) \cdot \sin(2\eta) \; d\eta}
   {f(s(\eta)) + v(s(\eta)) \cdot \sin(2\eta)} = \nonumber \\
& -\int_0^{\pi/2}
   \frac{v(s(\eta+(i+1/2)\cdot\pi)) \cdot \sin(2\eta) \; d\eta}
   {f(s(\eta+(i+1/2)\cdot\pi)) - v(s(\eta+(i+1/2)\cdot\pi)) \cdot \sin(2\eta)},
\label{eq_vsin_b}
\end{align}
which, combined with
\eqref{eq_jan_v} in Section~\ref{sec_prufer} and 
\eqref{eq_theta_increasing} in Lemma~\ref{lem_theta_increasing},
implies \eqref{eq_int_tp_xp}. Finally, 
for all $0 < \eta < \pi/2$,
\begin{align}
\frac{\sin(2\eta)}{(f/v)( s(\eta+(i+1/2)\cdot\pi) ) - \sin(2\eta)} >
\frac{\sin(2\eta)}{(f/v)( s(\eta + i\cdot\pi) ) + \sin(2\eta)},
\label{eq_vsin_c}
\end{align}
since the function $f/v$ is decreasing due to
\eqref{eq_dvf_positive}
in the proof of Lemma~\ref{lem_theta_increasing}.
The inequality \eqref{eq_int_x_xp} now follows from the combination
of \eqref{eq_vsin_a}, \eqref{eq_vsin_b} and \eqref{eq_vsin_c}.
\end{proof}
We are now ready to prove one of the principal results of this paper.
It is illustrated in
Tables~\ref{t:test77a}, \ref{t:test77b}, \ref{t:test77c}, \ref{t:test99}.
\begin{thm}
Suppose that $n \geq 2$ is a positive integer. 
If $\chi_n > c^2$, then
\begin{align}
n <
\frac{2}{\pi} \int_0^1 \sqrt{ \frac{\chi_n - c^2 t^2}{1 - t^2} } \; dt.
\label{eq_n_upper_large}
\end{align}
If $\chi_n < c^2$, then
\begin{align}
n <
\frac{2}{\pi} \int_0^{T} 
   \sqrt{ \frac{\chi_n - c^2 t^2}{1 - t^2} } \; dt,
\label{eq_n_upper_small}
\end{align}
where $T$
is the maximal root of $\psi_n'$ in $(-1,1)$.
Note that \eqref{eq_n_upper_large} and
\eqref{eq_n_upper_small} differ only in the range of integration
on their right-hand sides.
\label{thm_n_upper}
\end{thm}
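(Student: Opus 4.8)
The plan is to integrate the modified Pr\"ufer equation \eqref{eq_prufer_theta_ode} across the half-interval $[0,x_n]$ and to show that the monotone term $\int_0^{x_n} f$ alone already overshoots $n\pi/2$, because the oscillatory term integrates to a negative quantity.

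First I would evaluate $\theta$ at the origin. The parity of $\psi_n$ forces $0$ to be a special point: for even $n$ it is the middle root $x_{n/2}$ of $\psi_n'$, while for odd $n$ it is the middle root $t_{(n+1)/2}$ of $\psi_n$. Either way the evaluation formulas \eqref{eq_theta_at_t} and \eqref{eq_theta_at_x} of Lemma~\ref{lem_prufer} give $\theta(0)=n\pi/2$. Since $\theta(x_n)=n\pi$ by the same lemma, integrating \eqref{eq_prufer_theta_ode} from $0$ to $x_n$ produces
\[
\frac{n\pi}{2}=\theta(x_n)-\theta(0)=\int_0^{x_n} f(t)\,dt+\int_0^{x_n} v(t)\sin(2\theta(t))\,dt .
\]

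The heart of the argument is then the claim $\int_0^{x_n} v\sin(2\theta)\,dt<0$, which I would establish by tiling $[0,x_n]$ with the segments of Lemma~\ref{lem_vsin_1}. If $n$ is even, then $0=x_{n/2}$ and $[0,x_n]$ decomposes into the full periods $[x_i,x_{i+1}]$ for $i=n/2,\dots,n-1$, each contributing a negative integral by \eqref{eq_int_x_xp}. If $n$ is odd, then $0=t_{(n+1)/2}$, and $[0,x_n]$ decomposes into the initial half-period $[t_{(n+1)/2},x_{(n+1)/2}]$, negative by \eqref{eq_int_tp_xp}, together with the full periods $[x_i,x_{i+1}]$ for $i=(n+1)/2,\dots,n-1$, negative by \eqref{eq_int_x_xp}. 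In both cases the oscillatory integral is a finite sum of negative terms, hence negative, so $n\pi/2<\int_0^{x_n} f(t)\,dt$, i.e. $n<\tfrac{2}{\pi}\int_0^{x_n} f(t)\,dt$.

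Finally I would read off $x_n$ from Lemma~\ref{lem_five}: when $\chi_n>c^2$ we have $x_n=1$ by \eqref{eq_khi_large}, which yields \eqref{eq_n_upper_large}; when $\chi_n<c^2$ we have $x_n=T$, the maximal root of $\psi_n'$, which yields \eqref{eq_n_upper_small}. The only genuine difficulty lies in the sign of the oscillatory integral, and that work has already been packaged into Lemma~\ref{lem_vsin_1}; here the care is purely combinatorial, namely matching the two parities of $n$ to the $\theta$-values $n\pi/2,\dots,n\pi$ and verifying that the chosen segments tile $[0,x_n]$ exactly (noting also that $\int_0^1 f$ still converges through the integrable singularity of $f$ at $t=1$ when $\chi_n>c^2$).
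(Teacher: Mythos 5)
Your proposal is correct and is essentially identical to the paper's own proof: the paper likewise integrates the modified Pr\"ufer equation \eqref{eq_prufer_theta_ode} over $[0,x_n]$, using the parity split ($0=x_{n/2}$ for even $n$, $0=t_{(n+1)/2}$ for odd $n$) to get $\theta(0)=n\pi/2$ and $\theta(x_n)=n\pi$, and bounds the oscillatory term by the same tiling into the negative contributions \eqref{eq_int_tp_xp} and \eqref{eq_int_x_xp} of Lemma~\ref{lem_vsin_1}. The final identification of $x_n$ as $1$ or $T$ via \eqref{eq_xn} in Lemma~\ref{lem_five} also matches the paper exactly.
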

\begin{proof}
Suppose that the real numbers
\begin{align}
-1 \leq -x_n < t_1 < x_1 < t_2 < \dots < t_{n-1} < x_{n-1} < t_n < x_n \leq 1
\label{eq_all_xt}
\end{align}
are those of
Definition~\ref{def_special} in Section~\ref{sec_special_points},
and the function $\theta: \left[-x_n,x_n\right] \to \Rc$ is
that of Lemma~\ref{lem_prufer} above. Suppose also that
the functions $f,v$ are defined, respectively, via 
\eqref{eq_jan_f}, \eqref{eq_jan_v} in Section~\ref{sec_prufer}.
If $n$ is even, then we combine
\eqref{eq_prufer_theta_ode}, \eqref{eq_theta_at_t},
\eqref{eq_theta_at_x} in Lemma~\ref{lem_prufer} with
\eqref{eq_int_x_xp} in Lemma~\ref{lem_vsin_1} to obtain
\begin{align}
\frac{n}{2} \cdot \pi & \; = 
\int_{x_{n/2}}^{x_n} \theta'(t) \; dt =
\int_0^{x_n} f(t) \; dt + 
\sum_{i=n/2}^{n-1} \int_{x_i}^{x_{i+1}} v(t) \cdot \sin(2\theta(t)) \; dt
\nonumber \\
& \; < \int_0^{x_n} f(t) \; dt.
\label{eq_n_upper_even}
\end{align}
If $n$ is odd, then we combine
\eqref{eq_prufer_theta_ode}, \eqref{eq_theta_at_t},
\eqref{eq_theta_at_x} in Lemma~\ref{lem_prufer} with
\eqref{eq_int_tp_xp}, \eqref{eq_int_x_xp} in Lemma~\ref{lem_vsin_1} to obtain
\begin{align}
\frac{n}{2} \cdot \pi & \; = 
\int_{t_{(n+1)/2}}^{x_n} \theta'(t) \; dt
= 
\int_0^{x_n} f(t) \; dt \; + \nonumber \\
& \; \; \; \; \; 
\int_{t_{(n+1)/2}}^{x_{(n+1)/2}} v(t) \cdot \sin(2\theta(t)) \; dt +
\sum_{i=(n+1)/2}^{n-1} \int_{x_i}^{x_{i+1}} v(t) \cdot \sin(2\theta(t)) \; dt
\nonumber \\
& \; < \int_0^{x_n} f(t) \; dt.
\label{eq_n_upper_odd}
\end{align}
We combine \eqref{eq_n_upper_even} and \eqref{eq_n_upper_odd}
with \eqref{eq_xn} in Lemma~\ref{lem_five}
to conclude both \eqref{eq_n_upper_large} and
\eqref{eq_n_upper_small}.
\end{proof}
To prove Theorem~\ref{thm_n_lower}, we need to develop a number
of technical tools. In the following two lemmas,
we describe several properties of the equation $f(t)=v(t)$ in the unknown
$t$, where $f,v$ are defined, respectively, 
via \eqref{eq_jan_f}, \eqref{eq_jan_v} in Section~\ref{sec_prufer}.
\begin{lem}
Suppose that $n \geq 0$ is a non-negative integer. Suppose also
that the functions $f, v$ are defined, respectively, 
via \eqref{eq_jan_f}, \eqref{eq_jan_v}
in Section~\ref{sec_prufer}. Suppose furthermore that the
real number $x_n$ is that
of Definition~\ref{def_special} in Section~\ref{sec_special_points}.
Then, there exists a unique point $\hat{t}$ in the interval
$(0,x_n)$ such that
\begin{align}
f(\hat{t}) = v(\hat{t}).
\label{eq_t_hat}
\end{align}
\label{lem_jan_fv}
\end{lem}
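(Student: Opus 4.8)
The plan is to analyze the equation $f(t) = v(t)$ by studying the behavior of the ratio $f/v$ (equivalently $v/f$) on the interval $(0, x_n)$, exploiting the monotonicity already established in the proof of Lemma~\ref{lem_theta_increasing}. Recall from \eqref{eq_jan_f} and \eqref{eq_jan_v} that $f(t) = \sqrt{(\chi_n - c^2 t^2)/(1-t^2)}$ and $v(t) = \frac{1}{2}\left( t/(1-t^2) + c^2 t/(\chi_n - c^2 t^2) \right)$. The key structural fact, proven as \eqref{eq_dvf_positive} in Lemma~\ref{lem_theta_increasing}, is that $v/f$ is strictly increasing on $(-x_n, x_n)$; equivalently $f/v$ is strictly decreasing wherever $v > 0$. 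Since both $f$ and $v$ are continuous and positive on the open interval $(0, x_n)$ (positivity of $v$ for $t > 0$ is immediate from its formula, and positivity of $f$ follows because $\chi_n - c^2 t^2 > 0$ and $1 - t^2 > 0$ there, using \eqref{eq_all_tx}), existence and uniqueness of a solution to $f(\hat t) = v(\hat t)$ reduces to a sign-change argument for the monotone function $g := v/f$.

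First I would establish the endpoint behavior of $g = v/f$. As $t \to 0^+$, the numerator $v(t) \to 0$ (since $v$ has a factor of $t$), while $f(t) \to \sqrt{\chi_n} > 0$, so $g(0^+) = 0 < 1$, giving $f > v$ near $0$. The harder endpoint is $t \to x_n^-$. Here I would split into the two cases governed by whether $\chi_n > c^2$ or $\chi_n < c^2$, as in Lemma~\ref{lem_five}. If $\chi_n > c^2$, then $x_n = 1$ and as $t \to 1^-$ the term $t/(1-t^2)$ in $v$ blows up while $f = \sqrt{(\chi_n - c^2 t^2)/(1-t^2)}$ also blows up but only like $1/\sqrt{1-t^2}$; comparing orders, $v \sim \frac{1}{2(1-t)}$ dominates $f \sim \sqrt{(\chi_n - c^2)/2}\,(1-t)^{-1/2}$, so $g(t) \to +\infty > 1$. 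If $\chi_n < c^2$, then $x_n < \sqrt{\chi_n}/c < 1$ is a root of $\psi_n'$, and by \eqref{eq_all_tx} we have $x_n < \sqrt{\chi_n}/c$, so $f(x_n) > 0$ stays finite while the second term $c^2 t/(\chi_n - c^2 t^2)$ of $v$ stays finite and positive; here I would need a separate argument that $g(x_n) > 1$, most naturally by invoking that $x_n$ is a root of $\psi_n'$ together with the Pr\"ufer relation, or by a direct estimate showing $v$ exceeds $f$ at the turning-point side. By the intermediate value theorem applied to the continuous function $g - 1$, which changes sign from negative to positive across $(0, x_n)$, a root $\hat t$ exists; strict monotonicity of $g$ from \eqref{eq_dvf_positive} forces it to be unique.

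The main obstacle I anticipate is the endpoint analysis at $t \to x_n^-$ in the subcritical case $\chi_n < c^2$, where $x_n$ is merely the maximal root of $\psi_n'$ rather than the fixed endpoint $1$, so the clean blow-up argument available when $\chi_n > c^2$ is unavailable. Handling this cleanly will likely require relating the condition $\psi_n'(x_n) = 0$ back to the Pr\"ufer phase: at $x_n$ we have $\theta(x_n) = n\pi$ by \eqref{eq_theta_at_x}, and I would want to extract from the differential equation \eqref{eq_prufer_theta_ode} and the monotonicity \eqref{eq_theta_increasing} that $\theta'(x_n) > 0$ forces $f(x_n) + v(x_n)\sin(2\theta(x_n)) > 0$ together with a supplementary inequality that pins down the sign of $g(x_n) - 1$. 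An alternative, perhaps cleaner route is to observe that by \eqref{eq_all_tx} the point $x_n$ satisfies $c^2 x_n^2 < \chi_n$ yet is close enough to $\sqrt{\chi_n}/c$ that a direct algebraic comparison of $v(x_n)$ and $f(x_n)$ yields $v(x_n) \geq f(x_n)$; I would verify this by manipulating the explicit formulas and using that $x_n$ lies strictly between $t_n$ and $\sqrt{\chi_n}/c$. Once the sign of $g - 1$ is secured at both endpoints in both cases, the existence and uniqueness conclusion follows immediately from the monotonicity already in hand, so the entire lemma rests on this single endpoint estimate.
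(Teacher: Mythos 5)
Your overall strategy---strict monotonicity of $v/f$ from \eqref{eq_dvf_positive}, endpoint values, and the intermediate value theorem---is exactly the paper's argument, and your treatment of the case $\chi_n>c^2$ (where $x_n=1$ and $v/f\to\infty$ like $(1-t)^{-1/2}$) is complete and correct. The genuine gap is the one you flag yourself: the case $\chi_n<c^2$, where $x_n$ is the last root of $\psi_n'$ and, by \eqref{eq_all_tx}, lies \emph{strictly} below the turning point $t_*=\sqrt{\chi_n}/c$, so that $v/f$ is finite at $x_n$ and you must prove $v(x_n)\geq f(x_n)$. Neither of your proposed patches can close this gap. The Pr\"ufer route is vacuous: by \eqref{eq_theta_at_x}, $\theta(x_n)=n\pi$, hence $\sin(2\theta(x_n))=0$ and the positivity of $\theta'(x_n)=f(x_n)$ says nothing about $v(x_n)$ versus $f(x_n)$. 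The direct algebraic route fails because the inequality $v(x_n)\geq f(x_n)$ is in general \emph{false}: near $t_*$ the prolate equation is locally of Airy type with length scale $A^{-1/3}$, where $A=2c^2t_*/(1-t_*^2)$, so the last extremum sits at $t_*-x_n\approx 1.02\,A^{-1/3}$; substituting into \eqref{eq_jan_f}, \eqref{eq_jan_v} gives $f(x_n)\approx 1.01\,A^{1/3}$ but $v(x_n)\approx 0.25\,A^{1/3}$, i.e.\ $v(x_n)/f(x_n)\approx 1/4<1$ whenever $c$ is large and $\chi_n/c^2$ stays bounded away from $1$. In that regime $v<f$ on all of $(0,x_n]$, and the unique crossing of $f$ and $v$ lies in $(x_n,\sqrt{\chi_n}/c)$, outside the stated interval.

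You should know, however, that here you were more careful than the paper: its own proof commits precisely the error you were trying to avoid, asserting \eqref{eq_vf_limit}, i.e.\ $v(t)/f(t)\to\infty$ as $t\to x_n^-$, which is valid only when $x_n$ coincides with $\min\{\sqrt{\chi_n}/c,1\}$, that is, only in the case $\chi_n>c^2$. What the method actually proves is existence and uniqueness of the crossing in $(0,\min\{\sqrt{\chi_n}/c,1\})$: the computation \eqref{eq_d4vf} shows $v/f$ is strictly increasing on that whole interval (it uses only $p,q>0$ and $p''<0$, $q''<0$), $v/f$ vanishes at $0$, and it genuinely blows up at the right endpoint, where either $q\to 0$ or $p\to 0$. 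When $\chi_n>c^2$ this interval equals $(0,x_n)=(0,1)$ and the lemma follows; when $\chi_n<c^2$ the crossing exists and is unique in $(0,\sqrt{\chi_n}/c)$ but can lie beyond $x_n$, so the lemma as stated (and its subsequent use in Lemma~\ref{lem_pi4}) is itself defective in that case. The correct resolution is not a sharper endpoint estimate at $x_n$ but a restatement of the lemma on the larger interval; no argument of the kind you sketch can rescue the statement as written.
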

\begin{proof}
We observe that, 
due to \eqref{eq_jan_f},\eqref{eq_jan_v} in Section~\ref{sec_prufer},
\begin{align}
\frac{v(t)}{f(t)} > 0
\label{eq_vf_positive}
\end{align}
for all $0 < t < x_n$.
Moreover, 
\begin{align}
\frac{v(0)}{f(0)} = 0, \quad 
\lim_{t \to x_n. \; t < x_n} \frac{v(t)}{f(t)} = \infty.
\label{eq_vf_limit}
\end{align}
We combine
\eqref{eq_dvf_positive} in the proof of Lemma~\ref{lem_theta_increasing}
with \eqref{eq_vf_positive} and \eqref{eq_vf_limit} to conclude
both existence and uniqueness of the solution to the equation $f(t)=v(t)$
in the unknown $0 < t < x_n$.
\end{proof}
\begin{lem}
Suppose that $n \geq 2$ is a positive integer.
Suppose also that the real number $x_n$
and the function $\theta: \left[-x_n,x_n\right] \to \Rc$
are those of Lemma~\ref{lem_prufer} above. Suppose
furthermore that the point $0 < \hat{t} < x_n$ is that
of Lemma~\ref{lem_jan_fv} above.
Then,
\begin{align}
\left(n-\frac{1}{4}\right) \cdot \pi < \theta(\hat{t}) < n \cdot \pi.
\label{eq_pi4_ineq}
\end{align}
\label{lem_pi4}
\end{lem}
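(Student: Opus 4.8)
The plan is to treat the two inequalities separately, with the upper bound being essentially immediate and the lower bound carrying all the content. For the upper bound $\theta(\hat{t}) < n\pi$: since $\hat{t}$ lies in $(0,x_n)$ by Lemma~\ref{lem_jan_fv} and $\theta$ is strictly increasing on $\left[-x_n,x_n\right]$ by Lemma~\ref{lem_theta_increasing}, I immediately obtain $\theta(\hat{t}) < \theta(x_n) = n\pi$, using the value $\theta(x_n) = n\pi$ recorded in Lemma~\ref{lem_prufer}.

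For the lower bound, the key observation is that the value $(n-1/4)\pi$ is precisely the argument at which the oscillatory term of the ODE \eqref{eq_prufer_theta_ode} attains its extreme value $-1$ on the last arch. First I would introduce the unique point $\tau \in (t_n, x_n)$ satisfying $\theta(\tau) = (n-1/4)\pi$; it exists and is unique because $\theta$ is continuous and strictly increasing with $\theta(t_n) = (n-1/2)\pi < (n-1/4)\pi < n\pi = \theta(x_n)$, by \eqref{eq_theta_at_t} and \eqref{eq_theta_at_x}. At this point $\sin(2\theta(\tau)) = \sin\!\big((2n-1/2)\pi\big) = -1$, so evaluating \eqref{eq_prufer_theta_ode} at $\tau$ gives $\theta'(\tau) = f(\tau) - v(\tau)$. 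Since $\theta'(\tau) > 0$ by Lemma~\ref{lem_theta_increasing}, this forces $f(\tau) > v(\tau)$, that is $v(\tau)/f(\tau) < 1$.

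To finish, I would invoke the strict monotonicity of $v/f$ established in \eqref{eq_dvf_positive}: because $v/f$ is increasing and equals $1$ exactly at $\hat{t}$ by its defining property \eqref{eq_t_hat}, the inequality $v(\tau)/f(\tau) < 1$ places $\tau$ strictly to the left of $\hat{t}$. Applying the monotonicity of $\theta$ once more yields $\theta(\hat{t}) > \theta(\tau) = (n-1/4)\pi$, which is the desired lower bound.

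I do not anticipate a genuine obstacle: the whole argument hinges on recognizing that $\theta = (n-1/4)\pi$ corresponds to $\sin(2\theta) = -1$, after which the positivity of $\theta'$ converts directly into $f(\tau) > v(\tau)$ and hence into a statement about the position of $\tau$ relative to $\hat{t}$. The only points requiring care are verifying that $\tau$ indeed lands in $(t_n, x_n) \subset (0, x_n)$ — which uses $t_n > 0$ for $n \geq 2$ and guarantees that the comparison with $\hat{t}$ takes place inside the common domain $(0,x_n)$ of $v/f$ — and checking that every inequality above remains strict.
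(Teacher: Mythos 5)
Your proof is correct and is essentially the paper's own argument: the paper also works at the point where $\theta = (n-1/4)\pi$, uses $\sin(2\theta)=-1$ there together with the ODE \eqref{eq_prufer_theta_ode}, the strict positivity of $\theta'$ from Lemma~\ref{lem_theta_increasing}, and the monotonicity of $v/f$ from \eqref{eq_dvf_positive}. The only difference is presentational — the paper phrases it as a contradiction ($\hat{t} < x$ would force $\theta'(x) = f(x)-v(x) < 0$), whereas you run the same implications directly to place $\tau$ to the left of $\hat{t}$.
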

\begin{proof}
Suppose that the point $0<x<x_n$ is defined via the formula
\begin{align}
x = \theta^{-1}\left( \left(n-\frac{1}{4}\right) \cdot \pi \right),
\label{eq_x_pi4}
\end{align}
where $\theta^{-1}$ denotes the inverse of $\theta$.
By contradiction, suppose that \eqref{eq_pi4_ineq} does not hold. In other
words,
\begin{align}
0 < \hat{t} < x.
\label{eq_pi4_contra}
\end{align}
It follows from the combination of Lemma~\ref{lem_jan_fv}, 
\eqref{eq_dvf_positive} in the proof of Lemma~\ref{lem_theta_increasing},
and \eqref{eq_pi4_contra}, that $f(x) < v(x)$. On the other hand,
due to \eqref{eq_prufer_theta_ode} in Lemma~\ref{lem_prufer} and 
\eqref{eq_x_pi4},
\begin{align}
\theta'(x)
& \; = f(x) + v(x) \cdot \sin(2\theta(x)) \nonumber \\
& \; = f(x) + v(x) \cdot \sin\left(2 n \pi - \frac{\pi}{2}\right)
     = f(x) - v(x) < 0,
\end{align}
in contradiction to \eqref{eq_theta_increasing} 
in Lemma~\ref{lem_theta_increasing}.
\end{proof}
In the following three lemmas, we study some of the properties
of the ratio $f/v$, where $f,v$ are defined, respectively,
via \eqref{eq_jan_f}, \eqref{eq_jan_v} in Section~\ref{sec_prufer}.
\begin{lem}
Suppose that $n\geq 0$ is a non-negative integer, and that
the functions $f,v$ are defined, respectively, 
via \eqref{eq_jan_f}, \eqref{eq_jan_v}
in Section~\ref{sec_prufer}. Then, for all real $0 < t < 1$,
\begin{align}
-\frac{d}{dt}\left(\frac{f}{v}\right)(t) = h_t(a) \cdot f(t),
\label{eq_dfv}
\end{align}
where the real number $a>0$ is defined via the formula
\begin{align}
a = \frac{\chi_n}{c^2},
\label{eq_dfv_a}
\end{align}
and, for all $0<t<1$, the function $h_t : (0,\infty) \to \Rc$ 
is defined via the formula
\begin{align}
h_t(a) = \frac{ 4t^6 + (2a-6)\cdot t^4 + (4-8a)\cdot t^2 + 
      2a\cdot(a+1) }
     {t^2\cdot\left(1+a-2t^2\right)^2}.
\label{eq_hta}
\end{align}
Moreover, for all real $0 < t < \min\left\{ \sqrt{a}, 1 \right\}$,
\begin{align}
-\frac{d}{dt}\left(\frac{f}{v}\right)(t) \geq \frac{3}{2} \cdot f(t).
\label{eq_dfv_ineq}
\end{align}
\label{lem_dfv}
\end{lem}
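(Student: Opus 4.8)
The plan is to treat the two claims separately: first establish the exact identity \eqref{eq_dfv}--\eqref{eq_hta} by direct differentiation of $f/v$, and then derive the inequality \eqref{eq_dfv_ineq} by analyzing the numerator of $h_t(a)$ as a polynomial and bounding $h_t(a)$ from below by $3/2$ on the relevant range.

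For the first part, I would start from the explicit formulae $f(t)=\sqrt{(\chi_n-c^2t^2)/(1-t^2)}$ in \eqref{eq_jan_f} and $v(t)=\tfrac{1}{2}\!\left(\tfrac{t}{1-t^2}+\tfrac{c^2t}{\chi_n-c^2t^2}\right)$ in \eqref{eq_jan_v}. The cleanest route is to write $f/v$ in terms of $p(t)=1-t^2$ and $q(t)=\chi_n-c^2t^2$ as in \eqref{eq_prufer_1}, so that $f=\sqrt{q/p}$ and $v=-\tfrac14\,(p'q+q'p)/(pq)$, giving $f/v = -4\,p^{1/2}q^{3/2}/(p'q+q'p)$. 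Differentiating this and simplifying (using $p'=-2t$, $q'=-2c^2t$, $p''=-2$, $q''=-2c^2$ from \eqref{eq_prufer_1}) should reproduce the structure already seen in \eqref{eq_d4vf} in the proof of Lemma~\ref{lem_theta_increasing}, since $(f/v)' = -(v/f)'\cdot(f/v)^2$. After substituting $\chi_n=ac^2$ and clearing the common factor $c$, the numerator should collapse to the stated sextic $4t^6+(2a-6)t^4+(4-8a)t^2+2a(a+1)$ over the denominator $t^2(1+a-2t^2)^2$; the factor $f(t)$ on the right-hand side of \eqref{eq_dfv} is what remains after pulling out one power of $\sqrt{q/p}$. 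This step is routine but bookkeeping-heavy, so I would organize it by tracking $p,q$ symbolically and only substituting $t^2$ and $a$ at the very end.

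For the inequality \eqref{eq_dfv_ineq}, by \eqref{eq_dfv} it suffices to show $h_t(a)\ge 3/2$ whenever $0<t<\min\{\sqrt a,1\}$, since $f(t)>0$ there. Because the denominator $t^2(1+a-2t^2)^2$ is positive, this reduces to the polynomial inequality
\begin{align}
4t^6+(2a-6)t^4+(4-8a)t^2+2a(a+1) \ \ge\ \tfrac32\,t^2(1+a-2t^2)^2
\label{eq_dfv_poly}
\end{align}
on the range $0<t^2<\min\{a,1\}$. Setting $s=t^2$ and expanding the right-hand side, \eqref{eq_dfv_poly} becomes a cubic in $s$ with coefficients depending on $a$; the $s^3$ terms cancel ($4s^3$ on the left versus $\tfrac32\cdot 4 s^3=6s^3$ — so in fact I expect a nonzero leading term and must check its sign), so I would collect everything to one side as $P(s)\ge 0$ and analyze $P$ on $[0,\min\{a,1\}]$. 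The natural strategy is to show $P$ is nonnegative at the endpoints $s=0$ and $s=\min\{a,1\}$ and that any interior critical points also give nonnegative values, exploiting the constraint $s<a$ to control the sign of the cross terms.

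The main obstacle I anticipate is the inequality step rather than the identity: the cubic $P(s)$ has $a$-dependent coefficients, so a naive endpoint-plus-monotonicity argument may fail for some range of $a$, and I may need to split into cases according to whether $a\le 1$ or $a>1$ (equivalently whether the binding constraint is $s<a$ or $s<1$) and possibly to complete the square in $a$ at fixed $s$. I would keep in mind that the stated constant is $3/2$, not the sharp bound, which suggests there is slack to absorb; if a direct factorization of $P(s)$ is unavailable, I would instead bound each offending term using $s<a$ (so that $s^2<as$, $s^3<as^2$, etc.) to reduce \eqref{eq_dfv_poly} to manifestly nonnegative combinations.
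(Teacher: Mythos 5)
Your treatment of the identity \eqref{eq_dfv}--\eqref{eq_hta} matches the paper, which likewise obtains it by direct differentiation and dismisses the computation as ``straightforward algebraic manipulations''; organizing the bookkeeping through $p$, $q$ and the relation to \eqref{eq_d4vf} is fine. The divergence is in the inequality \eqref{eq_dfv_ineq}, and there your proposal has a genuine soft spot. The reduction is correct: with $s=t^2$, the claim $h_t(a)\ge\tfrac32$ is equivalent to $P(s,a)\ge 0$ on $\{0<s<\min(a,1)\}$, where
\begin{align*}
P(s,a) \;=\; \left(2-\tfrac32\,s\right)a^2 \;+\; \left(8s^2-11s+2\right)a \;+\; s\left(\tfrac52-2s^2\right).
\end{align*}
But your fallback plan --- ``bound each offending term using $s<a$,'' justified by the expectation that the constant $\tfrac32$ leaves slack --- cannot work, because $\tfrac32$ is the \emph{sharp} infimum. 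The paper's own computation shows this: for fixed $t$ with $1/7<t^2<1$, the function $a\mapsto h_t(a)$ has an interior minimum at $\hat{a}_t$ given by \eqref{eq_ahat}, with minimum value $(-t^4+14t^2-1)/(8t^4)$ by \eqref{eq_h_ahat}, and this value decreases to exactly $\tfrac32$ as $t\to 1$ (see \eqref{eq_32}); equivalently, $P(s,a)\to 0$ as $(s,a)\to(1,1)$. Any argument that discards terms wholesale must therefore fail in that corner of the domain. Your primary route (the cubic in $s$ at fixed $a$) is viable in principle, but the entire difficulty sits in the interior-critical-point step you left unexecuted: the local minimum lies at $s_-=\bigl(8a-\sqrt{55a^2-66a+15}\,\bigr)/6$, and verifying $P(s_-,a)\ge 0$ there is exactly the hard part, not a routine check.

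The good news is that the alternative you mention only as a contingency --- completing the square in $a$ at fixed $s$ --- closes the gap cleanly, and it is in effect the paper's argument (fix $t$, minimize over $a\in(t^2,\infty)$ using the boundary limits \eqref{eq_hta_2} and the critical point \eqref{eq_ahat}) transposed from calculus into algebra. Since $2-\tfrac32 s>\tfrac12$ for $0<s<1$, $P$ is an upward parabola in $a$, and its discriminant factors as $4(s-1)^3(13s-1)$. Hence for $1/13<s<1$ the discriminant is negative and $P(s,a)>0$ for \emph{every} $a$; for $0<s\le 1/13$ the vertex sits at $a^\ast=(11s-8s^2-2)/(4-3s)<0<s$, while $P(s,s)=\tfrac92\,s(1-s)^2>0$, so $P$ is positive and increasing in $a$ on $a\ge s$. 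This is, if anything, tidier than the paper's computation; but as written, your proposal stops short of the one step that actually carries the proof, and its designated safety net is the one approach that provably cannot succeed.
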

\begin{proof}
The identity \eqref{eq_dfv} is obtained from \eqref{eq_jan_f},
\eqref{eq_jan_v}
via straightforward algebraic manipulations. To establish
\eqref{eq_dfv_ineq}, it suffices to show that,
for a fixed $0<t<1$,
\begin{align}
\inf_a \left\{ h_t(a) \; : t^2 < a < \infty \right\} \geq \frac{3}{2}.
\label{eq_hta_1}
\end{align}
We start with observing that, for all $0 < t < 1$,
\begin{align}
\lim_{a \to t^2,\; a > t^2} h_t(a) = 6, \quad
\lim_{a \to \infty} h_t(a) = \frac{2}{t^2}.
\label{eq_hta_2}
\end{align}
Then, we differentiate $h_t(a)$, given via \eqref{eq_hta},
with respect to $a$ to obtain
\begin{align}
\frac{d h_t}{d a}(a) = 
\frac{2 \cdot(1-t^2)}{t^2 \cdot(1+a-2t^2)^3} \cdot
\left(
6t^4 + (a-9)\cdot t^2 + a + 1
\right).
\label{eq_dha}
\end{align}
It follows from \eqref{eq_hta_2}, \eqref{eq_dha}, that if
$t^2 < \hat{a}_t < \infty$ is a local extremum of $h_t(a)$,
then
\begin{align}
\hat{a}_t = \frac{-6t^4+9t^2-1}{t^2+1} > t^2,
\label{eq_ahat}
\end{align}
which is possible if and only if $1 > t^2 > 1/7$. Then we substitute
$\hat{a}_t$, given via \eqref{eq_ahat}, into \eqref{eq_hta} to obtain
\begin{align}
h(t,\hat{a}_t) = \frac{-t^4 + 14t^2-1}{8t^4}.
\label{eq_h_ahat}
\end{align}
It is trivial to verify that
\begin{align}
\inf_t\left\{ h(t,\hat{a}_t) \; : \; \frac{1}{7} < t < 1\right\} =
\lim_{t\to 1, \; t>1} h(t,\hat{a}_t) = \frac{3}{2}.
\label{eq_32}
\end{align}
Now \eqref{eq_hta_1} follows from the combination of
\eqref{eq_hta_2}, \eqref{eq_ahat}, \eqref{eq_h_ahat} and \eqref{eq_32}.
\end{proof}
\begin{lem}
Suppose that $n \geq 2$ is a positive integer, and that $t_n$
is the maximal zero of $\psi_n$ is the interval $(-1,1)$.
Suppose also
that the real number $Z_0$ is defined via the formula
\begin{align}
Z_0 = \frac{1}{1 + \frac{3\pi}{8}} \approx 0.4591.
\label{eq_z0}
\end{align}
Then, for all $0 < t \leq t_n$,
\begin{align}
v(t) < f(t) \cdot Z_0,
\label{eq_fv_z0}
\end{align} 
where the functions $f,v$ are defined, respectively, via
\eqref{eq_jan_f},\eqref{eq_jan_v} in Section~\ref{sec_prufer}. 
\label{lem_z0}
\end{lem}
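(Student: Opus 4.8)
The plan is to reduce the pointwise claim to a single inequality at the right endpoint and then control the ratio $f/v$ by integrating the differential inequality of Lemma~\ref{lem_dfv}. Since \eqref{eq_dvf_positive} (from the proof of Lemma~\ref{lem_theta_increasing}) shows that $v/f$ is strictly increasing on $(0,x_n)$, and $0<t_n<x_n$ by \eqref{eq_all_tx}, the supremum of $v/f$ over $(0,t_n]$ is attained at $t_n$. Hence it suffices to prove the single strict inequality $v(t_n)/f(t_n)<Z_0$, equivalently $f(t_n)/v(t_n)>1+3\pi/8=1/Z_0$; the full statement for all $0<t\le t_n$ then follows by monotonicity, with strictness preserved because the bound is already strict at $t_n$.

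To set up the reference point, I would invoke the unique solution $\hat t\in(0,x_n)$ of $f(\hat t)=v(\hat t)$ from Lemma~\ref{lem_jan_fv}, at which $f/v=1$. Lemma~\ref{lem_pi4} gives $\theta(\hat t)>(n-1/4)\pi$, while \eqref{eq_theta_at_t} gives $\theta(t_n)=(n-1/2)\pi$; since $\theta$ is strictly increasing (Lemma~\ref{lem_theta_increasing}), this forces $t_n<\hat t$. Moreover \eqref{eq_all_tx} together with Lemma~\ref{lem_five} yields $\hat t<x_n\le\min\{\sqrt{\chi_n}/c,\,1\}$, so the lower bound $-\tfrac{d}{dt}(f/v)\ge\tfrac32\,f$ of \eqref{eq_dfv_ineq} holds throughout $(t_n,\hat t)$. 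Integrating that inequality from $t_n$ to $\hat t$ and using $f(\hat t)/v(\hat t)=1$ gives $f(t_n)/v(t_n)\ge 1+\tfrac32\int_{t_n}^{\hat t}f(t)\,dt$.

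It remains to show $\int_{t_n}^{\hat t}f(t)\,dt>\pi/4$, which is the crux. For this I would use the Pr\"ufer ODE $\theta'(t)=f(t)+v(t)\sin(2\theta(t))$ of \eqref{eq_prufer_theta_ode}. On $(t_n,\hat t)$ the angle satisfies $\theta\in\left((n-\tfrac12)\pi,\,n\pi\right)$, where the lower end comes from $\theta(t_n)=(n-\tfrac12)\pi$ and the upper end from $\theta(\hat t)<n\pi$ in Lemma~\ref{lem_pi4}; hence $2\theta\in\left((2n-1)\pi,\,2n\pi\right)$ and $\sin(2\theta)\le 0$. Since $v>0$ there, the oscillatory term is nonpositive, so $f=\theta'-v\sin(2\theta)\ge\theta'$ pointwise. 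Integrating, $\int_{t_n}^{\hat t}f\,dt\ge\theta(\hat t)-\theta(t_n)>(n-\tfrac14)\pi-(n-\tfrac12)\pi=\pi/4$. Combining this with the previous display gives $f(t_n)/v(t_n)>1+\tfrac32\cdot\tfrac\pi4=1+\tfrac{3\pi}{8}=1/Z_0$, which is exactly the required bound.

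The main obstacle is not any single computation but the orchestration of the constants: the bound $\tfrac32$ from Lemma~\ref{lem_dfv} and the angular gap $\pi/4$ from Lemma~\ref{lem_pi4} are precisely what is needed for their product $\tfrac32\cdot\tfrac\pi4=\tfrac{3\pi}{8}$ to match the definition \eqref{eq_z0} of $Z_0$. The delicate structural point that makes this work is verifying $\sin(2\theta)\le 0$ on all of $(t_n,\hat t)$, which in turn relies on $\hat t<x_n$ keeping $\theta$ strictly below $n\pi$, and on the applicability of \eqref{eq_dfv_ineq} on the whole subinterval, both of which I would check carefully against the ordering in \eqref{eq_all_tx}.
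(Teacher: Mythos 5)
Your proposal is correct and is essentially the paper's own argument: the same reduction to $f(t_n)/v(t_n) > 1 + 3\pi/8$ via the monotonicity of $f/v$, the same reference point $\hat t$ from Lemma~\ref{lem_jan_fv}, and the same pairing of the $\tfrac{3}{2}$ bound of Lemma~\ref{lem_dfv} with the $\pi/4$ angular gap of Lemma~\ref{lem_pi4}. The only difference is organizational: the paper changes variables to $\eta = \theta(t)$ and truncates the integration range to $\left[(n-\tfrac12)\pi,(n-\tfrac14)\pi\right]$, while you stay in the $t$-variable and convert $\int_{t_n}^{\hat t} f(t)\,dt$ into the angular increment $\theta(\hat t)-\theta(t_n)$ using the sign of $\sin(2\theta)$ --- the same estimate written in the opposite order.
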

\begin{proof}
Due to \eqref{eq_dvf_positive} in the proof of Lemma~\ref{lem_theta_increasing},
the function $f/v$ 
decreases monotonically in the interval
$(0,t_n)$, and therefore, to prove \eqref{eq_fv_z0}, it suffices to show
that
\begin{align}
\frac{f(t_n)}{v(t_n)} > \frac{1}{Z_0} = 1 + \frac{3\pi}{8}.
\label{eq_fv_tn}
\end{align}
Suppose that the point $\hat{t}$ is that of Lemma~\ref{lem_jan_fv}.
Suppose also that the real number $x_n$ and the 
function $\theta: \left[-x_n,x_n\right] \to \Rc$ are those
of Lemma~\ref{lem_prufer}. Suppose furthermore that
the function
$s: \left[0, n\cdot\pi\right] \to \left[-x_n,x_n\right]$ 
is the 
inverse of $\theta$. In other words, for all $0 \leq \eta \leq n \cdot \pi$,
\begin{align}
\theta(s(\eta)) = \eta.
\label{eq_s_eta_2}
\end{align}
We combine Lemma~\ref{lem_prufer}, 
Lemma~\ref{lem_theta_increasing},
Lemma~\ref{lem_jan_fv},
Lemma~\ref{lem_pi4} and
Lemma~\ref{lem_dfv} to obtain
\begin{align}
& \frac{f(t_n)}{v(t_n)}-1 =
\left(-\frac{f}{v}\right)(\hat{t}) - \left(-\frac{f}{v}\right)(t_n) =
\nonumber \\
& \int_{t_n}^{\hat{t}} \frac{d}{dt} \left(-\frac{f}{v}\right)(t) \; dt =
 \int_{\theta(t_n)}^{\theta(\hat{t})}
\frac{ \frac{d}{dt}\left(-\frac{f}{v}\right)(s(\eta)) \; d\eta}
{f(s(\eta)) + v(s(\eta)) \cdot \sin(2\eta) } >
\nonumber \\
& \int_{(n-1/2)\pi}^{(n-1/4)\pi}
\frac{ \frac{d}{dt}\left(-\frac{f}{v}\right)(s(\eta)) \; d\eta}
{f(s(\eta)) + v(s(\eta)) \cdot \sin(2\eta) } > 
\nonumber \\
& \int_{(n-1/2)\pi}^{(n-1/4)\pi}
\frac{ \frac{d}{dt}\left(-\frac{f}{v}\right)(s(\eta)) \; d\eta}
{f(s(\eta))} > \frac{\pi}{4} \cdot \frac{3}{2} = \frac{3\pi}{8},
\end{align}
which implies \eqref{eq_fv_tn}.
\end{proof}
\begin{lem}
Suppose that $n \geq 2$ and $(n+1)/2 \leq i \leq n-1$ are
positive integers. Suppose also that the real number $x_n$
and the function $\theta:\left[-x_n,x_n\right] \to \Rc$
are those of Lemma~\ref{lem_prufer}. Suppose furthermore that
$0 < \delta < \pi/4$ is a real number, and that the real
number $Z_\delta$ is defined via the formula
\begin{align}
Z_\delta = \left[
1 + \frac{3}{2} \cdot \left(\frac{\pi}{4} + 
                            \frac{\delta}{1+Z_0\cdot\sin(2\delta)}\right)
\right]^{-1},
\label{eq_z_delta}
\end{align}
where $Z_0$ is defined via \eqref{eq_z0} in Lemma~\ref{lem_z0} above.
Then, 
\begin{align}
v(t) < f(t) \cdot Z_\delta,
\label{eq_z_delta_ineq}
\end{align}
for all $0 < t \leq s\left((i+1/2)\cdot\pi-\delta\right)$,
where the functions $f,v$ are defined, respectively, via \eqref{eq_jan_f},
\eqref{eq_jan_v}
in Section~\ref{sec_prufer}, and the function 
$s:\left[0,n\cdot\pi\right] \to \left[-x_n,x_n\right]$
is the inverse of $\theta$.
\label{lem_z_delta}
\end{lem}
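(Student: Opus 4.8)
\emph{Proof plan.} The plan is to reduce the claim to the single endpoint $t^{\ast} = s\brk{(i+1/2)\pi - \delta}$ and then imitate the proof of Lemma~\ref{lem_z0}. Since $f/v$ is monotonically decreasing on $(0,x_n)$ by \eqref{eq_dvf_positive} in the proof of Lemma~\ref{lem_theta_increasing}, the ratio $v/f$ is increasing, so it suffices to verify $v(t^{\ast}) < f(t^{\ast}) \cdot Z_\delta$; the inequality for every $0 < t \le t^{\ast}$ then follows automatically. The hypothesis $i \ge (n+1)/2$ guarantees $\theta(t^{\ast}) = (i+1/2)\pi - \delta > n\pi/2 = \theta(0)$, so $t^{\ast} > 0$ and the whole argument stays in the region where $v/f > 0$. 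Writing the goal as $f(t^{\ast})/v(t^{\ast}) > 1/Z_\delta$, I would reproduce the integral identity used in Lemma~\ref{lem_z0}, with $t_n$ replaced by $t^{\ast}$: if $\hat{t}$ is the point of Lemma~\ref{lem_jan_fv} (where $f=v$, so $(-f/v)(\hat t)=-1$) and $s$ is the inverse of $\theta$, then
\begin{align}
\frac{f(t^{\ast})}{v(t^{\ast})} - 1
= \int_{\theta(t^{\ast})}^{\theta(\hat{t})}
   \frac{ \frac{d}{dt}\brk{ -\frac{f}{v} }\brk{ s(\eta) } \; d\eta }
        { f(s(\eta)) + v(s(\eta)) \cdot \sin(2\eta) },
\end{align}
the change of variables being justified by $\theta'(s(\eta)) = f + v\sin(2\eta) > 0$ (Lemma~\ref{lem_theta_increasing}). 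By Lemma~\ref{lem_pi4}, $\theta(\hat{t}) > (n-1/4)\pi \ge (i+3/4)\pi$, so the integrand is positive throughout and I may lower-bound by integrating only over $\sbrk{(i+1/2)\pi - \delta, \; (i+3/4)\pi}$.

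Next I would split this interval into the two pieces $I_1 = \sbrk{(i+1/2)\pi - \delta, \; (i+1/2)\pi}$ (length $\delta$) and $I_2 = \sbrk{(i+1/2)\pi, \; (i+3/4)\pi}$ (length $\pi/4$), and handle them separately. On $I_2$ one has $\sin(2\eta) \le 0$ (since $\sin\brk{(2i+1)\pi + u} = -\sin u$), so the denominator is at most $f(s(\eta))$; combined with the estimate $-\frac{d}{dt}(f/v) \ge \frac{3}{2} f$ from \eqref{eq_dfv_ineq} in Lemma~\ref{lem_dfv}, the $I_2$-contribution is at least $\frac{\pi}{4}\cdot\frac{3}{2} = \frac{3\pi}{8}$, exactly as in Lemma~\ref{lem_z0}. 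On $I_1$, however, $\sin(2\eta) \ge 0$, so the denominator can exceed $f$ and the naive bound fails; this is the one new ingredient. Here I would observe that on $I_1$ we have $0 \le \sin(2\eta) \le \sin(2\delta)$ (using $\delta < \pi/4$), and that $t = s(\eta) \le s\brk{(i+1/2)\pi} \le s\brk{(n-1/2)\pi} = t_n$ because $i \le n-1$; hence Lemma~\ref{lem_z0} applies and yields $v/f < Z_0$ on $I_1$. Therefore the denominator is at most $f \cdot \brk{1 + Z_0 \sin(2\delta)}$, and the $I_1$-contribution is at least $\frac{3}{2}\cdot \delta/\brk{1+Z_0\sin(2\delta)}$. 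Adding the two pieces gives
\begin{align}
\frac{f(t^{\ast})}{v(t^{\ast})} - 1
\ge \frac{3}{2}\brk{ \frac{\pi}{4} + \frac{\delta}{1 + Z_0\sin(2\delta)} }
= \frac{1}{Z_\delta} - 1,
\end{align}
which is precisely the claimed inequality $v(t^{\ast}) < f(t^{\ast}) Z_\delta$.

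The only genuinely delicate step is the treatment of $I_1$: on this piece $\sin(2\eta)$ is \emph{nonnegative}, so the clean bound used on $I_2$ (and throughout Lemma~\ref{lem_z0}) no longer controls the denominator from above. One must instead feed the already-proven bound $v/f < Z_0$ back into the estimate, and this bootstrapping is exactly the source of the $Z_0\sin(2\delta)$ term inside $Z_\delta$. The remaining points are routine bookkeeping: checking the sign of $\sin(2\eta)$ on each piece via $\sin\brk{(2i+1)\pi \pm u} = \mp\sin u$, verifying that every $t$ occurring in the integration satisfies $t < \min\cbrk{\sqrt{\chi_n}/c, \, 1}$ so that \eqref{eq_dfv_ineq} is applicable (which follows from $t < x_n$ together with \eqref{eq_all_tx}), and confirming $(i+3/4)\pi < \theta(\hat{t})$ from Lemma~\ref{lem_pi4}. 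I expect no real difficulty beyond organizing these sign computations correctly.
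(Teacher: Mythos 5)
Your proposal is correct and follows essentially the same route as the paper's proof: reduce to the endpoint $t_\delta = s\left((i+1/2)\pi - \delta\right)$ via the monotonicity of $f/v$, change variables through $s = \theta^{-1}$, bound the denominator on $\left[(i+1/2)\pi - \delta,\,(i+1/2)\pi\right]$ by $f \cdot \left(1 + Z_0 \sin(2\delta)\right)$ using Lemma~\ref{lem_z0}, and bound the numerator by $\tfrac{3}{2}f$ via Lemma~\ref{lem_dfv}, which is exactly the bootstrapping the paper performs. The only (immaterial) difference is bookkeeping: the paper obtains the $1 + \tfrac{3\pi}{8}$ baseline by citing Lemma~\ref{lem_z0} at the point $s\left((i+1/2)\pi\right) \le t_n$, whereas you re-derive the $\tfrac{3\pi}{8}$ by integrating over the additional quarter-period $\left[(i+1/2)\pi,\,(i+3/4)\pi\right]$ (where $\sin(2\eta) \le 0$) anchored at $\hat{t}$ where $f/v = 1$; both yield precisely $1/Z_\delta$.
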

\begin{proof}
Suppose that the point $t_\delta$ is defined via the formula
\begin{align}
t_\delta = s\left((i+1/2)\cdot\pi-\delta\right).
\label{eq_t_delta}
\end{align}
Due to \eqref{eq_dvf_positive} in the proof of Lemma~\ref{lem_theta_increasing},
the function $f/v$ 
decreases monotonically in the interval $(0,t_\delta)$, 
and therefore to prove \eqref{eq_z_delta_ineq} it suffices to show
that
\begin{align}
\frac{f(t_\delta)}{v(t_\delta)} > \frac{1}{Z_\delta} = 
1 + \frac{3}{2} \cdot \left(\frac{\pi}{4} + 
                            \frac{\delta}{1+Z_0\cdot\sin(2\delta)}\right).
\label{eq_fv_delta}
\end{align}
We observe that, due to Lemma~\ref{lem_theta_increasing},
\begin{align}
0 \leq \sin(2\theta(t)) \leq \sin(2\delta),
\label{eq_theta_delta}
\end{align}
for all $t_\delta \leq t \leq s((i+1/2)\pi)$. We combine 
\eqref{eq_t_delta},
\eqref{eq_theta_delta} with
Lemma~\ref{lem_prufer}, 
Lemma~\ref{lem_theta_increasing},
Lemma~\ref{lem_pi4},
Lemma~\ref{lem_dfv} and Lemma~\ref{lem_z0} to obtain
\begin{align}
& \frac{f(t_\delta)}{v(t_\delta)}-
\frac{f(s((i+1/2)\pi))}{v(s((i+1/2)\pi))} =
\nonumber \\
& \int_{t_\delta}^{s((i+1/2)\pi)} 
  \frac{d}{dt} \left(-\frac{f}{v}\right)(t) \; dt =
 \int_{(i-1/2)\pi-\delta}^{(i-1/2)\pi}
\frac{ \frac{d}{dt}\left(-\frac{f}{v}\right)(s(\eta)) \; d\eta}
{f(s(\eta)) + v(s(\eta)) \cdot \sin(2\eta) } >
\nonumber \\
& \int_{(i-1/2)\pi-\delta}^{(i-1/2)\pi}
\frac{ \frac{d}{dt}\left(-\frac{f}{v}\right)(s(\eta)) }{f(s(\eta))} \cdot
\frac{d\eta}
{1 + (v/f)(s(\eta)) \cdot \sin(2\delta) } > 
\nonumber \\
& \frac{3}{2} \cdot \delta \cdot \frac{1}{1 + Z_0 \cdot \sin(2\delta)}.
\label{eq_fv_delta_3}
\end{align}
We combine \eqref{eq_fv_delta_3} with \eqref{eq_fv_z0}
in Lemma~\ref{lem_z0} to obtain \eqref{eq_fv_delta}, which, in turn,
implies \eqref{eq_z_delta_ineq}.
\end{proof}
In the following two lemmas, we estimate the rate of decay of the ratio
$f/v$ and its relationship with $\theta$ of the ODE 
\eqref{eq_prufer_theta_ode} in Lemma~\ref{lem_prufer}.
\begin{lem}
Suppose that $n \geq 2$ and $(n+1)/2 \leq i \leq n-1$ are positive integers.
Suppose also that the real number $x_n$ and the function
$\theta:\left[-x_n,x_n\right] \to \Rc$ are those
of Lemma~\ref{lem_prufer}. Suppose furthermore that $0 < \delta < \pi/4$
is a real number. Then,
\begin{align}
\left(\frac{f}{v}\right)\left(s(i\pi-\delta)\right) -
\left(\frac{f}{v}\right)\left(s(i\pi-\delta+\pi/2)\right) >
2\cdot \sin(2\delta),
\label{eq_sin_delta_1}
\end{align}
where the functions $f,v$ are defined, respectively,
via \eqref{eq_jan_f}, \eqref{eq_jan_v}
in Section~\ref{sec_prufer}, and the function 
$s:\left[0,n\cdot\pi\right] \to \left[-x_n,x_n\right]$
is the inverse of $\theta$.
\label{lem_delta_small}
\end{lem}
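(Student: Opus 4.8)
The plan is to write the difference of $f/v$ between the two points as an integral of $-(f/v)'$, change variables to $\eta=\theta(t)$ using the Pr\"ufer ODE \eqref{eq_prufer_theta_ode} of Lemma~\ref{lem_prufer}, bound the new integrand from below via Lemma~\ref{lem_dfv} and Lemma~\ref{lem_z_delta}, and finally reduce the assertion to a single scalar inequality in $\delta$.

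First I would set $\tau_1=s(i\pi-\delta)$ and $\tau_2=s(i\pi-\delta+\pi/2)$. Since $\theta$ is strictly increasing (Lemma~\ref{lem_theta_increasing}), $\tau_1<\tau_2$, and because $i\le n-1$ forces $i\pi-\delta+\pi/2<(n-1/2)\pi=\theta(t_n)$, we have $\tau_2<t_n<1$; in particular $0<s(\eta)<\tau_2<\min\{\sqrt{\chi_n}/c,1\}$ on the whole range. By the fundamental theorem of calculus and the substitution $t=s(\eta)$, whose Jacobian is $s'(\eta)=1/\theta'(s(\eta))=1/(f(s(\eta))+v(s(\eta))\sin(2\eta))$ by \eqref{eq_prufer_theta_ode}, the left-hand side of \eqref{eq_sin_delta_1} equals $\int_{i\pi-\delta}^{i\pi+\pi/2-\delta}\frac{-(f/v)'(s(\eta))}{f(s(\eta))+v(s(\eta))\sin(2\eta)}\,d\eta$.

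Next I would apply \eqref{eq_dfv_ineq} in Lemma~\ref{lem_dfv}, valid since $s(\eta)<\min\{\sqrt{\chi_n}/c,1\}$, to get $-(f/v)'(s(\eta))\ge\frac32 f(s(\eta))$, so the integrand exceeds $\frac32\,(1+(v/f)(s(\eta))\sin(2\eta))^{-1}$. I would then split the range at $\eta=i\pi$. On $[i\pi-\delta,i\pi]$ one has $\sin(2\eta)\le0$ while $v/f>0$, so the factor is at least $1$ and this piece contributes at least $\frac32\delta$. On $[i\pi,i\pi+\pi/2-\delta]$ one has $\sin(2\eta)\ge0$, and \eqref{eq_z_delta_ineq} in Lemma~\ref{lem_z_delta} (applicable because $s(\eta)\le\tau_2$) gives $(v/f)(s(\eta))<Z_\delta$, so the factor exceeds $(1+Z_\delta\sin(2\phi))^{-1}$ after shifting $\eta=i\pi+\phi$. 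Combining the two pieces, the left-hand side of \eqref{eq_sin_delta_1} exceeds $\frac32\big(\delta+\int_0^{\pi/2-\delta}(1+Z_\delta\sin(2\phi))^{-1}\,d\phi\big)$.

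It then remains to verify the elementary inequality $\delta+\int_0^{\pi/2-\delta}(1+Z_\delta\sin(2\phi))^{-1}\,d\phi>\tfrac43\sin(2\delta)$ for $0<\delta<\pi/4$, and I expect this to be the main obstacle. The integral $\int(1+k\sin(2\phi))^{-1}\,d\phi$ has a closed form, and the inequality appears to be tight as $\delta\uparrow\pi/4$; the crucial point is that one must retain the convex reciprocal $(1+Z_\delta\sin(2\phi))^{-1}$ rather than linearize it as $1-Z_\delta\sin(2\phi)$, since the linear bound loses just enough to fail near $\delta=\pi/4$. I would close the argument by reducing to a one-variable estimate in $\delta$, using that $Z_\delta$ itself decreases in $\delta$ to reinforce the margin, and checking the endpoint $\delta=\pi/4$ together with monotonicity.
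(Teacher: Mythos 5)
Your proposal is correct and follows essentially the same route as the paper's own proof: the fundamental theorem of calculus combined with the change of variables $\eta=\theta(t)$ via \eqref{eq_prufer_theta_ode}, the split of the range at $\eta=i\pi$, the bound $-\left(f/v\right)'\geq\tfrac{3}{2}f$ from Lemma~\ref{lem_dfv} together with Lemma~\ref{lem_z_delta} on the piece where $\sin(2\eta)\geq 0$, the trivial bound $f+v\sin(2\eta)<f$ on the piece where $\sin(2\eta)\leq 0$, and a final reduction to a one-variable inequality in $\delta$.

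The only divergence is in that last scalar estimate, and there your reasoning rests on a misconception that needlessly creates your ``main obstacle.'' The paper does exactly what you claim must be avoided: it replaces $\left(1+(v/f)\sin(2\eta)\right)^{-1}$ by the constant $(1+Z_\delta)^{-1}$ (even cruder than the linearization $1-Z_\delta\sin(2\phi)$), arriving at $h(\delta)=\tfrac{3}{2}\left(\tfrac{\pi}{2}-\delta\right)(1+Z_\delta)^{-1}+\tfrac{3}{2}\delta-2\sin(2\delta)$, and verifies that $h>1/25$ on $\left[0,\pi/4\right]$; in particular, at $\delta=\pi/4$ one has $Z_{\pi/4}\approx 0.335$ and $h(\pi/4)\approx 0.06>0$, so nothing fails near $\delta=\pi/4$. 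Your sharper bound retaining $\int_0^{\pi/2-\delta}\left(1+Z_\delta\sin(2\phi)\right)^{-1}d\phi$ is of course also valid, since it dominates the paper's bound pointwise, but it forces you to manipulate a closed-form arctangent-type integral in the closing step; adopting the constant bound instead makes that step elementary (a margin of $1/25$ survives) and removes the obstacle entirely.
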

\begin{proof}
We observe that, 
due to Lemma~\ref{lem_prufer} and Lemma~\ref{lem_theta_increasing},
\begin{align}
\sin(2\theta(t)) > 0,
\label{eq_sin_positive}
\end{align}
for all $s(i\pi) < t < s(i\pi-\delta+\pi/2)$.
We combine 
\eqref{eq_sin_positive} with
Lemma~\ref{lem_prufer}, 
Lemma~\ref{lem_theta_increasing},
Lemma~\ref{lem_pi4},
Lemma~\ref{lem_dfv} and Lemma~\ref{lem_z_delta} to obtain
\begin{align}
& \frac{f(s(i\pi))}{v(s(i\pi))}-
\frac{f(s(i\pi-\delta+\pi/2))}{v(s(i\pi-\delta+\pi/2))} =
\nonumber \\
& \int_{s(i\pi)}^{s(i\pi-\delta+\pi/2)} 
  \frac{d}{dt} \left(-\frac{f}{v}\right)(t) \; dt =
 \int_{i\pi}^{i\pi-\delta+\pi/2}
\frac{ \frac{d}{dt}\left(-\frac{f}{v}\right)(s(\eta)) \; d\eta}
{f(s(\eta)) + v(s(\eta)) \cdot \sin(2\eta) } >
\nonumber \\
& \int_{i\pi}^{i\pi-\delta+\pi/2}
\frac{ \frac{d}{dt}\left(-\frac{f}{v}\right)(s(\eta)) }{f(s(\eta))} \cdot
\frac{d\eta}
{1 + (v/f)(s(\eta)) } > 
 \frac{3}{2} \cdot \left(\frac{\pi}{2}-\delta \right) \cdot 
\frac{1}{1 + Z_\delta},
\label{eq_fv_delta_4}
\end{align}
where $Z_\delta$ is defined via \eqref{eq_z_delta}
in Lemma~\ref{lem_z_delta}. We also observe that,
due to Lemma~\ref{lem_prufer} and Lemma~\ref{lem_theta_increasing},
\begin{align}
\sin(2\theta(t)) < 0,
\label{eq_sin_negative}
\end{align}
for all $s(i\pi-\delta) < t < s(i\pi)$.
We combine 
\eqref{eq_sin_negative} with
Lemma~\ref{lem_prufer}, 
Lemma~\ref{lem_theta_increasing},
Lemma~\ref{lem_pi4} and
Lemma~\ref{lem_dfv} to obtain
\begin{align}
& \frac{f(s(i\pi-\delta))}{v(s(i\pi-\delta))}-
\frac{f(s(i\pi))}{v(s(i\pi))} =
\nonumber \\
& \int_{s(i\pi-\delta)}^{s(i\pi)} 
  \frac{d}{dt} \left(-\frac{f}{v}\right)(t) \; dt =
 \int_{i\pi-\delta}^{i\pi}
\frac{ \frac{d}{dt}\left(-\frac{f}{v}\right)(s(\eta)) \; d\eta}
{f(s(\eta)) + v(s(\eta)) \cdot \sin(2\eta) } >
\nonumber \\
& \int_{i\pi-\delta}^{i\pi}
\frac{ \frac{d}{dt}\left(-\frac{f}{v}\right)(s(\eta)) \; d\eta}{f(s(\eta))} > 
\frac{3}{2} \cdot \delta.
\label{eq_fv_delta_5}
\end{align}
Next, suppose that the function $h:\left[0,\pi/4\right] \to \Rc$
is defined via the formula
\begin{align}
h(\delta) = 
\frac{3}{2} \cdot \left(\frac{\pi}{2}-\delta \right) \cdot 
   \frac{1}{1 + Z_\delta} +
\frac{3}{2} \cdot \delta - 2 \cdot \sin(2\delta),
\label{eq_h_delta}
\end{align}
where $Z_\delta$ is defined via \eqref{eq_z_delta} in Lemma~\ref{lem_z_delta}.
One can easily verify that
\begin{align}
\min_\delta \left\{ h(\delta) \; : \; 0 \leq \delta \leq \pi/4 \right\}
> \frac{1}{25},
\label{eq_h_delta_min}
\end{align}
and, in particular, that $h(\delta) > 0$ for all $0 \leq \delta \leq \pi/4$.
We combine \eqref{eq_fv_delta_4}, \eqref{eq_fv_delta_5},
\eqref{eq_h_delta} and \eqref{eq_h_delta_min}
to obtain \eqref{eq_sin_delta_1}.
\end{proof}
\begin{lem}
Suppose that $n \geq 2$ and $(n+1)/2 \leq i \leq n-1$ are positive integers.
Suppose also that the real number $x_n$ and the function
$\theta:\left[-x_n,x_n\right] \to \Rc$ are those
of Lemma~\ref{lem_prufer}. Suppose furthermore that $0 < \delta < \pi/4$
is a real number. Then,
\begin{align}
\left(\frac{f}{v}\right)\left(s(i\pi+\delta-\pi/2)\right) -
\left(\frac{f}{v}\right)\left(s(i\pi+\delta)\right) >
2\cdot \sin(2\delta),
\label{eq_sin_delta_2}
\end{align}
where the functions $f,v$ are defined, respectively, 
via \eqref{eq_jan_f}, \eqref{eq_jan_v}
in Section~\ref{sec_prufer}, and the function 
$s:\left[0,n\cdot\pi\right] \to \left[-x_n,x_n\right]$
is the inverse of $\theta$.
\label{lem_delta_large}
\end{lem}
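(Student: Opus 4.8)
The plan is to follow the proof of Lemma~\ref{lem_delta_small} almost verbatim, with the roles of the two subintervals interchanged. Writing $s=\theta^{-1}$ and using $\theta'(s(\eta)) = f(s(\eta)) + v(s(\eta))\sin(2\eta)$ from Lemma~\ref{lem_prufer}, the substitution $t=s(\eta)$ turns the increment of $f/v$ into
\[
\left(\frac{f}{v}\right)(s(\alpha)) - \left(\frac{f}{v}\right)(s(\beta)) =
\int_{\alpha}^{\beta} \frac{ \frac{d}{dt}\left(-\frac{f}{v}\right)(s(\eta)) }{ f(s(\eta)) + v(s(\eta))\sin(2\eta) } \, d\eta ,
\]
for any $\alpha<\beta$. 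The integrand is positive: the numerator is positive because $f/v$ decreases (by \eqref{eq_dvf_positive}), and the denominator equals $\theta'(s(\eta))>0$ by Lemma~\ref{lem_theta_increasing}. I would apply this with $\alpha=i\pi+\delta-\pi/2$ and $\beta=i\pi+\delta$, and split the range of integration at $\eta=i\pi$, which is interior to $[\alpha,\beta]$ since $0<\delta<\pi/4$.

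On the lower segment $\eta\in[i\pi+\delta-\pi/2,\,i\pi]$ one has $\sin(2\eta)\le 0$, so the (positive) denominator is at most $f(s(\eta))$; combined with $\frac{d}{dt}\left(-\frac{f}{v}\right)\ge\frac{3}{2}f$ from \eqref{eq_dfv_ineq} (which is valid throughout because $s(\eta)<x_n\le\min\{\sqrt{\chi_n}/c,1\}$ by Lemma~\ref{lem_five}), this segment contributes at least $\frac{3}{2}\left(\frac{\pi}{2}-\delta\right)$. On the upper segment $\eta\in[i\pi,\,i\pi+\delta]$ one has $0\le\sin(2\eta)\le 1$; here I would bound the denominator above by $f(s(\eta))\cdot(1+Z_\delta)$ using the estimate $v<f\cdot Z_\delta$ of Lemma~\ref{lem_z_delta}, which applies on this segment precisely because $\delta\le\pi/4$ forces $s(\eta)\le s((i+1/2)\pi-\delta)$. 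Together with \eqref{eq_dfv_ineq} again, this segment contributes at least $\frac{3\delta/2}{1+Z_\delta}$. Note that, relative to Lemma~\ref{lem_delta_small}, the long ($\sin\le 0$) segment is now the bare one and the short ($\sin\ge 0$) segment carries the $Z_\delta$ factor.

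Adding the two contributions gives the lower bound $\frac{3}{2}\left(\frac{\pi}{2}-\delta\right)+\frac{3\delta/2}{1+Z_\delta}$ for the left-hand side of \eqref{eq_sin_delta_2}, and it remains to check that this exceeds $2\sin(2\delta)$; that is, that the auxiliary function $h(\delta)=\frac{3}{2}\left(\frac{\pi}{2}-\delta\right)+\frac{3\delta/2}{1+Z_\delta}-2\sin(2\delta)$ stays positive on $[0,\pi/4]$, which I would verify numerically exactly as in \eqref{eq_h_delta_min}. The main obstacle is this final inequality: the margin is thin, shrinking to roughly $0.06$ as $\delta\to\pi/4$, so it is essential to use the sharper constant $Z_\delta$ on the short segment rather than the cruder $Z_0$ of Lemma~\ref{lem_z0}; the latter bound already fails near $\delta=\pi/4$. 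Everything else is the routine bookkeeping of the change of variables and of the sign of $\sin(2\eta)$ on the two segments.
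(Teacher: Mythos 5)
Your proposal is correct and takes essentially the same route as the paper: the same change of variables via $s=\theta^{-1}$, the same split of $[i\pi+\delta-\pi/2,\,i\pi+\delta]$ at $\eta=i\pi$, and the same per-segment bounds $\tfrac{3}{2}\left(\tfrac{\pi}{2}-\delta\right)$ on the $\sin\leq 0$ piece and $\tfrac{3}{2}\,\delta/(1+Z_\delta)$ on the $\sin\geq 0$ piece, which are exactly the paper's \eqref{eq_fv_delta_7} and \eqref{eq_fv_delta_6}. The only cosmetic difference is the final step: the paper uses the algebraic inequality \eqref{eq_flip} to reduce the positivity check to the function $h$ already verified in \eqref{eq_h_delta_min} of Lemma~\ref{lem_delta_small}, whereas you verify positivity of the (pointwise larger) flipped combination directly, an equivalent check with the same thin margin near $\delta=\pi/4$.
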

\begin{proof}
We observe that, 
due to Lemma~\ref{lem_prufer} and Lemma~\ref{lem_theta_increasing},
\begin{align}
\sin(2\theta(t)) > 0,
\label{eq_sin_positive2}
\end{align}
for all $s(i\pi) < t < s(i\pi+\delta)$.
We combine 
\eqref{eq_sin_positive2} with
Lemma~\ref{lem_prufer}, 
Lemma~\ref{lem_theta_increasing},
Lemma~\ref{lem_pi4},
Lemma~\ref{lem_dfv} and Lemma~\ref{lem_z_delta} to obtain
\begin{align}
& \frac{f(s(i\pi))}{v(s(i\pi))}-
\frac{f(s(i\pi+\delta))}{v(s(i\pi+\delta))} =
\nonumber \\
& \int_{s(i\pi)}^{s(i\pi+\delta)} 
  \frac{d}{dt} \left(-\frac{f}{v}\right)(t) \; dt =
 \int_{i\pi}^{i\pi+\delta}
\frac{ \frac{d}{dt}\left(-\frac{f}{v}\right)(s(\eta)) \; d\eta}
{f(s(\eta)) + v(s(\eta)) \cdot \sin(2\eta) } >
\nonumber \\
& \int_{i\pi}^{i\pi+\delta}
\frac{ \frac{d}{dt}\left(-\frac{f}{v}\right)(s(\eta)) }{f(s(\eta))} \cdot
\frac{d\eta}
{1 + (v/f)(s(\eta)) } > 
 \frac{3}{2} \cdot 
\frac{\delta}{1 + Z_\delta},
\label{eq_fv_delta_6}
\end{align}
where $Z_\delta$ is defined via \eqref{eq_z_delta}
in Lemma~\ref{lem_z_delta}. We also observe that,
due to Lemma~\ref{lem_prufer} and Lemma~\ref{lem_theta_increasing},
\begin{align}
\sin(2\theta(t)) < 0,
\label{eq_sin_negative2}
\end{align}
for all $s(i\pi+\delta-\pi/2) < t < s(i\pi)$.
We combine 
\eqref{eq_sin_negative2} with
Lemma~\ref{lem_prufer}, 
Lemma~\ref{lem_theta_increasing},
Lemma~\ref{lem_pi4} and
Lemma~\ref{lem_dfv} to obtain
\begin{align}
& \frac{f(s(i\pi+\delta-\pi/2))}{v(s(i\pi+\delta-\pi/2))}-
\frac{f(s(i\pi))}{v(s(i\pi))} =
\nonumber \\
& \int_{s(i\pi+\delta-\pi/2)}^{s(i\pi)} 
  \frac{d}{dt} \left(-\frac{f}{v}\right)(t) \; dt =
 \int_{i\pi+\delta-\pi/2}^{i\pi}
\frac{ \frac{d}{dt}\left(-\frac{f}{v}\right)(s(\eta)) \; d\eta}
{f(s(\eta)) + v(s(\eta)) \cdot \sin(2\eta) } >
\nonumber \\
& \int_{i\pi+\delta-\pi/2}^{i\pi}
\frac{ \frac{d}{dt}\left(-\frac{f}{v}\right)(s(\eta)) \; d\eta}{f(s(\eta))} > 
\frac{3}{2} \cdot \left(\frac{\pi}{2} - \delta\right).
\label{eq_fv_delta_7}
\end{align}
Obviously, for all $0 < \delta < \pi/4$,
\begin{align}
\frac{3}{2} \cdot \frac{\delta}{1 + Z_\delta} + 
\frac{3}{2} \cdot \left(\frac{\pi}{2} - \delta\right) >
\frac{3}{2} \cdot \left(\frac{\pi}{2}-\delta \right) \cdot 
   \frac{1}{1 + Z_\delta} +
\frac{3}{2} \cdot \delta.
\label{eq_flip}
\end{align}
We combine \eqref{eq_fv_delta_6},\eqref{eq_fv_delta_7},
\eqref{eq_flip} with \eqref{eq_h_delta}, \eqref{eq_h_delta_min}
in the proof of Lemma~\ref{lem_delta_small} to obtain
\eqref{eq_sin_delta_2}.
\end{proof}
In the following lemma, we analyze the integral of the oscillatory part
of the right-hand side of the ODE \eqref{eq_prufer_theta_ode}
between consecutive roots of $\psi_n$.
This lemma can be viewed as an extention of Lemma~\ref{lem_vsin_1},
and is used in the proof of Theorem~\ref{thm_n_lower} below.
\begin{lem}
Suppose that $n \geq 2$ is an integer,
$-1 < t_1 < t_2 < \dots < t_n < 1$ are the roots of $\psi_n$
in the interval $(-1,1)$,
and $x_1 < \dots < x_{n-1}$ are the roots of $\psi_n'$
in the interval $(t_1,t_n)$. Suppose also, that the real number $x_n$
and the function $\theta: \left[-x_n, x_n\right] \to \Rc$ are
those of Lemma~\ref{lem_prufer} above. Suppose furthermore
that the function $v$ is defined via \eqref{eq_jan_v}
in Section~\ref{sec_prufer}. Then,
\begin{align}
\int_{t_i}^{t_{i+1}} v(t) \cdot \sin(2\theta(t)) \; dt > 0,
\label{eq_int_t_tp}
\end{align}
for all integer $(n+1)/2 \leq i \leq n-1$, i.e. for all integer $i$
such that $0 \leq t_i < t_n$. 
\label{lem_vsin_2}
\end{lem}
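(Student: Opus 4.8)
The plan is to change variables from $t$ to $\eta=\theta(t)$ and reduce the claim to the pointwise decay estimates for $f/v$ furnished by Lemmas~\ref{lem_delta_small} and~\ref{lem_delta_large}. Let $s:\left[0,n\pi\right]\to\left[-x_n,x_n\right]$ be the inverse of $\theta$, which exists and is strictly increasing by Lemmas~\ref{lem_prufer} and~\ref{lem_theta_increasing}. Since $dt = d\eta/\theta'(s(\eta))$ with $\theta' = f + v\sin(2\theta)$ by \eqref{eq_prufer_theta_ode}, and since $\theta(t_i)=(i-1/2)\pi$, $\theta(t_{i+1})=(i+1/2)\pi$ by \eqref{eq_theta_at_t}, the integral in \eqref{eq_int_t_tp} becomes
\begin{align}
\int_{t_i}^{t_{i+1}} v(t)\sin(2\theta(t))\,dt
= \int_{(i-1/2)\pi}^{(i+1/2)\pi}
\frac{ v(s(\eta))\sin(2\eta) }{ f(s(\eta)) + v(s(\eta))\sin(2\eta) }\,d\eta.
\label{eq_plan_change}
\end{align}
The integrand is negative on $\brk{(i-1/2)\pi,\, i\pi}$, where $\sin(2\eta)<0$, and positive on $\brk{i\pi,\, (i+1/2)\pi}$, so the crude sign bounds of Lemma~\ref{lem_vsin_1} do not suffice; one must show the positive lobe dominates.

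First I would split \eqref{eq_plan_change} at $\eta=i\pi=\theta(x_i)$ and align the two lobes by a shift of $\pi/2$: substituting $\eta=i\pi+\phi$ in the positive lobe and $\eta=i\pi+\phi-\pi/2$ in the negative lobe, so that $\sin(2\eta)=\sin(2\phi)$ and $\sin(2\eta)=-\sin(2\phi)$ respectively, with $\phi$ ranging over $(0,\pi/2)$. This rewrites the integral as
\begin{align}
\int_0^{\pi/2}
\sbrk{
\frac{ w_+ \sin(2\phi) }{ 1 + w_+ \sin(2\phi) }
-
\frac{ w_- \sin(2\phi) }{ 1 - w_- \sin(2\phi) }
}\,d\phi,
\label{eq_plan_paired}
\end{align}
where $w_+ = (v/f)(s(i\pi+\phi))$ and $w_- = (v/f)(s(i\pi+\phi-\pi/2))$. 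Both denominators are positive: the first trivially, and the second because $1 - w_-\sin(2\phi) = \theta'(s(i\pi+\phi-\pi/2))/f(s(i\pi+\phi-\pi/2)) > 0$ by Lemma~\ref{lem_theta_increasing}. Hence it suffices to show that the bracket in \eqref{eq_plan_paired} is pointwise nonnegative on $(0,\pi/2)$.

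Clearing the (positive) denominators, the bracket has the sign of $(w_+ - w_-) - 2 w_+ w_- \sin(2\phi)$; dividing by $w_+ w_->0$, this is positive exactly when
\begin{align}
\brk{\frac{f}{v}}\brk{s(i\pi+\phi-\pi/2)} -
\brk{\frac{f}{v}}\brk{s(i\pi+\phi)} > 2\sin(2\phi).
\label{eq_plan_target}
\end{align}
Finally I would establish \eqref{eq_plan_target} directly from the two decay lemmas. For $0<\phi\leq\pi/4$ it is precisely \eqref{eq_sin_delta_2} of Lemma~\ref{lem_delta_large} with $\delta=\phi$. For $\pi/4\leq\phi<\pi/2$ one sets $\delta=\pi/2-\phi\in(0,\pi/4)$ and observes $i\pi+\phi-\pi/2=i\pi-\delta$, $\ i\pi+\phi=i\pi-\delta+\pi/2$, and $\sin(2\phi)=\sin(2\delta)$, so that \eqref{eq_plan_target} is exactly \eqref{eq_sin_delta_1} of Lemma~\ref{lem_delta_small}. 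This gives positivity of the bracket in \eqref{eq_plan_paired}, hence \eqref{eq_int_t_tp}. The main obstacle is organizational rather than computational: one must adopt the $\pi/2$-shifted pairing, rather than the more obvious reflection about $i\pi$, precisely so that the required comparison of $f/v$ at two points a distance $\pi/2$ apart matches the hypotheses of Lemmas~\ref{lem_delta_small} and~\ref{lem_delta_large} verbatim, and then split the range of $\phi$ at $\pi/4$ so as to invoke each lemma on its domain $\delta\in(0,\pi/4)$.
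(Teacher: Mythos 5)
Your proposal is correct and is essentially the paper's own argument: the same change of variables $\eta=\theta(t)$, the same $\pi/2$-shifted pairing of the negative lobe $(t_i,x_i)$ with the positive lobe $(x_i,t_{i+1})$, and the same pointwise domination obtained by invoking Lemma~\ref{lem_delta_large} for $0<\phi<\pi/4$ and Lemma~\ref{lem_delta_small} (via $\delta=\pi/2-\phi$) for $\pi/4<\phi<\pi/2$. The only cosmetic difference is that you normalize by $f$ and reduce to the single inequality \eqref{eq_plan_target}, whereas the paper compares the reciprocal denominators directly; the content is identical.
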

\begin{proof}
Suppose that $i$ is a positive integer such that $(n-1)/2 \leq i \leq n-1$.
Suppose also that the function
$s: \left[0, n\cdot\pi\right] \to \left[-x_n,x_n\right]$ 
is the 
inverse of $\theta$. In other words, for all $0 \leq \eta \leq n \cdot \pi$,
\begin{align}
\theta(s(\eta)) = \eta.
\label{eq_s_eta_3}
\end{align}
Due to \eqref{eq_vsin_b} in the proof of Lemma~\ref{lem_vsin_1} above,
\begin{align}
& \int_{t_i}^{x_i} v(t) \cdot \sin(2\theta(t)) \; dt = \nonumber \\
& -\int_0^{\pi/2}
   \frac{v(s(i\pi+\eta-\pi/2)) \cdot \sin(2\eta) \; d\eta}
   {f(s(i\pi+\eta-\pi/2)) - v(s(i\pi+\eta-\pi/2)) \cdot \sin(2\eta)}.
\label{eq_vsin_d}
\end{align}
We proceed to compare the integrand in \eqref{eq_vsin_d} to 
the integrand in \eqref{eq_vsin_a} in the proof of
Lemma~\ref{lem_vsin_1}. First, for all $0 < \eta < \pi/4$,
\begin{align}
\frac{1}{(f/v)(s(i\pi+\eta-\pi/2)) - \sin(2\eta)} <
\frac{1}{(f/v)(s(i\pi+\eta)) + \sin(2\eta)},
\label{eq_up_to_pi4}
\end{align}
due to \eqref{eq_sin_delta_2} in Lemma~\ref{lem_delta_large}.
Moreover, for all $\pi/4 < \eta < \pi/2$, we substitute
$\delta = \pi/2-\eta$ to obtain
\begin{align}
& \frac{1}{(f/v)(s(i\pi+\eta-\pi/2)) - \sin(2\eta)} = 
  \frac{1}{(f/v)(s(i\pi-\delta))-\sin(2\delta)} < \nonumber \\
& \frac{1}{(f/v)(s(i\pi-\delta+\pi/2)) + \sin(2\delta)} =
\frac{1}{(f/v)(s(i\pi+\eta)) + \sin(2\eta)},
\label{eq_beyond_pi4}
\end{align}
due to \eqref{eq_sin_delta_1} in Lemma~\ref{lem_delta_small}.
We combine \eqref{eq_vsin_a} in the proof of
Lemma~\ref{lem_vsin_1} with \eqref{eq_vsin_d}, \eqref{eq_up_to_pi4},
\eqref{eq_beyond_pi4} to obtain \eqref{eq_int_t_tp}.
\end{proof}
The following theorem is a counterpart of Theorem~\ref{thm_n_upper}
above.
It is illustrated in
Tables~\ref{t:test77a}, \ref{t:test77b}, \ref{t:test77c}, \ref{t:test99}.
\begin{thm}
Suppose that $n \geq 2$ is a positive integer. Suppose also that
$t_n$
is the maximal root of $\psi_n$ in $(-1,1)$. Then,
\begin{align}
1 + 
\frac{2}{\pi} \int_0^{t_n} \sqrt{ \frac{\chi_n - c^2 t^2}{1 - t^2} } \; dt <
n.
\label{eq_n_lower}
\end{align}
\label{thm_n_lower}
\end{thm}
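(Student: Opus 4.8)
The plan is to integrate the Pr\"ufer ODE \eqref{eq_prufer_theta_ode} over the interval $[0, t_n]$ and to show that the contribution of the oscillatory term is strictly positive, so that the monotone term $\int_0^{t_n} f(t)\,dt$ falls strictly below its ``naive'' value $(n-1)\pi/2$. First I would pin down $\theta(0)$. Since $f$ (given by \eqref{eq_jan_f}) is even and $v$ (given by \eqref{eq_jan_v}) is odd, the function $t \mapsto n\pi - \theta(-t)$ solves the same ODE \eqref{eq_prufer_theta_ode} and takes the value $0$ at $-x_n$, just as $\theta$ does; by uniqueness $\theta(t) + \theta(-t) = n\pi$, hence $\theta(0) = n\pi/2$. (Equivalently, $0$ is the special point $t_{(n+1)/2}$ when $n$ is odd and $x_{n/2}$ when $n$ is even, so \eqref{eq_theta_at_t} and \eqref{eq_theta_at_x} give $\theta(0)=n\pi/2$ in either parity.)

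Since $\theta(t_n) = (n-\tfrac12)\pi$ by \eqref{eq_theta_at_t}, integrating \eqref{eq_prufer_theta_ode} over $[0,t_n]$ yields
\[
\frac{(n-1)\pi}{2} = \theta(t_n)-\theta(0) = \int_0^{t_n} f(t)\,dt + \int_0^{t_n} v(t)\sin(2\theta(t))\,dt.
\]
Consequently \eqref{eq_n_lower} is equivalent to the single inequality $\int_0^{t_n} v(t)\sin(2\theta(t))\,dt > 0$, and the whole theorem reduces to proving this positivity.

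To establish it I would decompose $[0,t_n]$ along the special points it contains and invoke the sign results already in hand. When $n$ is odd, $0 = t_{(n+1)/2}$, so $[0,t_n]$ splits into the consecutive-root intervals $[t_i, t_{i+1}]$ with $(n+1)/2 \le i \le n-1$, and each integral $\int_{t_i}^{t_{i+1}} v(t)\sin(2\theta(t))\,dt$ is positive by \eqref{eq_int_t_tp} in Lemma~\ref{lem_vsin_2}. When $n$ is even, $0 = x_{n/2}$, so I would peel off the leading piece $\int_{x_{n/2}}^{t_{n/2+1}} v(t)\sin(2\theta(t))\,dt$, which is positive by \eqref{eq_int_x_tp} in Lemma~\ref{lem_vsin_1} (with $i = n/2 \ge (n-1)/2$), and then add the remaining root-to-root intervals $[t_i, t_{i+1}]$ with $n/2+1 \le i \le n-1$, each positive again by Lemma~\ref{lem_vsin_2}. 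In both cases the total is strictly positive, giving \eqref{eq_n_lower}.

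The conceptual work is already contained in Lemmas~\ref{lem_vsin_1} and \ref{lem_vsin_2}; the only genuinely new ingredient is the symmetry identification $\theta(0) = n\pi/2$, which turns the lower endpoint of integration into a clean multiple of $\pi/2$ and lets the root-to-root sign estimates tile the entire interval $[0, t_n]$. I expect the mildest source of friction to be the parity bookkeeping --- in particular ensuring that, for even $n$, the single ``$x$-to-$t$'' segment created by $0$ being a root of $\psi_n'$ is covered by Lemma~\ref{lem_vsin_1} rather than Lemma~\ref{lem_vsin_2} --- but no new estimate beyond the two lemmas is needed.
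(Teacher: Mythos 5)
Your proposal is correct and is essentially identical to the paper's own proof: the paper also integrates the Pr\"ufer ODE \eqref{eq_prufer_theta_ode} from $0$ to $t_n$, identifying $0$ with $t_{(n+1)/2}$ (odd $n$) or $x_{n/2}$ (even $n$) so that $\theta$ increases by exactly $(n-1)\pi/2$, and then bounds the oscillatory contribution from below by zero using \eqref{eq_int_t_tp} of Lemma~\ref{lem_vsin_2} on the root-to-root segments and \eqref{eq_int_x_tp} of Lemma~\ref{lem_vsin_1} on the initial segment in the even case. The only cosmetic difference is that you spell out the symmetry argument behind $\theta(0)=n\pi/2$, which the paper leaves implicit.
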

\begin{proof}
Suppose that the real numbers
\begin{align}
-1 \leq -x_n < t_1 < x_1 < t_2 < \dots < t_{n-1} < x_{n-1} < t_n < x_n \leq 1
\label{eq_all_xt_2}
\end{align}
and the function $\theta: \left[-x_n,x_n\right] \to \Rc$ are 
those of Lemma~\ref{lem_prufer} above. Suppose also that
the functions $f,v$ are defined, respectively, via 
\eqref{eq_jan_f},\eqref{eq_jan_v} in Section~\ref{sec_prufer}.
If $n$ is odd, then we combine
\eqref{eq_prufer_theta_ode}, \eqref{eq_theta_at_t},
in Lemma~\ref{lem_prufer} with
\eqref{eq_int_t_tp} in Lemma~\ref{lem_vsin_2} to obtain
\begin{align}
\frac{n-1}{2} \cdot \pi & \; =
\int_{t_{(n+1)/2}}^{t_n} \theta'(t) \; dt =
\int_0^{t_n} f(t) \; dt +
\sum_{i=(n+1)/2}^{n-1} \int_{t_i}^{t_{i+1}} v(t) \cdot \sin(2\theta(t)) \; dt
\nonumber \\
& \; > \int_0^{t_n} f(t) \; dt.
\label{eq_n_lower_odd}
\end{align}
If $n$ is even, then we combine
\eqref{eq_prufer_theta_ode}, \eqref{eq_theta_at_t},
\eqref{eq_theta_at_x} in Lemma~\ref{lem_prufer} with
\eqref{eq_int_x_tp} in Lemma~\ref{lem_vsin_1} 
and
\eqref{eq_int_t_tp} in Lemma~\ref{lem_vsin_2} to obtain
\begin{align}
\frac{n-1}{2} \cdot \pi & \; = 
\int_{x_{n/2}}^{t_n} \theta'(t) \; dt
= 
\int_0^{t_n} f(t) \; dt \; + \nonumber \\
& \; \; \; \; \; 
\int_{x_{n/2}}^{t_{(n/2)+1}} v(t) \cdot \sin(2\theta(t)) \; dt +
\sum_{i=(n/2)+1}^{n-1} \int_{t_i}^{t_{i+1}} v(t) \cdot \sin(2\theta(t)) \; dt
\nonumber \\
& \; > \int_0^{t_n} f(t) \; dt.
\label{eq_n_lower_even}
\end{align}
We combine \eqref{eq_n_lower_odd} and \eqref{eq_n_lower_even}
to conclude \eqref{eq_n_lower}.
\end{proof}
\begin{cor}
Suppose that $n \geq 2$ is a positive integer, 
and that $\chi_n > c^2$. Suppose also that 
$t_n$ is the maximal root of $\psi_n$
in the interval $(-1, 1)$. Then,
\begin{align}
1 + \frac{2}{\pi} \sqrt{\chi_n} \cdot
    E \brk{ \text{asin}\brk{t_n}, \frac{c}{\sqrt{\chi_n}} }
< n < 
\frac{2}{\pi} \sqrt{\chi_n} \cdot E \brk{ \frac{c}{\sqrt{\chi_n}} },
\label{eq_jan_n_both_with_e}
\end{align}
where $E(y, k)$ and $E(k)$ are defined, respectively, via \eqref{eq_E_y}
and \eqref{eq_E} in Section~\ref{sec_elliptic}.
\end{cor}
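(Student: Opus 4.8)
The plan is to read off the corollary directly from Theorems~\ref{thm_n_upper} and~\ref{thm_n_lower} by rewriting the integrals on their right-hand sides as elliptic integrals. Since $\chi_n > c^2$, Theorem~\ref{thm_n_upper} supplies the upper bound $n < (2/\pi) \int_0^1 \sqrt{(\chi_n - c^2 t^2)/(1-t^2)} \, dt$, while Theorem~\ref{thm_n_lower} supplies the lower bound $1 + (2/\pi)\int_0^{t_n} \sqrt{(\chi_n - c^2 t^2)/(1-t^2)}\,dt < n$. Thus it suffices to evaluate the integral $\int_0^y \sqrt{(\chi_n - c^2 t^2)/(1-t^2)}\,dt$ for the two choices $y = 1$ and $y = t_n$ and to express the result through the elliptic integrals of Section~\ref{sec_elliptic}.

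First I would factor $\sqrt{\chi_n}$ out of the integrand and introduce the modulus $k = c/\sqrt{\chi_n}$. Because $\chi_n > c^2$, this modulus satisfies $0 < k < 1$, so it lies in the admissible range $[0,1]$ for the elliptic integrals. The integral then becomes $\sqrt{\chi_n} \int_0^y \sqrt{(1 - k^2 t^2)/(1-t^2)}\,dt$, which is exactly the form of the incomplete elliptic integral of the second kind recorded in \eqref{eq_E_y_2}. Identifying the upper limit as the sine of the argument gives $\int_0^y \sqrt{(1-k^2 t^2)/(1-t^2)}\,dt = E\left(\text{asin}(y), k\right)$.

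Next I would specialize the two endpoints. For the upper bound $y = 1$ we have $\text{asin}(1) = \pi/2$, so the incomplete integral collapses to the complete one, $E(\pi/2, k) = E(k)$ by \eqref{eq_E}; substituting $k = c/\sqrt{\chi_n}$ and multiplying by $2/\pi$ yields the right-hand inequality of~\eqref{eq_jan_n_both_with_e}. For the lower bound $y = t_n$ the integral equals $\sqrt{\chi_n}\, E\left(\text{asin}(t_n), c/\sqrt{\chi_n}\right)$, and since $n \geq 2$ the maximal root obeys $0 < t_n < 1$, so $\text{asin}(t_n) \in (0,\pi/2)$ is well defined; multiplying by $2/\pi$ and adding $1$ produces the left-hand inequality of~\eqref{eq_jan_n_both_with_e}.

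There is no substantial obstacle here: the entire argument is a single change of variables together with a matching of the rescaled integrand against the definition~\eqref{eq_E_y_2}. The only points requiring a word of care are confirming that the modulus $k = c/\sqrt{\chi_n}$ lies in $(0,1)$ and that $t_n$ lies in $(0,1)$, so that both elliptic integrals are evaluated within their defining ranges; the former follows from the hypothesis $\chi_n > c^2$ and the latter from $n \geq 2$.
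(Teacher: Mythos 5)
Your proposal is correct and is essentially identical to the paper's own proof, which likewise obtains the corollary by combining \eqref{eq_n_upper_large} of Theorem~\ref{thm_n_upper} and \eqref{eq_n_lower} of Theorem~\ref{thm_n_lower} with the representation \eqref{eq_E_y_2} of the incomplete elliptic integral; your write-up merely spells out the substitution $k = c/\sqrt{\chi_n}$ and the endpoint specializations $y=1$ and $y=t_n$ that the paper leaves implicit.
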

\begin{proof}
It follows immediately from \eqref{eq_E_y_2},
\eqref{eq_n_upper_large} in Theorem~\ref{thm_n_upper} and
\eqref{eq_n_lower} in Theorem~\ref{thm_n_lower}.
\end{proof}
The following theorem,
illustrated in
Tables~\ref{t:test98a}, \ref{t:test98b},
provides upper and lower bounds 
on the distance between consecutive roots of $\psi_n$ inside $\brk{-1, 1}$.
\begin{thm}
Suppose that $n \geq 2$ is a positive integer,
and that $\chi_n > c^2$. Suppose also that
$-1 < t_1 < t_2 < \dots < t_n < 1$ are the roots of $\psi_n$
in the interval $(-1,1)$. Suppose furthermore
that the functions $f,v$ are defined,
respectively, via \eqref{eq_jan_f},\eqref{eq_jan_v}
in Section~\ref{sec_prufer}.
Then, 
\begin{align}
\frac{\pi}{ f(t_{i+1}) + v(t_{i+1})/2 } <
t_{i+1} - t_{i} <
\frac{\pi}{ f(t_i) },
\label{eq_zeros_inside_bounds}
\end{align}
for all integer $(n+1)/2 \leq i \leq n-1$, i.e. for all integer $i$
such that $0 \leq t_i < t_n$. 
\label{thm_zeros_inside_bounds}
\end{thm}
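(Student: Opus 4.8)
The plan is to exploit the fact that the Pr\"ufer phase $\theta$ of Lemma~\ref{lem_prufer} increases by \emph{exactly} $\pi$ between two consecutive roots of $\psi_n$. Indeed, \eqref{eq_theta_at_t} gives $\theta(t_{i+1})-\theta(t_i)=\pi$, so integrating the ODE \eqref{eq_prufer_theta_ode} yields the fundamental identity
\[
\pi=\int_{t_i}^{t_{i+1}}\Bigl(f(t)+v(t)\sin\bigl(2\theta(t)\bigr)\Bigr)\,dt,
\]
where $f,v$ are those of \eqref{eq_jan_f}, \eqref{eq_jan_v}. Throughout I would record two monotonicity facts on $(0,t_n)$, valid because $\chi_n>c^2$ and (by \eqref{eq_khi_large}) $t_n<\sqrt{\chi_n}/c$: a direct computation gives $(f^2)'(t)=2t(\chi_n-c^2)/(1-t^2)^2>0$, so $f$ is strictly increasing, and both summands of \eqref{eq_jan_v} are increasing, so $v$ is strictly increasing. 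The right inequality of \eqref{eq_zeros_inside_bounds} is then immediate: by Lemma~\ref{lem_vsin_2} the oscillatory integral $\int_{t_i}^{t_{i+1}}v\sin(2\theta)\,dt$ is positive, hence $\int_{t_i}^{t_{i+1}}f\,dt<\pi$; since $f(t)\ge f(t_i)$ on $[t_i,t_{i+1}]$ with strict monotonicity, this gives $f(t_i)(t_{i+1}-t_i)<\pi$.

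For the lower bound I would split $[t_i,t_{i+1}]$ at the intermediate root $x_i$ of $\psi_n'$, where $\theta(x_i)=i\pi$ by \eqref{eq_theta_at_x}; consequently $\sin(2\theta)\le 0$ on $(t_i,x_i)$ and $\sin(2\theta)\ge 0$ on $(x_i,t_{i+1})$. Using $f\le f(t_{i+1})$ throughout, together with $v\sin(2\theta)\le 0$ on the first subinterval and $v\sin(2\theta)\le v(t_{i+1})\sin(2\theta)$ on the second (where $\sin\ge0$ and $v\le v(t_{i+1})$), the ODE \eqref{eq_prufer_theta_ode} gives $\theta'\le f(t_{i+1})$ on $(t_i,x_i)$ and $\theta'\le f(t_{i+1})+v(t_{i+1})\sin(2\theta)$ on $(x_i,t_{i+1})$. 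Integrating and adding,
\[
\pi\le f(t_{i+1})(t_{i+1}-t_i)+v(t_{i+1})\int_{x_i}^{t_{i+1}}\sin\bigl(2\theta(t)\bigr)\,dt,
\]
so the lower bound of \eqref{eq_zeros_inside_bounds} will follow once I establish the positive-lobe estimate $\int_{x_i}^{t_{i+1}}\sin(2\theta)\,dt\le \tfrac12(t_{i+1}-t_i)$.

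This estimate is the main obstacle, and it is exactly where the factor $v(t_{i+1})/2$ comes from. I would prove it in two steps. First, $\sin(2\theta)\le 1$ gives $\int_{x_i}^{t_{i+1}}\sin(2\theta)\,dt\le t_{i+1}-x_i$. Second, I claim $x_i$ lies past the midpoint, i.e. $t_{i+1}-x_i\le x_i-t_i$. To see this I substitute $\eta=\theta(t)$ with inverse $s$ (as in Lemma~\ref{lem_vsin_2}) to write $x_i-t_i=\int_{(i-1/2)\pi}^{i\pi}d\eta/\bigl(f(s(\eta))+v(s(\eta))\sin(2\eta)\bigr)$ and $t_{i+1}-x_i=\int_{i\pi}^{(i+1/2)\pi}d\eta/\bigl(f(s(\eta))+v(s(\eta))\sin(2\eta)\bigr)$; since $\sin(2\eta)\le 0$ on the first range and $\ge 0$ on the second, these are bounded below, respectively above, by $\int d\eta/f(s(\eta))$ over the two half-periods, and $f\circ s$ is increasing (as $s$ is increasing by Lemma~\ref{lem_theta_increasing} and $f$ is increasing), so the earlier half-period integral dominates the later one. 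Combining the two steps yields $t_{i+1}-x_i\le \tfrac12(t_{i+1}-t_i)$ and hence the positive-lobe estimate. Strictness throughout—from $f$ being strictly increasing and $\sin(2\theta)<1$ off a single point—upgrades the final ``$\le$'' to the strict left inequality of \eqref{eq_zeros_inside_bounds}.
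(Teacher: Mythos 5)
Your proposal is correct and takes essentially the same route as the paper's own proof: the upper bound via the positivity of $\int_{t_i}^{t_{i+1}} v\sin(2\theta)\,dt$ (Lemma~\ref{lem_vsin_2}) plus monotonicity of $f$, and the lower bound by splitting at $x_i$, proving that $x_i$ lies past the midpoint of $[t_i,t_{i+1}]$, and then bounding the positive lobe using $\sin(2\theta)\le 1$ and $v\le v(t_{i+1})$. The only cosmetic difference is that you establish the midpoint claim by the substitution $\eta=\theta(t)$ and monotonicity of $1/f(s(\eta))$ over the two half-periods, whereas the paper compares $\int_{t_i}^{x_i}f\,dt$ and $\int_{x_i}^{t_{i+1}}f\,dt$ directly via the signs of $\sin(2\theta)$; the two arguments are equivalent.
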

\begin{proof}
Suppose that the function $\theta: \left[-1,1\right] \to \Rc$ is
that of Lemma~\ref{lem_prufer}. We observe that $f$
is increasing in $(0,1)$ due to \eqref{eq_jan_f} in Section~\ref{sec_prufer},
and combine this observation with
\eqref{eq_prufer_theta_ode}, \eqref{eq_theta_at_t},
in Lemma~\ref{lem_prufer} and
\eqref{eq_int_t_tp} in Lemma~\ref{lem_vsin_2} to obtain
\begin{align}
\pi & \; =
\int_{t_i}^{t_{i+1}} \theta'(t) \; dt =
\int_{t_i}^{t_{i+1}} f(t) \; dt +
\int_{t_i}^{t_{i+1}} v(t) \cdot \sin(2\theta(t)) \; dt \nonumber \\
& \; >
\int_{t_i}^{t_{i+1}} f(t) \; dt >
\left(t_{i+1} - t_i\right) \cdot f(t_i),
\end{align}
which implies the right-hand side of \eqref{eq_zeros_inside_bounds}.
As in Lemma~\ref{lem_prufer}, 
suppose that $x_i$ is the zero of $\psi_n'$ in the interval
$(t_i, t_{i+1})$.
We combine
\eqref{eq_prufer_theta_ode}, \eqref{eq_theta_at_t}, \eqref{eq_theta_at_t}
in Lemma~\ref{lem_prufer} and \eqref{eq_int_x_tp},
\eqref{eq_int_tp_xp} in Lemma~\ref{lem_vsin_1}
to obtain
\begin{align}
\int_{t_i}^{x_i} f(t) \; dt > 
\int_{t_i}^{x_i} \theta'(t) \; dt = \pi =
\int_{x_i}^{t_{i+1}} \theta'(t) \; dt > 
\int_{x_i}^{t_{i+1}} f(t) \; dt.
\label{eq_t_x_tp}
\end{align}
Since $f$ is increasing in $(t_i,t_{i+1})$ due to \eqref{eq_jan_f}
in Section~\ref{sec_prufer},
the inequality \eqref{eq_t_x_tp} implies that
\begin{align}
x_i - t_i > t_{i+1} - x_i.
\label{eq_x_t_tp_x}
\end{align}
Moreover, we observe that $v$ is also increasing in $(0,1)$.
We combine this observation with \eqref{eq_x_t_tp_x},
\eqref{eq_prufer_theta_ode}, \eqref{eq_theta_at_t},
in Lemma~\ref{lem_prufer} and
\eqref{eq_int_x_tp}, \eqref{eq_int_tp_xp} in Lemma~\ref{lem_vsin_1} to obtain
\begin{align}
\pi & \; =
\int_{t_i}^{t_{i+1}} \theta'(t) \; dt <
\int_{t_i}^{t_{i+1}} f(t) \; dt +
\int_{x_i}^{t_{i+1}} v(t) \cdot \sin(2\theta(t)) \; dt \nonumber \\
& \; <
\left(t_{i+1}-t_i\right) \cdot f(t_{i+1}) +
\left(t_{i+1}-x_i\right) \cdot v(t_{i+1}) \nonumber \\
& \; < \left(t_{i+1}-t_i\right) \cdot f(t_{i+1}) +
\frac{t_{i+1}-t_i}{2} \cdot v(t_{i+1}),
\end{align}
which implies the left-hand side of \eqref{eq_zeros_inside_bounds}.
\end{proof}
The following theorem is a direct consequence of
Theorem~\ref{thm_n_upper} above.
\begin{thm}
Suppose that $n \geq 2$ is a positive integer.
If $n \geq 2c/\pi$, then $\chi_n > c^2$.
\label{thm_good_n_good_khi}
\end{thm}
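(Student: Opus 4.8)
The plan is to argue by contraposition: rather than showing directly that $n \geq 2c/\pi$ forces $\chi_n > c^2$, I would show that $\chi_n \leq c^2$ forces $n < 2c/\pi$. Since $\chi_n$ is determined by $n$ and $c$, this is logically equivalent to the theorem, and it is exactly the form in which Theorem~\ref{thm_n_upper} is ready to be used. First I would dispose of the generic case $\chi_n < c^2$, where the relevant estimate is \eqref{eq_n_upper_small}, namely $n < \tfrac{2}{\pi}\int_0^{T} \sqrt{(\chi_n - c^2 t^2)/(1-t^2)}\,dt$, with $T$ the maximal root of $\psi_n'$ in $(-1,1)$.

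The crux is an elementary pointwise majorization of the integrand. Since $\chi_n < c^2$ and $0 \leq t < 1$,
\begin{align}
\chi_n - c^2 t^2 < c^2 - c^2 t^2 = c^2 \brk{1 - t^2},
\nonumber
\end{align}
so that $f(t) = \sqrt{(\chi_n - c^2 t^2)/(1-t^2)} < c$ throughout the range of integration. Moreover, by \eqref{eq_khi_small} in Lemma~\ref{lem_five} the maximal root of $\psi_n'$ satisfies $T < \sqrt{\chi_n}/c < 1$. Combining these two facts gives
\begin{align}
n < \frac{2}{\pi} \int_0^{T} c \; dt = \frac{2c}{\pi} \cdot T < \frac{2c}{\pi},
\nonumber
\end{align}
which is the desired conclusion in this case.

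The only remaining subtlety, and the step I expect to require the most care, is the boundary value $\chi_n = c^2$, which neither regime of Theorem~\ref{thm_n_upper} literally covers yet which must be ruled out to obtain the \emph{strict} inequality $\chi_n > c^2$. Here I would observe that the ODE \eqref{eq_prolate_ode} evaluated at $x=1$ gives $\psi_n'(1) = \tfrac{1}{2}\brk{\chi_n - c^2}\psi_n(1) = 0$, so $x_n = 1$ is itself a root of $\psi_n'$, and the modified Pr\"ufer machinery of Lemma~\ref{lem_prufer} and the proof of Theorem~\ref{thm_n_upper} applies verbatim with $x_n = 1$. In this degenerate case $f(t) \equiv c$ by \eqref{eq_jan_f}, whence $\int_0^{1} f(t)\,dt = c$; the identity $\tfrac{n}{2}\pi = \int_0^{1} f(t)\,dt + (\text{oscillatory integrals})$ together with the negativity of those integrals (Lemma~\ref{lem_vsin_1}) yields $\tfrac{n}{2}\pi < c$, i.e. $n < 2c/\pi$ once again.

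Thus $\chi_n \leq c^2$ implies $n < 2c/\pi$ in every case, which is the contrapositive of the theorem. The argument is genuinely short: Theorem~\ref{thm_n_upper} and Lemma~\ref{lem_five} carry the analytic weight, and what remains is only the pointwise bound $f(t) < c$ for $\chi_n < c^2$, the elementary observation $T \leq 1$, and the degenerate-case bookkeeping at $\chi_n = c^2$.
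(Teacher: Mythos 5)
Your proposal is correct and follows essentially the same route as the paper: the paper's proof is precisely your second paragraph --- contraposition via \eqref{eq_n_upper_small} of Theorem~\ref{thm_n_upper}, the pointwise bound $\sqrt{(\chi_n - c^2 t^2)/(1-t^2)} < c$ for $\chi_n < c^2$, and $T < 1$. If anything you are more careful than the paper, whose proof handles only the case $\chi_n < c^2$ and silently ignores the boundary case $\chi_n = c^2$ (which Definition~\ref{def_special}, Lemma~\ref{lem_five} and Theorem~\ref{thm_n_upper} all formally exclude); your degenerate-case bookkeeping with $x_n = 1$ and $f \equiv c$ closes exactly that gap and is what actually justifies the strict inequality $\chi_n > c^2$.
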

\begin{proof}
Suppose that $\chi_n < c^2$, and $T$ is the maximal root of $\psi'_n$
in $(0,1)$, as in 
Theorem~\ref{thm_n_upper} above.
Then,
\begin{align}
n & \; < \frac{2}{\pi} 
         \int_0^{T} \sqrt{ \frac{\chi_n - c^2 t^2}{1 - t^2} } \; dt
    = \frac{2c}{\pi} \int_0^{T} \sqrt{ \frac{\chi_n/c^2 - t^2}{1 - t^2} } \; dt
< \frac{ 2 c }{ \pi }\cdot T  < \frac{ 2c }{ \pi },
\end{align}
due to \eqref{eq_n_upper_small} in Theorem~\ref{thm_n_upper}.
\end{proof}
\subsubsection{A Certain Transformation of the Prolate ODE}
\label{sec_first_order}
In this subsection, we analyze the oscillation properties of $\psi_n$
via 
transforming the ODE \eqref{eq_prolate_ode} into
a second-order linear ODE without the first-order term.
The following lemma is the principal technical tool of this subsection.
\begin{lem}
Suppose that $n \geq 0$ is a non-negative integer. Suppose also that
that the functions $\Psi_n, Q_n: (-1,1) \to \Rc$
are defined, respectively, via the formulae
\begin{align}
\Psi_n(t) = \psi_n(t) \cdot \sqrt{1-t^2}
\label{eq_big_psi_n}
\end{align}
and
\begin{align}
Q_n(t) = 
\frac{ \chi_n - c^2\cdot t^2 }{ 1 - t^2 } +
       \frac{ 1 }{ \brk{1 - t^2}^2 },
\label{eq_big_q_n}
\end{align}
for $-1 < t < 1$. Then,
\begin{align}
\Psi_n''(t) + Q_n(t) \cdot \Psi_n(t) = 0,
\label{eq_big_psi_ode}
\end{align}
for all $-1 < t < 1$.
\label{lem_trans}
\end{lem}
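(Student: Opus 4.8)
The plan is to recognize \eqref{eq_big_psi_ode} as the Liouville normal form of the prolate equation \eqref{eq_prolate_ode}: the weight $\sqrt{1-t^2}$ appearing in \eqref{eq_big_psi_n} is precisely the integrating factor that annihilates the first-order term. Concretely, dividing \eqref{eq_prolate_ode} by $1-t^2$ puts it in the form $\psi_n'' + P\psi_n' + R\psi_n = 0$ with $P(t) = -2t/(1-t^2)$ and $R(t) = (\chi_n - c^2 t^2)/(1-t^2)$; the standard substitution $\psi_n = w\cdot\exp(-\tfrac12\int P)$ removes the $\psi_n'$ term, and since $\exp(-\tfrac12\int P) = (1-t^2)^{-1/2}$ this substitution is exactly $w = \Psi_n$. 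I would therefore carry out the equivalent direct computation, which keeps everything elementary and uses \eqref{eq_prolate_ode} explicitly.

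First I would record from \eqref{eq_prolate_ode} that $\psi_n'' = [2t\,\psi_n' - (\chi_n - c^2 t^2)\psi_n]/(1-t^2)$ on $(-1,1)$. Next I would differentiate $\Psi_n = \psi_n(1-t^2)^{1/2}$ twice by the product rule, obtaining $\Psi_n''$ as a combination of $\psi_n''(1-t^2)^{1/2}$, a term proportional to $\psi_n'$, and two $\psi_n$-terms carrying powers $(1-t^2)^{-1/2}$ and $(1-t^2)^{-3/2}$. Substituting the expression for $\psi_n''$ is the decisive step: the terms proportional to $\psi_n'$ cancel identically (this is the whole point of the $\sqrt{1-t^2}$ weight), leaving $\Psi_n''$ as a pure multiple of $\psi_n$. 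It then remains to collect the surviving $\psi_n$-terms and match them against $-Q_n\Psi_n = -Q_n\psi_n(1-t^2)^{1/2}$, with $Q_n$ given by \eqref{eq_big_q_n}.

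The only real work is the algebraic bookkeeping in this last matching, and the identity that makes it close is $(1-t^2)^{-1/2} + t^2(1-t^2)^{-3/2} = (1-t^2)^{-3/2}$, which converts the two leftover $\psi_n$-terms into exactly the $(1-t^2)^{-2}$ contribution of $Q_n$; the $(\chi_n - c^2 t^2)/(1-t^2)$ piece of $Q_n$ comes directly from the corresponding term in $\psi_n''$. I expect no conceptual obstacle here, only the need to track the fractional powers of $1-t^2$ carefully. As an independent check I would verify that the general normal-form potential $R - \tfrac14 P^2 - \tfrac12 P'$ evaluates to the same $Q_n$, which it does since $-\tfrac14 P^2 - \tfrac12 P' = (1-t^2)^{-2}$.
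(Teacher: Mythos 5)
Your proposal is correct and follows essentially the same route as the paper: differentiate $\Psi_n = \psi_n\sqrt{1-t^2}$ twice, substitute $\psi_n''$ from the prolate ODE \eqref{eq_prolate_ode} so the $\psi_n'$ terms cancel, and collect the remaining $\psi_n$ terms into $-Q_n\Psi_n$. The Liouville normal-form framing and the check that $R - \tfrac14 P^2 - \tfrac12 P'$ reproduces \eqref{eq_big_q_n} are sound additions, but they package the identical computation rather than constitute a different argument.
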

\begin{proof}
We differentiate $\Psi_n$ with respect to $t$ to obtain
\begin{align}
\Psi_n'(t) = \psi_n'(t) \sqrt{1-t^2} -
             \psi_n(t) \cdot \frac{t}{\sqrt{1-t^2}}.
\label{eq_d_big_psi}
\end{align}
Then, using \eqref{eq_d_big_psi}, we differentiate $\Psi_n'$ with 
respect to $t$ to obtain
\begin{align}
\Psi_n''(t)
& \; = 
\psi_n''(t) \sqrt{1-t^2} -
\psi_n'(t) \cdot \frac{ 2t }{ \sqrt{t^2 - 1} } -
\psi_n(t) \cdot
   \frac{ \sqrt{1-t^2} + t^2 / \sqrt{1-t^2} }{ 1-t^2 }
\nonumber \\
& \; =
\psi_n''(t) \sqrt{1-t^2} - 
\psi_n'(t) \cdot \frac{ 2t }{ \sqrt{1-t^2} } -
\psi_n(t) \brk{1-t^2}^{-\frac{3}{2}}
\nonumber \\
& \; = 
\frac{1}{\sqrt{1-t^2}}
\sbrk{
  \brk{1-t^2} \cdot \psi_n''(t) - 2t \cdot \psi_n'(t) - 
  \frac{ \psi_n(t) }{ 1-t^2 }
}
\nonumber \\
& \; = 
\frac{1}{\sqrt{1-t^2}}
\sbrk{ -\psi_n(t) \cdot \left(\chi_n - c^2 \cdot t^2 \right) -
       \frac{ \psi_n(t) }{ 1-t^2 } }
\nonumber \\
& \; = 
-\Psi_n(t) \cdot
\brk{ \frac{ \chi_n - c^2 \cdot t^2 }{1-t^2} + \frac{1}{\brk{t^2-1}^2}}.
\label{eq_dd_big_psi}
\end{align}
We observe that \eqref{eq_big_psi_ode} follows from \eqref{eq_dd_big_psi}.
\end{proof}

In the next theorem, we provide an upper bound on $\chi_n$
in terms of $n$.
The results of the corresponding numerical experiments
are reported in
Tables~\ref{t:test80a}, \ref{t:test80b}.
\begin{thm}
Suppose that $n \geq 2$ is a positive integer, and that $\chi_n > c^2$. Then,
\begin{align}
\chi_n < \brk{ \frac{\pi}{2} \brk{n+1} }^2.
\label{eq_khi_n_square}
\end{align}
\label{thm_khi_n_square}
\end{thm}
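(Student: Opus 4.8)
The plan is to pass to the transformed equation of Lemma~\ref{lem_trans} and to count the zeros of $\Psi_n$ on the \emph{closed} interval $[-1,1]$. Recall that $\Psi_n(t)=\psi_n(t)\sqrt{1-t^2}$ solves $\Psi_n''+Q_n\Psi_n=0$ with $Q_n(t)=(\chi_n-c^2t^2)/(1-t^2)+1/(1-t^2)^2$. The decisive observation is that, since $\psi_n$ is continuous on $[-1,1]$ (Theorem~\ref{thm_prolate_ode}) and hence $\psi_n(\pm1)$ is finite, $\Psi_n$ extends continuously to $[-1,1]$ with $\Psi_n(\pm1)=0$. Thus $\Psi_n$ vanishes at the $n+2$ points $-1<t_1<\dots<t_n<1$, which partition $[-1,1]$ into $n+1$ consecutive subintervals whose lengths sum to $2$. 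If each such subinterval has length $<\pi/\sqrt{\chi_n}$, then $2<(n+1)\,\pi/\sqrt{\chi_n}$, and squaring and rearranging gives exactly \eqref{eq_khi_n_square}.

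First I would establish the pointwise lower bound $Q_n(t)>\chi_n$ for all $-1<t<1$; this is where the hypothesis $\chi_n>c^2$ enters. Writing $\chi_n-c^2t^2=(\chi_n-c^2)+c^2(1-t^2)$ gives $(\chi_n-c^2t^2)/(1-t^2)=(\chi_n-c^2)/(1-t^2)+c^2\geq(\chi_n-c^2)+c^2=\chi_n$, since $1/(1-t^2)\geq 1$ and $\chi_n-c^2>0$; adding $1/(1-t^2)^2\geq 1$ yields $Q_n(t)\geq\chi_n+1>\chi_n$. For every \emph{interior} gap $[t_i,t_{i+1}]$ the function $Q_n$ is also bounded above, so Theorem~\ref{thm_25_09}, applied with $A^2=\chi_n$, gives $t_{i+1}-t_i<\pi/\sqrt{\chi_n}$ at once.

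The main obstacle is the two end gaps $[-1,t_1]$ and $[t_n,1]$, on which $Q_n$ blows up at the endpoint, so Theorem~\ref{thm_25_09} does not literally apply (it requires a finite upper bound $B^2$). By the parity of $\psi_n$ it suffices to treat $[t_n,1]$, and here I would use the Sturm comparison Theorem~\ref{thm_08_12_zeros} instead. Arguing by contradiction, suppose $1-t_n\geq\pi/\sqrt{\chi_n}$; then $\phi(t)=\sin(\sqrt{\chi_n}(t-t_n))$ solves $\phi''+\chi_n\phi=0$ and has consecutive zeros $t_n$ and $t_n+\pi/\sqrt{\chi_n}\leq 1$. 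Applying Theorem~\ref{thm_08_12_zeros} on a neighborhood $(t_n-\varepsilon,1)\subset(-1,1)$, where $\chi_n<Q_n$, forces $\Psi_n$ to vanish strictly between these two zeros, hence in $(t_n,1)$, contradicting that $t_n$ is the largest zero of $\psi_n$. The sole borderline case, in which the second zero of $\phi$ lands exactly at the singular endpoint $1$, is excluded by the strict slack $Q_n(t)>\chi_n+1$ available on $(t_n,1)$ (there $t_n>0$, so $1/(1-t^2)^2>1$): comparing instead against frequency $\sqrt{\chi_n+1}$ pushes that zero strictly inside $(t_n,1)$. Thus $1-t_n<\pi/\sqrt{\chi_n}$ and, by symmetry, $t_1+1<\pi/\sqrt{\chi_n}$. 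Summing over all $n+1$ gaps gives $2=(t_1+1)+\sum_{i=1}^{n-1}(t_{i+1}-t_i)+(1-t_n)<(n+1)\,\pi/\sqrt{\chi_n}$, whence $\sqrt{\chi_n}<\tfrac{\pi}{2}(n+1)$, which is \eqref{eq_khi_n_square}.
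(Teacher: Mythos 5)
Your proof is correct, and it takes a genuinely different route from the paper's. The paper also works from Lemma~\ref{lem_trans} and the bound $Q_n(t) \geq \chi_n + 1$, and it establishes the same endpoint estimate (in the form $t_n \geq 1 - \pi/\sqrt{\chi_n+1}$, via Theorem~\ref{thm_08_12_zeros} and Corollary~\ref{cor_08_12_zeros}); but instead of counting gaps it then invokes Theorem~\ref{thm_n_lower}, i.e.\ the Pr\"ufer-based bound $n > 1 + \tfrac{2}{\pi}\int_0^{t_n}\sqrt{(\chi_n - c^2t^2)/(1-t^2)}\,dt \geq 1 + \tfrac{2}{\pi}\,t_n\sqrt{\chi_n}$, so the paper's proof of \eqref{eq_khi_n_square} rests on the entire modified-Pr\"ufer machinery of Section~\ref{sec_sharp}. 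You avoid that machinery altogether: you bound each of the $n+1$ gaps determined by the zeros of $\Psi_n$ in $[-1,1]$ by $\pi/\sqrt{\chi_n}$ --- the interior gaps by the classical Theorem~\ref{thm_25_09} (continuity of $Q_n$ on the compact subinterval supplying the finite $B^2$), the two end gaps by Sturm comparison, with the slack in $Q_n \geq \chi_n + 1$ correctly disposing of the borderline case in which the comparison sine would vanish exactly at the singular endpoint --- and then sum the gap lengths. Your interior-gap estimate is essentially the content of the paper's Theorem~\ref{thm_spacing_khi}, which the paper derives from the same ingredients but does not use in its own proof of this theorem. The trade-off: your argument is elementary and self-contained, relying only on Lemma~\ref{lem_trans} and the classical oscillation facts of Section~\ref{sec_oscillation_ode}, which is arguably preferable for this particular statement; the paper's route reuses its sharper integral machinery, and it is that machinery (not gap counting, which cannot be pushed much further) that yields the subsequent refinements such as Theorems~\ref{thm_n_khi_simple} and~\ref{thm_exp_term}.
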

\begin{proof}
Suppose that the functions $\Psi_n, Q_n: (-1,1) \to \Rc$
are those of Lemma~\ref{lem_trans} above.
We observe that, since $\chi_n > c^2$,
\begin{align}
Q_n(t) > \chi_n + 1,
\label{eq_qn_gt_khi}
\end{align}
for $-1 < t < 1$. Suppose now that $t_n$ is the maximal root
of $\psi_n$ in $(-1,1)$. We combine \eqref{eq_qn_gt_khi} with
\eqref{eq_big_psi_ode} in Lemma~\ref{lem_trans} above and
Theorem~\ref{thm_08_12_zeros},
Corollary~\ref{cor_08_12_zeros} in Section~\ref{sec_oscillation_ode}
to obtain the inequality
\begin{align}
t_n \geq 1 - \frac{ \pi }{ \sqrt{\chi_n + 1} }.
\label{eq_tn_khi}
\end{align}
Then, we combine \eqref{eq_tn_khi} with 
Theorem~\ref{thm_n_lower} above to obtain
\begin{align}
n
& \; > 1 + \frac{2}{\pi}
   \int_0^{t_n} \sqrt{ \frac{\chi_n - c^2 t^2}{1 - t^2} } \; dt
\nonumber \\
& \; > 1 + \frac{2 \cdot t_n}{\pi} \sqrt{\chi_n} \geq
       1 + \frac{2}{\pi} \sqrt{\chi_n} \brk{1 - \frac{\pi}{\sqrt{\chi_n+1}}}
     > \frac{2}{\pi} \sqrt{\chi_n} - 1,
\end{align}
which implies \eqref{eq_khi_n_square}.
\end{proof}
The following theorem is a consequence of the proof
of Theorem~\ref{thm_khi_n_square}.
\begin{thm}
Suppose that $n \geq 2$ is a positive integer, and
that $\chi_n > c^2$.
Suppose also that $t_1 < \dots <t_n$ are
the roots of $\psi_n$ in $(-1,1)$. Then, 
\begin{align}
t_{j+1} - t_j < \frac{\pi}{\sqrt{\chi_n+1}},
\label{eq_tspacing}
\end{align}
for all $j=1,2,\dots,n-1$.
\label{thm_spacing_khi}
\end{thm}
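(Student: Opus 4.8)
The plan is to reuse the transformation and comparison already behind the proof of Theorem~\ref{thm_khi_n_square}, but to apply it to \emph{every} pair of consecutive roots rather than only to the gap between $t_n$ and $1$. First I would invoke Lemma~\ref{lem_trans}, so that $\Psi_n(t)=\psi_n(t)\sqrt{1-t^2}$ satisfies $\Psi_n''+Q_n\Psi_n=0$ on $(-1,1)$ with $Q_n$ as in \eqref{eq_big_q_n}. Since $\sqrt{1-t^2}>0$ on $(-1,1)$, the interior zeros of $\Psi_n$ are exactly $t_1<\dots<t_n$, so \eqref{eq_tspacing} is equivalent to an upper bound on the spacing of consecutive zeros of $\Psi_n$.

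Next I would record the pointwise lower bound on the coefficient. Setting $u=1-t^2\in(0,1]$ and using $\chi_n>c^2$, one rewrites \eqref{eq_big_q_n} as $Q_n(t)=(\chi_n-c^2)/u+c^2+1/u^2$, whence $Q_n(t)\ge(\chi_n-c^2)+c^2+1=\chi_n+1$, with equality only at $t=0$ (where $u=1$). This is precisely the estimate \eqref{eq_qn_gt_khi} used before.

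Then I would compare $\Psi_n$ against the constant-coefficient equation $y''+(\chi_n+1)y=0$, whose solution $\phi(t)=\sin(\sqrt{\chi_n+1}\,(t-t_j))$ has consecutive zeros spaced exactly $\pi/\sqrt{\chi_n+1}$ apart. Fixing $j$ and arguing by contradiction, suppose $t_{j+1}-t_j\ge\pi/\sqrt{\chi_n+1}$; then the first zero of $\phi$ to the right of $t_j$, namely $t_j+\pi/\sqrt{\chi_n+1}$, lies in $(t_j,t_{j+1}]$. Since $Q_n>\chi_n+1$ off the single point $t=0$, Theorem~\ref{thm_08_12_zeros} forces $\Psi_n$ to have a zero strictly between the two consecutive zeros $t_j$ and $t_j+\pi/\sqrt{\chi_n+1}$ of $\phi$, hence strictly inside $(t_j,t_{j+1})$ --- contradicting that $t_j,t_{j+1}$ are consecutive roots of $\psi_n$. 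This yields \eqref{eq_tspacing}.

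The one delicate point, which I expect to be the main obstacle, is that the coefficient comparison $Q_n\ge\chi_n+1$ fails to be strict at $t=0$; this matters only for the single interval straddling the origin, which occurs when $n$ is even (when $n$ is odd, $0$ is itself the root $t_{(n+1)/2}$ and lies in no interior interval). It is resolved by the standard fact that Sturm's comparison --- equivalently, the monotonicity of the Wronskian $W=\Psi_n'\phi-\Psi_n\phi'$, whose derivative is $(\chi_n+1-Q_n)\Psi_n\phi$ --- needs the strict coefficient inequality only on a set of positive measure, exactly as in the Wronskian computation in the proof of Corollary~\ref{cor_08_12_zeros}. Thus equality at the isolated point $t=0$ is harmless, and evaluating $W$ at $t_j$ and at $t_j+\pi/\sqrt{\chi_n+1}$ delivers the contradiction directly if one prefers to avoid citing the comparison theorem verbatim.
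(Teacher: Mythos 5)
Your proposal is correct and takes essentially the same route as the paper, whose proof is exactly the combination of Lemma~\ref{lem_trans}, the bound \eqref{eq_qn_gt_khi}, and the Sturm comparison results (Theorem~\ref{thm_08_12_zeros} and Corollary~\ref{cor_08_12_zeros}). In fact you are more careful than the paper on one point: \eqref{eq_qn_gt_khi} is stated there as a strict inequality for all $-1<t<1$, although $Q_n(0)=\chi_n+1$, and your Wronskian remark correctly shows that equality at this isolated point is harmless.
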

\begin{proof}
The inequality \eqref{eq_tspacing}
follows from the combination
of 
\eqref{eq_qn_gt_khi} in the proof of Theorem~\ref{thm_khi_n_square},
\eqref{eq_big_psi_ode} in Lemma~\ref{lem_trans} and
Theorem~\ref{thm_08_12_zeros},
Corollary~\ref{cor_08_12_zeros} in Section~\ref{sec_oscillation_ode}.
\end{proof}
The following theorem extends 
Theorem~\ref{thm_good_n_good_khi} in Section~\ref{sec_sharp}.
\begin{thm}
Suppose that $n \geq 2$ is a positive integer.
\begin{itemize}
\item If $n \leq (2c/\pi)-1$, then $\chi_n < c^2$.
\item If $n \geq (2c/\pi)$, then $\chi_n > c^2$.
\item If $(2c/\pi)-1 < n < (2c/\pi)$, then either inequality is possible.
\end{itemize}
\label{thm_n_and_khi}
\end{thm}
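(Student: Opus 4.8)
The plan is to read the trichotomy as the combination of two one-sided bounds that are already available: Theorem~\ref{thm_good_n_good_khi} controls the regime $n\ge 2c/\pi$, while Theorem~\ref{thm_khi_n_square} will control the regime $n\le 2c/\pi-1$, and the window $2c/\pi-1<n<2c/\pi$ is exactly the place where neither bound forces a sign. The second bullet requires no work: it is verbatim Theorem~\ref{thm_good_n_good_khi}.

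For the first bullet I would argue by contraposition. Assume $\chi_n\ge c^2$. By Theorem~\ref{thm_khi_n_square} we have $\chi_n<\left(\tfrac{\pi}{2}(n+1)\right)^2$; combined with $c^2\le\chi_n$ this gives $c<\tfrac{\pi}{2}(n+1)$, i.e.\ $n>\tfrac{2c}{\pi}-1$. Negating both sides turns the implication $\chi_n\ge c^2\Rightarrow n>\tfrac{2c}{\pi}-1$ into precisely the desired statement $n\le\tfrac{2c}{\pi}-1\Rightarrow \chi_n<c^2$. The only subtlety is that Theorem~\ref{thm_khi_n_square} is stated with the strict hypothesis $\chi_n>c^2$, whereas I use it with $\chi_n\ge c^2$ in order to obtain the strict conclusion $\chi_n<c^2$ rather than merely $\chi_n\le c^2$. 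I would resolve this by observing that its proof invokes $\chi_n>c^2$ only through the inequality $Q_n(t)\ge\chi_n+1$ behind \eqref{eq_qn_gt_khi} and through the bound $\sqrt{(\chi_n-c^2t^2)/(1-t^2)}\ge\sqrt{\chi_n}$ fed into Theorem~\ref{thm_n_lower}; both persist when $\chi_n=c^2$, indeed then $(\chi_n-c^2t^2)/(1-t^2)\equiv\chi_n$, so the conclusion of Theorem~\ref{thm_khi_n_square} in fact holds for all $\chi_n\ge c^2$. Equivalently, one may note that the borderline value $\chi_n=c^2$ can occur only inside the window $2c/\pi-1<n<2c/\pi$, hence is automatically excluded once $n\le 2c/\pi-1$.

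For the third bullet, the two engines above produce no contradiction from either alternative when $2c/\pi-1<n<2c/\pi$: if $\chi_n>c^2$ then Theorem~\ref{thm_khi_n_square} only demands $n>2c/\pi-1$, which holds, and if $\chi_n\le c^2$ then Theorem~\ref{thm_good_n_good_khi} only demands $n<2c/\pi$, which also holds. Thus ``either inequality is possible'' is a genuine existence statement rather than a consequence of the one-sided bounds, and I would establish it by exhibiting explicit pairs $(c,n)$ in this window, one realizing $\chi_n<c^2$ and one realizing $\chi_n>c^2$ --- concretely via the computed eigenvalues $\chi_n$ of \eqref{eq_prolate_ode} reported in the tables, or via the continuity argument that, for fixed $n$, the quantity $\chi_n-c^2$ is positive for small $c$ and negative for large $c$ and therefore changes sign. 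This last bullet is the main obstacle: unlike the first two it cannot be derived from the integral and oscillation inequalities, which are inherently one-directional, so it rests on producing examples (equivalently, a sign-change argument), and the accompanying degenerate case $\chi_n=c^2$ forces one to check that the modified Pr\"ufer setup of Lemma~\ref{lem_prufer} still underlies Theorem~\ref{thm_n_lower} when the turning point coincides with the endpoint $t=1$.
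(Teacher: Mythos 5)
Your proposal is correct, but it takes a genuinely different route to the first bullet than the paper does. The paper argues directly: assuming $\chi_n > c^2$, the transformed equation of Lemma~\ref{lem_trans} has potential $Q_n(t) = c^2 + (\chi_n - c^2)/(1-t^2) + (1-t^2)^{-2} > c^2$, so Theorem~\ref{thm_25_09} makes every gap between consecutive roots of $\Psi_n = \psi_n\sqrt{1-t^2}$ (including the gap from $t_n$ to the root at $1$) smaller than $\pi/c$; counting gaps gives $2(1-\pi/c) < 2t_n = t_n - t_1 < (n-1)\pi/c$, hence $n > 2c/\pi - 1$, and Theorem~\ref{thm_good_n_good_khi} completes the trichotomy. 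You instead obtain $n > 2c/\pi - 1$ from $c^2 \le \chi_n < \left(\tfrac{\pi}{2}(n+1)\right)^2$, i.e.\ from Theorem~\ref{thm_khi_n_square}. This is a legitimate shortcut (Theorem~\ref{thm_khi_n_square} precedes the present theorem and does not depend on it, so there is no circularity), but it buys brevity at the price of a heavier dependency chain: Theorem~\ref{thm_khi_n_square} rests on Theorem~\ref{thm_n_lower}, hence on the Pr\"ufer apparatus of Lemma~\ref{lem_prufer}, whose setup (Definition~\ref{def_special}, Lemma~\ref{lem_five}) is formulated only for $\chi_n \ne c^2$ --- whereas the paper's Sturm-only argument passes through the borderline case $\chi_n = c^2$ for free, since $Q_n(t) > c^2$ still holds there. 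Your verification that the two uses of $\chi_n > c^2$ inside the proof of Theorem~\ref{thm_khi_n_square} persist at $\chi_n = c^2$ is accurate, and you rightly identify the turning-point-at-the-endpoint issue as the real obligation of your route. One caveat: your ``equivalently'' aside --- that $\chi_n = c^2$ can occur only inside the window and is therefore excluded once $n \le 2c/\pi - 1$ --- is circular, because ``$\chi_n = c^2$ implies $n > 2c/\pi - 1$'' is precisely the strict form of the bullet being proved; keep the direct verification and drop the aside. Finally, on the third bullet you are more scrupulous than the paper, whose proof is silent about it: it is indeed an existence claim, settled by exhibiting pairs inside the window such as $c=10$, $n=6$ with $\chi_n > c^2$ (Table~\ref{t:test77a}) and $c=100$, $n=63$ with $\chi_n < c^2$ (Table~\ref{t:test99}); your sign-change alternative also works, but it requires continuity of $\chi_n$ as a function of $c$, a standard fact that is nowhere established in the paper.
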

\begin{proof}
Suppose that $\chi_n > c^2$, 
and that the functions $\Psi_n, Q_n: (-1,1) \to \Rc$
are those of Lemma~\ref{lem_trans} above.
Suppose also that $t_1 < \dots < t_n$ are the roots
of $\psi_n$ in $(-1,1)$. We observe that, due to \eqref{eq_big_q_n}
in Lemma~\ref{lem_trans},
\begin{align}
Q_n(t) = 
c^2 + \frac{ \chi_n - c^2 }{ 1 - t^2 } + \frac{ 1 }{ \brk{1-t^2}^2 }
> c^2.
\label{eq_qn_gt_c}
\end{align}
We combine \eqref{eq_qn_gt_c} with
\eqref{eq_big_psi_ode} in Lemma~\ref{lem_trans} above and
Theorem~\ref{thm_25_09} in Section~\ref{sec_oscillation_ode} to conclude
that
\begin{align}
t_{j+1} - t_j < \frac{ \pi }{ c }, 
\label{eq_spacing_c}
\end{align}
for all $j = 1, \dots, n-1$, and, moreover,
\begin{align}
\quad 1 - t_n < \frac{ \pi }{ c }.
\label{eq_1_tn_c}
\end{align}
We combine \eqref{eq_spacing_c} with \eqref{eq_1_tn_c} to obtain
the inequality
\begin{align}
2 \brk{1 - \frac{ \pi }{ c }} < 2t_n = t_n - t_1 < \brk{n-1} \frac{ \pi }{ c },
\end{align}
which implies that
\begin{align}
n > \frac{2}{\pi} c - 1.
\label{eq_misc_n_small}
\end{align}
We conclude the proof by combining 
Theorem~\ref{thm_good_n_good_khi} in Section~\ref{sec_sharp}
with \eqref{eq_misc_n_small}.
\end{proof}
The following theorem is yet another application
of Lemma~\ref{lem_trans} above.
\begin{thm}
Suppose that $n \geq 2$ is a positive integer.
Suppose also that
$-1 < t_1 < t_2 < \dots < t_n < 1$ are the roots of $\psi_n$
in the interval $(-1,1)$. Suppose furthermore that
$i$ is an integer such that
$0 \leq t_i < t_n$, i.e. $(n+1)/2 \leq i \leq n-1$.
If $\chi_n > c^2$, then
\begin{align}
t_{i+1} - t_i > t_{i+2} - t_{i+1} > \dots > t_n - t_{n-1}.
\label{eq_spacing_shrink}
\end{align}
If $\chi_n < c^2 - c\sqrt{2}$, then
\begin{align}
t_{i+1} - t_i < t_{i+2} - t_{i+1} < \dots < t_n - t_{n-1}.
\label{eq_spacing_stretch}
\end{align}
\label{thm_spacing_inside}
\end{thm}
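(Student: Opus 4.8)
The plan is to reduce everything to the self-adjoint form supplied by Lemma~\ref{lem_trans}. The function $\Psi_n(t)=\psi_n(t)\sqrt{1-t^2}$ has exactly the same roots $t_1<\dots<t_n$ in $(-1,1)$ as $\psi_n$, and it solves $\Psi_n''+Q_n\Psi_n=0$ with $Q_n$ as in \eqref{eq_big_q_n}. Since both chains \eqref{eq_spacing_shrink} and \eqref{eq_spacing_stretch} involve only roots with $t_i\ge 0$, it suffices to determine where $Q_n$ is monotone on the positive axis and then quote Theorem~\ref{thm_25_09_2}: a non-decreasing coefficient forces the consecutive gaps to shrink, a non-increasing one forces them to stretch.

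First I would differentiate \eqref{eq_big_q_n} directly, obtaining
\begin{align}
Q_n'(t)=\frac{2t}{\brk{1-t^2}^3}\cdot\brk{\brk{\chi_n-c^2}\brk{1-t^2}+2},
\end{align}
so that for $0<t<1$ the sign of $Q_n'(t)$ equals the sign of $\brk{\chi_n-c^2}\brk{1-t^2}+2$. When $\chi_n>c^2$ this bracket is strictly positive for every $0<t<1$, hence $Q_n$ is strictly increasing on $(0,1)$; the positive roots all lie in $(0,1)$ by Lemma~\ref{lem_five}, and Theorem~\ref{thm_25_09_2} then yields the shrinking chain \eqref{eq_spacing_shrink}.

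The substantive case is $\chi_n<c^2-c\sqrt{2}$, where I must show $Q_n$ is decreasing on all of $(0,t_n)$, i.e. that $\brk{c^2-\chi_n}\brk{1-t^2}>2$ throughout. Because $1-t^2$ is decreasing, the binding point is $t=t_n$, so it is enough to prove $\brk{c^2-\chi_n}\brk{1-t_n^2}>2$. Here I would invoke Lemma~\ref{lem_five}: since $\chi_n<c^2$, the maximal root obeys $t_n<\sqrt{\chi_n}/c$, whence $1-t_n^2>\brk{c^2-\chi_n}/c^2$. The hypothesis $\chi_n<c^2-c\sqrt{2}$ is exactly $\brk{c^2-\chi_n}^2>2c^2$, and combining the two gives $\brk{c^2-\chi_n}\brk{1-t_n^2}>\brk{c^2-\chi_n}^2/c^2>2$. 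Thus $Q_n'<0$ on $(0,t_n)$, $Q_n$ is strictly decreasing there, and Theorem~\ref{thm_25_09_2} produces the stretching chain \eqref{eq_spacing_stretch}. I expect this threshold bookkeeping to be the crux: the constant $c\sqrt{2}$ is precisely what makes $\brk{c^2-\chi_n}^2/c^2$ clear the value $2$ once the location bound $t_n<\sqrt{\chi_n}/c$ is inserted, which also explains why no analogous hypothesis is needed when $\chi_n>c^2$.

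Two technical points remain. Theorem~\ref{thm_25_09_2} only delivers non-strict inequalities, whereas the claim is strict; and when $n$ is odd the central root $t_{(n+1)/2}=0$ sits at the boundary of the region of monotonicity (indeed $Q_n$, being even in $t$, has a local extremum at the origin), so the single gap straddling $0$ is not covered by one application of the theorem. Both are handled by the strict comparison apparatus of Theorem~\ref{thm_08_12_zeros} and Corollary~\ref{cor_08_12_zeros}: for consecutive roots $t_{j-1}<t_j<t_{j+1}$ with $t_{j-1}\ge 0$ I would compare the translated solutions $\Psi_n\brk{t+t_{j-1}}$ and $\Psi_n\brk{t+t_j}$, which both vanish at $t=0$ and whose coefficient functions $Q_n\brk{t+t_{j-1}}$ and $Q_n\brk{t+t_j}$ are strictly ordered on the relevant range because $Q_n$ is strictly monotone on $(0,t_n)$. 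Arranging the comparison so that the solution with the smaller coefficient is the one whose consecutive zeros bound the interval (and, where convenient, arguing by contradiction so that the translated argument is automatically kept below $1$, using $t_{j+1}<1$), the comparison theorem forces the next zero of the solution with the larger coefficient to occur strictly earlier. This yields strict inequalities for every consecutive pair of gaps, with the origin playing no special role, completing both \eqref{eq_spacing_shrink} and \eqref{eq_spacing_stretch}.
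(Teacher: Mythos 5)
Your proposal is correct and follows essentially the same route as the paper: both pass to $\Psi_n$ via Lemma~\ref{lem_trans}, establish monotonicity of $Q_n$ on the relevant interval (your direct differentiation criterion $(c^2-\chi_n)(1-t^2)>2$ combined with $t_n<\sqrt{\chi_n}/c$ is computationally equivalent to the paper's substitution $P_n(y)=y^2+(\chi_n-c^2)y+c^2$ with $y=1/(1-t^2)$, both reducing to the same condition $(c^2-\chi_n)^2>2c^2$), and conclude via the oscillation results of Section~\ref{sec_oscillation_ode}. The only difference is that the paper simply cites Theorem~\ref{thm_25_09_2}, whereas you additionally patch two technicalities the paper leaves implicit---strictness of the inequalities, and the central root $t_{(n+1)/2}=0$ lying at the boundary of the monotonicity region when $n$ is odd---by running the strict Sturm comparison of Theorem~\ref{thm_08_12_zeros} and Corollary~\ref{cor_08_12_zeros} on translated copies of $\Psi_n$; this is a refinement of, not a departure from, the paper's argument.
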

\begin{proof}
Suppose that the functions $\Psi_n, Q_n: (-1,1) \to \Rc$
are those of Lemma~\ref{lem_trans} above.
If $\chi_n > c^2$, then, due to \eqref{eq_big_q_n} in Lemma~\ref{lem_trans},
\begin{align}
Q_n(t) = c^2 + \frac{\chi_n - c^2}{1 - t^2} + \frac{1}{\brk{1-t^2}^2}
\label{eq_30_11_q_again}
\end{align}
is obviously a monotonically increasing function. We combine this
observation
with \eqref{eq_big_psi_ode} of Lemma~\ref{lem_trans} and
\eqref{eq_shrink}
of Theorem~\ref{thm_25_09_2} in Section~\ref{sec_oscillation_ode}
to conclude \eqref{eq_spacing_shrink}.

Suppose now that 
\begin{align}
\chi_n < c^2 - c \sqrt{2}. 
\label{eq_khi_very_small}
\end{align}
Suppose also that
the function $P_n : (1,\infty) \to \Rc$ is defined via the formula
\begin{align}
P_n(y) = Q_n\left( \sqrt{1-\frac{1}{\sqrt{y}}} \right) =
        y^2 + (\chi_n-c^2) \cdot y + c^2,
\label{eq_big_p_n}
\end{align}
for $1 < y < \infty$. 
Obviously,
\begin{align}
Q_n(t) = P_n\left(\frac{1}{1-t^2}\right).
\label{eq_qn_pn}
\end{align}
Suppose also that $y_0$ is defined via 
the formula
\begin{align}
y_0 = \frac{1}{1-(\sqrt{\chi_n}/c)^2} = \frac{c^2}{c^2 - \chi_n}.
\label{eq_y0}
\end{align}
We combine \eqref{eq_khi_very_small}, \eqref{eq_big_p_n} and \eqref{eq_y0} to
conclude that, for $1 < y < y_0$,
\begin{align}
P_n'(y) = 2y-(c^2-\chi_n) < 2y_0 - (c^2-\chi_n)
        = \frac{2c^2-(c^2-\chi_n)^2}{c^2-\chi_n} < 0.
\label{eq_dp_neg}
\end{align}
Moreover, due to \eqref{eq_big_p_n}, \eqref{eq_y0}, \eqref{eq_dp_neg},
\begin{align}
P_n(y) > P_n(y_0) = \left( \frac{c^2}{\chi_n-c^2} \right)^2 > 0,
\label{eq_pn_pos}
\end{align}
for all $1 < y < y_0$. We combine \eqref{eq_big_p_n}, \eqref{eq_qn_pn},
\eqref{eq_y0}, \eqref{eq_dp_neg} and \eqref{eq_pn_pos} to conclude
that $Q_n$ is monotonically decreasing and strictly positive 
in the interval $(0, \sqrt{\chi_n}/c)$.
We combine this observation with
\eqref{eq_stretch} of 
Theorem~\ref{thm_25_09_2} in Section~\ref{sec_oscillation_ode},
\eqref{eq_khi_small} of Lemma~\ref{lem_five}, and
\eqref{eq_big_psi_ode} of Lemma~\ref{lem_trans} to conclude
\eqref{eq_spacing_stretch}.
\end{proof}
\begin{remark}
Numerical experiments confirm that there exist real $c>0$ and 
integer $n>0$ such that
$c^2 - c\sqrt{2} < \chi_n < c^2$ and neither of
\eqref{eq_spacing_shrink}, \eqref{eq_spacing_stretch}
is true.
\label{rem_spacing_inside}
\end{remark}
In the following theorem, we provide
an upper bound on $1-t_n$, where
$t_n$ is the maximal root of $\psi_n$ in the interval $(-1,1)$.
\begin{thm}
Suppose that $n \geq 2$ is a positive integer,
and that $\chi_n > c^2$. Suppose also that $t_n$ is the maximal root
of $\psi_n$ in the interval $(-1,1)$. 
Then,
\begin{align}
c^2 \cdot (1-t_n)^2 + \frac{\chi_n-c^2}{1+t_n} \cdot (1-t_n) < \pi^2.
\label{eq_tn_pi}
\end{align}
Moreover,
\begin{align}
1-t_n < \frac{ 4\pi^2 }{\chi_n-c^2 + \sqrt{(\chi_n-c^2)^2+(4\pi c)^2} }.
\label{eq_tn_upper}
\end{align}
\label{thm_tn_upper}
\end{thm}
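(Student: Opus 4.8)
The plan is to exploit the transformed equation of Lemma~\ref{lem_trans}, which recasts the prolate ODE as $\Psi_n'' + Q_n \Psi_n = 0$ with $\Psi_n = \psi_n \sqrt{1-t^2}$ and $Q_n$ given by \eqref{eq_big_q_n}. Since $\sqrt{1-t^2} > 0$ on $(-1,1)$, the functions $\Psi_n$ and $\psi_n$ share their roots there, so $t_n$ is also the maximal root of $\Psi_n$ in $(-1,1)$; in particular $\Psi_n$ has no root on $(t_n, 1)$. The quantity $1 - t_n$ is thus the distance from $t_n$ to the edge of the interval on which $\Psi_n$ keeps one sign, and I would bound it by comparing $\Psi_n$ against a constant-coefficient oscillator on $[t_n, 1]$.

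The key observation is that for $\chi_n > c^2$ one may write $Q_n$ in the form \eqref{eq_30_11_q_again}, which is strictly increasing on $(0,1)$; since $t_n > 0$ for $n \geq 2$, this gives $Q_n(t) > Q_n(t_n)$ for all $t \in (t_n, 1)$. I would set $A = \sqrt{Q_n(t_n)} > 0$ and let $\phi(t) = \sin(A(t - t_n))$, a solution of $\phi'' + A^2 \phi = 0$ with consecutive roots $t_n$ and $t_n + \pi/A$. If $1 - t_n > \pi/A$, then both of these roots lie in $(-1,1)$, and Theorem~\ref{thm_08_12_zeros} (with $g_1 \equiv A^2 < Q_n = g_2$) forces $\Psi_n$ to have a root strictly between them, hence in $(t_n, 1)$, contradicting the maximality of $t_n$. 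Therefore $1 - t_n \le \pi/A$, i.e. $(1-t_n)^2 Q_n(t_n) \le \pi^2$. Expanding $Q_n(t_n)$ and cancelling the factor $1 - t_n^2 = (1-t_n)(1+t_n)$ turns this into
\begin{align}
c^2(1-t_n)^2 + \frac{(\chi_n - c^2)(1-t_n)}{1+t_n} + \frac{1}{(1+t_n)^2} \le \pi^2,
\nonumber
\end{align}
and discarding the positive last term yields \eqref{eq_tn_pi}.

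To obtain the explicit bound \eqref{eq_tn_upper}, I would feed \eqref{eq_tn_pi} into an elementary quadratic estimate. Since $t_n < 1$ gives $1 + t_n < 2$, we have $(\chi_n - c^2)/(1+t_n) > (\chi_n - c^2)/2$, so \eqref{eq_tn_pi} implies $c^2(1-t_n)^2 + \tfrac{1}{2}(\chi_n - c^2)(1-t_n) < \pi^2$. Viewing the left side as a quadratic in $u = 1 - t_n$ and retaining its positive root, then rationalizing the numerator (using $(4\pi c)^2 = 16\pi^2 c^2$), collapses the estimate precisely to \eqref{eq_tn_upper}.

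The main obstacle is that $Q_n(t) \to +\infty$ as $t \to 1^-$, so the two-sided distance-between-roots theorem (Theorem~\ref{thm_25_09}) cannot be applied on $[t_n, 1]$ directly, as there is no finite upper bound $B^2$. The resolution is that only a lower bound on $Q_n$ is needed to bound the root gap from above, and this is supplied cleanly by the Sturm comparison theorem together with the monotonicity of $Q_n$; the blow-up at $t = 1$ is harmless because the contradiction is reached using the root $t_n + \pi/A$, which under the assumption $1 - t_n > \pi/A$ lies safely in the open interval $(-1,1)$ where the ODE is regular.
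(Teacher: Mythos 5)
Your proposal is correct and follows essentially the same route as the paper's proof: both pass to the transformed equation $\Psi_n'' + Q_n\Psi_n = 0$ of Lemma~\ref{lem_trans}, use the monotonicity of $Q_n$ for $\chi_n > c^2$ together with Sturm comparison against the constant-coefficient oscillator with frequency $\sqrt{Q_n(t_n)}$ to force $(1-t_n)^2 Q_n(t_n) \leq \pi^2$, and then obtain \eqref{eq_tn_upper} from the same quadratic estimate after replacing $1+t_n$ by $2$. The only cosmetic difference is that the paper invokes Corollary~\ref{cor_08_12_zeros} via matched initial conditions at $t_n$, whereas you use the explicit comparison solution $\sin\left(\sqrt{Q_n(t_n)}\,(t-t_n)\right)$ with Theorem~\ref{thm_08_12_zeros} directly; both arguments are sound.
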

\begin{proof}
Suppose that the functions $\Psi_n, Q_n: (-1,1) \to \Rc$
are those of Lemma~\ref{lem_trans} above.
Since $\chi_n > c^2$, the function
$Q_n$ is monotonically increasing, i.e.
\begin{align}
Q_n(t_n) \leq Q(t),
\label{eq_qn_ineq_1}
\end{align}
for all $t_n \leq t < 1$.
We consider the solution $\varphi_n$ of the ODE
\begin{align}
\varphi_n''(t) + Q_n(t_n) \cdot \varphi_n(t) = 0,
\label{eq_phi_ode_1}
\end{align}
with the initial conditions
\begin{align}
\varphi(t_n) = \Psi_n(t_n) = 0, \quad \varphi'(t_n) = \Psi_n'(t_n).
\label{eq_phi_ic}
\end{align}
The function $\varphi_n$ has a root $y_n$ given via the formula
\begin{align}
y_n = t_n + \frac{ \pi }{ \sqrt{Q_n(t_n)} }.
\end{align}
Suppose, by contradiction, that $y_n \leq 1$.
Then, due to the combination of
\eqref{eq_big_psi_ode} of Lemma~\ref{lem_trans},
Theorem~\ref{thm_08_12_zeros}, Corollary~\ref{cor_08_12_zeros} in
Section~\ref{sec_oscillation_ode}, and \eqref{eq_qn_ineq_1} above,
$\Psi_n$ has a root in the interval $(t_n,y_n)$, in contradiction
to \eqref{eq_big_psi_n}. Therefore,
\begin{align}
t_n + \frac{\pi}{\sqrt{Q_n(t_n)}} > 1.
\label{eq_tn_qn_1}
\end{align}
We rewrite \eqref{eq_tn_qn_1} as
\begin{align}
(1-t_n)^2 \cdot Q_n(t_n) < \pi^2,
\label{eq_tn_qn_2}
\end{align}
and plug \eqref{eq_big_q_n} into \eqref{eq_tn_qn_2} to obtain
the inequality
\begin{align}
c^2 \cdot (1-t_n)^2 + \frac{\chi_n-c^2}{1+t_n} \cdot (1-t_n) +
\frac{1}{(1+t_n)^2} < \pi^2,
\label{eq_tn_qn_3}
\end{align}
which immediately yields \eqref{eq_tn_pi}. Since $1-t_n$ is positive,
\eqref{eq_tn_qn_3} implies that $1-t_n$
is bounded from above by the maximal root 
$x_{\max}$ of the quadratic equation
\begin{align}
c^2 \cdot x^2 + \frac{\chi_n-c^2}{2} \cdot x - \pi^2 = 0,
\label{eq_tn_quad_1}
\end{align}
given via the formula
\begin{align}
x_{\max} & \; =
\frac{1}{4c^2} \cdot \left(
\sqrt{ (\chi_n-c^2)^2 + 16 \pi^2 c^2 } - (\chi_n-c^2) \right) \nonumber \\
& \; = \frac{16 \pi^2 c^2}{4c^2} \cdot
\frac{1}{ \chi_n-c^2 + \sqrt{ (\chi_n-c^2)^2 + 16 \pi^2 c^2 } },
\end{align}
which implies \eqref{eq_tn_upper}.
\end{proof}
The following theorem uses Theorem~\ref{thm_tn_upper} to simplify
the inequalities
\eqref{eq_n_upper_large} in Theorem~\ref{thm_n_upper}
and \eqref{eq_n_lower} in Theorem~\ref{thm_n_lower}
in Section~\ref{sec_sharp}.
\begin{thm}
Suppose that $n \geq 2$ is a positive integer, and
that $\chi_n > c^2$. Suppose also that $t_n$ is the maximal root
of $\psi_n$ in the interval $(-1,1)$. Then,
\begin{align}
\frac{2}{\pi} \int_{t_n}^1
\sqrt{ \frac{\chi_n - c^2 t^2}{1-t^2} } \; dt < 4.
\label{eq_bound_4}
\end{align}
Moreover,
\begin{align}
n <
\frac{2}{\pi} \int_0^1 \sqrt{ \frac{\chi_n - c^2 t^2}{1 - t^2} } \; dt
< n+3.
\label{eq_both_large_simple}
\end{align}
\label{thm_n_khi_simple}
\end{thm}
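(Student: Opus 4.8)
The plan is to prove the tail bound \eqref{eq_bound_4} first, because the two-sided estimate \eqref{eq_both_large_simple} then follows almost for free. Writing $A=\frac{2}{\pi}\int_0^1 f\,dt$, $B=\frac{2}{\pi}\int_0^{t_n} f\,dt$ and $C=\frac{2}{\pi}\int_{t_n}^1 f\,dt$ with $f(t)=\sqrt{(\chi_n-c^2t^2)/(1-t^2)}$, we have $A=B+C$. The left inequality of \eqref{eq_both_large_simple} is exactly \eqref{eq_n_upper_large} of Theorem~\ref{thm_n_upper}, while \eqref{eq_n_lower} of Theorem~\ref{thm_n_lower} gives $B<n-1$; hence once $C<4$ is established, $A=B+C<(n-1)+4=n+3$, which is the right inequality.

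So everything reduces to showing $C<4$, i.e. $\int_{t_n}^1 f\,dt<2\pi$. First I would rewrite the integrand via $\chi_n-c^2t^2=(\chi_n-c^2)+c^2(1-t^2)$, so that $f(t)=\sqrt{c^2+(\chi_n-c^2)/(1-t^2)}$ (which is real and finite on $[t_n,1)$ since $\chi_n-c^2t^2>\chi_n-c^2>0$ there). For $t\in[t_n,1]$ one has $1-t^2=(1-t)(1+t)\ge(1-t)(1+t_n)$, so with $L=1-t_n$ and $E=(\chi_n-c^2)/(1+t_n)$ the substitution $w=1-t$ gives
\[
\int_{t_n}^1 f\,dt \le \int_0^L \sqrt{c^2+\tfrac{E}{w}}\,dw =: I.
\]
The point of this reduction is that the model integral $I$ is elementary: the substitution $w=z^2$ turns it into $2\int_0^{\sqrt L}\sqrt{c^2z^2+E}\,dz$, and the standard antiderivative yields
\[
I=\sqrt{L\,(c^2L+E)}+\tfrac{E}{c}\,\mathrm{arcsinh}\!\left(c\sqrt{L/E}\right).
\]

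Both summands are then controlled by the single inequality \eqref{eq_tn_pi} of Theorem~\ref{thm_tn_upper}, which in the present notation reads $c^2L^2+EL<\pi^2$. The first summand is $\sqrt{c^2L^2+EL}<\pi$ outright. For the second, using $\mathrm{arcsinh}(\beta)\le\beta$ for $\beta\ge0$ with $\beta=c\sqrt{L/E}$ gives $\tfrac{E}{c}\,\mathrm{arcsinh}(\beta)\le\tfrac{E}{c}\beta=\sqrt{EL}<\pi$. Hence $I<2\pi$ and $C=\frac{2}{\pi}\int_{t_n}^1 f\,dt\le\frac{2}{\pi}I<4$, which is \eqref{eq_bound_4}.

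The step I expect to be the real obstacle is squeezing out a constant as small as $4$ for the tail. The tempting crude bound $\sqrt{c^2+E/w}\le c+\sqrt{E/w}$ splits $C$ into a ``$cL$'' piece and an ``$\mathrm{arccos}$'' piece, each individually sharp, but after optimizing the two against the constraint $c^2L^2+EL<\pi^2$ one only gets $C<2\sqrt5\approx4.47$; the subadditivity of the square root wastes too much. Integrating the reduced integrand \emph{before} estimating, i.e. evaluating $I$ in closed form and only then invoking \eqref{eq_tn_pi} and $\mathrm{arcsinh}(\beta)\le\beta$, is precisely what recovers the required constant, so the care lies in the order of operations rather than in any delicate estimate.
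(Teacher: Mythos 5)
Your proof is correct, and its skeleton is the same as the paper's: both arguments reduce \eqref{eq_both_large_simple} to the tail estimate \eqref{eq_bound_4} via Theorems~\ref{thm_n_upper} and~\ref{thm_n_lower}, and both extract the tail estimate from the inequality \eqref{eq_tn_pi} of Theorem~\ref{thm_tn_upper}. The difference is in how \eqref{eq_tn_pi} is converted into $\int_{t_n}^1 f\,dt<2\pi$. You majorize the tail by the model integral $\int_0^L\sqrt{c^2+E/w}\,dw$ (with $L=1-t_n$, $E=(\chi_n-c^2)/(1+t_n)$), evaluate it in closed form, and bound the two resulting terms, $\sqrt{c^2L^2+EL}$ and $(E/c)\,\mathrm{arcsinh}\left(c\sqrt{L/E}\right)\leq\sqrt{EL}$, by $\pi$ each. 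The paper reaches the same constant purely pointwise: since $t\mapsto(\chi_n-c^2t^2)/(1+t)$ is decreasing, for $t_n\leq t<1$ one has
\begin{align*}
\frac{\chi_n-c^2t^2}{1+t} \;<\; c^2(1-t_n)+\frac{\chi_n-c^2}{1+t_n} \;=\; c^2L+E,
\end{align*}
so the integrand is at most $\sqrt{(c^2L+E)/(1-t)}$, and integrating $(1-t)^{-1/2}$ over $(t_n,1)$ gives exactly $2\sqrt{L(c^2L+E)}=2\sqrt{c^2L^2+EL}<2\pi$ by \eqref{eq_tn_pi}. This also qualifies your closing diagnosis: it is not the pointwise (as opposed to integrate-first) strategy that loses the constant in your $2\sqrt{5}$ computation, but the decision to split $c^2$ off under the square root before estimating. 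Bounding the entire ratio $(\chi_n-c^2t^2)/(1+t)$ by the single constant $c^2L+E$ keeps the combination $c^2L^2+EL$ — precisely the quantity controlled by \eqref{eq_tn_pi} — intact, and then no closed-form antiderivative or $\mathrm{arcsinh}$ bound is needed. Both routes are valid; yours trades the paper's monotonicity observation for an explicit integration.
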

\begin{proof}
We observe that, for $t_n \leq t < 1$,
\begin{align}
\frac{\chi_n-c^2 t^2}{1+t} < 
\frac{\chi_n-c^2 t_n^2}{1+t_n} = 
\frac{\chi_n-c^2}{1+t_n} + \frac{c^2-c^t t_n^2}{1+t_n} =
c^2 (1-t_n) + \frac{\chi_n-c^2}{1+t_n}.
\label{eq_simple_1}
\end{align}
We combine \eqref{eq_simple_1} with \eqref{eq_tn_pi} 
in Theorem~\ref{thm_tn_upper} to obtain the inequality
\begin{align}
\frac{\chi_n-c^2 t^2}{1+t} < 
\frac{\pi^2}{1-t_n},
\label{eq_simple_2}
\end{align}
valid for $t_n \leq t < 1$. We conclude from \eqref{eq_simple_2} that
\begin{align}
\int_{t_n}^1 \sqrt{ \frac{\chi_n - c^2 t^2}{1 - t^2} } \; dt <
\frac{\pi}{\sqrt{1-t_n}} \cdot \int_{t_n}^1 \frac{dt}{\sqrt{1-t}} =
\frac{\pi}{\sqrt{1-t_n}} \cdot 2\sqrt{1-t_n} = 2\pi,
\end{align}
which implies \eqref{eq_bound_4}. The inequality
\eqref{eq_both_large_simple} follows from the combination of
\eqref{eq_bound_4},
\eqref{eq_n_upper_large} in Theorem~\ref{thm_n_upper}
and \eqref{eq_n_lower} in Theorem~\ref{thm_n_lower}
in Section~\ref{sec_sharp}.
\end{proof}
\begin{cor}
Suppose that $n \geq 2$ is a positive integer, and that $\chi_n > c^2$.
Then,
\begin{align}
n < 
\frac{2}{\pi} \sqrt{\chi_n} \cdot E \brk{ \frac{c}{\sqrt{\chi_n}} } < n+3,
\label{eq_khi_simple_elliptic}
\end{align}
where $E(k)$ is defined via \eqref{eq_E} in Section~\ref{sec_elliptic}.
\label{cor_khi_simple}
\end{cor}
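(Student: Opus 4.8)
The plan is to recognize that the integral appearing in Theorem~\ref{thm_n_khi_simple} is precisely a complete elliptic integral of the second kind, after which the claimed two-sided bound follows at once. The corollary is therefore purely a matter of rewriting the middle quantity in \eqref{eq_both_large_simple}, with no new analytic input beyond that theorem.

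First I would factor $\sqrt{\chi_n}$ out of the integrand in \eqref{eq_both_large_simple}, writing
\begin{align}
\int_0^1 \sqrt{ \frac{\chi_n - c^2 t^2}{1-t^2} } \; dt
= \sqrt{\chi_n} \int_0^1 \sqrt{ \frac{1 - (c^2/\chi_n) t^2}{1 - t^2} } \; dt.
\end{align}
Setting $k = c/\sqrt{\chi_n}$, I would note that the standing hypothesis $\chi_n > c^2$ guarantees $0 < k < 1$, so that $k$ lies in the admissible range $[0,1]$ for the elliptic integrals introduced in Section~\ref{sec_elliptic}.

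Next I would identify the remaining integral. Comparing with \eqref{eq_E_y_2} and \eqref{eq_E} in Section~\ref{sec_elliptic}, and using $\sin(\pi/2) = 1$ so that the upper limit of integration in \eqref{eq_E_y_2} becomes $1$, I obtain
\begin{align}
\int_0^1 \sqrt{ \frac{1 - k^2 t^2}{1 - t^2} } \; dt = E\brk{ \frac{\pi}{2}, k } = E(k),
\end{align}
whence the middle expression of \eqref{eq_both_large_simple} equals exactly $\frac{2}{\pi} \sqrt{\chi_n} \cdot E\brk{c/\sqrt{\chi_n}}$.

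Finally, substituting this identity into the two-sided bound \eqref{eq_both_large_simple} of Theorem~\ref{thm_n_khi_simple} yields \eqref{eq_khi_simple_elliptic} directly. The only point requiring any attention is the verification that $k = c/\sqrt{\chi_n}$ remains in $[0,1]$, so that the substitution $x = \sin t$ underlying the representation \eqref{eq_E_y_2} is legitimate; this is precisely the content of the assumption $\chi_n > c^2$. Consequently there is no genuine obstacle here, and the corollary is an immediate restatement of Theorem~\ref{thm_n_khi_simple} in terms of the complete elliptic integral $E$.
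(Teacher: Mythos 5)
Your proposal is correct and matches the paper's own proof, which likewise derives \eqref{eq_khi_simple_elliptic} by combining the definition \eqref{eq_E} of the complete elliptic integral with the two-sided bound \eqref{eq_both_large_simple} of Theorem~\ref{thm_n_khi_simple}; you have merely written out explicitly the change of variables and the verification that $k = c/\sqrt{\chi_n} \in (0,1)$, which the paper leaves implicit.
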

\begin{proof}
The inequality \eqref{eq_khi_simple_elliptic}
follows immediately from the combination of 
\eqref{eq_E} in Section~\ref{sec_elliptic}
and
\eqref{eq_both_large_simple} in Theorem~\ref{thm_n_khi_simple} above.
\end{proof}
The following theorem
extends Theorem~\ref{thm_tn_upper} above by providing
a lower bound on $1-t_n$, where
$t_n$ is the maximal root of $\psi_n$ in the interval $(-1,1)$.
\begin{thm}
Suppose that $n \geq 2$ is a positive integer, and
that $\chi_n > c^2$. Suppose also that $t_n$ is the maximal root
of $\psi_n$ in the interval $(-1,1)$. 
Then,
\begin{align}
\frac{ \pi^2/8 }{\chi_n-c^2 + \sqrt{(\chi_n-c^2)^2+(\pi c/2)^2} } < 1-t_n.
\label{eq_tn_lower}
\end{align}
\label{thm_tn_lower}
\end{thm}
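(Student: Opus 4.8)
The plan is to reduce \eqref{eq_tn_lower} to the single algebraic inequality
\begin{align}
c^2(1-t_n)^2 + (\chi_n-c^2)(1-t_n) > \frac{\pi^2}{16},
\label{eq_lb_goal}
\end{align}
and then to prove \eqref{eq_lb_goal} with the modified Pr\"ufer transformation of Lemma~\ref{lem_prufer}. The reduction is purely algebraic: since $1-t_n>0$ and the quadratic $x\mapsto c^2x^2+(\chi_n-c^2)x-\pi^2/16$ has exactly one positive root, \eqref{eq_lb_goal} is equivalent to the assertion that $1-t_n$ exceeds that root, and rationalizing the root (multiplying numerator and denominator by $\sqrt{(\chi_n-c^2)^2+(\pi c/2)^2}+(\chi_n-c^2)$) reproduces exactly the right-hand side of \eqref{eq_tn_lower}. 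Hence it suffices to establish \eqref{eq_lb_goal}.

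The central idea is to analyze the \emph{last half-swing} of the Pr\"ufer angle, namely the interval $(t_n,x_n)=(t_n,1)$, where the identification $x_n=1$ comes from \eqref{eq_khi_large} in Lemma~\ref{lem_five} (using $\chi_n>c^2$). By \eqref{eq_theta_at_t} and \eqref{eq_theta_at_x} we have $\theta(t_n)=(n-\tfrac12)\pi$ and $\theta(1)=n\pi$, so $\theta$ increases by exactly $\pi/2$ across this interval. Because $\theta$ is strictly increasing (Lemma~\ref{lem_theta_increasing}), on $(t_n,1)$ it stays in $((n-\tfrac12)\pi,n\pi)$, whence $2\theta\in((2n-1)\pi,2n\pi)$ and $\sin(2\theta)<0$ throughout. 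Since $v>0$ on $(0,1)$ by \eqref{eq_jan_v}, the oscillatory term in \eqref{eq_prufer_theta_ode} is negative, so $\theta'=f+v\sin(2\theta)<f$. Integrating gives
\begin{align}
\frac{\pi}{2}=\int_{t_n}^1\theta'(t)\,dt<\int_{t_n}^1 f(t)\,dt.
\label{eq_lb_intf}
\end{align}

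It remains to bound $\int_{t_n}^1 f$ from above in terms of $1-t_n$. Although $f$ is singular at $t=1$, the integral converges, and for $t_n\le t<1$ the estimates $\chi_n-c^2t^2\le\chi_n-c^2t_n^2$ and $1-t^2\ge(1-t)(1+t_n)$ yield the pointwise majorant
\begin{align}
f(t)=\sqrt{\frac{\chi_n-c^2t^2}{1-t^2}}\le\sqrt{\frac{\chi_n-c^2t_n^2}{1+t_n}}\cdot\frac{1}{\sqrt{1-t}}.
\label{eq_lb_fmaj}
\end{align}
As $\int_{t_n}^1(1-t)^{-1/2}\,dt=2\sqrt{1-t_n}$, combining \eqref{eq_lb_intf} with \eqref{eq_lb_fmaj} and squaring gives $(\chi_n-c^2t_n^2)(1-t_n)/(1+t_n)>\pi^2/16$. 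Writing $\chi_n-c^2t_n^2=(\chi_n-c^2)+c^2(1-t_n)(1+t_n)$ turns the left-hand side into $c^2(1-t_n)^2+(\chi_n-c^2)(1-t_n)/(1+t_n)$; since $1/(1+t_n)<1$, discarding this factor in the second term only decreases it, so the estimate implies \eqref{eq_lb_goal}, which finishes the proof.

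I expect the only delicate points to be the sign determination $\sin(2\theta)<0$ on the final swing (which is what replaces the constant-coefficient comparison of Theorem~\ref{thm_tn_upper}) and the convergence-plus-majorization of the singular integral $\int_{t_n}^1 f$. The genuine obstacle is precisely this singularity: both $f$ and the potential $Q_n$ of Lemma~\ref{lem_trans} blow up at $t=1$, so the distance-between-roots strategy that powered the upper bound has no direct dual — a lower bound on $1-t_n$ would naively demand a finite \emph{upper} bound on $Q_n$ over $(t_n,1)$, which does not exist. Controlling $\theta'$ by $f$ alone on the last half-swing, and then majorizing $f$ by an integrable $(1-t)^{-1/2}$ profile, is exactly the device that sidesteps this difficulty; everything else is elementary.
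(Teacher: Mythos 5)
Your proof is correct, and it differs from the paper's in one substantive way. The paper obtains the key inequality $\pi/2 < \int_{t_n}^1 \sqrt{(\chi_n-c^2t^2)/(1-t^2)}\,dt$ (its \eqref{eq_lower_1}) by subtracting the bound of Theorem~\ref{thm_n_lower} from that of Theorem~\ref{thm_n_upper}; you instead derive it directly from the Pr\"ufer ODE \eqref{eq_prufer_theta_ode}, observing that on the final half-swing $(t_n,x_n)=(t_n,1)$ one has $\theta\in((n-\tfrac12)\pi,n\pi)$ by \eqref{eq_theta_at_t}, \eqref{eq_theta_at_x} and Lemma~\ref{lem_theta_increasing}, hence $\sin(2\theta)<0$, hence $\theta'<f$ since $v>0$ by \eqref{eq_jan_v}. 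This is logically lighter: your argument needs only Lemmas~\ref{lem_five}, \ref{lem_prufer}, \ref{lem_theta_increasing}, whereas Theorem~\ref{thm_n_lower} rests on the heaviest machinery in the paper (Lemmas~\ref{lem_jan_fv} through \ref{lem_vsin_2}); in effect you have rediscovered, in pointwise form, exactly the easy inequality \eqref{eq_int_tp_xp} of Lemma~\ref{lem_vsin_1} specialized to the last half-swing, which is all this theorem really requires. The paper's route buys brevity given results already on the books; yours buys self-containedness and makes clear which ingredient actually drives the bound. The remainder of your argument — the majorant $f(t)\le\sqrt{(\chi_n-c^2t_n^2)/(1+t_n)}\,(1-t)^{-1/2}$ (the same quantity as the paper's \eqref{eq_simple_1}), the evaluation $\int_{t_n}^1(1-t)^{-1/2}dt=2\sqrt{1-t_n}$, the discarding of the factor $1/(1+t_n)$, and the rationalized positive root of the quadratic — coincides with the paper's, and your quadratic is in fact written correctly with coefficient $\chi_n-c^2$, where the paper's displayed equation \eqref{eq_lower_4} carries a spurious factor $\tfrac12$ (a typo: the paper's formula for $x_{\max}$ and the final bound \eqref{eq_tn_lower} correspond to the unhalved coefficient, as in your version).
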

\begin{proof}
We combine the inequalities
\eqref{eq_n_upper_large} in Theorem~\ref{thm_n_upper}
and \eqref{eq_n_lower} in Theorem~\ref{thm_n_lower}
in Section~\ref{sec_sharp} to conclude that
\begin{align}
1 < \frac{2}{\pi} \int_{t_n}^1 \sqrt{\frac{\chi_n-c^2t^2}{1-t^2}} \; dt.
\label{eq_lower_1}
\end{align}
We combine \eqref{eq_lower_1} with \eqref{eq_simple_1}
in the proof of Theorem~\ref{thm_n_khi_simple} above to obtain
\begin{align}
1 & \; < \frac{2}{\pi} \sqrt{
  \frac{\chi_n-c^2}{1+t_n} + c^2(1-t_n)
} \cdot \int_{t_n}^1 \frac{dt}{\sqrt{1-t}} \nonumber \\
& \; <
\frac{4}{\pi} \sqrt{ c^2 (1-t_n)^2 + (\chi_n-c^2) \cdot (1-t_n) }.
\label{eq_lower_2}
\end{align}
We rewrite \eqref{eq_lower_2} as
\begin{align}
c^2 (1-t_n)^2 + (\chi_n-c^2) \cdot (1-t_n) - \frac{\pi^2}{16} > 0.
\label{eq_lower_3}
\end{align}
Since $1-t_n$ is positive,
\eqref{eq_lower_3} implies that $1-t_n$
it is bounded from below by the maximal root 
$x_{\max}$ of the quadratic equation
\begin{align}
c^2 \cdot x^2 + \frac{\chi_n-c^2}{2} \cdot x - \frac{\pi^2}{16} = 0,
\label{eq_lower_4}
\end{align}
given via the formula
\begin{align}
x_{\max} & \; =
\frac{1}{2c^2} \cdot \left(
\sqrt{ (\chi_n-c^2)^2 + \pi^2 c^2/4 } - (\chi_n-c^2) \right) \nonumber \\
& \; = \frac{\pi^2 c^2}{8c^2} \cdot
\frac{1}{ \chi_n-c^2 + \sqrt{ (\chi_n-c^2)^2 + \pi^2 c^2/4 } },
\end{align}
which implies \eqref{eq_tn_lower}.
\end{proof}
The following theorem is a direct consequence of
Theorem~\ref{thm_n_khi_simple}. It is illustrated
in Figures~\ref{fig:test171a},~\ref{fig:test171b}.
\begin{thm}
Suppose that $n \geq 2$ is a positive integer such that
$n > 2c/\pi$, and that the function
$f:[0,\infty) \to \Rc$ is defined via the formula
\begin{align}
f(x) = -1 + \int_0^{\pi/2} \sqrt{ x + \cos^2(\theta)} \; d\theta.
\label{eq_f_def}
\end{align}
Suppose also that the function $H: [0,\infty) \to \Rc$ is the inverse of $f$,
in other words, 
\begin{align}
y = f(H(y)) = 
-1 + \int_0^{\pi/2} \sqrt{ H(y) + \cos^2(\theta)} \; d\theta,
\label{eq_big_h_def}
\end{align}
for all real $y \geq 0$.
Then,
\begin{align}
H\left( \frac{n\pi}{2c} - 1 \right) < 
\frac{\chi_n - c^2}{c^2} < 
H\left( \frac{n\pi}{2c} - 1 + \frac{3\pi}{2c} \right).
\label{eq_khi_via_h}
\end{align}
\label{thm_exp_term}
\end{thm}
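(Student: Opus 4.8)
The plan is to read the right-hand side of \eqref{eq_both_large_simple} in Theorem~\ref{thm_n_khi_simple} as an affine function of $f$ evaluated at the single point $(\chi_n-c^2)/c^2$, and then merely invert. First I would dispose of the hypothesis of Theorem~\ref{thm_n_khi_simple}: the assumption $n > 2c/\pi$ forces $n \geq 2c/\pi$, so by Theorem~\ref{thm_good_n_good_khi} we have $\chi_n > c^2$, and Theorem~\ref{thm_n_khi_simple} applies, yielding
\[
n < \frac{2}{\pi}\int_0^1 \sqrt{\frac{\chi_n - c^2 t^2}{1-t^2}}\, dt < n+3.
\]

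The central computation is the substitution $t = \sin\theta$ in this integral. Under it $\sqrt{1-t^2} = \cos\theta$, $dt = \cos\theta\, d\theta$, and the endpoints become $0$ and $\pi/2$, so that
\[
\int_0^1 \sqrt{\frac{\chi_n - c^2 t^2}{1-t^2}}\, dt = \int_0^{\pi/2}\sqrt{\chi_n - c^2\sin^2\theta}\, d\theta.
\]
Writing $\sin^2\theta = 1 - \cos^2\theta$ turns the radicand into $(\chi_n - c^2) + c^2\cos^2\theta$, and factoring $c$ out (legitimate since $c>0$) gives, by the definition \eqref{eq_f_def} of $f$ with $x=(\chi_n-c^2)/c^2$,
\[
\frac{2}{\pi}\int_0^1 \sqrt{\frac{\chi_n - c^2 t^2}{1-t^2}}\, dt = \frac{2c}{\pi}\int_0^{\pi/2}\sqrt{\frac{\chi_n - c^2}{c^2} + \cos^2\theta}\, d\theta = \frac{2c}{\pi}\left(f\!\left(\frac{\chi_n-c^2}{c^2}\right) + 1\right).
\]

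Substituting this identity into the double inequality above produces $n < \frac{2c}{\pi}(f(x)+1) < n+3$ with $x=(\chi_n-c^2)/c^2$, which rearranges directly to
\[
\frac{n\pi}{2c} - 1 < f\!\left(\frac{\chi_n-c^2}{c^2}\right) < \frac{n\pi}{2c} - 1 + \frac{3\pi}{2c}.
\]
To finish I would observe that $f$ is strictly increasing on $[0,\infty)$, since its integrand is strictly increasing in $x$, with $f(0)=-1+\int_0^{\pi/2}\cos\theta\, d\theta = 0$ and $f(x)\to\infty$; hence $f$ is a bijection of $[0,\infty)$ onto itself and its inverse $H$ is strictly increasing. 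The hypothesis $n>2c/\pi$ makes both endpoints $\frac{n\pi}{2c}-1$ and $\frac{n\pi}{2c}-1+\frac{3\pi}{2c}$ nonnegative, hence in the domain of $H$, so applying the order-preserving map $H$ to the last display yields exactly \eqref{eq_khi_via_h}.

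There is no genuine obstacle here: the statement is essentially Theorem~\ref{thm_n_khi_simple} rewritten in inverted form. The only points demanding care are the bookkeeping in the trigonometric substitution and the verification that $f$ is monotone with range $[0,\infty)$, which is what legitimizes both the definition of $H$ and the order-preserving passage from the bound on $f(x)$ to the bound on $x=(\chi_n-c^2)/c^2$.
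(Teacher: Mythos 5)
Your proposal is correct and follows essentially the same route as the paper: invoke Theorem~\ref{thm_good_n_good_khi} (equivalently Theorem~\ref{thm_n_and_khi}) to get $\chi_n > c^2$ from $n > 2c/\pi$, rewrite the integral in Theorem~\ref{thm_n_khi_simple} via $t=\sin\theta$ as $\frac{2c}{\pi}\left(f\left((\chi_n-c^2)/c^2\right)+1\right)$, and invert using the monotonicity of $f$ and $H$. The paper's proof is just a terser version of this, leaving the substitution and the rearrangement implicit; your write-up supplies exactly the details it omits.
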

\begin{proof}
Obviously, the function $f$, defined via \eqref{eq_f_def},
is monotonically increasing. Moreover, $f(0) = 0$, and
\begin{align}
\lim_{x \to \infty} f(x) = \infty.
\end{align}
Therefore, $H(y)$ 
is well defined for all $y \geq 0$, and, moreover,
the function $H$
is monotonically increasing. This observation,
combined with Theorems~\ref{thm_n_and_khi},
\ref{thm_n_khi_simple} above,
implies
the inequality \eqref{eq_khi_via_h}.
\end{proof}
In the following theorem, we provide a simple lower bound on $H$, 
defined via \eqref{eq_big_h_def}
in Theorem~\ref{thm_exp_term}.
\begin{thm}
Suppose that the function $H:[0,\infty) \to \Rc$ is defined
via \eqref{eq_big_h_def} in Theorem~\ref{thm_exp_term}.
Then,
\begin{align}
s \leq H\left( \frac{s}{4} \cdot \log \frac{16e}{s} \right),
\label{eq_h_bounds}
\end{align}
for all real $0 \leq s \leq 5$.
\label{thm_big_h}
\end{thm}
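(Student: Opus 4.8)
The plan is to exploit that $H$ is the increasing inverse of the increasing function $f$ of \eqref{eq_f_def}. Applying $f$ to both sides, the assertion $s\le H\brk{\frac{s}{4}\log\frac{16e}{s}}$ is equivalent to $f(s)\le\frac{s}{4}\log\frac{16e}{s}=:T(s)$ for $0\le s\le 5$ (note $\frac{16e}{s}>1$ on this range, so $T(s)\ge0$ and lies in the domain of $H$). Since $f(0)=T(0)=0$, everything reduces to controlling $f-T$ on $(0,5]$. First I would record two representations: using $\int_0^{\pi/2}\cos\theta\,d\theta=1$, \eqref{eq_f_def} becomes $f(s)=\int_0^{\pi/2}\brk{\sqrt{s+\cos^2\theta}-\cos\theta}\,d\theta=\int_0^1\frac{\sqrt{s+u^2}-u}{\sqrt{1-u^2}}\,du$, with derivative $f'(s)=\frac12\int_0^{\pi/2}\frac{d\theta}{\sqrt{s+\cos^2\theta}}$; equivalently $f(s)=-1+\sqrt{s+1}\,E\brk{1/\sqrt{s+1}}$.

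A direct comparison of $f$ and $T$ is delicate because, by \eqref{eq_E_exp}, the bound is asymptotically sharp: $T$ reproduces $f$ through the full $O(s)$ term as $s\to0$. To organize this I would instead study $\psi(s):=\frac{4f(s)}{s}-\log\frac{16e}{s}$, so that $f\le T$ is precisely $\psi\le0$. A direct computation gives $\psi'(s)=\frac{s-4W(s)}{s^2}$ with $W(s):=f(s)-sf'(s)$, and the algebraic identities $\sqrt{s+u^2}-\frac{s}{2\sqrt{s+u^2}}=\frac{s+2u^2}{2\sqrt{s+u^2}}$ and $s+2u^2-2u\sqrt{s+u^2}=\brk{\sqrt{s+u^2}-u}^2$ turn $W$ into the manifestly positive integral
\[ W(s)=\int_0^1\frac{\brk{\sqrt{s+u^2}-u}^2}{2\,\sqrt{s+u^2}\,\sqrt{1-u^2}}\,du. \]
Hence the sign of $\psi'$ is controlled entirely by the comparison of $W(s)$ with $s/4$.

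The endgame is then a one-variable analysis of $\psi$ on $[0,5]$. One shows $\psi(0^+)=0$, and, expanding $f$ one order beyond \eqref{eq_E_exp} (the coefficient of $s^2\log s$ in $f$ being $+\tfrac1{32}$), that $W(s)-\tfrac{s}{4}\sim-\tfrac1{32}\,s^2\log s>0$ for small $s>0$, so $\psi$ decreases near $0$; at the other end $W(5)<5/4$, so $\psi$ increases near $s=5$. The plan is to prove that $W(s)-s/4$ changes sign exactly once on $(0,5]$, from positive to negative — equivalently that $\psi$ has a single interior minimum — which forces $\max_{[0,5]}\psi=\max\cbrk{\psi(0),\psi(5)}$. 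It then remains to check the endpoint $\psi(5)\le0$, i.e. $-1+\int_0^{\pi/2}\sqrt{5+\cos^2\theta}\,d\theta\le\frac54\log\frac{16e}{5}$, by a rigorous enclosure of the integral, after which $\psi\le0$ follows.

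The main obstacle is the single-sign-change (unimodality) step. Because $T$ matches $f$ through order $s$ at the origin, no pointwise domination $f'\le T'$ can hold on all of $(0,5]$ (indeed it fails near $s=5$), so the genuine content is a monotonicity property of the elliptic-type integral $W$, namely that $W(s)/s$ rises above $1/4$ and then returns below it only once. A secondary but quantitatively tight point is the endpoint check: $f(5)$ and $T(5)$ differ by well under two percent, so the numerical bound on $\int_0^{\pi/2}\sqrt{5+\cos^2\theta}\,d\theta$ must be made fully rigorous (e.g. by a monotone Simpson/Riemann enclosure), and this tightness is precisely why the statement is restricted to $s\le 5$.
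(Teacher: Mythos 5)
You should know at the outset that the paper gives you nothing to compare against: its ``proof'' of Theorem~\ref{thm_big_h} is the assertion that the result is straightforward, elementary, based on \eqref{eq_E_exp}, and will be omitted, together with a numerical validation. Your reduction, for its part, is correct as far as it goes: since $f$ in \eqref{eq_f_def} is strictly increasing with increasing inverse $H$, the claim \eqref{eq_h_bounds} is equivalent to $f(s)\le T(s):=\frac{s}{4}\log\frac{16e}{s}$ on $[0,5]$; your integral representation of $W(s)=f(s)-sf'(s)$ is correct; and the asymptotics you invoke (agreement of $f$ and $T$ through the $s\log s$ and $s$ terms, with $f-T\sim\frac{1}{32}s^2\log s<0$ near $0$, and failure of the pointwise comparison $f'\le T'$ near $s=5$) are accurate. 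However, what you have written is a reduction, not a proof. The two steps that carry all of the mathematical content are explicitly deferred: (i) the claim that $W(s)-s/4$ changes sign exactly once on $(0,5]$, which you yourself label ``the main obstacle'' and state only as a plan; and (ii) the endpoint inequality $f(5)\le T(5)$, which holds by a margin of under one percent and for which you offer only the intention of a rigorous enclosure. Without (i) the conclusion $\max_{[0,5]}\psi=\max\{\psi(0^+),\psi(5)\}$ has no justification, and without (ii) even that maximum is not known to be $\le 0$. As submitted, the theorem has been traded for two unproven claims, one of which is a global monotonicity statement about an elliptic-type integral that is not visibly easier than the original inequality.

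The good news is that gap (i) is closable inside your framework, and cleanly. Since $W'(s)=-sf''(s)$, and since writing $s+\cos^2\theta=(1+s)\left(1-k^2\sin^2\theta\right)$ with $k=1/\sqrt{1+s}$ and using the classical identity $\int_0^{\pi/2}(1-k^2\sin^2\theta)^{-3/2}\,d\theta=E(k)/(1-k^2)$ gives
\begin{align*}
\frac{d}{ds}\left(W(s)-\frac{s}{4}\right)
\;=\; \frac{s}{4}\int_0^{\pi/2}\frac{d\theta}{\left(s+\cos^2\theta\right)^{3/2}}-\frac14
\;=\; \frac{k\,E(k)-1}{4},
\qquad k=\frac{1}{\sqrt{1+s}},
\end{align*}
the sign of $(W(s)-s/4)'$ is that of $kE(k)-1$. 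Now $\frac{d}{dk}\left(kE(k)\right)=2E(k)-F(k)$, with $E,F$ the complete elliptic integrals \eqref{eq_E}, \eqref{eq_F}; this is strictly decreasing in $k$, equals $\pi/2$ at $k=0$, and tends to $-\infty$ as $k\to1^-$, so $kE(k)$ increases then decreases on $(0,1)$. Since $kE(k)$ vanishes at $k=0$ and tends to $1$ as $k\to1^-$, unimodality forces $kE(k)-1$ to have exactly one sign change on $(0,1)$: negative near $0$, positive near $1$. Pulling this back through the decreasing map $s\mapsto k$ shows that $W(s)-s/4$ increases then decreases on $(0,\infty)$; as it vanishes at $s=0^+$, it changes sign at most once, from positive to negative --- precisely your claim. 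The endpoint check (ii) still has to be carried out by hand, e.g.\ an upper bound on $f(5)=-1+\sqrt{6}\,E(1/\sqrt{6})$ from a truncation of the series for $E$ (whose terms beyond the leading one are all negative, so truncation overestimates) against a lower bound on $\frac54\log\frac{16e}{5}$; given the tightness you correctly identify, this step cannot be waved through as your proposal currently does.
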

\begin{proof}
The proof of \eqref{eq_h_bounds} is straightforward,
elementary, and is based on
\eqref{eq_E_exp} in Section~\ref{sec_elliptic};
it will be omitted.
The correctness of Theorem~\ref{thm_big_h} has also been
validated numerically.
\end{proof}
%
%
\begin{remark}
Numerical experiments by the author indicate that
the relative error of the lower bound in \eqref{eq_h_bounds}
is below 0.07 for all $0 \leq s \leq 5$; moreover, this 
error grows roughly linearly with $s$
to $\approx 0.0085$ for all $0 \leq s \leq 0.1$.
\label{rem_h_bounds}
\end{remark}
In the following theorem, we provide a lower bound on $\chi_n$
for certain values of $n$.
\begin{thm}
Suppose that $\alpha$ is a real number, and that
\begin{align}
0 < \alpha < 5c.
\label{eq_khi_lower_1}
\end{align}
Suppose also that $n\geq 2$ is a positive integer, and that
\begin{align}
n > \frac{2c}{\pi} + \frac{\alpha}{2\pi} \cdot 
   \log\left( \frac{16ec}{\alpha} \right).
\label{eq_khi_lower_2}
\end{align}
Then,
\begin{align}
\chi_n > c^2 + \alpha c.
\label{eq_khi_lower}
\end{align}
\label{thm_khi_lower}
\end{thm}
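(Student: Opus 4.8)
The plan is to reduce the claim to the explicit lower bound on $H$ supplied by Theorem~\ref{thm_big_h}, via the single substitution $s = \alpha/c$. With this substitution the hypothesis $0 < \alpha < 5c$ becomes $0 < s < 5$, which is exactly the range in which Theorem~\ref{thm_big_h} is valid. Since $\alpha < 5c < 16ec$, the quantity $\log(16ec/\alpha)$ is strictly positive, so the second summand in \eqref{eq_khi_lower_2} is positive and in particular $n > 2c/\pi$; hence Theorem~\ref{thm_n_and_khi} yields $\chi_n > c^2$ and Theorem~\ref{thm_exp_term} is applicable.

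First I would rewrite the hypothesis \eqref{eq_khi_lower_2} in terms of the argument that appears inside $H$ in \eqref{eq_khi_via_h}. Multiplying \eqref{eq_khi_lower_2} by $\pi/(2c)$ and subtracting $1$ gives
\begin{align}
\frac{n\pi}{2c} - 1 > \frac{\alpha}{4c} \cdot \log\left(\frac{16ec}{\alpha}\right) = \frac{s}{4} \cdot \log\left(\frac{16e}{s}\right), \nonumber
\end{align}
where the last equality is the substitution $s = \alpha/c$.

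Then I would chain the inequalities. The left-hand inequality of \eqref{eq_khi_via_h} in Theorem~\ref{thm_exp_term} gives $(\chi_n - c^2)/c^2 > H(n\pi/(2c) - 1)$. Because $H$ is monotonically increasing (as established in the proof of Theorem~\ref{thm_exp_term}), the display above lets me enlarge the argument, and Theorem~\ref{thm_big_h} bounds the outcome from below:
\begin{align}
\frac{\chi_n - c^2}{c^2} > H\left( \frac{n\pi}{2c} - 1 \right) > H\left( \frac{s}{4} \cdot \log\frac{16e}{s} \right) \geq s = \frac{\alpha}{c}. \nonumber
\end{align}
Multiplying through by $c^2$ yields $\chi_n - c^2 > \alpha c$, which is \eqref{eq_khi_lower}.

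I do not anticipate a genuine obstacle: the theorem is essentially a repackaging of Theorem~\ref{thm_big_h} into statements about $\chi_n$ and $c$, and the heavy analytic lifting has already been done in Theorems~\ref{thm_exp_term} and~\ref{thm_big_h}. The only points requiring care are the bookkeeping in the change of variable $s = \alpha/c$ and checking that the hypothesis forces $n > 2c/\pi$, so that $\chi_n > c^2$ and the lower bound of \eqref{eq_khi_via_h} is legitimately available.
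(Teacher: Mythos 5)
Your proof is correct and takes essentially the same route as the paper: both arguments rewrite hypothesis \eqref{eq_khi_lower_2} as a bound on $n\pi/(2c)-1$, invoke the monotonicity of $H$ established in the proof of Theorem~\ref{thm_exp_term}, apply Theorem~\ref{thm_big_h} with $s=\alpha/c$ (legitimate since $0<\alpha/c<5$), and conclude via the left-hand inequality of \eqref{eq_khi_via_h}. Indeed, your explicit chain $H\bigl(\tfrac{n\pi}{2c}-1\bigr) > H\bigl(\tfrac{s}{4}\log\tfrac{16e}{s}\bigr) \geq s$ spells out the intermediate $H(\cdot)$ that the paper's displayed inequality \eqref{eq_khi_lower_a} omits (evidently a typographical slip there), so your write-up is if anything the more careful of the two.
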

\begin{proof}
Suppose that the function $H:[0,\infty) \to \Rc$ is defined
via \eqref{eq_big_h_def} in Theorem~\ref{thm_exp_term}.
It was observed in the proof of Theorem~\ref{thm_exp_term}
that $H$ is monotonically increasing. We combine
this observation with \eqref{eq_khi_lower_1}, \eqref{eq_khi_lower_2}
and Theorem~\ref{thm_big_h} to conclude that
\begin{align}
H\left( \frac{\pi n}{2c} - 1 \right) > 
\frac{\alpha}{4c} \cdot \log\left( \frac{16ec}{\alpha} \right) \geq 
 \frac{\alpha}{c}.
\label{eq_khi_lower_a}
\end{align}
Thus \eqref{eq_khi_lower} follows from the combination
of \eqref{eq_khi_lower_a} and Theorem~\ref{thm_exp_term}.
\end{proof}
In the following theorem,
we provide upper and lower bounds on $1-t_n$,
where
$t_n$ is the maximal root of $\psi_n$ in the interval $(-1,1)$,
in terms of $\chi_n-c^2$. This theorem
is illustrated in Figure~\ref{fig:test170}.
\begin{thm}
Suppose that
\begin{align}
c > \frac{10}{\pi}.
\label{eq_tn_simple_1}
\end{align}
Suppose also that $n \geq 2$ is a positive integer, and that
\begin{align}
n > \frac{2c}{\pi} + 1 + \frac{1}{4} \cdot \log(c).
\label{eq_tn_simple_2}
\end{align}
Suppose furthermore that $t_n$ is the maximal root of $\psi_n$
in the interval $(-1,1)$. Then,
\begin{align}
\chi_n > c^2 + \frac{\pi}{2} \cdot c,
\label{eq_tn_simple_khi}
\end{align}
and also,
\begin{align}
\frac{\pi^2}{8 \cdot (1+\sqrt{2})} \cdot \frac{1}{\chi_n-c^2} <
1-t_n <
\frac{2\pi^2}{\chi_n - c^2}.
\label{eq_tn_simple}
\end{align}
\label{thm_tn_simple}
\end{thm}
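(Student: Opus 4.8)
The plan is to establish the two displayed conclusions in order: first \eqref{eq_tn_simple_khi}, and then to feed it into the already-proven two-sided estimate for $1-t_n$ coming from Theorems~\ref{thm_tn_upper} and~\ref{thm_tn_lower}.

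First I would prove \eqref{eq_tn_simple_khi} by invoking Theorem~\ref{thm_khi_lower} with the specific choice $\alpha = \pi/2$. The hypothesis $0 < \alpha < 5c$ of that theorem reads $\pi/2 < 5c$, which holds comfortably under \eqref{eq_tn_simple_1}, since $c > 10/\pi$ gives $5c > 50/\pi > \pi/2$. After substituting $\alpha = \pi/2$ (so that $\alpha/(2\pi) = 1/4$ and $16ec/\alpha = 32ec/\pi$), the remaining hypothesis \eqref{eq_khi_lower_2} becomes
\[
n > \frac{2c}{\pi} + \frac{1}{4}\log\left(\frac{32ec}{\pi}\right) = \frac{2c}{\pi} + \frac{1}{4}\log\left(\frac{32e}{\pi}\right) + \frac{1}{4}\log c .
\]
Thus it suffices to check that the present hypothesis \eqref{eq_tn_simple_2} implies this, i.e. that $1 \geq \frac{1}{4}\log(32e/\pi)$, equivalently $4 \geq \log(32e/\pi)$. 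This is a routine numerical verification ($\log(32e/\pi) \approx 3.32 < 4$), and it is here — and essentially only here — that the additive slack $+1$ in \eqref{eq_tn_simple_2} is consumed. Applying Theorem~\ref{thm_khi_lower} then yields $\chi_n > c^2 + (\pi/2)c$, which is \eqref{eq_tn_simple_khi}.

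Next I would derive \eqref{eq_tn_simple} from the sharper but less transparent bounds of Theorems~\ref{thm_tn_upper} and~\ref{thm_tn_lower}. Writing $D = \chi_n - c^2 > 0$, the upper bound $1 - t_n < 4\pi^2/(D + \sqrt{D^2 + 16\pi^2 c^2})$ of Theorem~\ref{thm_tn_upper} immediately gives the right-hand inequality of \eqref{eq_tn_simple}: since $\sqrt{D^2 + 16\pi^2 c^2} > D$, the denominator exceeds $2D$, so $1 - t_n < 4\pi^2/(2D) = 2\pi^2/D$. Notably this half needs only $\chi_n > c^2$, which \eqref{eq_tn_simple_khi} certainly supplies.

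For the left-hand inequality of \eqref{eq_tn_simple} I would start from the lower bound $1 - t_n > (\pi^2/8)/(D + \sqrt{D^2 + \pi^2 c^2/4})$ of Theorem~\ref{thm_tn_lower} and use \eqref{eq_tn_simple_khi} to control the square root. Indeed, \eqref{eq_tn_simple_khi} gives $D > (\pi/2)c$, hence $\pi^2 c^2/4 < D^2$ and $\sqrt{D^2 + \pi^2 c^2/4} < \sqrt{2}\,D$; therefore $D + \sqrt{D^2 + \pi^2 c^2/4} < (1 + \sqrt{2})D$, and the lower bound becomes $1 - t_n > (\pi^2/8)/((1 + \sqrt{2})D) = \frac{\pi^2}{8(1+\sqrt{2})}\cdot\frac{1}{\chi_n - c^2}$, as claimed. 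The only genuinely non-mechanical step is the numerical check $4 > \log(32e/\pi)$ in the first part; everything else is algebraic rearrangement of the existing bounds, with \eqref{eq_tn_simple_khi} serving precisely to collapse the $\sqrt{D^2 + \pi^2 c^2/4}$ denominator into a clean multiple of $D$.
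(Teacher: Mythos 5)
Your proposal is correct and follows essentially the same route as the paper's own (much terser) proof: invoke Theorem~\ref{thm_khi_lower} (implicitly with $\alpha = \pi/2$) to get \eqref{eq_tn_simple_khi}, then feed it into Theorems~\ref{thm_tn_upper} and~\ref{thm_tn_lower} to collapse the two-sided bounds into \eqref{eq_tn_simple}. Your write-up merely makes explicit the details the paper leaves to the reader, namely the numerical check $\log(32e/\pi) < 4$ and the estimate $\sqrt{D^2 + \pi^2c^2/4} < \sqrt{2}\,D$.
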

\begin{proof}
We combine \eqref{eq_tn_simple_1}, \eqref{eq_tn_simple_2} and
Theorem~\ref{thm_khi_lower} to obtain \eqref{eq_tn_simple_khi}.
Then, we combine \eqref{eq_tn_simple_khi} with 
Theorems~\ref{thm_tn_upper},~\ref{thm_tn_lower} to obtain
\eqref{eq_tn_simple}.
\end{proof}

\subsection{Growth Properties of PSWFs}
\label{sec_growth}
In this subsection, we establish several bounds
on $\abrk{\psi_n}$ and $\abrk{\psi_n'}$.
Throughout this subsection $c > 0$ is 
a fixed positive real number.
The principal results of this subsection
are Theorems~\ref{thm_psi1}, \ref{thm_extrema}, \ref{thm_q0}.
The following lemma is a technical tool to be used in
the rest of this subsection.

\begin{lem}
Suppose that $n \geq 0$ is a non-negative integer, and that
the functions $p, q: \Rc \to \Rc$ are defined via 
\eqref{eq_prufer_1} in Section~\ref{sec_prufer}. 
Suppose also that the functions 
$Q, \tilde{Q} : (0,\min\left\{\sqrt{\chi_n}/c,1\right\}) \to \Rc$ are defined,
respectively, via the formulae
\begin{align}
Q(t) 
& \; = \psi_n^2(t) + \frac{ p(t) }{ q(t) } \cdot \brk{\psi'_n(t)}^2 
 = \psi_n^2(t) + 
       \frac{ \brk{1-t^2} \cdot \brk{\psi'_n(t)}^2 }{ \chi_n-c^2 t^2}
\label{eq_psi1_q}
\end{align}
and
\begin{align}
\tilde{Q}(t) 
& \; = p(t) \cdot q(t) \cdot Q(t)  \nonumber \\
& \; = \brk{1 - t^2} \cdot \brk{ \brk{\chi_n - c^2 t^2} \cdot \psi_n^2(t) +
                           \brk{1 - t^2} \cdot \brk{\psi'_n(t)}^2 }.
\label{eq_psi1_qtilde}
\end{align}
Then, $Q$ is increasing in the interval
$\brk{0, \min\cbrk{\sqrt{\chi_n}/c, 1}}$,
and $\tilde{Q}$ is decreasing in the interval 
$\brk{0, \min\cbrk{\sqrt{\chi_n}/c, 1}}$.
\label{lem_Q_Q_tilde}
\end{lem}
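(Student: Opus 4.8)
The plan is to differentiate $Q$ and $\tilde{Q}$ directly, using the prolate equation \eqref{eq_prolate_ode} in its self-adjoint form \eqref{eq_prufer_0}, namely $\brk{p(t)\psi_n'(t)}' = -q(t)\,\psi_n(t)$, with $p,q$ given by \eqref{eq_prufer_1}. The whole argument rests on the observation that, after using the ODE to eliminate $\psi_n''$, both $Q'$ and $\tilde{Q}'$ collapse to a single term whose sign is controlled entirely by $(pq)'$. So the routine part is two differentiations, and the only genuine step is determining the sign of $(pq)'$ on the stated interval.

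First I would compute $Q'$. Writing $Q = \psi_n^2 + (p/q)(\psi_n')^2$ and substituting $\psi_n'' = \brk{-q\psi_n - p'\psi_n'}/p$ from the ODE, the term $2(p/q)\psi_n'\psi_n''$ becomes $-2\psi_n\psi_n' - (2p'/q)(\psi_n')^2$; the $2\psi_n\psi_n'$ piece cancels against the derivative of $\psi_n^2$. What survives is
\begin{align}
Q'(t) = \sbrk{ \brk{\frac{p}{q}}' - \frac{2p'}{q} } \brk{\psi_n'(t)}^2
      = -\frac{(pq)'(t)}{q(t)^2}\,\brk{\psi_n'(t)}^2,
\end{align}
where the last equality uses the algebraic identity $(p/q)' - 2p'/q = -(p'q + pq')/q^2 = -(pq)'/q^2$. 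For $\tilde{Q} = pq\,Q = pq\,\psi_n^2 + p^2(\psi_n')^2$ I would differentiate term by term and again substitute $p\psi_n'' = -q\psi_n - p'\psi_n'$ into the $2p^2\psi_n'\psi_n''$ term. This contributes $-2pq\,\psi_n\psi_n' - 2pp'(\psi_n')^2$, which cancels the $2pq\,\psi_n\psi_n'$ and $2pp'(\psi_n')^2$ arising from the other two terms, leaving the clean identity
\begin{align}
\tilde{Q}'(t) = (pq)'(t)\,\psi_n^2(t).
\end{align}

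The remaining point is the sign of $(pq)'$. Since $pq = (1-t^2)(\chi_n - c^2 t^2)$, a direct computation gives
\begin{align}
(pq)'(t) = -2t\,\brk{\chi_n + c^2 - 2c^2 t^2}.
\end{align}
For $0 < t < \min\cbrk{\sqrt{\chi_n}/c,\, 1}$ one has both $c^2 t^2 < \chi_n$ and $c^2 t^2 < c^2$, hence $2c^2 t^2 < \chi_n + c^2$, so the bracket is strictly positive and $(pq)'(t) < 0$ throughout. It follows that $Q'(t) = -(pq)'/q^2 \cdot (\psi_n')^2 \geq 0$ and $\tilde{Q}'(t) = (pq)'\,\psi_n^2 \leq 0$, giving that $Q$ is increasing and $\tilde{Q}$ is decreasing, as claimed.

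I do not expect any real obstacle beyond careful bookkeeping in the two differentiations: the ODE substitution is precisely what makes the cross terms $2\psi_n\psi_n'$ (resp. $2pq\,\psi_n\psi_n'$ and $2pp'(\psi_n')^2$) cancel, and the domain restriction $t < \min\cbrk{\sqrt{\chi_n}/c,\,1}$ is exactly the hypothesis that forces the bracket $\chi_n + c^2 - 2c^2 t^2$ to be positive. The mild subtlety to watch is that $Q'$ and $\tilde{Q}'$ each vanish at the (isolated) zeros of $\psi_n'$ and $\psi_n$ respectively, so strict monotonicity on the open interval follows because those zeros are isolated; the nonstrict inequalities above suffice for the statement as phrased.
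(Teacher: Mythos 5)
Your proposal is correct and follows essentially the same route as the paper: differentiate $Q$ and $\tilde{Q}$, eliminate $\psi_n''$ via the prolate ODE, and observe that the surviving factor $\chi_n + c^2 - 2c^2t^2$ is positive on $\brk{0,\min\cbrk{\sqrt{\chi_n}/c,1}}$ --- indeed your $-(pq)'(t)$ is exactly the paper's factor $2t\brk{\chi_n+c^2-2c^2t^2}$, so the computations coincide, with your self-adjoint bookkeeping being a tidier (and your remark on the isolated zeros of $\psi_n'$ and $\psi_n$ a more careful) presentation of the identical argument.
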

\begin{proof}
%
%
We differentiate $Q$, defined via \eqref{eq_psi1_q}, with respect to $t$
to obtain
\begin{align}
Q'(t) = & \; 2 \cdot \psi_n(t) \cdot \psi_n'(t) + \left(
        \frac{2c^2 t \cdot (1-t^2)}{(\chi_n-c^2t^2)^2}-\frac{2t}{\chi_n-c^2t^2}
        \right) \cdot \left(\psi_n'(t)\right)^2 + \nonumber \\
  & \; \frac{2\cdot(1-t^2)}{\chi_n-c^2t^2} \cdot \psi_n'(t) \cdot \psi_n''(t).
\label{eq_dq_long}
\end{align}
Due to \eqref{eq_prolate_ode} in Section~\ref{sec_pswf},
\begin{align}
\psi_n''(t) = \frac{2t}{1-t^2} \cdot \psi_n'(t) - 
\frac{\chi_n-c^2 t^2}{1-t^2} \cdot \psi_n(t),
\label{eq_ddpsi}
\end{align}
for all $-1 < t < 1$. We substitute \eqref{eq_ddpsi} into \eqref{eq_dq_long}
and carry out straightforward algebraic manipulations to obtain
\begin{align}
Q'(t) = \frac{2t}{(\chi_n-c^2 t^2)^2} \cdot \left(\chi_n+c^2-2c^2 t^2\right)
        \cdot \left( \psi_n'(t) \right)^2.
\label{eq_dq_short}
\end{align}
Obviously, for all $0 < t < \min\cbrk{\sqrt{\chi_n}/c, 1}$,
\begin{align}
\chi_n+c^2-2c^2 t^2 > 0.
\label{eq_term_pos}
\end{align}
We combine \eqref{eq_dq_short} with \eqref{eq_term_pos} to conclude that
\begin{align}
Q'(t) > 0,
\label{eq_dq_positive}
\end{align}
for all $0 < t < \min\cbrk{\sqrt{\chi_n}/c, 1}$. Then,
we differentiate $\tilde{Q}$, defined via \eqref{eq_psi1_qtilde},
with respect to $t$ to obtain
\begin{align}
\tilde{Q}'(t) = 
& \; -2t \cdot \left( (\chi_n-c^2 t^2) \cdot \psi_n^2(t) + 
                      (1-t^2) \cdot \left(\psi'_n(t)\right)^2 \right) 
     \nonumber \\
& \; + (1-t^2) \cdot \left( -2c^2 t \cdot \psi_n^2(t) +
                  2 \cdot (\chi_n-c^2 t^2) \cdot \psi_n(t) \cdot \psi_n'(t)
     \right. \nonumber \\
& \; \left. \quad -2t \cdot \left(\psi_n'(t)\right)^2 
                  +2 \cdot (1-t^2) \cdot \psi_n'(t) \cdot \psi_n''(t)
                     \right).
\label{eq_dqtilde_long}
\end{align}
We substitute \eqref{eq_ddpsi} into \eqref{eq_dqtilde_long}
and carry out straightforward algebraic manipulations to obtain
\begin{align}
\tilde{Q}'(t) = -2t \cdot (\chi_n + c^2 - 2 c^2 t^2) \cdot \psi_n^2(t).
\label{eq_dqtilde_short}
\end{align}
We combine \eqref{eq_term_pos} with \eqref{eq_dqtilde_short} to conclude
that
\begin{align}
\tilde{Q}'(t) < 0,
\label{eq_dqtilde_negative}
\end{align}
for all $0 < t < \min\cbrk{\sqrt{\chi_n}/c, 1}$.
We combine \eqref{eq_dq_positive} and \eqref{eq_dqtilde_negative}
to finish the proof.
\end{proof}
In the following theorem, we establish
a lower bound on $\abrk{\psi_n(1)}$.
\begin{thm}[bound on $\abrk{\psi_n(1)}$]
Suppose that $\chi_n > c^2$. Then,
\label{thm_psi1}
\begin{align}
\label{eq_psi1_bound}
\abrk{\psi_n(1)} > \frac{1}{\sqrt{2}}.
\end{align}
\end{thm}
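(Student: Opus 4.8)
The plan is to exploit the monotonicity of the auxiliary function $Q$ furnished by Lemma~\ref{lem_Q_Q_tilde}, together with the $L^2$-normalization $\|\psi_n\|_{L^2[-1,1]} = 1$. The key idea is that $Q$ increasing forces $\psi_n^2$ to be largest at the endpoint $t = 1$, after which the normalization supplies the quantitative bound.

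First I would note that the hypothesis $\chi_n > c^2$ gives $\sqrt{\chi_n}/c > 1$, so $\min\{\sqrt{\chi_n}/c, 1\} = 1$ and Lemma~\ref{lem_Q_Q_tilde} applies on the entire interval $(0,1)$: the function
\[
Q(t) = \psi_n^2(t) + \frac{(1-t^2)\,(\psi_n'(t))^2}{\chi_n - c^2 t^2}
\]
is strictly increasing there. Next I would observe that for $0 \le t < 1$ the second summand is nonnegative, since $1 - t^2 > 0$ and $\chi_n - c^2 t^2 \ge \chi_n - c^2 > 0$; hence $\psi_n^2(t) \le Q(t)$. Letting $t \to 1^-$, the second summand vanishes because $1 - t^2 \to 0$ while $\psi_n'$ stays bounded, so that $\lim_{t \to 1^-} Q(t) = \psi_n^2(1)$. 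Combining this with strict monotonicity yields $\psi_n^2(t) \le Q(t) < \psi_n^2(1)$ for every $0 \le t < 1$.

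Finally I would integrate this pointwise inequality. By Theorem~\ref{thm_pswf_main} the function $\psi_n^2$ is even, so the normalization gives $1 = \int_{-1}^1 \psi_n^2(t)\,dt = 2\int_0^1 \psi_n^2(t)\,dt$. Since $\psi_n^2(t) < \psi_n^2(1)$ on $[0,1)$ with a continuous integrand, the integral inequality is strict, giving $1 < 2\int_0^1 \psi_n^2(1)\,dt = 2\,\psi_n^2(1)$, whence $\psi_n^2(1) > 1/2$ and therefore $|\psi_n(1)| > 1/\sqrt{2}$. The only mildly delicate point is the boundary limit $\lim_{t \to 1^-} Q(t) = \psi_n^2(1)$ (which rests on the smoothness of $\psi_n$ up to $t = 1$); the remaining steps are immediate consequences of the monotonicity already established in Lemma~\ref{lem_Q_Q_tilde}.
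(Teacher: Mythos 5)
Your proposal is correct and follows essentially the same route as the paper's own proof: both rely on the monotonicity of $Q$ from Lemma~\ref{lem_Q_Q_tilde} to get $\psi_n^2(t) < Q(t) \leq Q(1) = \psi_n^2(1)$ on $[0,1)$, and then integrate using the normalization and parity of $\psi_n$ to conclude $\psi_n^2(1) > 1/2$. The only cosmetic difference is that you justify the endpoint value via the limit $t \to 1^-$, whereas the paper simply invokes continuity of $Q$ on $[-1,1]$ (guaranteed by the smoothness of $\psi_n$ from Theorem~\ref{thm_prolate_ode}); these are the same observation.
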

\begin{proof}
Suppose that the function $Q: [-1,1] \to \Rc$ is defined via 
\eqref{eq_psi1_q} in Lemma~\ref{lem_Q_Q_tilde}.
Then, $Q$ is increasing in $\left(0, 1\right)$, and is
continuous in $[-1,1]$ (see Lemma~\ref{lem_Q_Q_tilde}
and Theorem~\ref{thm_prolate_ode} in Section~\ref{sec_pswf}).  
Therefore,
\begin{align}
\psi_n^2(t) < Q(t) \leq Q(1) = \psi_n^2(1), 
\end{align}
for all real $0 \leq t < 1$.
Due to Theorem~\ref{thm_pswf_main} in Section~\ref{sec_pswf},
\begin{align}
\frac{1}{2} = \int_0^1 \psi_n^2(t) \; dt < \int_0^1 \psi_n^2(1) \; dt = 
\psi_n^2(1),
\end{align}
which implies \eqref{eq_psi1_bound}.
\end{proof}
The following theorem describes 
some of the properties of the extrema of $\psi_n$ in $(-1,1)$.
\begin{thm}
Suppose that $n \geq 0$ is a non-negative integer, and that
$x, y$ are two arbitrary extremum points of $\psi_n$ in $(-1,1)$.
If $\abrk{x} < \abrk{y}$, then
\begin{align}
\abrk{\psi_n(x)} < \abrk{\psi_n(y)}.
\label{eq_extremum_general}
\end{align}
If, in addition, $\chi_n > c^2$, then
\begin{align}
\abrk{\psi_n(x)} < \abrk{\psi_n(y)} < \abrk{\psi_n(1)}.
\label{eq_extremum_special}
\end{align}
\label{thm_extrema}
\end{thm}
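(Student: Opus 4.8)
The plan is to read the statement directly off the monotone quantity $Q$ of Lemma~\ref{lem_Q_Q_tilde}. The crucial observation is that at any extremum point $x$ of $\psi_n$ in $(-1,1)$ one has $\psi_n'(x)=0$, so that the defining formula \eqref{eq_psi1_q} collapses to $Q(x)=\psi_n^2(x)$. Moreover, by Theorem~\ref{thm_pswf_main} the function $\psi_n$ is either even or odd, whence $\psi_n^2$ and $\brk{\psi_n'}^2$ are both even, $Q$ is an even function of $t$, and the set of roots of $\psi_n'$ is symmetric about the origin. Consequently it suffices to work with the absolute values $\abrk{x},\abrk{y}$, which are themselves extremum points (or $0$), and to prove everything for $Q$ restricted to the nonnegative axis, where Lemma~\ref{lem_Q_Q_tilde} asserts that $Q$ is strictly increasing on $\brk{0,\min\cbrk{\sqrt{\chi_n}/c,1}}$.

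First I would verify that all interior extrema fall inside this interval of monotonicity. By Lemma~\ref{lem_five}, every root of $\psi_n'$ in $(-1,1)$ lies in $\brk{-\sqrt{\chi_n}/c,\sqrt{\chi_n}/c}$, and of course in $(-1,1)$; hence $\abrk{x},\abrk{y}<\min\cbrk{\sqrt{\chi_n}/c,1}$, so both lie strictly inside the range of strict monotonicity (and, when $\chi_n<c^2$, strictly below the singularity $t=\sqrt{\chi_n}/c$ of $Q$). Given $\abrk{x}<\abrk{y}$, strict monotonicity of $Q$ together with evenness then yields $\psi_n^2(x)=Q(\abrk{x})<Q(\abrk{y})=\psi_n^2(y)$, which is exactly \eqref{eq_extremum_general}.

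For the refinement \eqref{eq_extremum_special}, I would assume $\chi_n>c^2$, so that $\min\cbrk{\sqrt{\chi_n}/c,1}=1$ and, exactly as in the proof of Theorem~\ref{thm_psi1}, $Q$ extends continuously to $[-1,1]$ and is strictly increasing on $[0,1]$. Since $p(1)=1-1^2=0$ while the denominator $\chi_n-c^2$ in \eqref{eq_psi1_q} is positive, evaluating \eqref{eq_psi1_q} at $t=1$ gives $Q(1)=\psi_n^2(1)$. Because $y\in(-1,1)$ forces $\abrk{y}<1$, strict monotonicity yields $\psi_n^2(y)=Q(\abrk{y})<Q(1)=\psi_n^2(1)$; combined with the first part, this gives \eqref{eq_extremum_special}.

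The genuinely substantive work is already carried out in Lemma~\ref{lem_Q_Q_tilde}, so I expect no real obstacle here. The only points that require care, rather than difficulty, are the bookkeeping ones: confirming via Lemma~\ref{lem_five} that the extrema never reach the singularity of $Q$ in the case $\chi_n<c^2$ (so that the correct interval of monotonicity is used), and justifying the continuous extension of $Q$ up to $t=1$ in the case $\chi_n>c^2$, where the factor $1-t^2$ in the numerator of \eqref{eq_psi1_q} vanishes while $\chi_n-c^2t^2$ stays positive. Both are immediate from the earlier results, and the symmetry reduction to nonnegative arguments removes any need to analyze $Q$ on the negative axis.
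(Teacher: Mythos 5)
Your proposal is correct and takes essentially the same route as the paper, whose proof is a two-line sketch combining the evenness of $\abrk{\psi_n}$ with the monotonicity of $Q$ from Lemma~\ref{lem_Q_Q_tilde} and (implicitly) the observation that $Q = \psi_n^2$ at critical points and $Q(1)=\psi_n^2(1)$. Your extra bookkeeping—invoking Lemma~\ref{lem_five} to confirm that when $\chi_n < c^2$ all extrema lie strictly inside $\brk{0,\sqrt{\chi_n}/c}$, the actual interval of monotonicity—is a detail the paper's proof glosses over (it asserts monotonicity on $(0,1)$ outright), so your version is if anything slightly more careful than the original.
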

\begin{proof}
We observe that $\abrk{\psi_n}$ is even in $(-1,1)$, and combine
this observation with the fact that the function $Q:[-1,1] \to \Rc$,
defined via \eqref{eq_psi1_q}, is increasing in $(0,1)$
due to Lemma~\ref{lem_Q_Q_tilde}.
\end{proof}
In the following theorem, we provide an upper bound
on the reciprocal of $|\psi_n|$ (if $n$ is even) or
$|\psi'_n|$ (if $n$ is odd) at zero.
\begin{thm}
Suppose that $\chi_n > c^2$. If $n$ is even, then
\begin{align}
\frac{1}{|\psi_n(0)|} \leq 4 \cdot \sqrt{n \cdot \frac{\chi_n}{c^2}}.
\label{eq_q0_even}
\end{align}
If $n$ is odd, then
\begin{align}
\frac{1}{|\psi_n'(0)|} \leq 4 \cdot \sqrt{\frac{n}{c^2}}.
\label{eq_q0_odd}
\end{align}
\label{thm_q0}
\end{thm}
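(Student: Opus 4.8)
The plan is to reduce both inequalities to the single uniform estimate $\tilde{Q}(0) \geq c^2/(16n)$, where $\tilde{Q}$ is the function from Lemma~\ref{lem_Q_Q_tilde}. Since $\chi_n > c^2$ we have $\min\cbrk{\sqrt{\chi_n}/c,\,1} = 1$, so $Q$ from \eqref{eq_psi1_q} is increasing and $\tilde{Q}$ from \eqref{eq_psi1_qtilde} is decreasing on all of $(0,1)$. Evaluating \eqref{eq_psi1_qtilde} at $t=0$ and using that, by Theorem~\ref{thm_pswf_main}, $\psi_n$ is even with $\psi_n'(0)=0$ when $n$ is even and odd with $\psi_n(0)=0$ when $n$ is odd, gives $\tilde{Q}(0) = \chi_n\,\psi_n^2(0)$ in the even case and $\tilde{Q}(0) = \brk{\psi_n'(0)}^2$ in the odd case. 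Substituting these two values into $\tilde{Q}(0)\geq c^2/(16n)$ and rearranging yields exactly \eqref{eq_q0_even} and \eqref{eq_q0_odd}, respectively, so it suffices to prove the uniform estimate.

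First I would establish two pointwise envelopes on $(0,1)$. Because $\tilde{Q}$ is decreasing, dropping the nonnegative $\brk{\psi_n'}^2$ term in \eqref{eq_psi1_qtilde} gives $\brk{1-t^2}\brk{\chi_n-c^2t^2}\psi_n^2(t) \leq \tilde{Q}(t) \leq \tilde{Q}(0)$; combining this with the elementary bound $\chi_n - c^2t^2 \geq c^2\brk{1-t^2}$ (valid since $\chi_n\geq c^2$) produces the upper envelope $\psi_n^2(t) \leq \tilde{Q}(0)/\brk{c^2\brk{1-t^2}^2}$. On the other hand, exactly as in the proof of Theorem~\ref{thm_psi1}, $Q$ is increasing on $(0,1)$ and continuous up to $t=1$ with $Q(1)=\psi_n^2(1)$ (the gradient term in \eqref{eq_psi1_q} vanishes there), whence $\psi_n^2(t)\leq Q(t)\leq \psi_n^2(1) < n+\tfrac12$ for all $t\in[0,1]$, the final inequality being Theorem~\ref{thm_psi1_upper_bound}.

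Next I would integrate against the normalization. By Theorem~\ref{thm_pswf_main} and the evenness of $\psi_n^2$, one has $\int_0^1 \psi_n^2 = \tfrac12$. I split this integral at $b = 1 - 1/(4n+2)$. On $[b,1]$ the second envelope gives $\int_b^1 \psi_n^2 < \brk{n+\tfrac12}(1-b) = \tfrac14$, while on $[0,b]$ the first envelope gives $\int_0^b \psi_n^2 \leq \frac{\tilde{Q}(0)}{c^2}\int_0^b \frac{dt}{\brk{1-t^2}^2}$. Using the closed form $\int_0^b \brk{1-t^2}^{-2}\,dt = \frac{b}{2(1-b^2)} + \frac14\log\frac{1+b}{1-b}$ together with the choice of $b$, an elementary estimate shows this integral is at most $4n$ for every $n\geq 2$. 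Combining the three bounds gives $\tfrac12 \leq \frac{4n}{c^2}\tilde{Q}(0) + \tfrac14$, which rearranges to the desired claim $\tilde{Q}(0)\geq c^2/(16n)$.

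The main obstacle is entirely technical: choosing the cutoff so that the tail contributes at most $\tfrac14$ while simultaneously keeping $\int_0^b \brk{1-t^2}^{-2}\,dt \leq 4n$, since sharpening $b$ improves one bound at the expense of the other. With $b = 1-1/(4n+2)$ one has $\frac{1+b}{1-b} = 8n+3$ and $\frac{b}{2(1-b^2)} = \frac{(4n+1)(4n+2)}{2(8n+3)} < n+1$, so the integral is bounded by $n + 1 + \frac14\log(8n+3)$, which is below $4n$ for all $n\geq 2$. Everything else is a direct appeal to Lemma~\ref{lem_Q_Q_tilde}, Theorems~\ref{thm_pswf_main} and~\ref{thm_psi1_upper_bound}, or a routine rearrangement.
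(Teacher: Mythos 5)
Your proposal is correct and follows essentially the same route as the paper's own proof: both rest on the monotonicity of $\tilde{Q}$ from Lemma~\ref{lem_Q_Q_tilde} to obtain the envelope $\psi_n^2(t)\leq \tilde{Q}(0)/\bigl(c^2(1-t^2)^2\bigr)$, combine it with the endpoint bound $\psi_n^2\leq n+\tfrac{1}{2}$, split the normalization integral $\int_0^1\psi_n^2=\tfrac{1}{2}$ at a cutoff near $1$, and reduce the even/odd cases to the single estimate $\tilde{Q}(0)=\chi_n Q(0)\geq c^2/(16n)$. The only differences are bookkeeping: the paper cuts at $1-1/(8n)$, bounding the tail by $3/16$ and the bulk integral by $5n$, while you cut at $1-1/(4n+2)$, getting tail exactly $1/4$ and bulk at most $4n$ (your restriction to $n\geq 2$ is also unnecessary, since $n+1+\tfrac{1}{4}\log(8n+3)\leq 4n$ already holds at $n=1$), and both arrive at the identical final constant.
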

\begin{proof}
Since $\chi_n > c^2$, the inequality
\begin{align}
\psi_n^2(t) \leq \psi_n^2(1) \leq n + \frac{1}{2},
\end{align}
holds
due to Theorem~\ref{thm_psi1_upper_bound} in Section~\ref{sec_pswf} 
and
Theorem~\ref{thm_extrema} above.
Therefore,
\begin{align}
\int_{1 - 1/8n}^1 \psi_n^2(t) \; dt \leq \frac{1}{8} + \frac{1}{16n} <
\frac{3}{16}.
\end{align}
Combined with the orthonormality of $\psi_n$, this yields the inequality
\begin{align}
\int_0^{1 - 1/8n} \psi_n^2(t) \; dt 
= \int_0^1 \psi_n^2(t) \; dt - \int_{1 - 1/8n}^1 \psi_n^2(t) \; dt
\geq 
\frac{1}{2} - \frac{3}{16} = \frac{5}{16}.
\label{eq_quad_fourth}
\end{align}
Since
\begin{align}
\int \frac{ dx }{ \brk{1 - x^2}^2 } = 
\frac{1}{2} \cdot \frac{ x }{ 1 - x^2 } + 
\frac{1}{4} \log \frac{x+1}{1-x},
\end{align}
it follows that 
\begin{align}
& \int_0^{1-1/8n} \frac{ dx }{ \brk{1 - x^2}^2 } = \nonumber \\
& \frac{1}{2} \cdot \frac{1 - 1/8n}{1 - \brk{1-1/8n}^2} +
  \frac{1}{4} \log \frac{2 - 1/8n}{1/8n} = \nonumber \\
& \frac{1}{2} \cdot \frac{ 8n \brk{8n-1} }{ 16n - 1} +
  \frac{1}{4} \log \brk{16n - 1} \leq \nonumber \\
& 4n + n \leq 5n.
\label{eq_quad_8n}
\end{align}
Suppose that the functions $Q(t), \tilde{Q}(t)$ are 
defined for $-1 \leq t \leq 1$, respectively, 
via the formulae \eqref{eq_psi1_q}, \eqref{eq_psi1_qtilde}
in
Lemma~\ref{lem_Q_Q_tilde} in Section~\ref{sec_growth}.
We apply Lemma~\ref{lem_Q_Q_tilde}
with $t_0 = 0$ and $0 < t \leq 1$ to obtain
\begin{align}
Q(0) \cdot \chi_n
& \; = Q(0) \cdot p(0) \cdot q(0) = \tilde{Q}(0) \nonumber \\
& \; \geq \tilde{Q}(t) = c^2
\sbrk{ \psi_n^2(t) + \frac{ \brk{t^2-1} \brk{\psi_n'(t)}^2 }
                                    { \brk{c^2 \cdot t^2 - \chi_n} } }
   \cdot \brk{1 - t^2} \brk{\chi_n/c^2 - t^2} \nonumber \\
& \; \geq c^2 \psi_n^2(t) \brk{1-t^2} \brk{\chi_n/c^2-t^2}
     \geq c^2 \psi_n^2(t) \brk{1-t^2}^2.
\label{eq_quad_q0_est}
\end{align}
It follows from \eqref{eq_quad_fourth}, 
\eqref{eq_quad_8n} and \eqref{eq_quad_q0_est} that
\begin{align}
& 5n \cdot Q(0) \cdot \frac{\chi_n}{c^2} \geq 
Q(0) \cdot \frac{\chi_n}{c^2} 
\int_0^{1 - 1/8n} \frac{ dx }{ \brk{1-x^2}^2 } \geq 
\int_0^{1 - 1/8n} \psi_n^2(t) \; dt \geq \frac{5}{16},
\end{align}
which, in turn, implies that
\begin{align}
\frac{1}{Q(0)} \leq 16 n \cdot \frac{\chi_n}{c^2}.
\label{eq_quad_32n}
\end{align}
If $n$ is even, then $\psi_n'(0)=0$, also, if $n$ is odd, then $\psi_n(0)=0$.
Combined with \eqref{eq_quad_32n}, this observation
yields both \eqref{eq_q0_even} and \eqref{eq_q0_odd}.
\end{proof}

\section{Numerical Results}
\label{sec_numerical}
In this section,
we illustrate the analysis of Section~\ref{sec_analytical}
via several numerical experiments. All the calculations were
implemented in FORTRAN (the Lahey 95 LINUX version) and were
carried out in
double precision.
The algorithms for the evaluation of PSWFs and
their eigenvalues were based on \cite{RokhlinXiaoProlate}.

We illustrate Lemma~\ref{lem_five} in
Figures~\ref{fig:test75a}, \ref{fig:test75b},
via plotting
$\psi_n$ with $\chi_n < c^2$ and
$\chi_n > c^2$, respectively. The relations
\eqref{eq_khi_small} and \eqref{eq_khi_large} hold for
the functions in Figures~\ref{fig:test75a}, 
\ref{fig:test75b}, respectively.
Theorem~\ref{thm_extrema}
holds in both cases, that is,
the absolute value of local extrema of $\psi_n(t)$ increases
as $t$ grows from $0$ to $1$. On the other hand,
\eqref{eq_extremum_special} holds only for 
the function plotted in Figure~\ref{fig:test75b},
as expected. 
%
%
%
\begin{figure} [htbp]
\begin{center}
\includegraphics[width=12cm, bb=81 227 549 564, clip=true]
{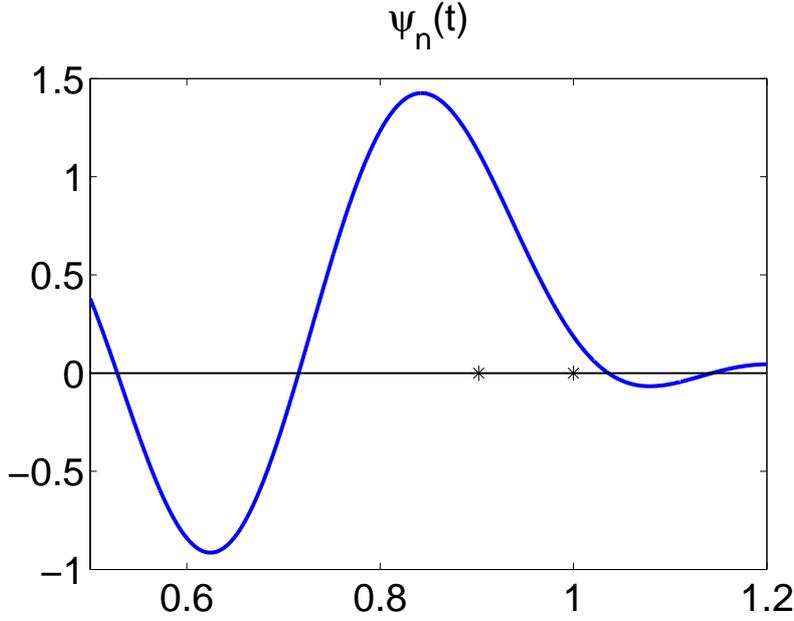}
\caption
{\it
The function $\psi_n(t)$ for $c = 20$ and $n = 9$.
Since $\chi_n \approx 325.42 < c^2$, 
the location of the special points is according to
\eqref{eq_khi_small}
of Lemma~\ref{lem_five}. The points
$\sqrt{\chi_n}/c \approx 0.90197$ and $1$ are marked with asterisks.
Compare to Figure~\ref{fig:test75b}.
}
\label{fig:test75a}
\end{center}
\end{figure}
\begin{figure} [htbp]
\begin{center}
\includegraphics[width=12cm, bb=81 227 529 564, clip=true]
{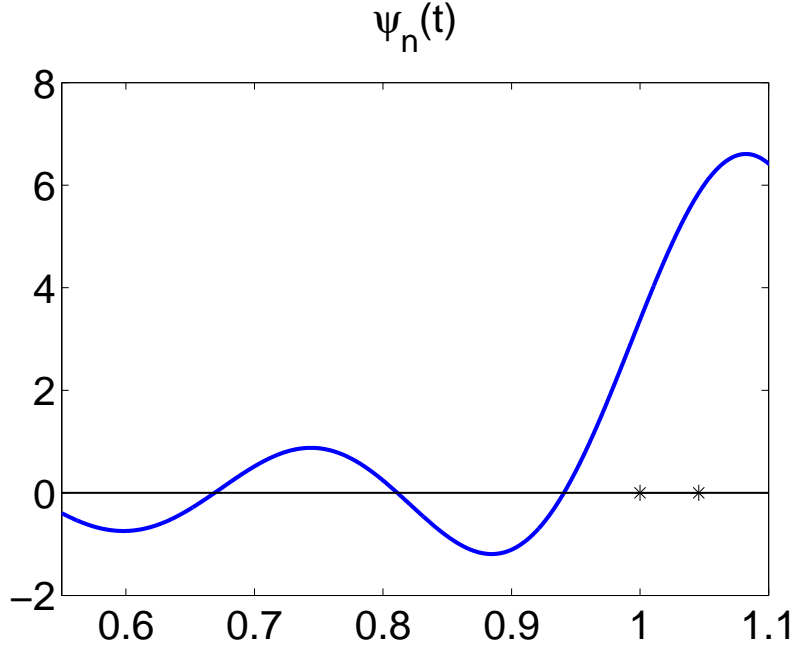}
\caption
{\it
The function $\psi_n(t)$ for $c = 20$ and $n = 14$.
Since $\chi_n \approx 437.36 > c^2$, 
the location of the special points is according to
\eqref{eq_khi_large} of Lemma~\ref{lem_five}. The points $1$ and
$\sqrt{\chi_n}/c \approx 1.0457$ are marked with asterisks. 
Compare to Figure~\ref{fig:test75a}.
}
\label{fig:test75b}
\end{center}
\end{figure}

\begin{table}[htbp]
\begin{center}
\begin{tabular}{c|c|c|c|c|c}
$n$   & 
$\chi_n / c^2$   & 
Above($n$) &
Below($n$) &
$\frac{\text{Above}(n)-n}{n}$ &
$\frac{n-\text{Below}(n)}{n}$ \\[1ex]
\hline
 6
& 0.10104E+01 & 0.65036E+01 & 0.59568E+01 & 0.83927E-01 & 0.71987E-02  \\
 10
& 0.16310E+01 & 0.10498E+02 & 0.99600E+01 & 0.49826E-01 & 0.39974E-02  \\
 15
& 0.29137E+01 & 0.15494E+02 & 0.14963E+02 & 0.32940E-01 & 0.24599E-02  \\
 20
& 0.47078E+01 & 0.20495E+02 & 0.19964E+02 & 0.24737E-01 & 0.17952E-02  \\
 25
& 0.70050E+01 & 0.25496E+02 & 0.24965E+02 & 0.19820E-01 & 0.14066E-02  \\
 30
& 0.98035E+01 & 0.30496E+02 & 0.29965E+02 & 0.16538E-01 & 0.11533E-02  \\
 35
& 0.13103E+02 & 0.35497E+02 & 0.34966E+02 & 0.14189E-01 & 0.97596E-03  \\
 40
& 0.16902E+02 & 0.40497E+02 & 0.39966E+02 & 0.12425E-01 & 0.84521E-03  \\
 45
& 0.21202E+02 & 0.45497E+02 & 0.44966E+02 & 0.11052E-01 & 0.74500E-03  \\
\end{tabular}
\end{center}
\caption{\it
Illustration of Theorems~\ref{thm_n_upper}, \ref{thm_n_lower} with $c = 10$.
The quantities 
Above($n$) and Below($n$) are defined by \eqref{eq_above_below}.
}
\label{t:test77a}
\end{table}
\begin{table}[htbp]
\begin{center}
\begin{tabular}{c|c|c|c|c|c}
$n$   & 
$\chi_n / c^2$   & 
Above($n$) &
Below($n$) &
$\frac{\text{Above}(n)-n}{n}$ &
$\frac{n-\text{Below}(n)}{n}$ \\[1ex]
\hline
 64
& 0.10066E+01 & 0.64590E+02 & 0.63964E+02 & 0.92169E-02 & 0.56216E-03  \\
 70
& 0.10668E+01 & 0.70513E+02 & 0.69971E+02 & 0.73216E-02 & 0.40732E-03  \\
 75
& 0.11290E+01 & 0.75505E+02 & 0.74971E+02 & 0.67341E-02 & 0.38256E-03  \\
 80
& 0.11989E+01 & 0.80502E+02 & 0.79970E+02 & 0.62812E-02 & 0.37011E-03  \\
 85
& 0.12756E+01 & 0.85501E+02 & 0.84970E+02 & 0.58974E-02 & 0.35594E-03  \\
 90
& 0.13584E+01 & 0.90501E+02 & 0.89969E+02 & 0.55623E-02 & 0.34087E-03  \\
 95
& 0.14472E+01 & 0.95500E+02 & 0.94969E+02 & 0.52652E-02 & 0.32589E-03  \\
 100
& 0.15416E+01 & 0.10050E+03 & 0.99969E+02 & 0.49994E-02 & 0.31150E-03  \\
\end{tabular}
\end{center}
\caption{\it
Illustration of Theorems~\ref{thm_n_upper}, \ref{thm_n_lower}
with $c = 100$.
The quantities 
Above($n$) and Below($n$) are defined by \eqref{eq_above_below}.
}
\label{t:test77b}
\end{table}
\begin{table}[htbp]
\begin{center}
\begin{tabular}{c|c|c|c|c|c}
$n$   & 
$\chi_n / c^2$   & 
Above($n$) &
Below($n$) &
$\frac{\text{Above}(n)-n}{n}$ &
$\frac{n-\text{Below}(n)}{n}$ \\[1ex]
\hline
 637
& 0.10005E+01 & 0.63759E+03 & 0.63697E+03 & 0.93059E-03 & 0.51797E-04  \\
 640
& 0.10025E+01 & 0.64055E+03 & 0.63997E+03 & 0.85557E-03 & 0.49251E-04  \\
 645
& 0.10063E+01 & 0.64552E+03 & 0.64497E+03 & 0.80101E-03 & 0.39996E-04  \\
 650
& 0.10105E+01 & 0.65051E+03 & 0.64997E+03 & 0.78412E-03 & 0.39578E-04  \\
 655
& 0.10149E+01 & 0.65551E+03 & 0.65497E+03 & 0.77352E-03 & 0.40527E-04  \\
 660
& 0.10195E+01 & 0.66050E+03 & 0.65997E+03 & 0.76512E-03 & 0.41359E-04  \\
 665
& 0.10243E+01 & 0.66550E+03 & 0.66497E+03 & 0.75777E-03 & 0.41942E-04  \\
 670
& 0.10292E+01 & 0.67050E+03 & 0.66997E+03 & 0.75103E-03 & 0.42321E-04  \\
 675
& 0.10343E+01 & 0.67550E+03 & 0.67497E+03 & 0.74469E-03 & 0.42547E-04  \\
\end{tabular}
\end{center}
\caption{\it
Illustration of Theorems~\ref{thm_n_upper}, \ref{thm_n_lower}
with $c = 1000$.
The quantities 
Above($n$) and Below($n$) are defined by \eqref{eq_above_below}.
}
\label{t:test77c}
\end{table}

In Tables~\ref{t:test77a}, \ref{t:test77b}, \ref{t:test77c}, we
illustrate Theorems~\ref{thm_n_upper}, \ref{thm_n_lower}
in the case of $\chi_n > c^2$.
The band limit $c > 0$ is fixed per table and chosen to 
be equal to 10, 100 and 1000, respectively. 
The first two columns contain
$n$ and the ratio $\chi_n/c^2$. The third and fourth column contain
the upper and lower bound on $n$ defined, respectively,
via \eqref{eq_n_upper_large} in Theorem~\ref{thm_n_upper}
and \eqref{eq_n_lower} in Theorem~\ref{thm_n_lower},
i.e.
\begin{align}
& \text{Below}(n) =
1 + 
\frac{2}{\pi} \int_0^{t_n} \sqrt{ \frac{\chi_n - c^2 t^2}{1 - t^2} } \; dt =
1 + \frac{2}{\pi} \sqrt{\chi_n} \cdot
    E \brk{ \text{asin}\brk{t_n}, \frac{c}{\sqrt{\chi_n}} },
\nonumber \\
& \text{Above}(n) = 
\frac{2}{\pi} \int_0^1 \sqrt{ \frac{\chi_n - c^2 t^2}{1 - t^2} } \; dt =
\frac{2}{\pi} \sqrt{\chi_n} \cdot E \brk{ \frac{c}{\sqrt{\chi_n}} },
\label{eq_above_below}
\end{align}
where $E$ denote the elliptical integrals of Section~\ref{sec_elliptic},
and $t_n$ is the maximal root of $\psi_n$ in $\brk{-1, 1}$
(see also \eqref{eq_jan_n_both_with_e}).
The fifth and sixth columns contain the relative errors of these bounds.
The first row corresponds to the minimal $n$ for which $\chi_n > c^2$.
We observe that, for a fixed $c$, the bounds become more accurate
as $n$ grows. Also, for $n = \lceil 2c/\pi \rceil + 1$
the accuracy improves as $c$ grows. Moreover, 
the lower bound is always more accurate than the upper bound.

\begin{table}[htbp]
\begin{center}
\begin{tabular}{c|c|c|c|c|c}
$n$   & 
$\chi_n / c^2$   & 
Above($n$) &
Below($n$) &
$\frac{\text{Above}(n)-n}{n}$ &
$\frac{n-\text{Below}(n)}{n}$ \\[1ex]
\hline
 1
& 0.29824E-01 & 0.10395E+01 & 0.10000E+01 & 0.39511E-01 & 0.00000E+00  \\
 9
& 0.18531E+00 & 0.90625E+01 & 0.89818E+01 & 0.69444E-02 & 0.20214E-02  \\
 19
& 0.36985E+00 & 0.19069E+02 & 0.18981E+02 & 0.36421E-02 & 0.10180E-02  \\
 29
& 0.54240E+00 & 0.29075E+02 & 0.28980E+02 & 0.25825E-02 & 0.69027E-03  \\
 39
& 0.70125E+00 & 0.39082E+02 & 0.38979E+02 & 0.21102E-02 & 0.53327E-03  \\
 49
& 0.84356E+00 & 0.49096E+02 & 0.48978E+02 & 0.19543E-02 & 0.45122E-03  \\
 54
& 0.90685E+00 & 0.54110E+02 & 0.53977E+02 & 0.20330E-02 & 0.43263E-03  \\
 59
& 0.96278E+00 & 0.59146E+02 & 0.58974E+02 & 0.24725E-02 & 0.44189E-03  \\
 63
& 0.99867E+00 & 0.63420E+02 & 0.62966E+02 & 0.66661E-02 & 0.53355E-03  \\
\end{tabular}
\end{center}
\caption{\it
Illustration of Theorems~\ref{thm_n_upper}, \ref{thm_n_lower} with $c = 100$.
The quantities 
Above($n$) and Below($n$) are defined by \eqref{eq_above_below_small}.
}
\label{t:test99}
\end{table}
In Table~\ref{t:test99}, 
we illustrate Theorems~\ref{thm_n_upper}, \ref{thm_n_lower}
in the case $\chi_n < c^2$ with $c = 100$.
The structure of Table~\ref{t:test99} is the same as that
of Tables~\ref{t:test77a}, \ref{t:test77b}, \ref{t:test77c}
with the only difference: 
the third and fourth column contain
the upper and lower bound on $n$ given, respectively,
via
\eqref{eq_n_upper_small} in Theorems~\ref{thm_n_upper}
and \eqref{eq_n_lower} in Theorem~\ref{thm_n_lower}, i.e.
\begin{align}
& \text{Below}(n) =
1 + 
\frac{2}{\pi} \int_0^{t_n} \sqrt{ \frac{\chi_n - c^2 t^2}{1 - t^2} } \; dt =
1 + \frac{2}{\pi} \sqrt{\chi_n} \cdot
    E \brk{ \text{asin}\brk{t_n}, \frac{c}{\sqrt{\chi_n}} }
\nonumber \\
& \text{Above}(n) = 
\frac{2}{\pi} \int_0^T \sqrt{ \frac{\chi_n - c^2 t^2}{1 - t^2} } \; dt =
\frac{2}{\pi} \sqrt{\chi_n} \cdot 
    E \brk{ \text{asin}\brk{T}, \frac{c}{\sqrt{\chi_n}} },
\label{eq_above_below_small}
\end{align}
where $t_n$ and $T$ are the maximal roots of $\psi_n$ and $\psi_n'$ 
in the interval $\brk{-1, 1}$, respectively.
The values in the first row grow up to $\lfloor 2c/\pi \rfloor$,
in correspondence with Theorem~\ref{thm_n_and_khi} in 
Section~\ref{sec_sharp}. We observe that both bounds in the third
and fourth columns are correct and the lower bound is always more
accurate. This behavior is similar to that observed in
Tables~\ref{t:test77a}, \ref{t:test77b}, \ref{t:test77c}.



\begin{table}[htbp]
\begin{center}
\begin{tabular}{c|c|c|c|c}
$n$   & 
$\brk{n - 2c/\pi - 1}/{c}$ &
$\chi_n$   & 
$\brk{\frac{\pi}{2}\brk{n+1}}^2$ &
$\brk{\frac{\pi}{2}\brk{n+1}}^2 / \chi_n$ - 1 \\[1ex]
\hline
 640
& 0.23802E-02 & 0.10025E+07 & 0.10138E+07 & 0.11248E-01  \\
 660
& 0.22380E-01 & 0.10195E+07 & 0.10781E+07 & 0.57443E-01  \\
 680
& 0.42380E-01 & 0.10395E+07 & 0.11443E+07 & 0.10082E+00  \\
 700
& 0.62380E-01 & 0.10615E+07 & 0.12125E+07 & 0.14229E+00  \\
 720
& 0.82380E-01 & 0.10850E+07 & 0.12827E+07 & 0.18215E+00  \\
 740
& 0.10238E+00 & 0.11100E+07 & 0.13548E+07 & 0.22054E+00  \\
 760
& 0.12238E+00 & 0.11363E+07 & 0.14289E+07 & 0.25757E+00  \\
 780
& 0.14238E+00 & 0.11637E+07 & 0.15050E+07 & 0.29330E+00  \\
 800
& 0.16238E+00 & 0.11923E+07 & 0.15831E+07 & 0.32777E+00  \\
 820
& 0.18238E+00 & 0.12219E+07 & 0.16631E+07 & 0.36105E+00  \\
\end{tabular}
\end{center}
\caption{\it
Illustration of Theorem~\ref{thm_khi_n_square} with $c = 1000$.
}
\label{t:test80a}
\end{table}
\begin{table}[htbp]
\begin{center}
\begin{tabular}{c|c|c|c|c}
$n$   & 
$\brk{n - 2c/\pi - 1}/{c}$ &
$\chi_n$   & 
$\brk{\frac{\pi}{2}\brk{n+1}}^2$ &
$\brk{\frac{\pi}{2}\brk{n+1}}^2 / \chi_n$ - 1 \\[1ex]
\hline
 6400
& 0.32802E-02 & 0.10022E+09 & 0.10110E+09 & 0.87670E-02  \\
 6600
& 0.23280E-01 & 0.10191E+09 & 0.10751E+09 & 0.55007E-01  \\
 6800
& 0.43280E-01 & 0.10390E+09 & 0.11413E+09 & 0.98410E-01  \\
 7000
& 0.63280E-01 & 0.10609E+09 & 0.12094E+09 & 0.13991E+00  \\
 7200
& 0.83280E-01 & 0.10845E+09 & 0.12795E+09 & 0.17979E+00  \\
 7400
& 0.10328E+00 & 0.11094E+09 & 0.13515E+09 & 0.21821E+00  \\
 7600
& 0.12328E+00 & 0.11357E+09 & 0.14255E+09 & 0.25526E+00  \\
 7800
& 0.14328E+00 & 0.11631E+09 & 0.15016E+09 & 0.29102E+00  \\
 8000
& 0.16328E+00 & 0.11916E+09 & 0.15795E+09 & 0.32552E+00  \\
 8200
& 0.18328E+00 & 0.12213E+09 & 0.16595E+09 & 0.35883E+00  \\
\end{tabular}
\end{center}
\caption{\it
Illustration of Theorem~\ref{thm_khi_n_square} with $c = 10000$.
}
\label{t:test80b}
\end{table}
In Tables~\ref{t:test80a}, \ref{t:test80b}, we illustrate
Theorem~\ref{thm_khi_n_square} with $c = 1000$ and $c = 10000$, respectively.
The first column contains the PSWF index $n$, 
which starts from roughly $2c/\pi$ and
increases by steps of $c/50$. The second column displays the normalized
distance $d_n$ between $n$ and  $(2c/\pi+1)$, defined via the formula
\begin{align}
d_n = \frac{n-2c/\pi-1}{c}.
\label{eq_dn}
\end{align}
The third column contains $\chi_n$. The fourth 
and fifth column
contain the upper bound on $\chi_n$, defined in Theorem~\ref{thm_khi_n_square},
and the relative error of this bound, respectively. We observe
that the bound
is slightly better for $c = 10000$,
if we keep $d_n$ fixed. On the other hand, for a fixed $c$,
this bound
deteriorates as $n$ grows. In fact, 
starting from $n \approx (2/\pi + 1/6) \cdot c$, this bound
becomes even worse than \eqref{eq_khi_crude} (this value is $n = 825$ for
$c = 1000$ and $n = 8254$ for $c = 10000$). Since 
Theorem~\ref{thm_khi_n_square} is a simplification of
more accurate Theorems~\ref{thm_n_upper},~\ref{thm_n_lower},
the latter observation
is not surprising. Nevertheless, the high accuracy for $n \approx 2c/\pi$
and the simplicity of the estimate make Theorem~\ref{thm_khi_n_square}
useful (see also Figure~\ref{fig:test171a}).
\begin{figure} [htbp]
\begin{center}
\includegraphics[width=12cm, bb=81 227 529 564, clip=true]
{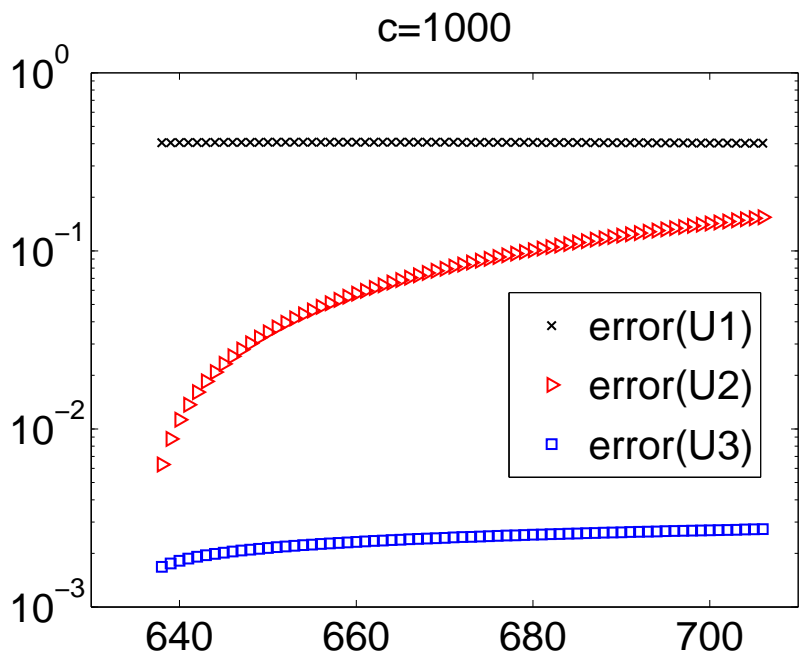}
\caption
{\it
Relative error of upper bounds on $\chi_n$ with $c=1000$,
on the logarithmic scale.
The bounds are defined, respectively, via
\eqref{eq_u1}, \eqref{eq_u2}, \eqref{eq_u3}.
}
\label{fig:test171a}
\end{center}
\end{figure}

In Figure~\ref{fig:test171a}, we illustrate 
Theorems~\ref{thm_khi_crude},~\ref{thm_khi_n_square},~\ref{thm_exp_term}
via comparing the relative accuracy
of the corresponding upper bounds on $\chi_n$.
More specifically, we choose $c=1000$, and, for each integer
$630 \leq n \leq 710$, we evaluate numerically the following quantities.
First, we compute $\chi_n$ (see \eqref{eq_prolate_ode}
in Section~\ref{sec_pswf}). Second, we compute the upper bound
on $\chi_n$, defined via the right-hand side of \eqref{eq_khi_crude}
of Theorem~\ref{thm_khi_crude} in Section~\ref{sec_pswf}, namely,
\begin{align}
U_1(n) = c^2 + n \cdot (n+1).
\label{eq_u1}
\end{align}
Third, we compute the upper bound on $\chi_n$, defined via
\eqref{eq_khi_n_square} of Theorem~\ref{thm_khi_n_square}, namely,
\begin{align}
U_2(n) = \left( \frac{\pi}{2} \cdot (n+1) \right)^2.
\label{eq_u2}
\end{align}
Finally, we compute the upper bound on $\chi_n$, defined via
\eqref{eq_khi_via_h} of Theorem~\ref{thm_exp_term}, namely,
\begin{align}
U_3(n) = c^2 \cdot 
\left(1 + H\left( \frac{\pi n}{2c} - 1 + \frac{3\pi}{2c}\right) \right),
\label{eq_u3}
\end{align}
where $H$ is defined via \eqref{eq_big_h_def} in Theorem~\ref{thm_exp_term}.
In Figure~\ref{fig:test171a}, we plot the relative errors of 
$U_1(n),U_2(n),U_3(n)$ as functions of $n$, on the logarithmic scale.

We observe that $U_1(n)$ significantly overestimates $\chi_n$, and the
relative accuracy of $U_1(n)$ remains roughly the same for all 
$630 \leq n \leq 710$.
On the other hand, the relative accuracy of $U_2(n)$ is higher
than that of $U_1(n)$; however, it deteriorates as
$n$ grows: from below $0.01$ for $n \leq 640$ to above $0.1$
for $n \geq 680$ (see also Table~\ref{t:test80a} above).
Finally, $U_3(n)$ displays much higher relative accuracy than
both $U_1(n)$ and $U_2(n)$: the relative accuracy of $U_3(n)$ remains
below $0.004$ for all $630 \leq n \leq 710$.

\begin{figure} [htbp]
\begin{center}
\includegraphics[width=12cm, bb=81 227 529 564, clip=true]
{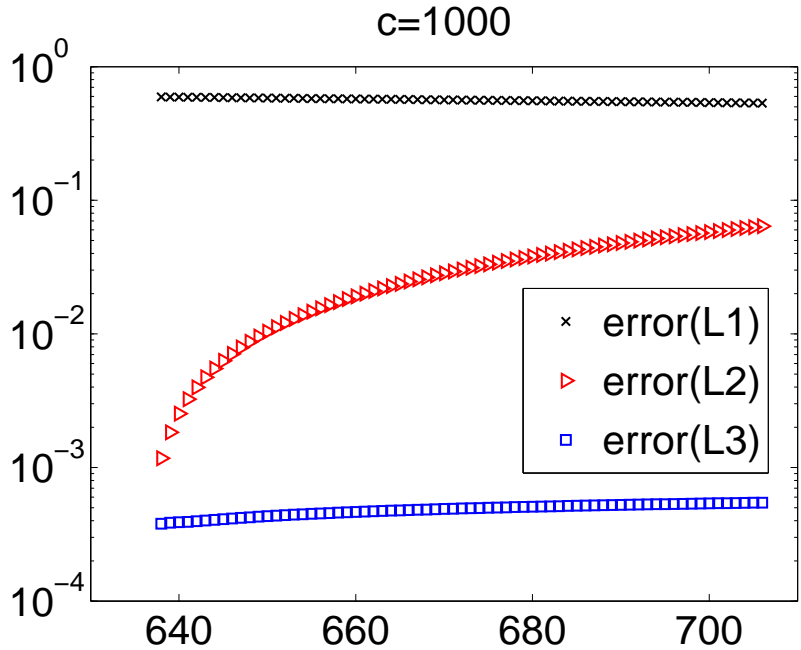}
\caption
{\it
Relative error of lower bounds on $\chi_n$ with $c=1000$,
on the logarithmic scale.
The bounds are defined, respectively, via
\eqref{eq_l1}, \eqref{eq_l2}, \eqref{eq_l3}.
}
\label{fig:test171b}
\end{center}
\end{figure}

In Figure~\ref{fig:test171b}, we illustrate 
Theorems~\ref{thm_khi_crude},~\ref{thm_n_and_khi},~\ref{thm_exp_term}
via comparing the relative accuracy
of the corresponding lower bounds on $\chi_n$.
More specifically, we choose $c=1000$, and, for each integer
$630 \leq n \leq 710$, we evaluate numerically the following quantities.
First, we compute $\chi_n$ (see \eqref{eq_prolate_ode}
in Section~\ref{sec_pswf}). Second, we compute the lower bound
on $\chi_n$, defined via the left-hand side of \eqref{eq_khi_crude}
of Theorem~\ref{thm_khi_crude} in Section~\ref{sec_pswf}, namely,
\begin{align}
L_1(n) = n \cdot (n+1).
\label{eq_l1}
\end{align}
Third, we compute the trivial lower bound on $\chi_n$, established
in Theorem~\ref{thm_n_and_khi}, namely,
\begin{align}
L_2(n) = c^2.
\label{eq_l2}
\end{align}
Finally, we compute the lower bound on $\chi_n$, defined via
\eqref{eq_khi_via_h} of Theorem~\ref{thm_exp_term}, namely,
\begin{align}
L_3(n) = c^2 \cdot 
\left(1 + H\left( \frac{\pi n}{2c} - 1\right) \right),
\label{eq_l3}
\end{align}
where $H$ is defined via \eqref{eq_big_h_def} in Theorem~\ref{thm_exp_term}.
In Figure~\ref{fig:test171b}, we plot the relative errors of 
$L_1(n),L_2(n),L_3(n)$ as functions of $n$, on the logarithmic scale.

We observe that $L_1(n)$ significantly underestimates $\chi_n$, and the
relative accuracy of $L_1(n)$ remains roughly the same for all 
$630 \leq n \leq 710$. Even the trivial lower bound $L_2(n) = c^2$
displays a higher relative accuracy, which, obviously,
deteriorates as $n$ grows.
Finally, $L_3(n)$ is much more accurate than
both $L_1(n)$ and $L_2(n)$: the relative accuracy of $L_3(n)$ remains
below $0.0006$ for all $630 \leq n \leq 710$. We also observe,
that the relative accuracy of $L_3(n)$ is about an order of magnitude
higher than that of $U_3(n)$, defined via \eqref{eq_u3} above
(see Figure~\ref{fig:test171a}).

\begin{table}[htbp]
\begin{center}
\begin{tabular}{c|c|c|c|c|c}
$i$   & 
$t_{i+1} - t_i$ &
$\frac{\pi}{f(t_{i+1}) + v(t_{i+1})/2}$ & 
$\frac{\pi}{f(t_i)}$ &
lower error &
upper error \\[1ex]
\hline
 44
& 0.27468E-01 & 0.27464E-01 & 0.27470E-01 & 0.13152E-03 & 0.63357E-04  \\
 46
& 0.27453E-01 & 0.27439E-01 & 0.27460E-01 & 0.52432E-03 & 0.24265E-03  \\
\hline
 60
& 0.26685E-01 & 0.26573E-01 & 0.26741E-01 & 0.42160E-02 & 0.21008E-02  \\
 62
& 0.26437E-01 & 0.26303E-01 & 0.26506E-01 & 0.50867E-02 & 0.25968E-02  \\
\hline
 70
& 0.24700E-01 & 0.24418E-01 & 0.24863E-01 & 0.11404E-01 & 0.66360E-02  \\
 72
& 0.23948E-01 & 0.23602E-01 & 0.24158E-01 & 0.14473E-01 & 0.87772E-02  \\
\hline
 84
& 0.96757E-02 & 0.81279E-02 & 0.10948E-01 & 0.15996E+00 & 0.13147E+00  \\
 86
& 0.39568E-02 & 0.22125E-02 & 0.55074E-02 & 0.44083E+00 & 0.39188E+00  \\
\end{tabular}
\end{center}
\caption{\it
Illustration of Theorem~\ref{thm_zeros_inside_bounds} 
with $c = 100$ and $n = 87$.
}
\label{t:test98a}
\end{table}
\begin{table}[htbp]
\begin{center}
\begin{tabular}{c|c|c|c|c|c}
$i$   & 
$t_{i+1} - t_i$ &
$\frac{\pi}{f(t_{i+1}) + v(t_{i+1})/2}$ & 
$\frac{\pi}{f(t_i)}$ &
lower error &
upper error \\[1ex]
\hline
 336
& 0.30967E-02 & 0.30967E-02 & 0.30967E-02 & 0.19367E-05 & 0.59233E-06  \\
 338
& 0.30967E-02 & 0.30967E-02 & 0.30967E-02 & 0.52185E-05 & 0.86461E-06  \\
\hline
 400
& 0.30948E-02 & 0.30945E-02 & 0.30949E-02 & 0.11172E-03 & 0.10078E-04  \\
 402
& 0.30947E-02 & 0.30944E-02 & 0.30947E-02 & 0.11547E-03 & 0.10427E-04  \\
\hline
 500
& 0.30813E-02 & 0.30802E-02 & 0.30815E-02 & 0.37302E-03 & 0.41125E-04  \\
 502
& 0.30808E-02 & 0.30797E-02 & 0.30810E-02 & 0.38101E-03 & 0.42311E-04  \\
\hline
 601
& 0.30109E-02 & 0.30065E-02 & 0.30118E-02 & 0.14549E-02 & 0.30734E-03  \\
 603
& 0.30071E-02 & 0.30025E-02 & 0.30080E-02 & 0.15168E-02 & 0.32775E-03  \\
\hline
 667
& 0.10176E-02 & 0.85504E-03 & 0.11505E-02 & 0.15973E+00 & 0.13065E+00  \\
 669
& 0.41703E-03 & 0.23323E-03 & 0.58020E-03 & 0.44073E+00 & 0.39128E+00  \\
\end{tabular}
\end{center}
\caption{\it
Illustration of Theorem~\ref{thm_zeros_inside_bounds} 
with $c = 1000$ and $n = 670$.
}
\label{t:test98b}
\end{table}
\begin{figure} [htbp]
\begin{center}
\includegraphics[width=12cm, bb=81 227 529 564, clip=true]
{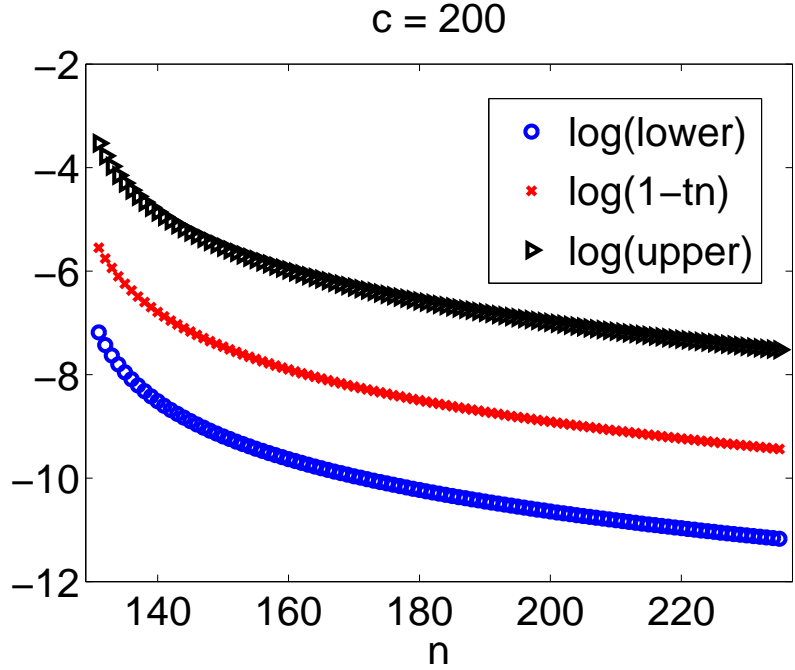}
\caption
{\it
Illustration of Theorem~\ref{thm_tn_simple} with $c=200$.
Here $t_n$ is the maximal root of $\psi_n$
in $(-1,1)$, while the lower 
and upper bounds are defined via \eqref{eq_tn_simple_lower},
\eqref{eq_tn_simple_upper}, respectively.
}
\label{fig:test170}
\end{center}
\end{figure}

In Tables~\ref{t:test98a}, \ref{t:test98b}, we illustrate
Theorems~\ref{thm_zeros_inside_bounds}, \ref{thm_spacing_inside},
with $c = 100, n = 87$ and
$c = 1000, n = 670$, respectively.
The first column contains the index $i$ of the $i$th root $t_i$ of $\psi_n$
inside $\brk{-1, 1}$.
The second column contains the difference between 
two consecutive roots $t_{i+1}$ and $t_i$.
The third and fourth columns contain, respectively,
the lower and upper bounds on this difference, given via
\eqref{eq_zeros_inside_bounds}
in Theorem~\ref{thm_zeros_inside_bounds}. The last two columns
contain the relative errors of these bounds. 
We observe that
both estimates are fairly accurate when $t_i$ is far from 1,
and the accuracy increases with $c$. The best relative accuracy
is about 0.01\% for $c = 100$ and 0.0001\% for $c = 1000$. Both
bounds deteriorate
as $i$ grows to $n$. 
For both values of $c$ the relative accuracy
of the lower bound for $i = n-1$ is as low as 44\%, and that of
the upper bound is about 39\%. 
In general, the upper bound is always more accurate.
We also note that $t_{i+1}-t_i$ decreases
monotonically as $i$ grows, which confirms Theorem~\ref{thm_spacing_inside},
since $\chi_n > c^2$ in both cases.

We illustrate Theorem~\ref{thm_tn_simple} in Figure~\ref{fig:test170}.
We choose $c = 200$, and, for each integer $130 \leq n \leq 230$, we
evaluate numerically the following quantities. First, we compute the maximal
root $t_n$ of $\psi_n$ in $(-1,1)$. Second, we evaluate the
eigenvalue $\chi_n$ (see \eqref{eq_prolate_ode} in Section~\ref{sec_pswf}).
Then, we compute the lower and upper bounds on $1-t_n$, established
in Theorem~\ref{thm_tn_simple}, namely,
\begin{align}
\label{eq_tn_simple_lower}
& lower(n) = \frac{\pi^2}{8 \cdot (1+\sqrt{2})} \cdot \frac{1}{\chi_n-c^2},
\\
& upper(n) = \frac{2\pi^2}{\chi_n-c^2}.
\label{eq_tn_simple_upper}
\end{align}
In Figure~\ref{fig:test170}, we plot $\log(lower(n))$, 
$\log(upper(n))$ and $\log(1-t_n)$,
as functions of $n$.

We observe that neither of \eqref{eq_tn_simple_lower}, 
\eqref{eq_tn_simple_upper}
is a very accurate estimate of $1-t_n$. 
Nevertheless, they correctly capture the behavior
of $1-t_n$, up to a multiplicative constant.
In particular, for all integer $130 \leq n \leq 230$,
\begin{align}
1-t_n = \frac{\xi(n)}{\chi_n-c^2},
\end{align}
where $\xi(n)$ is a real number in the range
\begin{align}
\frac{\pi^2}{8 \cdot (1+\sqrt{2})} < \xi(n) < 2 \pi^2,
\end{align}
as expected
from Theorem~\ref{thm_tn_simple}. In other words, $1-t_n$
is proportional to $(\chi_n-c^2)^{-1}$.




\end{document}